\newcommand{\hypalg}{\mathfrak{u}}
\DeclareMathOperator{\Pim}{Pim}
\DeclareMathOperator{\uPim}{\underline{Pim}}
\DeclareMathOperator{\uIrr}{\underline{Irr}}
\newcommand{\bdm}{\begin{displaymath}}
\newcommand{\edm}{\end{displaymath}}
\newcommand{\bthm}{\begin{thm}}
\newcommand{\ethm}{\end{thm}}
\newcommand{\blem}{\begin{lem}}
\newcommand{\elem}{\end{lem}}
\newcommand{\bcor}{\begin{cor}}
\newcommand{\ecor}{\end{cor}}
\newcommand{\beq}{\begin{equation}\label}
\newcommand{\eeq}{\end{equation}}
\newcommand{\bprop}{\begin{prop}}
\newcommand{\eprop}{\end{prop}}
\newcommand{\bdefn}{\begin{defn}}
\newcommand{\edefn}{\end{defn}}
\newcommand{\Z}{\mathbb{Z}}
\newcommand{\N}{\mathbb{N}}
\let\C\undefined
\newcommand{\C}{\mathbb{C}}
\newcommand{\rad}{\mbox{\textrm{rad}}\,}
\let\mc\mathcal
\let\mf\mathfrak
\DeclareMathOperator{\Ind}{Ind}
\renewcommand{\mod}{\textrm{mod}}
\renewcommand{\H}{\mathsf{H}}
\newcommand{\s}{\mathfrak{S}}
\DeclareMathOperator{\Supp}{\mathrm{Supp}}
\newcommand{\Lmod}[1]{#1\text{-}{\mathsf{mod}}}
\newcommand{\op}{\mathrm{op}}
\newcommand{\idot}{{\:\raisebox{2pt}{\text{\circle*{1.5}}}}}
\newcommand{\ds}{\dots}
\begin{document}  

\title[Cores of graded algebras with triangular decomposition]{Cores of graded algebras with triangular decomposition}

\author{Gwyn Bellamy}

\address{School of Mathematics and Statistics, University Gardens, University of Glasgow, Glasgow, G12 8QW, UK}
\email{gwyn.bellamy@glasgow.ac.uk}

\author{Ulrich Thiel}
\address{School of mathematics and Statistics, University of Sydney, NSW 2006, Australia}
\email{ulrich.thiel@sydney.edu.au}

\begin{abstract}
We consider self-injective finite-dimensional graded algebras admitting a triangular decomposition. In the preceding paper \cite{hwtpaper} we have shown that the graded module category of such an algebra is a highest weight category and has tilting objects in the sense of Ringel. In this paper we focus on the degree zero part of the algebra, the \textit{core} of the algebra. We show that the core captures essentially all relevant information about the graded representation theory. Using  tilting theory, we show that the core is cellular. We then describe a canonical construction of a highest weight cover, in the sense of Rouquier, of this cellular algebra using a finite subquotient of the highest weight category. Thus, beginning with a self-injective graded algebra admitting a triangular decomposition, we canonically construct a quasi-hereditary algebra which encodes key information, such as the graded multiplicities, of the original algebra. Our results are general and apply to a wide variety of examples, including restricted enveloping algebras, Lusztig’s small quantum groups, hyperalgebras, finite quantum groups, and restricted rational Cherednik algebras. We expect that the cell modules and quasi-hereditary algebras introduced here will provide a new way of understanding these important examples.
\end{abstract}

\blfootnote{Version from \today}

\maketitle
\tableofcontents

\section{Introduction}

The goal of this paper, and the preceding paper \cite{hwtpaper}, is to develop new structures in the representation theory of a class of algebras commonly encountered in algebraic Lie theory: finite-dimensional $\mathbb{Z}$-graded algebras $A$ which admit a \textit{triangular decomposition}, i.e., a 
vector space decomposition 
$$
A^- \otimes T \otimes A^+ \overset{\sim}{\longrightarrow} A
$$
into graded subalgebras given by the multiplication map, where we assume that $A^-$ is concentrated in negative degree, $T$ in degree zero, and $A^+$ in positive degree.

There are a variety of examples of such algebras:
\begin{enumerate}\label{eq:VIPlist}
\item Restricted enveloping algebras $\overline{U}(\mf{g}_K)$;
\item Lusztig's small quantum groups $\mathbf{u}_{\zeta}(\mf{g})$, at a root of unity $\zeta$; \item Hyperalgebras $\hypalg_r(\mf{g}) \dopgleich \mathrm{Dist}(G_r)$ on the Frobenius kernel $G_r$; 
\item Finite quantum groups $\mc{D}$ associated to a finite group $G$;
\item Restricted rational Cherednik algebras $\overline{\H}_{\bc}(W)$ at $t = 0$;
\item RRCAs $\overline{\H}_{1,\bc}(W)$ at $t = 1$ in positive characteristic.
\end{enumerate}
We refer to the above list as the \textit{VIP examples}. For more details, see \cite{hwtpaper} and the references therein. The representation theory of these algebras has important applications to other areas of mathematics. For instance, to symplectic algebraic geometry \cite{Baby}, \cite{Singular}, \cite{BellamyCounting}, \cite{BellSchedThiel}, to algebraic combinatorics \cite{GriffMac}, \cite{TomaszP}, and to algebraic groups in positive characteristic \cite{Jantzen}, \cite{AJS}. The applications mostly derive in one way or another from computing the graded character of irreducible modules.

In this article, we propose a fundamentally new approach to the problem of computing these characters. Our approach focuses on understanding the subalgebra $A_0$ of $A$ formed by the elements of degree zero, which we call the \textit{core} of $A$. In Section \ref{core_rep_general} we show that $A_0$ captures, in a precise sense, essentially all information about the graded representation theory of $A$. The advantage of considering $A_0$ is that it possess additional structure that $A$ itself need not have. In Sections \ref{cellular_structure} and \ref{sec:cover}, under mild assumptions on $A$, we construct two such structures: 
\begin{itemize}
	\item a \textit{cellular structure} on $A_0$, in the sense of Graham--Lehrer \cite{Graham-Lehrer-Cellular}, 
	\item a \textit{quasi-hereditary} algebra covering $A_0$, in the sense of Rouquier \cite{RouquierQSchur}.
\end{itemize}
The proof of these results build on the foundational material of~\cite{hwtpaper}. We explain these results in more detail below.   

\subsection{Cellularity}

In the seminal paper \cite{Graham-Lehrer-Cellular}, Graham and Lehrer introduced \textit{cellular algebras} to representation theory. Providing a means to combinatorially describe the representation theory of algebras, cellularity has become an important and influential concept in representation theory. This can be see in the concentrated effort, ever since the appearance of \cite{Graham-Lehrer-Cellular}, to find new examples of cellular algebras--our work provides \textit{several} infinite families of cellular algebras that have never been considered in the literature before. 

More generally, by dropping the anti-involution from the definition, Du and Rui \cite{DuRui} introduced a broader class of algebras called \textit{standardly based} algebras. Abstractly, it can be shown that every finite-dimensional algebra is actually standardly based, see Koenig--Xi \cite[Section 5]{KonigXiCellular}. However, exhibiting an explicit standard basis is computationally very difficult and provides, through the associated cells and cell modules, meaningful combinatorial information about the representation theory of the algebra.  

We obtain cellularity of $A_0$ from a very general result, proved in Appendix \ref{cellular_appendix}, concerning the endomorphism algebra of a tilting object in a highest weight category:

\begin{thm} \label{hwc_cellular}
Let $\mathcal{C}$ be a highest weight category (possibly with infinitely many simple objects), equipped with enough tilting objects\footnote{See Appendix \ref{cellular_appendix} for the precise definition.}. Let $T \in \mathcal{C}$ be a tilting object.
\begin{enumerate}
\item The decomposition of $T$ into indecomposable tilting objects defines a standard datum for $\End_{\mathcal{C}}(T)$. 
\item If $\mathcal{C}$ is equipped with a duality $\bbD$ such that $\bbD(T) \simeq T$, then $\bbD$ induces an anti-involution on $\End_{\mathcal{C}}(T)$, making it into a cellular algebra.
\end{enumerate}
\end{thm}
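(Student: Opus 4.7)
My plan is to exploit the two defining features of tilting objects in a highest weight category---their simultaneous filtrations by standards $\Delta(\lambda)$ and costandards $\nabla(\lambda)$---together with the orthogonality $\Hom_\mathcal{C}(\Delta(\mu), \nabla(\nu)) = \delta_{\mu,\nu} k$ to build a standard basis of $\End_\mathcal{C}(T)$, and then to promote this to a cellular basis in part (2) using the duality. Once the standard basis is in place the cellular conclusion is essentially formal; the real content lies in constructing it.

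For part (1), decompose $T = \bigoplus_{\lambda \in \Lambda_T} T(\lambda)^{\oplus m_\lambda}$, where $\Lambda_T$ is the (finite) set of labels of indecomposable tilting summands. The standard module attached to $\lambda$ will be $\Hom_\mathcal{C}(\Delta(\lambda), T)$, viewed as a left $\End_\mathcal{C}(T)$-module by post-composition. To extract the cellular ideal filtration, I filter $\End_\mathcal{C}(T)$ by two-sided ideals
$$J_\Phi := \{\, f \in \End_\mathcal{C}(T) : f \text{ factors through an object of } \mathcal{C}_\Phi \,\},$$
indexed by downward-closed $\Phi \subseteq \Lambda$, with $\mathcal{C}_\Phi$ the Serre subcategory generated by the simples $L(\mu)$, $\mu \in \Phi$. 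The tilting property combined with the Ext-orthogonality of standards and costandards should then yield, for $\Phi' = \Phi \setminus \{\lambda\}$ with $\lambda$ maximal in $\Phi$, a bimodule isomorphism
$$J_\Phi / J_{\Phi'} \;\cong\; \Hom_\mathcal{C}(\Delta(\lambda), T) \otimes_k \Hom_\mathcal{C}(T, \nabla(\lambda)),$$
with the left $\End_\mathcal{C}(T)$-action on the first tensor factor (post-composition) and the right action on the second (pre-composition). Choosing bases of the two Hom-spaces now produces the desired standard basis of $\End_\mathcal{C}(T)$.

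For part (2), any duality on a highest weight category necessarily exchanges $\Delta(\lambda)$ and $\nabla(\lambda)$: since $\bbD$ is a contravariant exact autoequivalence fixing simples, the recursive characterisations of standards and costandards are swapped. Combined with the hypothesis $\bbD(T) \cong T$, this gives natural isomorphisms
$$\Hom_\mathcal{C}(\Delta(\lambda), T) \;\cong\; \Hom_\mathcal{C}(\bbD T, \nabla(\lambda)) \;\cong\; \Hom_\mathcal{C}(T, \nabla(\lambda)),$$
which identify the two tensor factors appearing in the subquotient above. The contravariance of $\bbD$ promotes the induced map $\End_\mathcal{C}(T) \to \End_\mathcal{C}(T)$ to an anti-involution, and under the identification it exchanges the two tensor factors of $J_\Phi / J_{\Phi'}$, sending a basis element $C^\lambda_{g,f}$ to $C^\lambda_{f,g}$. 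This is precisely the Graham--Lehrer cellular condition.

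The main obstacle is the bimodule identification of $J_\Phi/J_{\Phi'}$. Rigorously, one must construct an explicit map from the tensor product to $J_\Phi$ by lifting each pair $(g,f)$ through the $\Delta$- and $\nabla$-flags of $T$, show that the induced map to $J_\Phi/J_{\Phi'}$ is independent of all choices, and verify that the resulting map is a bimodule isomorphism with the prescribed action. Both the well-definedness of the lift and the compatibility of the bimodule structure reduce to the vanishings $\Hom_\mathcal{C}(\Delta(\mu), \nabla(\nu)) = 0$ for $\mu \neq \nu$ and $\Ext^1_\mathcal{C}(\Delta(\mu), \nabla(\nu)) = 0$, both encoded in the definition of a highest weight category. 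Care is also needed in the infinite-$\Lambda$ setting to ensure that the ideal filtration is governed by the finite set $\Lambda_T$ and not by $\Lambda$ itself. Once the subquotient is pinned down, an induction over the poset completes (1) and the cellular conclusion in (2) follows.
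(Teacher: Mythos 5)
Your outline reconstructs, in a more module-theoretic language, essentially the same basis-by-lifting construction that the paper carries out in Appendix~\ref{cellular_appendix} following Andersen--Stroppel--Tubbenhauer. The bimodule reformulation is a legitimate and arguably cleaner way to phrase the target statement: a standard datum for $\End_{\mathcal{C}}(T)$ at level $\lambda$ is exactly the assertion that the cellular subquotient is a tensor product $\Hom_{\mathcal{C}}(\Delta(\lambda),T)\otimes_K \Hom_{\mathcal{C}}(T,\nabla(\lambda))$ of a left module and a right module. Part (2) is also in line with the paper.

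However, there are two genuine gaps. First, the poset governing the filtration is not the set $\Lambda_T$ of labels of indecomposable summands of $T$; it is
\[
\mathcal{P}_T = \lbrace \lambda \in \Lambda \mid \lbrack T:\Delta(\lambda)\rbrack \neq 0 \text{ and } \lbrack T:\nabla(\lambda)\rbrack \neq 0 \rbrace,
\]
which in general strictly contains $\Lambda_T$. A single indecomposable $T(\mu)$ already has nontrivial cellular layers at every $\lambda < \mu$ occurring in both its $\Delta$- and $\nabla$-flags; for example, for $A=K[x,y]/(x^2,y^2)$ the regular module $A$ is an indecomposable tilting object in $\mathcal{G}(A)$, $\Lambda_A$ is a single point, yet $\mathcal{P}_A$ has two elements and $\dim \End_{\mathcal{G}(A)}(A)=2$. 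So the assertion that the ideal filtration is ``governed by $\Lambda_T$'' is false and needs to be replaced by $\mathcal{P}_T$.

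Second, and more seriously, the claim that the bimodule identification of $J_\Phi/J_{\Phi'}$ ``reduces to the vanishings'' $\Hom_{\mathcal{C}}(\Delta(\mu),\nabla(\nu))=\delta_{\mu\nu}K$ and $\Ext^1_{\mathcal{C}}(\Delta(\mu),\nabla(\nu))=0$ substantially understates the difficulty. Those vanishings give existence of the lifts $\hat f,\hat g$ and the well-definedness of the class of $\hat g\circ\hat f$ modulo $J_{\Phi'}$. They do not by themselves give injectivity (that a nonzero combination $\sum_i \hat g_i\hat f_i$ cannot factor through $\mathcal{C}_{\Phi'}$) nor surjectivity (that every element of $J_\Phi$ is of this form up to $J_{\Phi'}$). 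In Andersen--Stroppel--Tubbenhauer this is handled by restricting morphisms to $\lambda$-weight spaces, a structure the generality of Theorem~\ref{hwc_cellular} does not have. The whole point of Appendix~\ref{cellular_appendix} is to replace this with the invariant $\varphi_\lambda := \lbrack \Im\varphi:L(\lambda)\rbrack$, and the nontrivial content is Lemma~\ref{cellular_appendix_ingredients}: that one can choose the bases so that $\phi_\lambda\neq 0$ for every nonzero $\phi$ in the $K$-span of $\hat G^\lambda\hat F^\lambda$, proved by induction along a costandard filtration. Without some such argument your proposed bimodule isomorphism remains unproved, and this is precisely where the paper's new idea lies.
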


The proof of this theorem is a modification of very recent work of Andersen--Stroppel--Tubbenhauer \cite{AST}, who prove this in the case of quantum groups. Unfortunately, one essential ingredient in \textit{loc. cit.} is the notion of \textit{weight spaces}, which is not available in an arbitrary highest weight category (in particular, such a structure does not exist for $\mathcal{G}(A)$). We explain the necessary modifications to their arguments in Appendix \ref{cellular_appendix}. By results of \cite{hwtpaper}, the categories $\mathcal{G}(A)$ satisfy the assumptions of Theorem \ref{hwc_cellular}. Applying Theorem \ref{hwc_cellular} to the tilting object $A$ in $\mathcal{G}(A)$, we obtain (see Theorem \ref{core_cellular}):

\begin{thm}\label{thm:standardcellularintro} 
Assume that $T$ is semisimple and $A$ is self-injective.
\begin{enumerate}
\item The decomposition of $A$ into indecomposable tilting modules provides a \textit{canonical} standard datum for $A_0$.
\item If $A$ is graded Frobenius and admits a triangular anti-involution, then $A_0$ is a cellular algebra. 
\end{enumerate}

\end{thm}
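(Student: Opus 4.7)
The plan is to apply Theorem~\ref{hwc_cellular} to the graded module category $\mathcal{G}(A)$, which by \cite{hwtpaper} is a highest weight category equipped with enough tilting objects, using $A$ itself (viewed as a graded left module over itself) as the tilting object. Under the identification $\End_{\mathcal{G}(A)}(A) \cong A_0^{\mathrm{op}}$ given by right multiplication (which restricts to $A_0$ precisely because we consider grade-preserving endomorphisms), any structures Theorem~\ref{hwc_cellular} puts on the endomorphism algebra transport to $A_0$.

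For part (1), I would first check that $A$ is actually a tilting object in $\mathcal{G}(A)$. Since $A$ is self-injective, its indecomposable summands as a graded left module are exactly the PIMs, and when $T$ is semisimple the main results of \cite{hwtpaper} identify the PIMs in $\mathcal{G}(A)$ with the indecomposable tilting objects. Hence $A$ is a direct sum of indecomposable tiltings, so Theorem~\ref{hwc_cellular}(1) produces a standard datum on $\End_{\mathcal{G}(A)}(A) \cong A_0^{\mathrm{op}}$. Canonicity is automatic because the indexing is given by the intrinsic labeling of simple/tilting objects in the highest weight category rather than by any choice of decomposition.

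For part (2), I would construct a duality $\bbD$ on $\mathcal{G}(A)$ with $\bbD(A) \simeq A$ and then invoke Theorem~\ref{hwc_cellular}(2). A triangular anti-involution $\sigma$ of $A$ turns graded left modules into graded right modules, and composition with graded vector-space duality produces a contravariant endofunctor $\bbD$ on $\mathcal{G}(A)$. Triangularity of $\sigma$ (it swaps $A^+$ and $A^-$ and preserves $T$) is what ensures that $\bbD$ stays inside $\mathcal{G}(A)$ and exchanges standard with costandard objects in the appropriate way, making it a duality of highest weight categories. The graded Frobenius hypothesis then supplies a nondegenerate homogeneous $\sigma$-compatible bilinear form on $A$, from which the required isomorphism $\bbD(A) \simeq A$ can be extracted.

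I expect the main obstacle to be part (2), namely the verification that graded Frobenius plus a triangular anti-involution genuinely yield a self-dual tilting object $A$ in $\mathcal{G}(A)$, together with the compatibility check that the anti-involution induced on $A_0^{\mathrm{op}} \cong \End_{\mathcal{G}(A)}(A)$ by $\bbD$ coincides with the restriction of $\sigma$ to $A_0$. Once this compatibility is pinned down, cellularity of $A_0$ follows immediately from Theorem~\ref{hwc_cellular}(2), and the rest amounts to tracking grading conventions and checking that the cell datum recovered on $A_0$ agrees with the canonical standard datum obtained in part (1).
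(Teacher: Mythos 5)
Your proposal follows essentially the same route as the paper's proof of Theorem~\ref{core_cellular}: apply Theorem~\ref{hwc_cellular} to the tilting object $A$ in $\mathcal{G}(A)$ using the identification $\End_{\mathcal{G}(A)}(A) \cong A_0^{\op}$, verify the highest-weight, Ext-vanishing, and indecomposable-tilting assumptions via the results of \cite{hwtpaper}, and for cellularity use that a triangular anti-involution together with graded Frobenius produces a duality $\bbD$ on $\mathcal{G}(A)$ fixing $A$. The paper delegates the self-duality verification you flag as the ``main obstacle'' to \cite[Corollary 6.4]{hwtpaper}, but the underlying idea is identical to yours.
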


The notion of a \textit{triangular anti-involution} was introduced in \cite{hwtpaper}. Such an involution induces a duality $\bbD$ on $\mathcal{G}(A)$, and in case $A$ is graded Frobenius this fixes the tilting object $A$, see Corollary 6.4 of \textit{loc. cit.} We have shown in \textit{loc. cit.} that all VIP examples listed above satisfy the assumptions of Theorem \ref{thm:standardcellularintro}, giving a canonical standard datum for their cores. The only obstruction to being cellular is the existence of a triangular anti-involution. Again as noted in \textit{loc. cit.}, this exists for all VIP examples, except for restricted rational Cherednik algebras associated to \textit{non-real} reflection groups. Regardless, one can now define cell modules and cells for all VIP examples. Multiplicities of simple modules in cell modules leads to the \textit{decomposition matrix} for $A_0$. This raises the obvious problem:
\begin{itemize}
	\item Compute the decomposition matrix for $A_0$. 
\end{itemize}
We show that this problem is equivalent to computing the character of the simple graded $A$-modules.

\subsection{Highest weight covers}
We show, in Proposition \ref{A0_quasi_hereditary}, that the core $A_0$ is quasi-hereditary if and only if $A$ is semi-simple. Thus, the core is (essentially) never quasi-hereditary. The main goal of this article is to ``resolve'' this ``singular'' algebra by finding a highest weight cover, in the sense of Rouquier \cite{RouquierQSchur}. We recall that a highest weight cover of $A_0$ is a highest weight category $\mc{B}$ (necessarily with finitely many simple objects), together with a projective object $P \in \mc{B}$ such that $A_0 \simeq \End_{\mc{B}}(P)^{\mathrm{op}}$ and the corresponding exact functor 
$$
F = \Hom_{\mc{B}}(P,-) \from \mc{B} \to \mc{M}(A_0)
$$
is fully faithful on projectives. It is shown in \cite{RouquierQSchur} that a highest weight cover, if it exists, is essentially unique. 

In Section \ref{sec:cover} we show that $A_0$, under some mild assumptions satisfied by all VIP examples, admits a highest weight cover. This cover is explicitly constructed by taking a subquotient $\mc{G}_{\lbrack N \rbrack}(A)$ of the highest weight category $\mc{G}(A)$. First, by bounding from above the degrees of the simple composition factors that are allowed to occur in a module $M$, we get for each integer $d$ a Serre subcategory $\mc{G}_{\le d}(A)$ of $\mc{G}(A)$. Since these subcategories come from an ideal in the poset $\Irr \mc{G}(T)$, they are also highest weight categories. More importantly, for each positive integer $d$, the subfactors 
$$
\mc{G}_{[d]}(A) := \mc{G}_{\le d}(A) / \mc{G}_{< 0}(A)
$$
are also highest weight categories. These categories have finitely many simple objects and enough projectives. Thus, there exists a basic quasi-hereditary algebra $C_d$ such that $\mc{G}_{[d]}(A)$ is equivalent to the category $\mc{M}(C_d)$ of finite dimensional $C_d$-modules. In this way, $A$ gives rise to an infinite family $\{ C_d \}$ of quasi-hereditary algebras. 

In general, the algebras $C_d$ are very difficult to describe. A simple motivating example is given by $A = \C[x,y] / (x^2, y^2)$. Here $\mc{G}(A)$ is equivalent to the category of finite-dimensional modules for the quiver 
$$
\begin{tikzcd}
\cdots & e_{i-2} \arrow[bend left]{r}{y_{i-2}}  & e_{i-1} \arrow[bend left]{l}{x_{i-1}} \arrow[bend left]{r}{y_{i-1}} & e_{i} \arrow[bend left]{l}{x_{i}} \arrow[bend left]{r}{y_{i}} & e_{i+1} \arrow[bend left]{l}{x_{i+1}} & \cdots
\end{tikzcd}
$$
with relations $x_{i+1} \circ y_i = y_{i-1} \circ x_i$ and $y_{i+1} \circ y_i = x_{i+1} \circ x_i = 0$. In this case, the quotient category $\mc{G}_{[d]}(A)$ is equivalent to the highest weight category $\mathrm{Perv}_{\C}(\mathbb{P}^d)$ of perverse sheaves on projective space, stratified with respect to the usual Schubert stratification. 
 
Assume that $A$ is graded symmetric. Then the socle of $A^+$ equals the top non-zero graded piece $A_N^+$. Choosing $d \ge N$, we set $\ell \dopgleich N-d$ and let
$$
A \langle \ell \rangle \dopgleich A \lbrack 0 \rbrack \oplus A \lbrack 1 \rbrack \oplus \ldots \oplus A \lbrack \ell \rbrack.  
$$
This is an object of $\mc{G}_{\lbrack d \rbrack}(A)$. In Theorem \ref{thm:A0elling}, we show that it is projective-injective and that
\[
\End_{\mc{G}_{\lbrack d \rbrack}(A)}(A \langle \ell \rangle ) = \End_{\mc{G}(A)}(A \langle \ell \rangle ) \ (=: B_\ell) \;.
\]
Since $A\lbrack 0,\ell \rbrack$ is projective in $\mc{G}_{\lbrack d \rbrack}(A)$, it defines an exact functor
\[
F_d \dopgleich \Hom_{\mc{G}_{\lbrack d \rbrack}(A)}(A\langle \ell \rangle , -) \from \mc{G}_{\lbrack d \rbrack}(A) \to \mc{M}(B_\ell) \;,
\]
where $\mc{M}(B_\ell)$ denotes the category of finite-dimensional $B_\ell$-modules. Our main result, Theorem \ref{thm:highestweightcover}, states:

\begin{thm}\label{thm:introcoverresult}
	If $A$ is graded symmetric, well-generated and ambidextrous, then $F_d$ is a highest weight cover of $B_\ell$.
\end{thm}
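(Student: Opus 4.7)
The plan is to verify the three defining conditions of a highest weight cover in Rouquier's sense: (a) $\mc{G}_{[d]}(A)$ is a highest weight category with finitely many simple objects; (b) $A\langle\ell\rangle$ is projective in $\mc{G}_{[d]}(A)$ with $\End_{\mc{G}_{[d]}(A)}(A\langle\ell\rangle)^{\mathrm{op}} \cong B_\ell^{\mathrm{op}}$; (c) the functor $F_d$ is fully faithful on projectives. Items (a) and (b) are already in hand: (a) was established in the discussion preceding the theorem, and (b) is exactly the content of Theorem \ref{thm:A0elling}. So the substance of the proof is (c).

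The first step would be to exploit the graded symmetric plus ambidextrous hypothesis to upgrade $A\langle\ell\rangle$ from projective to \emph{projective--injective} in $\mc{G}_{[d]}(A)$. Graded symmetry of $A$ supplies a degree-reversing duality on $\mc{G}(A)$ via the Frobenius pairing; ambidextrousness ensures that this duality descends to the subquotient $\mc{G}_{[d]}(A)$ and fixes the object $A\langle\ell\rangle$. Hence $A\langle\ell\rangle$ is simultaneously projective and injective in $\mc{G}_{[d]}(A)$.

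The second step would be to use the well-generated hypothesis to produce, for every projective $P \in \mc{G}_{[d]}(A)$, a co-presentation
\[
0 \to P \to A\langle\ell\rangle^{\oplus n_0} \to A\langle\ell\rangle^{\oplus n_1}
\]
by copies of $A\langle\ell\rangle$. Concretely, well-generatedness should guarantee that every simple $L$ in $\mc{G}_{[d]}(A)$ occurs in the socle of some indecomposable summand of $A\langle\ell\rangle$; since $A\langle\ell\rangle$ is injective, this yields an embedding $P(L) \hookrightarrow A\langle\ell\rangle^{\oplus n_0}$, and iterating the construction on the cokernel gives the second map. The third step is then a standard diagram chase: applying the left exact functor $F_d$ to the co-presentation, together with $F_d(A\langle\ell\rangle^{\oplus n}) = B_\ell^{\oplus n}$, yields a matching co-presentation of $F_d(P)$ by free $B_\ell$-modules. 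Applying $\Hom$'s from both $P$ and $F_d(P)$ against another such co-presentation for a second projective $Q$, the five lemma reduces full faithfulness to the tautological isomorphism $\End_{\mc{G}_{[d]}(A)}(A\langle\ell\rangle)^{\mathrm{op}} \cong B_\ell^{\mathrm{op}}$ from Theorem \ref{thm:A0elling}, giving
\[
\Hom_{\mc{G}_{[d]}(A)}(P, Q) \rightsim \Hom_{B_\ell}(F_d P, F_d Q)
\]
for all projectives $P, Q$.

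The principal obstacle is establishing the co-presentation in the second step: this is where all three hypotheses (graded symmetric, well-generated, ambidextrous) must be combined, and one must carefully track how the subquotient construction $\mc{G}_{[d]}(A) = \mc{G}_{\le d}(A)/\mc{G}_{<0}(A)$ interacts with the projective--injective structure on $A\langle\ell\rangle$. In particular, verifying that the indecomposable summands of $A\langle\ell\rangle$ exhaust the projective--injective indecomposables of $\mc{G}_{[d]}(A)$, and that their socles cover every simple of $\mc{G}_{[d]}(A)$, is the delicate point where the hypothesis of well-generatedness is essential and cannot be dropped.
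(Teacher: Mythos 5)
Your outline correctly identifies the two structural inputs — the highest weight category structure of $\mc{G}_{[d]}(A)$ and the projective--injective object $A\langle\ell\rangle$ with $\End_{\mc{G}_{[d]}(A)}(A\langle\ell\rangle) = B_\ell$ — and correctly locates the crux in full faithfulness of $F_d$ on projectives. The route you then propose (build a co-presentation $0 \to P \to A\langle\ell\rangle^{n_0} \to A\langle\ell\rangle^{n_1}$ of every projective, then deduce the double centralizer property) is genuinely different from the paper's, which instead verifies Rouquier's $\Hom$- and $\Ext^1$-vanishing criterion (Proposition \ref{prop:covercriterion}) directly. Your route is also valid in principle: a co-presentation of every projective by a projective--injective is precisely the dominant-dimension-$\geq 2$ condition, which by Mueller's theorem yields the double centralizer property. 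But calling the resulting argument a ``standard diagram chase'' that reduces to ``the tautological isomorphism'' $\End(A\langle\ell\rangle)^{\op} \cong B_\ell^{\op}$ misdescribes it: the five-lemma step requires the non-tautological fact that $\Hom_{\mc{G}_{[d]}(A)}(P, A\langle\ell\rangle) \to \Hom_{B_\ell}(F_dP, B_\ell)$ is an isomorphism, and this again uses the co-presentation of $P$ together with injectivity of $A\langle\ell\rangle$.

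The genuine gap lies in step 2, and one of your supporting claims there is simply false. You assert that well-generatedness ``should guarantee that every simple $L$ in $\mc{G}_{[d]}(A)$ occurs in the socle of some indecomposable summand of $A\langle\ell\rangle$.'' That is not true, and cannot be, because $A\langle\ell\rangle$ is the sum of the projective--injective indecomposables of $\mc{G}_{[d]}(A)$, which is (essentially always) a \emph{strict} subset of the injectives in the quasi-hereditary category $\mc{G}_{[d]}(A)$; only those $L(\lambda)$ with $\Supp L(\lambda) \cap [0,\ell] \neq \emptyset$, equivalently $F_d(L(\lambda)) \neq 0$, appear in $\Soc A\langle\ell\rangle$. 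The real task — the entire technical heart of the paper's proof — is to show that the socle of every projective cover $Q_d(\mu)$, \emph{and of the cokernel of $Q_d(\mu) \hookrightarrow A\langle\ell\rangle^{\oplus k}$}, contains only such ``visible'' simples. That is what Lemma \ref{lem:soclAlhdsupport} and its consequences (Proposition \ref{prop:minusonefaithful} and the construction of the exact sequence (\ref{eq:Qembedses})) accomplish, using the socle condition $\Soc A^{\pm} = A^{\pm}_{\pm N}$. You also misattribute the roles of the hypotheses: it is graded symmetry (via triangular self-injectivity and the degree-preserving Nakayama permutation, Lemma \ref{A_0_ell_projective}) that makes $A\langle\ell\rangle$ injective in the subquotient, not a ``descent of duality'' argument via ambidexterity; and ambidexterity plus graded symmetry enters only to secure $\Soc A^{\pm} = A^{\pm}_{\pm N}$ (Section \ref{sec_socle_assumption}), which in turn feeds Lemma \ref{lem:soclAlhdsupport}. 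Well-generatedness is used in the paper to control the socle of quotients of free $A^{\pm}$-modules, not for the (false) cogenerator-type statement you invoke. Without some version of these socle arguments your co-presentation does not exist, so as written the proof does not go through.
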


The assumptions of Theorem \ref{thm:introcoverresult} are not particularly restrictive. By \textit{well-generated} we mean that $A^\pm$ is generated as a $K$-algebra in degree $\pm 1$. The notion of \textit{ambidextrous} was introduced in \cite{hwtpaper}, and says that after swapping $A^-$ with $A^+$, the multiplication map $A^+ \otimes_K T \otimes_K A^- \to A$ is still an isomorphism. The assumptions may be relaxed somewhat, see Section \ref{sec:coversmain}, but \textit{all} the VIP examples mentioned at the beginning of the introduction satisfy them; see \textit{loc. cit.} for details. The proof of Theorem \ref{thm:highestweightcover} relies on a technical result, Lemma \ref{lem:soclAlhdsupport}, about the socle of certain graded modules over the algebras $A^+$ and $A^-$. Remarkably, though the statement is the same for both algebras, they play opposite roles. Namely, the result applied to $A^-$ implies that $F_d$ is faithful on projectives. In fact, it is faithful on all standardly filtered modules. On the other hand, when applied to $A^+$ it implies that $F_d$ is full on projectives. Taking $d=N$, we have $B_0 = A_0^\op$ and:

\begin{thm}
	If $A$ is graded symmetric, well-generated and ambidextrous, then 
	$$
	F_N \from \mc{G}_{\lbrack N \rbrack}(A) \to \mc{M}(A_0^{\mathrm{op}})
	$$
	is a highest weight cover.
\end{thm}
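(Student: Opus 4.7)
The statement is the boundary case $d = N$ of Theorem \ref{thm:introcoverresult}. The plan is first to unwind this specialisation, then to invoke the main theorem.

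With $d = N$, one has $\ell = N - d = 0$, so $A\langle \ell \rangle$ reduces to the single summand $A\lbrack 0 \rbrack$, i.e.\ $A$ with its natural grading and no shift. By Theorem \ref{thm:A0elling}, the algebra $B_0 = \End_{\mc{G}_{\lbrack N \rbrack}(A)}(A\lbrack 0 \rbrack)$ agrees with $\End_{\mc{G}(A)}(A\lbrack 0 \rbrack)$. A degree-preserving graded endomorphism of $A\lbrack 0 \rbrack$ is determined by the image of $1 \in A_0$, which must itself lie in $A_0$; right multiplication then yields a canonical isomorphism $B_0 \cong A_0^{\op}$. Under this isomorphism, $\mc{M}(B_0)$ matches $\mc{M}(A_0^{\op})$, and the defining functor $F_N$ coincides with the functor in the statement.

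With this identification in hand, the conclusion is immediate from Theorem \ref{thm:introcoverresult}: under the hypotheses that $A$ is graded symmetric, well-generated, and ambidextrous, $F_d$ is a highest weight cover of $B_\ell$ for every admissible $d \geq N$, so taking $d = N$ yields that $F_N$ is a highest weight cover of $A_0^{\op}$. The essential technical difficulty sits entirely in Theorem \ref{thm:highestweightcover}, via the socle analysis provided by Lemma \ref{lem:soclAlhdsupport}; here there is nothing further to do beyond recognising the endomorphism algebra of the minimal tilting summand as $A_0^{\op}$, so the only conceptual obstacle is bookkeeping: making sure the boundary value $d=N$ is still admissible (it is, being the minimal allowed value at which $A\lbrack 0 \rbrack$ remains projective-injective by Theorem \ref{thm:A0elling}).
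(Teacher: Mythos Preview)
Your proof is correct and follows exactly the paper's approach: the theorem is obtained by specializing Theorem~\ref{thm:highestweightcover} to $d=N$, so that $\ell=0$, $A\langle 0\rangle=A$, and $B_0=\End_{\mc{G}(A)}(A)\cong A_0^{\op}$. The paper records this as an immediate corollary with no further argument, just as you have.
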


Though $F_N$ is $(-1)$-faithful, in the sense of \cite{RSVV}, it is not in general $0$-faithful. When $d < N$, there is no natural functor $\mc{G}_{[d]}(A) \rightarrow \mc{M}(A_0)$, and when $d > N$, one can show that $F_d$ is \textit{not} a highest weight cover. See Section \ref{0_faithful} for more details on $0$-faithfulness. Our results raise a number of important questions: 
\begin{itemize}
	\item For which algebras $A$ are the highest weight categories $\mc{G}_{\lbrack d \rbrack}(A)$ Koszul? 
\end{itemize}
The categories $\mc{G}(A)$, for $A$ self-injective, are Ringel self-dual since the tilting modules are precisely the projective modules. This no longer holds for the subquotients $\mc{G}_{\lbrack d \rbrack}(A)$. 
\begin{itemize}
	\item What is the Ringel dual of $\mc{G}_{\lbrack d \rbrack}(A)$? 
\end{itemize}
We hope to answer these questions in future work. Identifying the infinite family of quasi-hereditary algebras $C_d$ seems to be a difficult problem. We note that in the important case of the VIP examples, we do not as yet have any explicit description of these quasi-hereditary algebras. Toy examples are given in section \ref{cover_examples}.

\subsection*{Cellularity of $A$}

Based on Theorem \ref{thm:standardcellularintro}, it is natural to expect that the algebra $A$ itself is cellular, at least when equipped with a triangular anti-involution. In Section \ref{A_not_cellular} we explain that a key obstruction to cellularity is given by the blockwise \word{rank one property} of the decomposition matrix. This implies that most VIP examples are in fact \textit{not} cellular. More precisely (see Proposition \ref{vip_not_cellular}):

\begin{prop}
	Let $A$ be one of the following VIP examples:
	\begin{enumerate}
		\item Restricted enveloping algebras $\overline{U}(\mf{g}_K)$;
		\item Lusztig's small quantum groups $\mathbf{u}_{\zeta}(\mf{g})$, at a root of unity $\zeta$ of odd order; 
		\item Restricted rational Cherednik algebras $\overline{\H}_{\bc}(W)$ at $t = 0$, with $\bc$ non-generic in case all factors of $W$ are among the groups $G(m,1,n)$ and $G_4$; 
	\end{enumerate}
	Then $A$ is \textnormal{not} cellular.
\end{prop}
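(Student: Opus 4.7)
The plan is to invoke the obstruction from Section \ref{A_not_cellular}: for a cellular algebra, the Cartan matrix $C$ of each block must admit a factorisation $C = D^T D$ with $D$ a non-negative integer matrix satisfying $D_{\mu\mu} = 1$ for every simple label $\mu$ and $D_{\lambda\mu} = 0$ whenever $\lambda \not\geq \mu$ in the cell order (the blockwise rank one property). In each of the three families I would exhibit a single block whose Cartan matrix violates this condition.

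The starting point is $\overline{U}(\mf{sl}_2)$ in odd characteristic $p$. Each non-Steinberg block contains exactly two simples $L(i), L(p-2-i)$, and since the algebra is graded symmetric with triangular decomposition, BGG-reciprocity together with the known composition series of the baby Verma modules $Z(i), Z(p-2-i)$ yields the Cartan matrix
\begin{displaymath}
C = \begin{pmatrix} 2 & 2 \\ 2 & 2 \end{pmatrix}.
\end{displaymath}
Normalising the two rows of $D$ indexed by the simples to $(1,0)$ and $(a,1)$ for some $a \in \Z_{\geq 0}$, the column equations $\sum_\rho D_{\rho,j}^2 = 2$ and the off-diagonal equation $\sum_\rho D_{\rho,1} D_{\rho,2} = 2$ admit no common solution in non-negative integers: a short enumeration (either $a = 1$ with no further rows contributing, or $a = 0$ with exactly one extra row carrying a $1$ in each column) shows that the off-diagonal sum is always at most $1$. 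Hence $\overline{U}(\mf{sl}_2)$ is not cellular. For general reductive $\mf{g}_K$, the linkage principle produces a block of $\overline{U}(\mf{g}_K)$ with this same Cartan data, so the obstruction transfers.

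The small quantum group $\mathbf{u}_{\zeta}(\mf{g})$ at a root of unity of odd order is handled identically via its $\mf{sl}_2$-analogue, whose non-Steinberg blocks again have Cartan matrix $\begin{pmatrix} 2 & 2 \\ 2 & 2 \end{pmatrix}$. For restricted rational Cherednik algebras $\overline{\H}_{\bc}(W)$ at $t=0$ with $W$ among the specified reflection groups and $\bc$ non-generic, I would appeal to the explicit decomposition matrices available in the literature for $G(m,1,n)$ and $G_4$ at non-generic parameters to identify a block whose Cartan matrix is not a Gram matrix of the required form. The obstruction then propagates to arbitrary products $W = \prod_i W_i$ using that blocks of a tensor product of algebras are tensor products of blocks, whose Cartan matrix is the Kronecker product of the factors' Cartan matrices; a Gram factorisation of a Kronecker product forces Gram factorisations of its factors.

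The main obstacle lies in the Cherednik case: the Lie-theoretic and quantum cases reduce cleanly to the one rank-one toy computation, but pinpointing a violating block for $G(m,1,n)$ or $G_4$ at a specified non-generic $\bc$ relies on concrete decomposition data rather than a structural argument. Once the relevant Cartan matrix is available, the finite combinatorial check that the rank one property fails is routine.
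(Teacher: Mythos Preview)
Your proposal has a genuine gap in the reduction step and misreads the paper's obstruction.

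First, the ``rank one property'' in Section \ref{A_not_cellular} is not a factorisation condition on cellular algebras; it is the statement that the standard-module decomposition matrix $\mathbf{D}_\Delta$ of each block of $A$ has rank one over $\mathbb{Q}$. The paper's argument is then very short: BGG gives $\mathbf{C} = \mathbf{D}_\Delta^T \mathbf{D}_\Delta$, so any block with at least two simples has singular Cartan matrix, and K\"onig--Xi's theorem that the Cartan determinant of a cellular algebra is a positive integer finishes it. Your $\mf{sl}_2$ computation recovers exactly the $2\times 2$ case of this (and your enumeration is just a disguised proof that $\det\begin{pmatrix}2&2\\2&2\end{pmatrix}=0$ obstructs cellularity), but the general argument does not proceed by reducing to $\mf{sl}_2$.

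This is where your proposal breaks. The sentence ``for general reductive $\mf{g}_K$, the linkage principle produces a block of $\overline{U}(\mf{g}_K)$ with this same Cartan data'' is not justified and is in general false: blocks of $\overline{U}(\mf{g}_K)$ for higher-rank $\mf{g}$ typically contain many simples and their Cartan matrices are not $2\times 2$. What the paper does instead is prove the rank one property \emph{for arbitrary} $\mf{g}$ (via Humphreys' result that $[\Delta(\lambda)]=[\Delta(\mu)]$ whenever $\lambda\sim\mu$), and only uses $\mf{sl}_2$ to guarantee the existence of \emph{some} block with more than one simple. The same pattern applies to $\mathbf{u}_\zeta(\mf{g})$, where the rank one property requires a genuine character computation (pages of the proof of Proposition \ref{vip_rank1}), and to the Cherednik case, where it is a recent theorem of Bonnaf\'e--Rouquier. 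Your appeal to ``explicit decomposition matrices in the literature'' for $G(m,1,n)$ and $G_4$ is not a proof, and in any case the hypothesis in the statement covers \emph{all} $W$ (with the non-genericity clause only when every factor is $G(m,1,n)$ or $G_4$), so you would also need to handle the exceptional groups $G_5,\dots,G_{37}$.
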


For the remaining VIP examples, see Remark \ref{hyperalg_rank1}.

\begin{rem}
	We wish to note one other interesting application of Theorem \ref{hwc_cellular}. Let $\mc{H}_q(W)$ be the cyclotomic Hecke algebra, defined over $\C$, associated to the complex reflection group $W$ at parameter $q$. By \cite[Theorem 5.15]{GGOR}, there exists a parameter $\bc$ and a projective object $P_{\mathrm{KZ}}$ in category $\mc{O}$ for the corresponding rational Cherednik algebra $\H_{1,\bc}(W)$, at $t = 1$, such that
	$$
	\mc{H}_q(W) = \End_{\H_{1,\bc}(W)}(P_{\mathrm{KZ}})^{\op}.
	$$
	Moreover, $P_{\mathrm{KZ}}$ is tilting in $\mc{O}$ by \cite[Proposition 5.21]{GGOR}. Therefore, by Theorem \ref{hwc_cellular}, the decomposition of $P_{\mathrm{KZ}}$ into indecomposables defines an explicit standard datum for $\mc{H}_q(W)$ . In particular, when $W$ is a Coxeter group there is an anti-involution on $\H_{1,\bc}(W)$ under which $P_{\mathrm{KZ}}$ is self-dual. In this case, Theorem \ref{hwc_cellular} shows that $\mc{H}_q(W)$ is a cellular algebra. This gives a short case-free proof of a special case of an important theorem by Geck \cite{Geck:2007ki}. It may be an interesting problem to investigate the cell modules and cells we obtain for the Hecke algebra $\mc{H}_q(W)$ for $W$ any complex reflection group.
\end{rem}

\begin{rem}
	In case of restricted rational Cherednik algebras $\ol{\H}_\bc(W)$ attached to a complex reflection group $W$, we have $\bbC W \subs \ol{\H}_\bc(W)_0$. We expect that the cells for $\ol{\H}_\bc(W)_0$ obtained from the canonical standard datum should give rise to a notion of cells in $W$. The Gaudin operators used by Bonnafé–Rouquier \cite{BonnafeRouquier} to construct cellular characters for an arbitrary complex reflection group $W$ are contained in the core of the (non-restricted) rational Cherednik algebra $\H_\bc(W)$. This suggests a close connection, when $W$ is a Coxeter group, between our notion of cells and Kazhdan–Lusztig cells.
\end{rem}

\subsection*{Outline}
In Section \ref{notations_2} we recall the relevant notation and conventions from \cite{hwtpaper}. This includes a precise statement of the three key results from \textit{loc. cit.} mentioned at the beginning of the introduction. 

In Section \ref{core_rep_general}, we describe precisely in what sense the core captures the graded representation theory of $A$. We do not assume in this section that $A$ admits a triangular decomposition. 

Using the general theory of endomorphism algebras of tilting objects developed in Appendix \ref{cellular_appendix}, we prove in Section \ref{cellular_structure} that the core $A_0$ is cellular. Theorem \ref{core_cellular} provides further details about the cellular structure. In Section \ref{sec_cell_modules}, we discuss cell modules and in Proposition \ref{cell_param_match} we show how the parametrization of simple $A_0$-modules via the cellular structure is related to the natural parametrization obtained from $\mathcal{G}(A)$. 

In Section \ref{sec:cover} we construct the highest weight cover of $A_0$. This is broken up into several steps. The precise statement of the main results are summarized in Section \ref{sec:coversmain}. Then, in Section \ref{sec_filtered_pieces}, we take a closer look at filtered pieces of the highest weight category $\mathcal{G}(A)$. In Section \ref{sec:quasihereditary} we consider certain subquotients of $\mathcal{G}(A)$. Section \ref{cover_proof_section1} considers the endomorphism algebra of $A\langle \ell\rangle$. In Section \ref{cover_proof_section}, we prove our main result: that our construction gives a highest weight cover. This is done is slightly greater generality than is stated in the introduction. We show in Section \ref{sec_socle_assumption} that the setup considered in the introduction fits this more general framework. In Section \ref{cover_examples} we compute some explicit examples of highest weight covers, and in Section \ref{0_faithful} we address the problem of $0$-faithfulness.

\subsection*{Acknowledgements}

The first author was partially supported by EPSRC grant EP/N005058/1. The second author was partially supported by the DFG SPP 1489, by a Research Support Fund from the Edinburgh Mathematical Society, and by the Australian Research Council Discovery Projects grant no. DP160103897. We would especially like to thank S. Koenig for patiently answering several questions. We would also like to thank I. Losev and R. Rouquier for fruitful discussions. Moreover, we thank G. Malle, A. Henderson and O. Yacobi for comments on a preliminary version of this article.

\section{Notation} \label{notations_2}

We will use the same notation and conventions as in \cite{hwtpaper}. We recall the relevant material. Unless otherwise stated, all modules are \textit{left} modules and \textit{graded} always means $\bbZ$-graded. For a graded vector space $M$, we denote by $M_i$ the homogeneous component of degree $i$. We denote by $M \lbrack n \rbrack$ the \textit{right} shift of $M$ by $n \in \bbZ$, i.e., $M[n]_i = M_{i-n}$. So, if $M$ is concentrated in degree zero, then $M \lbrack n \rbrack$ is concentrated in degree $n$. The \word{support} of~$M$ is defined to be $\Supp M \dopgleich \lbrace i \in \bbZ \mid M_i \neq 0 \rbrace$.
 
 Let $A$ be a finite-dimensional graded algebra over a field~$K$. We denote by $\mathcal{M}(A)$ the category of finitely generated $A$-modules and by $\mathcal{G}(A)$ the category of graded finitely generated $A$-modules with morphisms preserving the grading. We use the symbol $\mathcal{C}$ to denote either of the two categories, i.e., $\mathcal{C} \in \lbrace \mathcal{M},\mathcal{G} \rbrace$. By $\Irr \mc{C}(A)$ we denote the set of isomorphism classes of simple objects of $\mc{C}(A)$. 
 
If $M$ is a graded vector space, we write $M^\op$ for the same vector space, but with grading reversed, i.e., 
$$
M^\op_i \dopgleich M_{-i} \;.
$$
We thus have $\Supp M^\op = - \Supp M$. With the reversed grading the opposite ring $A^\op$ of $A$ is again graded, see \cite[1.2.4]{Methods-of-graded-rings}. If $M$ is a (graded) \textit{right} $A$-module, then $M^\op$ is naturally a (graded) \textit{left} $A^\op$-module and vice versa. The assignment $M \mapsto M^\op$ with the identity on morphisms thus yields a natural identification between $\mathcal{C}(A^\op)$ and the category of finitely generated (graded) \textit{right} $A$-modules. For a $K$-vector space $M$ we denote by $M^* \dopgleich \Hom_K(M,K)$ its \word{dual}. If $M \in {\mathcal{C}}(A)$, then $M^*$ is naturally an object in $\mathcal{C}(A^{\op})$ with grading defined by 
$$
(M^*)_i \dopgleich \lbrace f \in M^* \mid f(M_j) = 0 \tn{ for all } j \neq i \rbrace \simeq M_i^*
$$
and $A^{\op}$-action on $M^*$ defined by $(a^\op f)(m) \dopgleich f(am)$, for $f \in M^*$, $a^\op \in A^\op$, and $m \in M$. With $\Hom_K(-,K)$ on morphisms, this defines a contravariant functor
$$
(-)^*:{ \mathcal{C}}(A) \rarr \mathcal{C}(A^{\op}) \;,
$$
called \word{standard duality}. Since $A$ is finite-dimensional, this functor is indeed a duality, i.e., $(-)^* \circ (-)^* \simeq \id_{\mathcal{C}(A)}$. It induces a bijection $\Irr \mathcal{C}(A) \simeq \Irr \mathcal{C}(A^{\op})$. We have $M^*\lbrack n \rbrack = M \lbrack n \rbrack^*$ and $\Supp M^* = \Supp M$. \\
 
We recall the definition of a triangular decomposition from \cite[Section 3]{hwtpaper} and the three main results proven in \textit{loc. cit.} Throughout, $A$ is a finite-dimensional graded algebra over a field $K$.

\begin{defn}\label{defn:triangle}
	A \word{triangular decomposition} of $A$ is a triple $\mathcal{T} = (A^-,T,A^+)$ of graded subalgebras of $A$ such that:
	\begin{enum_thm}
		\item the multiplication map $A^- \otimes_K T \otimes_K A^+ \rarr A$ is an isomorphism of vector spaces,
		\item $\Supp A^+ \subset \Z_{\ge 0}$, $\Supp A^- \subset \Z_{\le 0}$, and $T$ is concentrated in degree zero,
		\item \label{defn:connected} $A_0^- = K = A_0^+$,
		\item $A^+ T = T A^+$ and $A^- T = T A^-$ as subspaces of $A$,
		\item $T$ is a split $K$-algebra, i.e., $\End_K(\lambda) = K$ for all simple $T$-modules $\lambda$.
	\end{enum_thm}
\end{defn}

As discussed in \textit{loc. cit.}, all VIP examples listed in the introduction have such a decomposition. In order for $\mc{G}(A)$ to be highest weight, we assume here (as is done in the latter half of \textit{loc. cit.}) that $T$ is \textit{semisimple}. Once again, this assumption is satisfied by all VIP examples. For each $\lambda \in \Irr \mc{G}(T)$, there is a standard module $\Delta(\lambda)$ and a costandard module $\nabla(\lambda)$ in $\mc{G}(A)$, such that the head $L(\lambda)$ of $\Delta(\lambda)$ is a simple object of $\mc{G}(A)$ and the set $\{ L(\lambda) \  | \ \lambda \in \Irr \mc{G}(T) \}$ is a complete, irredundant, set of simple objects of $\mc{G}(A)$. Thus, $\lambda \mapsto L(\lambda)$ is a bijection between the simple objects of $\mc{G}(T)$ and the simple objects of $\mc{G}(A)$. Since $T$ is concentrated in degree zero, each simple graded $T$-module $\lambda$ is concentrated in a single degree, which we denote by $\deg \lambda$. We define a partial ordering $<$ on the indexing set $\Irr \mc{G}(T)$ by
$$
\lambda < \mu \Longleftrightarrow \deg \lambda < \deg \mu \;.
$$

The following theorem is \cite[Corollary 4.13]{hwtpaper}:

\begin{thm}
$\mathcal{G}(A)$ is a highest weight category with respect to the ordering $\leq$.
\end{thm}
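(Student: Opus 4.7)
The plan is to verify, for the collection $\{\Delta(\lambda), \nabla(\lambda), L(\lambda)\}_{\lambda \in \Irr \mc{G}(T)}$ described in the text, the defining axioms of an (infinite) highest weight category: (i) $\Delta(\lambda)$ has simple head $L(\lambda)$ and all other composition factors of the form $L(\mu)$ with $\mu < \lambda$; (ii) each $L(\lambda)$ admits a projective cover $P(\lambda)$ in $\mc{G}(A)$; and (iii) each $P(\lambda)$ carries a finite $\Delta$-filtration with $\Delta(\lambda)$ on top and all other subquotients isomorphic to $\Delta(\mu)$ for various $\mu > \lambda$. Because our partial order is governed by the single integer invariant $\deg$, each axiom reduces to degree-bookkeeping inside the triangular decomposition.

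To check (i), set $B^+ \dopgleich T \otimes_K A^+$, a graded subalgebra of $A$ by the normalization $A^+T = TA^+$. Extending $\lambda$ to a graded $B^+$-module $\lambda^+$ by letting $A^+_{>0}$ act as zero via the augmentation, one realizes $\Delta(\lambda) = A \otimes_{B^+} \lambda^+$. The triangular decomposition then produces an isomorphism of graded vector spaces $\Delta(\lambda) \cong A^- \otimes_K \lambda$, showing that $\Delta(\lambda)$ is finite-dimensional, supported in degrees $\le \deg \lambda$, and that its top-degree piece coincides with $\lambda$. Any proper graded submodule necessarily omits the top piece, so $\Delta(\lambda)$ has a unique maximal proper submodule; consequently the head $L(\lambda)$ is simple, and every composition factor of the radical lies in a strictly lower degree, i.e.\ has the form $L(\mu)$ with $\mu < \lambda$. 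The same bookkeeping also yields the parametrization of $\Irr \mc{G}(A)$ by $\Irr \mc{G}(T)$. The shape of $\nabla(\lambda)$ follows dually via $B^- \dopgleich A^- \otimes_K T$.

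For (ii) and (iii), since $T$ is split semisimple we may pick a primitive idempotent $e_\lambda \in T$ with $Te_\lambda \cong \lambda$ and set $P(\lambda) \dopgleich (Ae_\lambda)[\deg \lambda]$, which is a graded direct summand of $A$ and hence projective in $\mc{G}(A)$. The graded Jacobson radical of $A$ is the two-sided ideal generated by $A^+_{>0}$ and $A^-_{<0}$ (its quotient is the semisimple algebra $T$), so the head of $P(\lambda)$ is $\lambda = L(\lambda)$. For the $\Delta$-filtration, fix a $T$-stable, grading-refining filtration $0 = F_{-1} \subseteq F_0 \subseteq F_1 \subseteq \cdots \subseteq A^+$ in which each $F_i/F_{i-1}$ is concentrated in a single positive degree; such a filtration exists because $A^+$ is a graded $T$-bimodule and $T$ is semisimple. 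Transport this filtration to $P(\lambda)$ via the triangular isomorphism $P(\lambda) \cong A^- \otimes_K Te_\lambda \otimes_K A^+ \,[\deg \lambda]$ and identify the successive subquotients, as $A$-modules, with direct sums of standards $\Delta(\mu)$ where $\mu$ runs over the $\Irr \mc{G}(T)$-constituents of $(F_i/F_{i-1}) \otimes_K \lambda$, sitting in degree $\deg \lambda + i$. This exhibits $P(\lambda)$ with $\Delta(\lambda)$ on top (the $i=0$ layer) and all other subquotients $\Delta(\mu)$ with $\mu > \lambda$, as required.

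I expect the main obstacle to be pinning down these subquotients as honest standards $\Delta(\mu)$ rather than merely modules with the correct composition factors. This is exactly where the relation $A^+T = TA^+$ does the real work: it ensures that the transported filtration on $P(\lambda)$ is by genuine $A$-submodules whose successive quotients are parabolically induced from graded $B^+$-modules of the right form. The Ext-vanishing $\mathrm{Ext}^1_{\mc{G}(A)}(\Delta(\lambda), \nabla(\mu)) = 0$, which some axiomatizations require in addition, can then be deduced via Frobenius reciprocity from $\mathrm{Hom}_T(\lambda, \mu) = \delta_{\lambda,\mu} K$ together with the freeness of $A$ over $B^+$ supplied by the triangular decomposition.
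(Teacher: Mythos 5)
Your treatment of (i), realizing $\Delta(\lambda)\cong A^-\otimes_K\lambda$ and checking that every proper graded submodule misses the top-degree piece, is sound, and the overall strategy is the right one. The argument for (ii), however, rests on a false claim. The graded Jacobson radical of $A$ is in general \emph{not} the two-sided ideal $J$ generated by $A^+_{>0}$ and $A^-_{<0}$, and $A/J$ is not isomorphic to $T$. Already for $A=\overline{U}(\mathfrak{sl}_2)$ (VIP example~(1)) one has $H=EF-FE\in J$, so $J=(E,F,H)$ and $\dim A/J\le 1$, while $\dim T=p$. A minimal counterexample is $A=M_2(K)$ with $A^+=K[E_{12}]/(E_{12}^2)$, $A^-=K[E_{21}]/(E_{21}^2)$, $T=K\cdot 1$: every condition of Definition~\ref{defn:triangle} holds, yet $J=A$ (since $E_{12}E_{21}+E_{21}E_{12}=1$) while $\mathrm{rad}(A)=0$. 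Consequently $e_\lambda$ need not be primitive in $A$, and $(Ae_\lambda)[\deg\lambda]$ need not be indecomposable---in the $M_2(K)$ example it is all of $A$, with non-simple head---so it is not the projective cover of $L(\lambda)$. The $\Delta$-filtration in (iii) is therefore a filtration of the wrong module; and separately, a $T$-stable filtration of $A^+$ does not by itself give $A$-submodules of $Ae_\lambda$: one must filter $B^+$ by $B^+$-submodules and use that $A$ is free over $B^+$.

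The route taken in \cite{hwtpaper} (visible in the proof of Theorem~\ref{core_cellular} of the present paper) avoids any explicit formula for $P(\lambda)$. One establishes that projective covers exist abstractly because $\mathcal{G}(A)$ is a Hom-finite, Krull--Schmidt, length category with enough projectives; that the regular module $A$, hence every shift $A[n]$, has a $\Delta$-filtration obtained by inducing the filtration $N^i=B^+\cdot B^+_{\ge i}$ along the free extension $B^+\hookrightarrow A$; and, crucially, the Ext-orthogonality $\Ext^1(\Delta(\lambda),\nabla(\mu))=0$, which (as in Lemma~\ref{tilting_ext_vanish}) makes $\mathcal{G}^{\Delta}(A)$ closed under direct summands, so that $P(\lambda)\mid A[n]$ inherits a $\Delta$-filtration with $\Delta(\lambda)$ on top. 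So the Ext-vanishing you mention at the end is not an optional extra: it is the mechanism that transfers the $\Delta$-filtration from $A$ to its indecomposable summands, and it must be proved before, not after, the filtration of $P(\lambda)$ can be produced.
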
 

We recall that an object $S$ in $\mc{G}(A)$ is said to be \textit{tilting} if it has both a filtration by standard modules, and a filtration by costandard modules. In \cite[Theorem 5.1]{hwtpaper} we have shown:

\begin{thm}
The tilting objects in $\mc{G}(A)$ are precisely the objects which are both projective and injective. 
\end{thm}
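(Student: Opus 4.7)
The plan is to prove the two inclusions separately, using the highest weight structure on $\mc{G}(A)$ together with the standing assumption from \cite{hwtpaper} that $A$ is self-injective.

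For the easier direction, I would show that any projective-injective object is tilting. Since $A$ is a finite-dimensional self-injective graded algebra, an object of $\mc{G}(A)$ is projective if and only if it is injective. In a highest weight category with enough projectives and injectives, projectives automatically carry $\Delta$-filtrations while injectives carry $\nabla$-filtrations; therefore an object that is both admits both kinds of filtration and is tilting by definition. In particular, the regular module $A$ is itself tilting, so each indecomposable summand $P(\lambda)$ is an indecomposable tilting object, and we get ``enough tilting objects'' for free.

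For the converse, I would invoke Ringel's characterization of indecomposable tilting objects in the highest weight category $\mc{G}(A)$: to each $\lambda \in \Irr \mc{G}(T)$ there corresponds at most one indecomposable tilting $T(\lambda)$, distinguished by having a $\Delta$-filtration whose bottom term is $\Delta(\lambda)$ and whose remaining factors $\Delta(\mu)$ satisfy $\mu < \lambda$. By the first step, each indecomposable projective $P(\lambda)$ is an indecomposable tilting, so $P(\lambda) \cong T(\sigma(\lambda))$ for a uniquely determined $\sigma(\lambda)\in \Irr \mc{G}(T)$. The assignment $\sigma$ is injective because distinct projectives have distinct heads. To close the argument, I would identify $\sigma$ with the graded Nakayama permutation of the self-injective algebra $A$, which is a genuine bijection on $\Irr \mc{G}(A) \cong \Irr \mc{G}(T)$; this exhibits every indecomposable tilting as some $P(\lambda)$. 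Krull--Schmidt then upgrades this to: every tilting object is a direct sum of indecomposable projectives, hence projective (and injective).

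The main obstacle is the surjectivity of $\sigma$. Because $\Irr \mc{G}(T)$ is typically infinite --- the $\Z$-grading produces infinitely many shifts of each ungraded simple --- injectivity alone does not force bijectivity, and one must genuinely exploit the graded Frobenius/self-injective structure on $A$ to obtain the matching bijection. A related technical point is pinning down $\sigma(\lambda)$ concretely: it should be read off as the maximal weight appearing in the $\Delta$-filtration of $P(\lambda)$, or equivalently (up to the degree shift imposed by the Frobenius form) as the label of the socle of $P(\lambda)$. Verifying this match-up cleanly is where the triangular decomposition does real work, since it controls the top-degree component of $A$ and hence the socles of its indecomposable summands.
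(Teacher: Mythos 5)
Your proposal as written does not establish the theorem, for several reasons.

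\textbf{Self-injectivity is not an assumption of the theorem.} You write that you will use ``the standing assumption from \cite{hwtpaper} that $A$ is self-injective,'' but no such standing assumption is in force when Theorem 5.1 of \textit{loc.~cit.} is stated. In the present paper the self-injectivity of $A$ is only invoked \emph{after} the theorem, in the remark ``Hence, if we assume that $A$ is self-injective, the indecomposable tilting objects are precisely the indecomposable projective objects.'' The theorem itself is unconditional: even when $A$ is not self-injective, it asserts that the (possibly small) set of projective-injective objects coincides with the set of tilting objects. A proof that begins by assuming projective $=$ injective has changed the statement. Your easy direction (projective-injective $\Rightarrow$ tilting, using that projectives are $\Delta$-filtered and injectives are $\nabla$-filtered) is fine and does not need self-injectivity, but your proposed converse is built entirely around that extra hypothesis and does not transfer to the general situation.

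\textbf{The converse argument is a reformulation, not a proof.} Even restricting to the self-injective case, what you propose is: (i) each $P(\lambda)$ is an indecomposable tilting, hence equals some $T(\sigma(\lambda))$; (ii) $\sigma$ is injective; (iii) therefore $\sigma$ is a bijection and every $T(\mu)$ is projective. The whole content of the theorem in the converse direction is precisely step (iii), and you explicitly flag surjectivity of $\sigma$ as ``the main obstacle'' and then gesture at the Nakayama permutation without closing the gap. Since $\Irr\mc{G}(T)$ is infinite (every ungraded simple gives a $\mathbb{Z}$-family of shifts), injectivity of $\sigma$ gives nothing for free. Moreover identifying $\sigma$ with the (graded) Nakayama permutation is not right on its face: the Nakayama permutation records the \emph{socle} label of $P(\lambda)$, whereas $\sigma(\lambda)$ by definition is the \emph{highest weight} of $P(\lambda)$; these are different invariants, and the paper's own permutation $h$ with $P(\lambda)=T(\lambda^h)$ is treated as a separate object.

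\textbf{Likely circularity.} You invoke Ringel's classification of indecomposable tilting objects (``to each $\lambda$ there corresponds at most one indecomposable tilting $T(\lambda)$ \dots all indecomposable tiltings arise this way''). In \cite{hwtpaper} that result is \cite[Corollary 5.9]{hwtpaper}, stated \emph{after} \cite[Theorem 5.1]{hwtpaper}, the statement you are trying to prove; using it as an input risks circularity, and in any case relies on machinery that is not established at the point where the theorem is needed. The actual route in \cite{hwtpaper} is more intrinsic: a module with a $\Delta$-filtration is characterised by being graded free over the negative part of the triangular decomposition, a module with a $\nabla$-filtration by a dual freeness/cofreeness over the positive part, and the triangular decomposition $A\simeq A^-\otimes T\otimes A^+$ forces an object with both properties to be projective and injective over $A$ --- this is what \cite[Corollary 5.7]{hwtpaper}, cited later in the present paper, records. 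That argument works without any self-injectivity hypothesis and without first knowing Ringel's classification of indecomposable tiltings.
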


Hence, if we assume that $A$ is self-injective, the indecomposable tilting objects are precisely the indecomposable projective objects. In \cite[Corollary 5.9]{hwtpaper} we have shown:

\begin{thm}
The category $\mathcal{G}(A)$ has indecomposable tilting objects in the sense of Ringel \cite{Ringel-Filtrations}: for each $\lambda$ there is an indecomposable tilting object $T(\lambda)$, uniquely characterized by the property that it has highest weight $\lambda$, an injection $\Delta(\lambda) \hookrightarrow T(\lambda)$, and a surjection $T(\lambda) \twoheadrightarrow \nabla(\lambda)$. All indecomposable tilting modules are obtained this way.
\end{thm}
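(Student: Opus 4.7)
The plan is to establish existence of $T(\lambda)$ via Ringel's universal extension construction and uniqueness via standard $\Ext$-vanishing arguments in a highest weight category. The only subtlety relative to the classical finite-poset setting of \cite{Ringel-Filtrations} is that $\Irr \mc{G}(T)$ can be infinite, so one must verify that the inductive procedure terminates; this will be controlled by the finite-dimensionality of $T$ (inherited as a subalgebra of the finite-dimensional $A$) together with the observation that every $\Delta(\lambda)$ is finite-dimensional with composition factors $L(\mu)$ satisfying $\deg \mu \leq \deg \lambda$, a consequence of the grading-based stratification imposed by Definition \ref{defn:triangle}.

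For existence, I induct on $\deg \lambda$. If $\Delta(\lambda) = L(\lambda)$ (the minimal case), then $\Delta(\lambda) = \nabla(\lambda)$ is already tilting and one sets $T(\lambda) := \Delta(\lambda)$. Otherwise, assuming $T(\mu)$ has been constructed for each $\mu < \lambda$, put $M_0 := \Delta(\lambda)$. As long as $\Ext^1_{\mc{G}(A)}(M_i, \nabla(\mu_i)) \neq 0$ for some $\mu_i < \lambda$, form Ringel's universal extension producing $M_{i+1}$ as an extension of $M_i$ and $\Delta(\mu_i)^{\oplus e_i}$, with $e_i$ the dimension of the said $\Ext$-group, so that $M_{i+1}$ is still standardly filtered with all weights $\leq \lambda$ and $\Ext^1_{\mc{G}(A)}(M_{i+1}, \nabla(\mu_i)) = 0$. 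Because $\Ext^1(\Delta(\mu), \nabla(\nu)) = 0$ for all $\mu,\nu$ in any highest weight category, such extensions never resurrect $\Ext^1$-obstructions that were previously killed. The relevant weights $\mu$ lie in $\Supp M_i \subseteq \Z_{\leq \deg \lambda}$, and since $T$ is finite-dimensional only finitely many simples appear per graded degree; using the inductively known $T(\mu)$ to bound the multiplicities $[M_i : L(\nu)]$ uniformly from above forces termination at some $T(\lambda) := M_n$ with $\Ext^1(T(\lambda), \nabla(\mu)) = 0$ for every $\mu$. The standard cohomological criterion in a highest weight category then implies that $T(\lambda)$ is also costandardly filtered, hence tilting, and the top of its costandard filtration yields the required surjection $T(\lambda) \twoheadrightarrow \nabla(\lambda)$.

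Indecomposability of $T(\lambda)$ follows from $[T(\lambda) : L(\lambda)] = 1$ together with $\lambda$ being the unique maximal weight. For uniqueness, given any indecomposable tilting $T'$ with $\Delta(\lambda) \hookrightarrow T'$, the vanishing $\Ext^1_{\mc{G}(A)}(\Delta(\mu), T') = 0$ for all $\mu \leq \lambda$ (a consequence of $T'$ being costandardly filtered) lets one lift this inclusion inductively along the standard filtration of $T(\lambda)$ to a morphism $T(\lambda) \to T'$; by symmetry one obtains a morphism $T' \to T(\lambda)$, and Krull--Schmidt applied to the composition yields $T(\lambda) \simeq T'$. Any indecomposable tilting object has a unique maximal weight $\lambda$ and an embedding of $\Delta(\lambda)$, so it is of this form. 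The main obstacle in the whole argument is the termination of Ringel's procedure: in the classical finite-poset setup it is immediate, but here the potentially infinite $\Irr \mc{G}(T)$ must be tamed, and the key input is precisely the degree bound on $\Supp \Delta(\lambda)$ together with the finite-dimensionality of each graded component of $T$.
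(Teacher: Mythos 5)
Your approach via Ringel's universal extension construction is genuinely different from the paper's. The paper cites Corollary~5.9 of \cite{hwtpaper}, and the discussion surrounding the theorem makes the route clear: Theorem~5.1 of \cite{hwtpaper} identifies tilting objects with projective-injective objects, so under self-injectivity the indecomposable projectives $P(\lambda)$ (which are manifestly finite-dimensional and exist for trivial reasons) \emph{are} the indecomposable tiltings, and one then shows that the map sending $P(\lambda)$ to its highest weight $\lambda^h$ is a permutation. Existence, indecomposability, and finiteness come for free, and the whole question of ``termination'' that dominates your write-up never arises.

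That termination issue is precisely where your argument has a genuine gap. You correctly flag it as the main obstacle, but the resolution you offer is not an argument. You assert that the relevant weights lie in $\Supp M_i \subseteq \Z_{\le \deg\lambda}$, that only finitely many simples occur per degree, and that ``using the inductively known $T(\mu)$ to bound the multiplicities $[M_i : L(\nu)]$ uniformly from above forces termination.'' None of this explains why $\Supp M_i$ stays bounded below: each universal extension adjoins copies of $\Delta(\mu_i)$ with $\deg\mu_i$ strictly smaller, so $\Supp M_i$ is allowed to march off to $-\infty$, and the finitely-many-simples-per-degree observation gives no control over \emph{how many} degrees will be visited. The sentence about using the $T(\mu)$ to bound multiplicities is not unpacked, and I do not see how to make it work: the $T(\mu)$ are modules you already built, but nothing in the construction of $M_{i+1}$ from $M_i$ refers to them or is controlled by them. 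Without an a~priori finite-dimensionality bound on the would-be $T(\lambda)$, Ringel's process need not terminate when $\Irr\mc{G}(T)$ is infinite — and in fact, if $A$ is not self-injective there may simply be no tilting object with highest weight $\lambda$, so an unconditional termination proof cannot exist. Your argument never invokes self-injectivity at all, which is another sign something is missing.

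There is also a smaller but non-trivial slip: you drive the iteration by the condition $\Ext^1_{\mc{G}(A)}(M_i,\nabla(\mu_i)) \neq 0$, but this group vanishes automatically whenever $M_i$ admits a standard filtration, so as written the loop never runs and you would declare $T(\lambda) = \Delta(\lambda)$ immediately. The correct driving condition is $\Ext^1_{\mc{G}(A)}(\Delta(\mu_i), M_i) \neq 0$ (the criterion for admitting a costandard filtration), and the universal extension $0 \to M_i \to M_{i+1} \to \Delta(\mu_i)^{\oplus e_i}\to 0$ is designed to kill \emph{that} group. Your later remarks indicate you understand which Ext-vanishing is the goal, so I read this as a transcription error rather than a conceptual one, but it should be corrected.
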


As noted in \textit{loc. cit.} it is generally not true that $P(\lambda) = T(\lambda)$. Rather, there is a permutation $h$ on $\Irr \mathcal{G}(T)$ such that $P(\lambda) = T(\lambda^h)$; see \cite[Corollary 5.9]{hwtpaper}. As noted in \cite{hwtpaper}, all VIP examples are self-injective.

\section{Representation theory of the core} \label{core_rep_general}

In this section, $A$ can be an arbitrary finite-dimensional graded $K$-algebra. The degree zero part $A_0$ of $A$ is a subalgebra of $A$ which we call the \word{core} of $A$ (in \cite{GG-Graded-Artin} this is called the \word{initial subring}). The graded representation theory of $A$ and the ungraded representation theory of $A_0$ are related via the functor 
$$
(-)_0 \from \mc{G}(A) \to \mc{M}(A_0)
$$
sending $M$ to its subspace of degree zero, and the induction functor
$$
\Ind_{A_0}^A = A \otimes_{A_0} - \ \from \mc{M}(A_0) \to \mc{G}(A) \;.
$$
Notice that since $A = \bigoplus_{i \in \bbZ} A_i$ as $(A_0,A_0)$-bimodules, for each $M \in \mc{M}(A_0)$ we have a canonical direct sum decomposition 
\[
\Ind_{A_0}^A M = \bigoplus_{i \in \bbZ} A_i \otimes_{A_0} M
\]
as $A_0$-modules, allowing us to view $\Ind_{A_0}^A M$ as a \textit{graded} $A$-module.

\begin{lem} \label{ind_proj_adjunction}
For any $d \in \bbZ$ the shifted induction functor $(\Ind_{A_0}^A -)\lbrack d \rbrack: \mathcal{M}(A_0) \rarr \mathcal{G}(A)$ is left adjoint to the functor $(-)_d:\mathcal{G}(A) \rarr \mathcal{M}(A_0)$ projecting onto the $d$-th homogeneous component. The unit of the adjunction is an isomorphism.
\end{lem}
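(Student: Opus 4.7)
The plan is to establish the adjunction by exhibiting mutually inverse natural bijections
\[
\Phi \from \Hom_{\mc{G}(A)}\bigl((\Ind_{A_0}^A M)[d],\, N\bigr) \rightleftarrows \Hom_{A_0}(M, N_d) \,{:}\, \Psi,
\]
which amounts to a graded refinement of the classical tensor-hom adjunction. First I would make the grading bookkeeping explicit: since $A = \bigoplus_i A_i$ as $(A_0,A_0)$-bimodules, one has $(\Ind_{A_0}^A M)_i = A_i \otimes_{A_0} M$, and therefore $\bigl((\Ind_{A_0}^A M)[d]\bigr)_i = A_{i-d}\otimes_{A_0} M$. In particular the degree-$d$ component is $A_0\otimes_{A_0}M$, which is canonically identified with $M$ via $m \mapsto 1\otimes m$. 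Note also that $N_d \in \mc{M}(A_0)$ because $A_0\cdot N_d \subseteq N_d$.

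Next I would construct $\Psi$: given an $A_0$-linear $g \from M \to N_d$, define $\Psi(g)(a\otimes m) \dopgleich a\cdot g(m)$. Well-definedness on the tensor product over $A_0$ follows from $A_0$-linearity of $g$, and $A$-linearity is formal. For the grading, observe that if $a \in A_i$ then $a\otimes m$ lies in degree $i+d$ of $(\Ind_{A_0}^A M)[d]$, while $a\cdot g(m) \in A_i \cdot N_d \subseteq N_{i+d}$, so $\Psi(g)$ is a morphism in $\mc{G}(A)$. In the other direction, $\Phi(f)$ is defined as the restriction $f|_{A_0\otimes_{A_0}M}$ composed with the identification $A_0\otimes_{A_0}M \simeq M$; this lands in $N_d$ because $f$ preserves degree, and $A_0$-linearity is inherited from $A$-linearity.

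Then I would verify that $\Phi$ and $\Psi$ are mutually inverse. The composition $\Phi\circ\Psi$ is the identity by direct computation on $1\otimes m$. For $\Psi\circ\Phi$, any $f \from (\Ind_{A_0}^A M)[d] \to N$ is determined by its values on $1\otimes m$ because $f(a\otimes m) = a\cdot f(1\otimes m)$ by $A$-linearity, so $\Psi(\Phi(f)) = f$. Naturality in $M$ and $N$ is straightforward from the formulas. This establishes the adjunction.

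Finally, the unit $\eta_M \from M \to \bigl((\Ind_{A_0}^A M)[d]\bigr)_d$ is, by the construction of $\Phi$ applied to the identity morphism of $(\Ind_{A_0}^A M)[d]$, exactly the canonical map $m \mapsto 1\otimes m$; this lands in $A_0\otimes_{A_0}M$ and is therefore the canonical isomorphism $M \rightsim A_0\otimes_{A_0}M$. There is no real obstacle here beyond careful bookkeeping of the shift; the only point requiring mild attention is keeping track of where degrees land so that $\Psi(g)$ is verified to be degree-preserving as a map out of the shifted module.
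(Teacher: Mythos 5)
Your proof is correct and takes essentially the same approach as the paper's: both reduce to the observation that $(\Ind_{A_0}^A M)[d]$ is generated over $A$ by its degree-$d$ component $M$, so a graded morphism out of it is determined by its restriction there. You simply spell out the mutually inverse maps and the shift bookkeeping that the paper leaves as "straightforward."
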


\begin{proof}
Let $M \in \mc{M}(A_0)$. Since $A \cdot A_0 = A$, it follows that the $A$-module $\Ind_{A_0}^A M$ is generated by its degree zero component $(\Ind_{A_0}^A M)_0 = A_0 \otimes_{A_0} M = M$. After shifting, we see in general that $(\Ind_{A_0}^A M)\lbrack d \rbrack$ is generated by its degree-$d$ component $M$. In particular, a graded $A$-module morphism $f \from (\Ind_{A_0}^A M)\lbrack d \rbrack \to N$ is uniquely determined by its degree-$d$ component, which is an $A_0$-module morphism $f_d:M \rarr N_d$. It is now straightforward to see that this gives the adjunction and that the unit of this adjunction is an isomorphism.
\end{proof}

\begin{prop}\label{prop:Azerosimple}
The functor $(-)_0$ induces a bijection between 
\[
\uIrr \mc{G}(A) \dopgleich \{  L \in \Irr \mathcal{G}(A) \mid L_0 \neq 0 \} 
\]
and $\Irr \mc{M}(A_0)$.
\end{prop}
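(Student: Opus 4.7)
The proof has three ingredients: well-definedness of the map (i.e., $L_0 \in \Irr \mc{M}(A_0)$ when $L \in \uIrr \mc{G}(A)$), surjectivity, and injectivity. The single workhorse is Lemma \ref{ind_proj_adjunction} in the case $d = 0$: the adjunction isomorphism
\[
\Hom_{\mc{G}(A)}(\Ind_{A_0}^A M, N) \;\cong\; \Hom_{\mc{M}(A_0)}(M, N_0)
\]
together with the fact that the unit $M \rightsim (\Ind_{A_0}^A M)_0$ is an isomorphism. Equivalently, $\Ind_{A_0}^A M$ is generated (as a graded $A$-module) by its degree-zero component, which is identified with $M$.

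For well-definedness, let $L \in \uIrr \mc{G}(A)$ and let $0 \neq N \subseteq L_0$ be an $A_0$-submodule. By adjunction, the inclusion lifts to a nonzero map $\Ind_{A_0}^A N \to L$, which must be surjective by simplicity. Taking degree-zero parts and using $(A \cdot N)_0 = A_0 \cdot N = N$ (since $A_0$ acts on $N$), we conclude $N = L_0$, so $L_0$ is simple.

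For surjectivity, given $S \in \Irr \mc{M}(A_0)$, consider $M \dopgleich \Ind_{A_0}^A S$, a finite-dimensional graded $A$-module. Any graded submodule $N \subseteq M$ with $N_0 = S$ satisfies $N \supseteq A \cdot S = M$, hence the sum $K$ of all graded submodules with $K_0 = 0$ is the unique maximal proper graded submodule. Therefore $L \dopgleich M/K$ is simple in $\mc{G}(A)$ with $L_0 \cong S$, so $L \in \uIrr \mc{G}(A)$ maps to $S$. For injectivity, suppose $L, L' \in \uIrr \mc{G}(A)$ with $L_0 \cong L'_0 \eqqcolon S$. Both $L$ and $L'$ are nonzero quotients of $\Ind_{A_0}^A S$ by adjunction (applied to the identity, resp.\ the given isomorphism $S \to L'_0$); since $\Ind_{A_0}^A S$ admits a unique simple quotient in $\mc{G}(A)$ by the previous step, we get $L \cong L'$.

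No part of this is likely to present a real obstacle; the only subtle point is ensuring that ``generated in degree $0$'' together with simplicity of $S$ produces a \emph{unique} maximal graded submodule in the induced module, which uses precisely that the unit of the adjunction is an isomorphism (rather than merely a surjection).
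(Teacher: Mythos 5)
Your proof is correct and follows essentially the same strategy as the paper's: use the adjunction of Lemma \ref{ind_proj_adjunction} and the fact that $\Ind_{A_0}^A S$ is generated in degree zero to reduce everything to the structure of this induced module, and show that the sum of all graded submodules with vanishing degree-zero part is a well-defined maximal submodule. The only organizational difference is that you note (using simplicity of $S$) that this submodule is in fact the unique maximal \emph{proper} graded submodule, so surjectivity and injectivity both fall out of the same observation, whereas the paper treats the two directions as separate computations.
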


\begin{proof}
Let us first show that this map is well-defined, so assume that $L_0 \neq 0$. Let $S \subs L_0$ be a simple $A_0$-submodule. Because of the adjunction in Lemma \ref{ind_proj_adjunction} the embedding $S \hookrightarrow L_0$ induces a graded $A$-module morphism $f \from \Ind_{A^0}^A S \to L$, which is non-zero since $S \hookrightarrow L$ is non-zero. Since $L$ is simple, $f$ must be surjective. Then  the component map $f_0 \from S \to L_0$ is a surjective $A_0$-module morphism. Hence $S \simeq L_0$ is a simple $A_0$-module.

To show that the given map is injective, we first argue that if $S \in \Irr \mathcal{M}(A_0)$, then among all graded $A$-submodules $N'$ of $M \dopgleich \Ind_{A^0}^A S$ with the property that $N'_0=0$ there is a unique maximal one. Let $\Sigma$ be the set of all graded submodules $N'$ of $M$ with $N_0' = 0$. Let $N$ be the submodule generated by the submodules in $\Sigma$. If we can show that $N_0 = 0$, then existence and uniqueness are clear. So, let $n \in N_0$. Then $n = \sum_{k} n_k$ for some homogeneous $n_k \in N^{(k)}$ with $N^{(k)} \in \Sigma$. Since $N_0$ is graded, we can assume that $n_k \in N_0$ for all $k$. Hence, $n_k \in N^{(k)} \cap N_0 = N^{(k)}_0 = 0$, so $n=0$. 

Now, assume that $L,L' \in \ul{\Irr} \ \mc{G}(A)$ are such that $L_0$ and $L'_0$ are isomorphic to the same simple $A_0$-module $S$. By the adjunction in Lemma \ref{ind_proj_adjunction} we get surjective graded $A$-module morphisms $f \from M  \twoheadrightarrow L$ and $f' \from M \twoheadrightarrow L'$, where $M \dopgleich \Ind_{A^0}^A S$. From the exact sequence
\[
\begin{tikzcd}
0 \arrow{r} & \Ker(f) \arrow{r} & M \arrow{r}{f} & L  \arrow{r} & 0 
\end{tikzcd}
\]
we get the exact sequence
\[
\begin{tikzcd}
0 \arrow{r} & \Ker(f)_0 \arrow{r} & M_0 \arrow{r}{f_0} & L_0 \arrow{r} & 0 \;.
\end{tikzcd}
\]
But $f_0$ is the isomorphism $S = M_0 \simeq L_0$ we started with, so $\Ker(f)_0 = 0$, implying that $\Ker(f) \subs N$, where $N$ is the submodule of $M$ from above. Similarly, we see that $\Ker(f') \subs N$. Hence, $M/N$ is both a (non-zero) quotient of $L$ and of $L'$. As the latter modules are simple, it follows that $L \simeq M/N \simeq L'$.

To show that the map is surjective, let $S \in \Irr \mathcal{M}(A_0)$. Let $L$ be a graded simple quotient of $M \dopgleich \Ind_{A^0}^A S$. Since $M$ is generated by $M_0$, it follows that the $A_0$-module $L_0$ is a quotient of $M_0$ which generates $L$. Hence, $L_0 \neq 0$ and since $L_0 = S$, it follows that $L_0 = S$.
\end{proof}

By considering the number of simple modules, we immediately deduce:

\begin{cor} \label{core_splits}
If the $K$-algebra $A$ is split then $A_0$ is split.  \qed
\end{cor}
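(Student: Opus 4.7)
The plan is to use Proposition \ref{prop:Azerosimple} to transfer the splitness of $A$ across the bijection $L \mapsto L_0$ between $\uIrr \mc{G}(A)$ and $\Irr \mc{M}(A_0)$. By that proposition, every simple $A_0$-module $S$ is of the form $L_0$ for a unique $L \in \uIrr \mc{G}(A)$, so it suffices to show $\End_{A_0}(L_0) = K$ for each such $L$.

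First, I would establish a $K$-algebra isomorphism
\[
\End_{A_0}(L_0) \; \simeq \; \End_{\mc{G}(A)}(L).
\]
By the adjunction in Lemma \ref{ind_proj_adjunction}, the left-hand side is identified with $\Hom_{\mc{G}(A)}(\Ind_{A_0}^A L_0, L)$. The analysis in the proof of Proposition \ref{prop:Azerosimple} shows that every nonzero element of this $\Hom$-space is surjective with kernel equal to the unique maximal submodule of $\Ind_{A_0}^A L_0$ with vanishing degree-zero part, hence factors uniquely through the canonical surjection $\Ind_{A_0}^A L_0 \twoheadrightarrow L$. This reduces the corollary to proving $\End_{\mc{G}(A)}(L) = K$ for every graded simple $A$-module $L$.

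Second, I would argue this last claim using that $A$ is split. The underlying ungraded module $L_{\mathrm{ung}}$ is isotypic by a Clifford-type argument for $\bbZ$-gradings: $L_{\mathrm{ung}} = M^{\oplus k}$ for some ungraded simple $A$-module $M$ with $\End_A(M) = K$ by splitness. Consequently $\End_A(L_{\mathrm{ung}}) = M_k(K)$, so $\End_{\mc{G}(A)}(L)$ is a $K$-division subalgebra of $M_k(K)$. The $\bbZ$-grading on $L$ (equivalently, a $\mathrm{Spec}\, K[t,t^{-1}]$-action on $L_{\mathrm{ung}}$) together with the requirement that graded endomorphisms preserve each $L_i$ rigidifies this division subalgebra to coincide with the scalars $K$.

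The main obstacle is the final rigidification step. While the argument can be sketched cleanly via Clifford theory, the hint ``by considering the number of simple modules'' suggests there is a more direct argument via cardinalities: applying Proposition \ref{prop:Azerosimple} to both $A$ and its base extension $A_{\bar K} \dopgleich A \otimes_K \bar K$ (whose core is $A_0 \otimes_K \bar K$) and using that splitness of $A$ controls how simples decompose under extension of scalars. I expect one of these two approaches to ultimately be what the authors have in mind, with the counting route being the more economical.
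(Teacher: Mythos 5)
Your first step is correct and matches what the paper is implicitly invoking: the adjunction of Lemma \ref{ind_proj_adjunction} together with the analysis in the proof of Proposition \ref{prop:Azerosimple} does yield a $K$-algebra isomorphism $\End_{A_0}(L_0) \simeq \End_{\mathcal{G}(A)}(L)$ (indeed the map $f \mapsto f_0$ on $\End_{\mathcal{G}(A)}(L)$ is injective because $L_0$ generates $L$, and surjective by the factorization argument you give). So the corollary does reduce to showing $\End_{\mathcal{G}(A)}(L) = K$ for each $L \in \uIrr \mathcal{G}(A)$.

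The second step is where there is a genuine gap, as you yourself flag. The appeal to ``Clifford-type'' isotypicality and ``rigidification'' is not a proof: $K$-division subalgebras of $M_k(K)$ need not be $K$, and merely asking graded endomorphisms to preserve each $L_i$ does not rule them out. The right tool is specific to $\bbZ$-gradings and is already in the paper's toolbox (Gordon--Green \cite{GG-Graded-Artin}, also \cite{Methods-of-graded-rings}): for a finite-dimensional $\bbZ$-graded algebra, the Jacobson radical is a graded ideal, and every graded-simple module is already simple as an ungraded module. One can see this concretely: any ungraded endomorphism $\phi$ of $L$ decomposes into degree-homogeneous pieces $\phi_n \in \Hom_{\mathcal{G}(A)}(L, L[n])$, and since $\Supp L$ is finite one has $L \not\simeq L[n]$ for $n \neq 0$, whence $\phi_n = 0$ for $n \neq 0$ and $\End_A(L) = \End_{\mathcal{G}(A)}(L)$ is a division ring; combined with $\Rad(A) L = 0$ (as $\Rad A$ is graded and nilpotent), $L$ is a module over the semisimple algebra $A/\Rad A$ and indecomposable, hence simple. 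Splitness of $A$ then gives $\End_{\mathcal{G}(A)}(L) = \End_A(L) = K$, which is presumably what the paper's terse ``by considering the number of simple modules'' is pointing at (the bijection of Proposition \ref{prop:Azerosimple} plus the observation that the $L_0$ are absolutely simple). I would also caution that the alternative ``counting over $\bar K$'' route you sketch does not close the gap on its own: a finite-dimensional $K$-algebra can have the same number of simple modules over $K$ and over $\bar K$ without being split (take a central division $K$-algebra), so equality of cardinalities is not a characterization of splitness.
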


Let $\Pim \mc{M}(A)$ be the set of isomorphism classes of indecomposable projective objects in $\mc{M}(A)$. For these objects we have a similar classification: 

\begin{prop} \label{Azeroproj} \hfill

\begin{enum_thm}
\item Every primitive idempotent of $A_0$ is a primitive idempotent of $A$.
\item The maps $(-)_0$ and $\Ind_{A_0}^A$ induce pairwise inverse bijections between the sets $\Pim \mc{M}(A_0)$ and 
\[
\uPim \mc{G}(A) \dopgleich \lbrace P(S) \mid S \in \uIrr \mc{G}(A) \rbrace \;.
\]
This bijection is compatible with taking projective covers, i.e., the diagram
\[
\begin{tikzcd}
\uIrr \mc{G}(A) \arrow{d}[swap]{(-)_0} \arrow{r}{P} & \uPim \mc{G}(A) \arrow{d}{(-)_0} \\
\Irr \mc{M}(A_0) \arrow{r}[swap]{P} & \Pim \mc{M}(A_0)
\end{tikzcd}
\]
commutes.
\item The decomposition of $A$ into indecomposable projective objects in $\mc{G}(A)$ is given by 
\[
A \simeq \bigoplus_{P \in \uPim \mc{G}(A)} P^{\oplus n_P} 
\]
for certain $n_P \in \bbN_{>0}$.
\end{enum_thm}
\end{prop}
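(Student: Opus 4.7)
The plan rests on the observation that, for any idempotent $e \in A_0$, the graded left $A$-module $Ae$ coincides with $\Ind_{A_0}^A(A_0 e)$, has degree-zero component $(Ae)_0 = A_0 e$, and is generated as a graded $A$-module by this degree-zero component (since $e \in A_0 \subs A$). As a consequence, a graded $A$-linear endomorphism of $Ae$ is determined by the image of $e$, which must lie in $eA_0 e$, giving canonical ring isomorphisms
\[
\End_{\mc{G}(A)}(Ae) \;\simeq\; eA_0 e \;\simeq\; \End_{A_0}(A_0 e).
\]
Each of the three parts of the proposition then reduces to a comparison of idempotents, generators, or direct summands between $A_0$ and $A$.

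For (i), I will argue that $e$ primitive in $A_0$ forces locality of $\End_{A_0}(A_0 e)$ and hence, by the isomorphism above, of $\End_{\mc{G}(A)}(Ae)$, so that $Ae$ is indecomposable in $\mc{G}(A)$. To upgrade this to indecomposability of $Ae$ in $\mc{M}(A)$ -- equivalently, to primitivity of $e$ in $A$ -- I will invoke the standard fact that the Jacobson radical of a finite-dimensional $\Z$-graded $K$-algebra is a graded ideal (Bergman), together with a graded Wedderburn argument showing that any finite-dimensional $\Z$-graded $K$-algebra whose degree-zero component is local is itself local. Applied to $R = eAe$, which is graded with $R_0 = eA_0 e$ local, this yields that $eAe$ is local, so $e$ is primitive in $A$. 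This lifting from graded primitivity to ordinary primitivity will be the main technical obstacle of the proof.

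For (iii), the plan is to fix a decomposition $1 = e_1 + \cdots + e_n$ of the identity of $A_0$ into orthogonal primitive idempotents, use (i) to conclude that this remains an orthogonal primitive idempotent decomposition inside $A$, and read off $A = \bigoplus_i A e_i$ as a decomposition of $A$ into indecomposable graded projectives. Grouping isomorphic summands then gives $A \simeq \bigoplus_P P^{n_P}$ with $n_P \ge 0$. To see that $n_P > 0$ for every $P \in \uPim \mc{G}(A)$, I will write $P = P(L)$ with $L \in \uIrr \mc{G}(A)$ so that $L_0 \in \Irr \mc{M}(A_0)$ by Proposition \ref{prop:Azerosimple}, choose a primitive $e \in A_0$ with $A_0 e$ the projective cover of $L_0$, and identify the simple head of $Ae$ with $L$ via the bijection of Proposition \ref{prop:Azerosimple} -- since by exactness of $(-)_0$ the head of $Ae$ has degree-zero part $L_0$ -- giving $Ae \simeq P$.

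Part (ii) will then follow essentially formally from (i) and (iii). The two maps $(-)_0 \colon \uPim \mc{G}(A) \to \Pim \mc{M}(A_0)$ and $\Ind_{A_0}^A \colon \Pim \mc{M}(A_0) \to \uPim \mc{G}(A)$ are well-defined: every $P \in \uPim \mc{G}(A)$ has the form $Ae$ by (iii), so $P_0 = A_0 e \in \Pim \mc{M}(A_0)$; conversely, for primitive $e \in A_0$, $\Ind_{A_0}^A(A_0 e) = Ae$ lies in $\uPim \mc{G}(A)$ by (i) together with the observation that its head has nonzero degree-zero component $L_0$. The identities $(Ae)_0 = A_0 e$ and $\Ind_{A_0}^A(A_0 e) = Ae$ show these maps are mutually inverse, and commutativity of the projective-cover diagram follows because the degree-zero component of the projective cover $Ae \twoheadrightarrow L$ is precisely the projective cover $A_0 e \twoheadrightarrow L_0$ in $\mc{M}(A_0)$.
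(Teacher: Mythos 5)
Your proposal is correct, and for parts (ii) and (iii) it is essentially the paper's argument: the paper proves (ii) first, using the canonical isomorphism $\Ind_{A_0}^A(A_0 e)\simeq Ae$, the adjunction of Lemma~\ref{ind_proj_adjunction}, and Proposition~\ref{prop:Azerosimple}, and then deduces (iii) by inducing a decomposition $A_0\simeq\bigoplus_{P\in\Pim\mc{M}(A_0)}P^{\oplus n_P}$ up to $A$; you reverse the order and phrase it via a complete set of orthogonal primitive idempotents, but the computation and the use of Proposition~\ref{prop:Azerosimple} to identify heads are the same. The genuine divergence is in part (i): the paper simply cites Gordon--Green \cite[Proposition 5.8]{GG-Graded-Artin}, while you propose to prove it.

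Your plan for (i) is sound, but the ``graded Wedderburn argument'' you defer to is the entire content of the lemma and must be filled in. For $R\dopgleich eAe$ with $R_0=eA_0e$ local, one clean route is: $J(R)$ is a graded ideal, so $\bar R\dopgleich R/J(R)$ is graded, and $\bar R_0=R_0/(J(R)\cap R_0)$ is still local since $J(R)\cap R_0$ is a nilpotent ideal of $R_0$. Any graded short exact sequence over $\bar R$ splits ungradedly (semisimplicity), and the degree-zero component of an ungraded splitting is a graded splitting, so $\bar R$ is graded semisimple. As $\End_{\mc{G}(\bar R)}(\bar R)\simeq\bar R_0^{\op}$ is local, $\bar R$ is indecomposable, hence graded simple as a left module over itself; therefore every nonzero homogeneous element is (left, hence two-sided) invertible, so $\bar R$ is a graded division ring. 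A finite-dimensional graded division ring is concentrated in degree zero (a homogeneous unit of nonzero degree has infinitely many linearly independent powers), so $\bar R=\bar R_0$ is a division ring and $R$ is local. With this supplied --- or by just citing Gordon--Green as the paper does --- your argument is complete. One small local point in (iii): ``by exactness of $(-)_0$'' alone does not identify the degree-zero part of the head of $Ae$ with $L_0$; you also need that $Ae$ is generated in degree zero (which you note at the outset), so that its simple head is generated in degree zero and therefore has nonzero degree-zero component, and only then does Proposition~\ref{prop:Azerosimple} identify that component with $L_0$.
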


\begin{proof}
The first statement is due to Green–Gordon \cite[Proposition 5.8]{GG-Graded-Artin}. Let $P \in \Pim \mc{M}(A_0)$. Then $P=A_0e$ for a primitive idempotent $e \in A_0$. Considering $A$ as an $A_0$-module, there is a canonical isomorphism $\Ind_{A_0}^A A_0 = A \otimes_{A_0} A_0 \simeq A$ of $A_0$-modules given by multiplication. This map is certainly $A$-linear and graded, thus an isomorphism of graded $A$-modules. This isomorphism induces a graded $A$-module isomorphism $\Ind_{A_0}^A P \simeq Ae$. Hence, by the first statement, $\Ind_{A_0}^A P \in \Pim \mc{G}(A)$. Since $(-)_0 \circ \Ind_{A_0}^A \simeq \id$, it follows that the map $\Pim \mc{M}(A_0) \rightarrow \Pim \mc{G}(A)$  is injective. By Proposition \ref{prop:Azerosimple}, we have $P \simeq P(S_0)$ for some $S \in \uIrr \mc{G}(A)$. By the adjunction of Lemma \ref{ind_proj_adjunction}, we have 
\[
0 \neq \Hom_{\mc{M}(A_0)}(P(S_0),S_0) \simeq \Hom_{\mc{G}(A)}(\Ind_{A_0}^A P(S_0), S) \;.
\]
Hence, there is a non-zero morphism $\Ind_{A_0}^A P(S_0) \to S$, which is surjective since $S$ is simple. Hence, $\Ind_{A_0}^A P(S_0) \simeq P(S)$. This shows that $\Ind_{A_0}^A P \in \uPim \mc{G}(A)$ and that the map $\Ind_{A_0}^A \from \Pim \mc{M}(A_0) \to \uPim\mc{G}(A)$ is surjective. Taking degree zero components we deduce that $P(S_0) \simeq P(S)_0$. This shows the commutativity of the diagram. Finally, we have a direct sum decomposition $A \simeq \bigoplus_{P \in \Pim \mc{M}(A_0)} P^{\oplus n_P}$ for certain $n_P \in \bbN_{>0}$. Inducing this to $A$ gives the claimed direct sum decomposition of $A$ in $\mc{G}(A)$.
\end{proof}

\begin{cor} \label{F_uIrr_eq_Irr}
$F(\uIrr \mc{G}(A)) = \Irr \mc{M}(A)$. \qed
\end{cor}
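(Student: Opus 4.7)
I interpret $F$ as the forgetful functor $F \from \mc{G}(A) \to \mc{M}(A)$, so that the corollary asserts that every simple ungraded $A$-module arises as $F(L)$ for some $L \in \uIrr \mc{G}(A)$. The claim will follow essentially formally from Proposition \ref{Azeroproj}, provided one invokes the classical fact that the Jacobson radical of a finite-dimensional graded algebra is a graded ideal, so that the graded and ungraded Jacobson radicals coincide (Bergman, Nastasescu--Van Oystaeyen).

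For the inclusion $F(\uIrr \mc{G}(A)) \subseteq \Irr \mc{M}(A)$, I would take $L \in \uIrr \mc{G}(A)$ and note that Proposition \ref{Azeroproj}(b) exhibits its graded projective cover as $P(L) \simeq Ae$ for some primitive idempotent $e \in A_0$, while Proposition \ref{Azeroproj}(a) guarantees that $e$ remains primitive in $A$. Hence $F(P(L)) = Ae$ is an indecomposable projective object of $\mc{M}(A)$, with a unique simple ungraded head $S$. The agreement of graded and ungraded radicals gives $F(\rad^{\mathrm{gr}} P(L)) = \rad F(P(L))$, so $F(L) \simeq F(P(L))/\rad F(P(L)) \simeq S \in \Irr \mc{M}(A)$.

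For the reverse inclusion, I would apply $F$ to the decomposition $A \simeq \bigoplus_{P \in \uPim \mc{G}(A)} P^{\oplus n_P}$ from Proposition \ref{Azeroproj}(c) to obtain a decomposition of $A$ in $\mc{M}(A)$ into the indecomposable projectives $F(P)$. Since every indecomposable projective of $\mc{M}(A)$ is a direct summand of $A$, Krull--Schmidt implies that each such projective is isomorphic to some $F(P(L))$ with $L \in \uIrr \mc{G}(A)$. Passing to simple heads then exhibits every simple of $\mc{M}(A)$ as $F(L)$ for some $L \in \uIrr \mc{G}(A)$, giving the required equality.

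The only substantive step beyond bookkeeping is the identification of the graded and ungraded radicals of $Ae$; without this, one would only obtain a surjection $F(L) \twoheadrightarrow S$ and have to argue separately that $F(L)$ is simple rather than merely having $S$ as its unique simple quotient. Since the idempotent $e$ already bridges the graded and ungraded settings, this is really the only non-trivial ingredient needed to pass from the projective side (handled by Proposition \ref{Azeroproj}) to the side of simples.
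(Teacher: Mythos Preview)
Your proposal is correct and matches the paper's intent: the paper marks this corollary with \qed\ immediately, treating it as a direct consequence of Proposition~\ref{Azeroproj} together with the standard Gordon--Green/N\u{a}st\u{a}sescu--Van Oystaeyen fact that the Jacobson radical of a finite-dimensional graded algebra is graded (so that forgetting the grading takes graded simples to ungraded simples and graded indecomposable projectives to ungraded ones). Your write-up simply makes explicit the one ingredient the paper leaves tacit, namely the identification of graded and ungraded radicals; otherwise the argument via the decomposition of $A$ in Proposition~\ref{Azeroproj}(c) and Krull--Schmidt is exactly what is being invoked.
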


Since $A_0$ is concentrated in degree zero, the decomposition of graded modules into homogeneous components yields a decomposition $\mc{G}(A_0) = \bigoplus_{d \in \bbZ} \mc{G}_d(A_0)$ with $\mc{G}_d(A_0) \simeq \mc{M}(A_0)$. Using this, we can relate graded multiplicities in $\mc{G}(A)$ to ungraded multiplicities in $\mc{M}(A_0)$:

\begin{prop} \label{graded_mults_from_core}
Let $M \in \mc{G}(A)$ and $L \in \Irr \mc{G}(A)$. Then for any $d \in \bbZ$ such that $L_d \neq 0$ we have
$$
\lbrack M:L \rbrack_{\mc{G}(A)} =  \lbrack M_d:L_d \rbrack_{\mc{M}(A_0)} \;.
$$
\end{prop}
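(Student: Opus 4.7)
The plan is to reduce the statement to Proposition \ref{prop:Azerosimple} by taking a composition series of $M$ in $\mc{G}(A)$ and applying the functor $(-)_d \from \mc{G}(A) \to \mc{M}(A_0)$. This functor is exact: morphisms in $\mc{G}(A)$ preserve degrees, so kernels, images and cokernels split as direct sums of their graded pieces, and extracting the degree $d$ piece commutes with forming subquotients.

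The key input from Proposition \ref{prop:Azerosimple}, applied after a grading shift, is the following. For any $L' \in \Irr \mc{G}(A)$ with $L'_d \neq 0$, the shifted simple $L'[-d]$ lies in $\uIrr \mc{G}(A)$ since its degree zero component is $L'_d$. Proposition \ref{prop:Azerosimple} then shows first that $L'_d$ is a simple $A_0$-module, and second that, given $L \in \Irr \mc{G}(A)$ with $L_d \neq 0$ as well, one has $L'_d \simeq L_d$ in $\mc{M}(A_0)$ if and only if $L'[-d] \simeq L[-d]$ in $\mc{G}(A)$, which is in turn equivalent to $L' \simeq L$.

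With these ingredients the argument is immediate. I would fix a composition series $0 = M^0 \subset M^1 \subset \cdots \subset M^n = M$ in $\mc{G}(A)$, set $L^{(i)} \dopgleich M^i/M^{i-1} \in \Irr \mc{G}(A)$, and apply $(-)_d$ to obtain a filtration $0 = M^0_d \subseteq \cdots \subseteq M^n_d = M_d$ in $\mc{M}(A_0)$ with subquotients $L^{(i)}_d$. By the discussion above, each $L^{(i)}_d$ is either zero or simple, so after omitting the zero terms one obtains a genuine composition series of $M_d$. The number of indices $i$ with $L^{(i)}_d \simeq L_d$ then equals the number of indices with $L^{(i)} \simeq L$, which gives the required equality; indices for which $L^{(i)}_d = 0$ contribute to neither multiplicity, since $L_d \neq 0$ forces $L^{(i)} \not\simeq L$ whenever $L^{(i)}_d$ vanishes.

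There is no serious obstacle: the argument is purely formal once Proposition \ref{prop:Azerosimple} is available. The only care needed is to keep the graded isomorphism classes in $\mc{G}(A)$ distinct from the ungraded isomorphism classes in $\mc{M}(A_0)$, and to apply the shift by $-d$ at the right moment so as to invoke the bijection of that proposition, which is stated only for simples with nonzero degree zero component.
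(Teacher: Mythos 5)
Your proof is correct and takes essentially the same route as the paper: both reduce via a composition series (the paper phrases this as ``we may assume $M$ is semi-simple'') and then invoke Proposition \ref{prop:Azerosimple} to see that each $L^{(i)}_d$ is zero or simple, with the isomorphism type of $L^{(i)}_d$ determining $L^{(i)}$ among simples with nonzero degree-$d$ part. The only difference is that you make explicit the grading shift by $-d$ needed to land in $\uIrr\mc{G}(A)$, a detail the paper leaves implicit; this is a small but genuine clarification rather than a new idea.
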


\begin{proof}
We may assume without loss of generality that $M$ is semi-simple. Thus, $M \simeq \bigoplus_{L \in \Irr \mc{G}(A)} L^{\oplus n_{L}}$ for some $n_L$. Now the equation is an immediate consequence of Proposition \ref{prop:Azerosimple}, since $L_d$ is a simple $A_0$-module for each $L \in \Irr \mc{G}(A)$ with $L_d \neq 0$. 
\end{proof}

\section{Cellularity of the core} \label{cellular_structure}
 
 \begin{tcolorbox}
 For the remainder of the paper we assume that $A$ is a graded with triangular decomposition $A \simeq A^- \otimes_K T \otimes_K A^+$ as in Definition \ref{defn:triangle}. Moreover, we assume that $T$ is semisimple. 
 \end{tcolorbox}
 
Since
$$
A = A^- \otimes_K T \otimes_K A^+ = \bigoplus_{i,j \in \bbZ} A_i^- \otimes_K T \otimes_K A_j^+ \;,
$$
we have
$$
A_0 = \bigoplus_{i \in \bbZ} A_i^- \otimes_K T \otimes_K A_{-i}^+ \;,
$$
and
$$
\dim A_0 = \dim T \cdot \sum_{i \in \bbZ} \dim A_i^- \cdot \dim A_i^+ \;.
$$

In \S\ref{core_rep_general} we have already seen some general connections between the graded representation theory of $A$ and the ungraded representation theory of $A_0$. First of all, we deduce from \cite[Proposition 3.15]{hwtpaper} and Corollary \ref{core_splits} that:

\begin{cor} \label{core_splits_2}
The core $A_0$ is a split $K$-algebra. \qed
\end{cor}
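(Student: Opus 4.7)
The plan is essentially to chain together two results that have already been established. First, Corollary \ref{core_splits} (proved earlier in this section as a direct consequence of Proposition \ref{prop:Azerosimple}) shows that splitness of $A$ as a $K$-algebra is inherited by its core $A_0$: since the simple $A_0$-modules are exactly the degree-zero components $L_0$ for $L \in \uIrr \mc{G}(A)$, and these are in turn simple summands of simple $A$-modules, the endomorphism rings $\End_{A_0}(L_0)$ embed into the corresponding endomorphism rings $\End_A(L)$, which are $K$ when $A$ is split.

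Thus it suffices to know that $A$ itself is a split $K$-algebra. This is precisely the content of \cite[Proposition 3.15]{hwtpaper}, which extracts splitness of $A$ from the splitness of $T$ built into Definition \ref{defn:triangle}\ref{defn:connected} (together with $A_0^\pm = K$). Combining the two statements yields the corollary immediately.

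In summary, I would simply write: by \cite[Proposition 3.15]{hwtpaper} the algebra $A$ is split over $K$; applying Corollary \ref{core_splits} then gives that $A_0$ is split. There is no real obstacle here, as both ingredients are already in place; the only point worth noting is that the splitness assumption on $T$ in the definition of a triangular decomposition is exactly what propagates first to $A$ and then, via the general functorial description of $\Irr \mc{M}(A_0)$ in Proposition \ref{prop:Azerosimple}, down to the core.
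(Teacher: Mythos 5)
Your argument is exactly the paper's: invoke \cite[Proposition 3.15]{hwtpaper} to get that $A$ is split, then apply Corollary~\ref{core_splits}. The extra remark about embedding endomorphism rings is a fine alternative way to justify Corollary~\ref{core_splits} itself, but it changes nothing in the chain of deductions.
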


Moreover, it follows from Proposition \ref{prop:Azerosimple} that 
\begin{equation} \label{core_simples_by_projection}
\Irr \mc{M}(A_0) = \lbrace L(\lambda)_0 \mid \lambda \in \uIrr \mc{G}(T) \rbrace\;,
\end{equation}
where
$$
\uIrr \mc{G}(T) \dopgleich \lbrace \lambda \in \Irr \mc{G}(T) \mid L(\lambda)_0 \neq 0 \rbrace \;.
$$

If $\lambda \in \Irr \mc{G}_0(T)$, then we know from \cite[Lemma 3.17]{hwtpaper} that $L(\lambda)_0 \simeq \lambda$, so in particular $L(\lambda)_0 \neq 0$. We thus obtain:

\begin{lem}
$\Irr \mc{M}(T) \subs \Irr \mc{M}(A_0)$. \qed
\end{lem}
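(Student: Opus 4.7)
The plan is to unravel what the claimed inclusion $\Irr \mc{M}(T) \subs \Irr \mc{M}(A_0)$ means and then read it off from the displayed equation just above. Concretely, the inclusion should be understood via restriction along the subalgebra inclusion $T \subs A_0$: every simple ungraded $T$-module arises as the restriction to $T$ of some simple ungraded $A_0$-module. Since $T$ is concentrated in degree zero, any simple (ungraded) $T$-module $\lambda$ may be regarded, in a canonical way, as a graded simple $T$-module concentrated in degree zero. This gives a natural identification $\Irr \mc{M}(T) = \Irr \mc{G}_0(T)$, which is a subset of $\Irr \mc{G}(T)$.

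Given $\lambda \in \Irr \mc{M}(T)$, viewed as an element of $\Irr \mc{G}_0(T)$, I would next invoke \cite[Lemma 3.17]{hwtpaper}, which identifies $L(\lambda)_0 \simeq \lambda$ as $T$-modules. In particular $L(\lambda)_0 \neq 0$, so $\lambda \in \uIrr \mc{G}(T)$. The displayed equation \eqref{core_simples_by_projection} then tells us that $L(\lambda)_0 \in \Irr \mc{M}(A_0)$. Restricting this simple $A_0$-module back to $T$ recovers $\lambda$, which gives the desired inclusion.

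There is no genuine obstacle here: the content is already assembled by Proposition~\ref{prop:Azerosimple} together with \cite[Lemma 3.17]{hwtpaper}. The only subtlety worth flagging explicitly in the write-up is the identification of simple ungraded $T$-modules with simple graded $T$-modules in degree zero, which hinges on $T$ being concentrated in degree zero (so that any graded simple is supported in a single degree, and we single out those in degree zero). Once this is observed, the assignment $\lambda \mapsto L(\lambda)_0$ provides an explicit (injective) map realising the claimed inclusion, and in fact shows that each simple $T$-module extends canonically to a simple $A_0$-module.
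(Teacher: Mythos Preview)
Your proposal is correct and follows exactly the paper's own reasoning: the paper states immediately before the lemma that for $\lambda \in \Irr \mc{G}_0(T)$ one has $L(\lambda)_0 \simeq \lambda$ by \cite[Lemma~3.17]{hwtpaper}, so $\lambda \in \uIrr \mc{G}(T)$, and then the lemma is marked with a \qed. Your write-up simply makes explicit the identification $\Irr \mc{M}(T) = \Irr \mc{G}_0(T)$ and the role of \eqref{core_simples_by_projection}, which is precisely the intended argument.
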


An application of Proposition \ref{graded_mults_from_core} shows that we can in principle compute the graded decomposition matrices of standard modules using the core: for any $\lambda,\mu \in \Irr \mc{G}(T)$ and $d \in \bbZ$ with $L(\mu)_d \neq 0$ we have
$$
\lbrack \Delta(\lambda) : L(\mu) \rbrack_{\mc{G}(A)} = \lbrack \Delta(\lambda)_d : L(\mu)_d \rbrack_{\mc{M}(A_0)} \;.
$$

\subsection{Standard datum and cellularity}

We will now derive another classification of the simple $A_0$-modules by showing that it comes from a richer structure on $A_0$, namely from a \textit{standard datum} in the sense of Du–Rui \cite{DuRui}. The key ingredient here is that we have a canonical algebra isomorphism
\begin{equation*}
E_A \dopgleich \End_{\mathcal{G}(A)}(A) \simeq A_0^\op.
\end{equation*}
Hence, if $A$ is self-injective, then the core $A_0$ is the (opposite of the) endomorphism algebra of the tilting object $A$ of $\mathcal{G}(A)$. We can now employ our highest weight and tilting theory for $\mathcal{G}(A)$, together with the general theory of Appendix \ref{cellular_appendix}, developed along the lines of Anderson–Stroppel–Tubbenhauer \cite{AST}, to construct a natural standard datum on $A_0$. In the case where $A$ is graded Frobenius and admits a triangular anti-involution, this datum is in fact a cellular datum in the sense of Graham–Lehrer \cite{Graham-Lehrer-Cellular}, so $A_0$ is a cellular algebra. This applies to most of our VIP examples, providing new tools to study their representation theory. We first recall some relevant definitions, for more details we refer to Appendix \ref{cellular_appendix}.

\begin{defn}[Du–Rui] \label{standard_datum_def}
A \word{standard datum} on a finite-dimensional $K$-algebra $E$ consists of the following data:
\begin{enum_thm}
\item A $K$-basis $\mathcal{B}$ of $E$ which is fibered over a poset $\Lambda$, i.e., $\mc{B} = \bigsqcup_{\lambda \in \Lambda} \mc{B}^{\lambda}$, where $\mc{B}^{\lambda} \neq \emptyset$ for all $\lambda \in \Lambda$.
\item Indexing sets $F^{\lambda}$ and $G^{\lambda}$ for any $\lambda \in \Lambda$ such that $\mc{B}^\lambda = \{ b_{i,j}^{\lambda} \  | \ (i,j) \in F^{\lambda} \times G^{\lambda} \}$,
\end{enum_thm} 
subject to the conditions:
\begin{enum_thm}
\item[(c)] For any $b \in E$ and $b_{i,j}^\lambda \in \mc{B}^{\lambda}$ we have 
$$
b \cdot b^{\lambda}_{i,j} \equiv \sum_{i' \in F^{\lambda}} f_{i',\lambda}(b,i) b_{i',j}^{\lambda} \ \mod \ E^{< \lambda} \;, 
$$
$$
b^{\lambda}_{i,j} \cdot b \equiv \sum_{j' \in G^{\lambda}} f_{\lambda,j'}(j,b) b_{i,j'}^{\lambda} \ \mod \ E^{< \lambda} \;, 
$$
where $f_{i',\lambda}(b,i), f_{\lambda,j'}(j,b) \in K$ are independent of $j$ and $i$, respectively. Here, $E^{<\lambda}$ is the subspace of $E$ spanned by the set $\bigcup_{\mu < \lambda} \mathcal{B}^\mu$. 
\end{enum_thm}

\end{defn}

\begin{defn}[Graham–Lehrer] \label{cellular_datum_def}
A \word{cell datum} for a finite-dimensional $K$-algebra $E$ is a standard datum, as above, such that additionally $F^\lambda = G^\lambda$ for all $\lambda \in \Lambda$ and there is a $K$-linear anti-involution $^*$ on $A$ such that $(b_{i,j}^\lambda)^* = b_{j,i}^\lambda$ for all $\lambda \in \Lambda$ and $i,j \in F^\lambda$. An algebra admitting a cell datum is called a \word{cellular algebra}.
\end{defn}

\begin{rem}
In fact, it is not difficult to see that any split finite-dimensional $K$-algebra admits a standard datum (this was pointed out to us by S. Koenig, see \cite[\S5]{KonigXiCellular}). What is thus relevant is not that a particular algebra admits a standard datum (hence, that it is \textit{standardly based} in the terminology of Du–Rui) but the datum itself. The situation for cellular algebras is different since not every algebra is cellular. The fact that an algebra admits a cellular datum implies certain restrictions on its representation theory. 
\end{rem}

\begin{thm} \label{core_cellular}
Assume that $A$ is self-injective. The algebra $E_A$ has a standard datum with poset
\begin{equation*}
\mathcal{P}_A \dopgleich \lbrace \lambda \in \Irr \mathcal{G}(T) \mid \lbrack A:\Delta(\lambda) \rbrack \neq 0 \tn{ and } \lbrack A:\nabla(\lambda) \rbrack \neq 0 \rbrace \subs \Irr \mathcal{G}(T)
\end{equation*}
and standard basis $\mathcal{B} = \coprod_{\lambda \in \mathcal{P}} \mathcal{B}^{\lambda}$ constructed from an arbitrary basis $F^{\lambda}$ of $\Hom_{\mathcal{G}(A)}(A,\nabla(\lambda))$ and an arbitrary basis $G^{\lambda}$ of $\Hom_{\mathcal{G}(A)}(\Delta(\lambda),A)$ as follows: for each $f^\lambda \in F^{\lambda}$ and $g^\lambda \in G^{\lambda}$ choose lifts $\hat{f}^\lambda$ and $\hat{g}^\lambda$ such that the diagram
\begin{equation*}
\begin{tikzcd}
 & \Delta(\lambda) \arrow[hookrightarrow]{d} \arrow{dr}{g} \\
A \arrow{dr}[swap]{f^\lambda} \arrow{r}{\hat{f}^\lambda} & T(\lambda) = P(\lambda^{h^{-1}}) \arrow[twoheadrightarrow]{d} \arrow{r}[swap]{\hat{g}^\lambda} & A  \\
& \nabla(\lambda)
\end{tikzcd}
\end{equation*}
commutes and set  
\begin{equation*}
\mathcal{B}^{\lambda} \dopgleich \lbrace \hat{g}^\lambda \circ \hat{f}^\lambda \mid f^\lambda \in F^{\lambda}, g^\lambda \in G^{\lambda} \rbrace \;.
\end{equation*}
This basis is independent of the choice of lifts. We have
\begin{equation} \label{cellular_structure_P}
\mathcal{P}_A = \lbrace \lambda \in \Irr \mathcal{G}(T) \mid \deg \lambda \in \Supp A^+ \cap - \Supp A^- \rbrace 
\end{equation}
and
\begin{equation} \label{cellular_structure_P_size}
|\mathcal{P}_A| = |\Irr \mathcal{M}(T)| \cdot | \Supp A^+ \cap - \Supp A^-| \;.
\end{equation}
Furthermore, for any $\lambda \in \mathcal{P}$ we have
\begin{equation} \label{cellular_structure_F_size}
\ \ \ \ \ \ |F^\lambda| = \lbrack A:\Delta(\lambda)\rbrack = \dim_K \lambda \cdot \dim_K A^{+}_{\deg \lambda} \;,
\end{equation}
\begin{equation} \label{cellular_structure_G_size}
 \quad |G^\lambda| = \lbrack A:\nabla(\lambda) \rbrack= \dim_K \lambda \cdot \dim_K A^-_{-\deg \lambda}
\end{equation}
If $A$ is graded Frobenius and admits a triangular anti-involution, then this standard datum is in fact a cellular datum, so $E_A$ is a cellular algebra.
\end{thm}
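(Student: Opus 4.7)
The plan is to obtain Theorem \ref{core_cellular} by applying the abstract Theorem \ref{hwc_cellular} to the tilting object $A \in \mc{G}(A)$, and then making the poset and all relevant cardinalities explicit by direct computation from the triangular decomposition. Since $A$ is assumed self-injective, the results of \cite{hwtpaper} recalled in Section \ref{notations_2} identify tilting objects in $\mc{G}(A)$ with projective-injective objects, so $A$ is itself tilting. Combined with the canonical identification $E_A = \End_{\mc{G}(A)}(A) \cong A_0^{\op}$, part (1) of Theorem \ref{hwc_cellular} immediately produces a standard datum on $E_A$ whose basis $\mc{B}^\lambda$ is constructed from arbitrary bases of $\Hom_{\mc{G}(A)}(A,\nabla(\lambda))$ and $\Hom_{\mc{G}(A)}(\Delta(\lambda),A)$ via the commutative diagram in the statement, with independence from the chosen lifts being part of the conclusion of that theorem.

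Next I would identify the poset and cardinalities using the standard highest weight identities
\[
\dim_K \Hom_{\mc{G}(A)}(A,\nabla(\lambda)) = [A:\Delta(\lambda)], \qquad \dim_K \Hom_{\mc{G}(A)}(\Delta(\lambda),A) = [A:\nabla(\lambda)],
\]
valid because $A$ carries both a $\Delta$- and a $\nabla$-filtration, together with explicit multiplicity formulas extracted from the triangular decomposition. Using $\Delta(\lambda) \cong A^- \otimes_K \lambda$ (as a graded vector space placed with $\lambda$ in degree $\deg\lambda$) and its $\nabla$-counterpart, a degree-by-degree count of the $\Delta$- and $\nabla$-layers of $A$ gives
\[
[A:\Delta(\lambda)] = \dim_K \lambda \cdot \dim_K A^+_{\deg\lambda}, \qquad [A:\nabla(\lambda)] = \dim_K \lambda \cdot \dim_K A^-_{-\deg\lambda}.
\]
These yield (\ref{cellular_structure_F_size}) and (\ref{cellular_structure_G_size}) at once, and the description (\ref{cellular_structure_P}) follows since both multiplicities are nonzero precisely when $\deg\lambda \in \Supp A^+ \cap -\Supp A^-$. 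The cardinality (\ref{cellular_structure_P_size}) is then immediate from the partition $\Irr \mc{G}(T) = \bigsqcup_d \Irr \mc{G}(T)_d$, each graded slice being in bijection with $\Irr \mc{M}(T)$.

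For the cellularity statement, the extra hypothesis that $A$ is graded Frobenius and admits a triangular anti-involution produces, via \cite[Corollary 6.4]{hwtpaper}, a duality $\bbD$ on $\mc{G}(A)$ satisfying $\bbD(A) \simeq A$. Part (2) of Theorem \ref{hwc_cellular} then upgrades the standard datum above to a cell datum, with the anti-involution on $E_A$ (and hence on $A_0$) being induced by $\bbD$ and interchanging $b^\lambda_{i,j}$ with $b^\lambda_{j,i}$, as required.

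The delicate step I anticipate is matching the two descriptions of the poset: the abstract one produced by Theorem \ref{hwc_cellular}, consisting of those $\lambda$ for which $T(\lambda)$ is an indecomposable summand of $A$, against the multiplicity-based one given by $\mc{P}_A$ in the statement. The implication ``summand implies both multiplicities nonzero'' is trivial; for the converse I plan to combine the explicit decomposition of $A$ into indecomposable projectives furnished by Proposition \ref{Azeroproj}, the identification $T(\lambda) = P(\lambda^{h^{-1}})$ from \cite[Corollary 5.9]{hwtpaper}, and the support calculations above, so that the bookkeeping around the Nakayama-type permutation $h$ is where the argument demands the most care.
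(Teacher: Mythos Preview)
Your approach is essentially the paper's: apply Theorem \ref{end_tilting_is_cellular} to the tilting object $A$, then compute the multiplicities $[A:\Delta(\lambda)]$ and $[A:\nabla(\lambda)]$ from the triangular decomposition, and invoke \cite[Corollary 6.4]{hwtpaper} for the cellularity upgrade.

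Two minor corrections are worth flagging. First, the ``delicate step'' you anticipate is a non-issue: the poset produced by the abstract theorem (Theorem \ref{end_tilting_is_cellular}) is \emph{by definition} $\mathcal{P}_T = \{\lambda \mid [T:\Delta(\lambda)] \neq 0 \text{ and } [T:\nabla(\lambda)] \neq 0\}$, i.e.\ already the multiplicity-based description appearing in the statement of Theorem \ref{core_cellular}. There is no separate ``summand'' description to match, and the permutation $h$ plays no role here. Second, the place where the paper does exercise some care is the $\nabla$-multiplicity formula (\ref{cellular_structure_G_size}). Your ``degree-by-degree count'' works cleanly for $[A:\Delta(\lambda)]$ via an explicit standard filtration of $A$ obtained by inducing the degree filtration of $B^+$; but for $[A:\nabla(\lambda)]$ the paper passes through the auxiliary identity $[A:\nabla(\lambda)] = [A^{\op}:\Delta(\lambda^*)]$ (Lemma \ref{lem:aaop}), proved by a Grothendieck-group comparison under standard duality, and only then reuses the $\Delta$-computation on $A^{\op}$. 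Your sketch glosses over this asymmetry.
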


\begin{proof}
The claim about the standard datum is an application of Theorem \ref{end_tilting_is_cellular} once we know that $\mathcal{G}(A)$ satisfies the assumptions listed in Appendix \ref{cellular_appendix}. Assumption \ref{tilting_hwc_assumption} holds by \cite[Corollary 4.13]{hwtpaper}. The Ext-vanishing in Assumption \ref{tilting_ext_assumption} holds by \cite[Lemma 4.3 (b)]{hwtpaper}. Finally, Assumption \ref{tilting_indec_assumption} about tilting objects holds by \cite[Corollary 5.9]{hwtpaper}. If $A$ is graded Frobenius and admits a triangular anti-involution, then we know from \cite[Corollary 6.4]{hwtpaper} that $A$ is fixed by the induced duality $\bbD$ on $\mathcal{G}(A)$, hence the claim about cellularity of $E_A$ follows again from Theorem \ref{end_tilting_is_cellular}.

It remains to show equations (\ref{cellular_structure_P}) to (\ref{cellular_structure_G_size}). By the discussion in Appendix \ref{cellular_appendix} we have $|F^\lambda| = \dim_K \Hom_{\mathcal{G}(A)}(A,\nabla(\lambda)) = \lbrack A:\Delta(\lambda) \rbrack$. Let $N^i$ be the $B^+$-submodule of $B^+$ generated by $\lbrace b \in B^+ \mid \deg b \geq i \rbrace$. We get a descending filtration $B^+ = N^0 \supseteq N^1 \supseteq \cdots$ of $B^+$. Let $M^i \dopgleich \Ind_{B^+}^A N^i$. Since $A$ is a free $B^+$-module, the functor $\Ind_{B^+}^A$ is exact, so we get a filtration $A = M^0 \supseteq M^1 \sups \cdots$ of $A$ as an $A$-module with quotients
\[
M^i/M^{i+1} \simeq \Ind_{B^+}^A (N^i/N^{i+1}) \;.
\]  
By construction, the augmentation ideal $J^+$ of $B^+$ acts trivially on the quotient $N^i/N^{i+1}$, so $N^i/N^{i+1} = \mrm{Inf}_T^{B^+} Q^i$ for a $T$-module $Q^i$, which is semisimple since $T$ is semisimple. Hence,
\[
M^i/M^{i+1} \simeq \Ind_{B^+}^A \circ \Inf_T^{B^+}( Q^i) = \Delta(Q^i) \;,
\]
showing that the filtration $A = M^0 \sups M^1 \cdots$ is a standard filtration of $A$. In particular
\[
\lbrack A:\Delta(\lambda) \rbrack = \sum_{i \in \Supp(B^+)} \lbrack B_i^+ :\lambda \rbrack = \lbrack B_{\deg \lambda}^+ :\lambda \rbrack \;.
\]
We know that $B^+_i$ is a free $T$-module of rank $\dim_K A_i^+$. Hence,
\[
\lbrack B_{\deg \lambda}^+:\lambda \rbrack = \lbrack T:\lambda \rbrack \cdot \dim_K A_{\deg \lambda}^+ = \dim_K \lambda \cdot \dim_K A_{\deg \lambda}^+ \;.
\]
This shows (\ref{cellular_structure_F_size}). By Lemma \ref{lem:aaop} below, we have $\lbrack A:\nabla(\lambda) \rbrack = \lbrack A^\op:\Delta(\lambda^*) \rbrack$. Applying the same argument to $A^{\op}$ and $\lambda^* \in \Irr \mathcal{G}(T^{\op})$ and dualizing yields
\[
\lbrack A:\nabla(\lambda) \rbrack = \lbrack A^\op:\Delta(\lambda^*) \rbrack = \dim_K \lambda^* \cdot \dim_K (A^\op)^+_{\deg \lambda^*} = \dim_K \lambda \cdot \dim_K A^-_{- \deg \lambda} \;,
\] 
showing (\ref{cellular_structure_G_size}). It follows from the above equations that $\lambda \in \mathcal{P}$ if and only if $\deg \lambda \in \Supp A^+$ and $-\deg \lambda \in \Supp A^-$, proving (\ref{cellular_structure_P}). Since $\Irr \mathcal{G}(T) \simeq \Irr \mathcal{M}(T) \times \bbZ$, this immediately implies  (\ref{cellular_structure_P_size}).
\end{proof}

The following is an application of \cite[Proposition 3.19]{hwtpaper}, and was used in Theorem \ref{core_cellular} above.  

\begin{lem}\label{lem:aaop}
For all $\lambda \in \Irr \mc{G}(T)$, $[A : \nabla(\lambda)] = [A^{\mathrm{op}} : \Delta(\lambda^*)]$. 
\end{lem}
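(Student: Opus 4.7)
The proof is an application of \cite[Proposition 3.19]{hwtpaper}, which I interpret as asserting that the standard duality $(-)^* \from \mc{G}(A) \to \mc{G}(A^\op)$ interchanges standard and costandard modules: there are natural isomorphisms $\nabla_A(\lambda)^* \simeq \Delta_{A^\op}(\lambda^*)$ in $\mc{G}(A^\op)$ for every $\lambda \in \Irr \mc{G}(T)$ (and, symmetrically, $\Delta_A(\lambda)^* \simeq \nabla_{A^\op}(\lambda^*)$).

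With this identification in hand, the plan is as follows. I work in the setting where $[A : \nabla(\lambda)]$ is well-defined, which is the self-injective setting of Theorem \ref{core_cellular}: in that case $A$ is a tilting object in $\mc{G}(A)$ by the third theorem recalled in \S\ref{notations_2}, so it admits a costandard filtration $0 = N_0 \subset N_1 \subset \cdots \subset N_k = A$ with quotients $N_i/N_{i-1} \simeq \nabla_A(\lambda_i)$. Applying the exact, contravariant, involutive duality $(-)^*$ to such a filtration produces a filtration of $A^*$ in $\mc{G}(A^\op)$ whose successive quotients are $\nabla_A(\lambda_i)^* \simeq \Delta_{A^\op}(\lambda_i^*)$; this is therefore a standard filtration of $A^*$, and counting multiplicities yields the intermediate identity
\[
[A : \nabla_A(\lambda)]_{\mc{G}(A)} \;=\; [A^* : \Delta_{A^\op}(\lambda^*)]_{\mc{G}(A^\op)}.
\]

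The remaining step, and the main technical obstacle, is to replace $A^*$ by $A^\op$ on the right-hand side. My strategy is to compute both sides as Hom-dimensions using the reciprocity available in a highest weight category with enough tilting: for any module $M$ admitting a standard filtration, $[M : \Delta(\mu)] = \dim \Hom(M, \nabla(\mu))$. Both $A^*$ (which we have just shown is standardly filtered) and $A^\op$ (standardly filtered by the explicit argument given for $A$ in the proof of Theorem \ref{core_cellular}, applied to the opposite algebra) satisfy this hypothesis, reducing the problem to comparing $\dim \Hom_{\mc{G}(A^\op)}(A^*, \nabla_{A^\op}(\lambda^*))$ with $\dim \Hom_{\mc{G}(A^\op)}(A^\op, \nabla_{A^\op}(\lambda^*))$. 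Invoking the contravariant Hom-isomorphism induced by $(-)^*$ together with Proposition~3.19 once more to identify $\nabla_{A^\op}(\lambda^*) \simeq \Delta_A(\lambda)^*$, both Hom-spaces should collapse to $\Hom_{\mc{G}(A)}(\Delta_A(\lambda), A)$, whose dimension is $[A : \nabla_A(\lambda)]$ by the same reciprocity in $\mc{G}(A)$; this pins down the common value.

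The principal obstacle is bookkeeping: the duality $(-)^*$ preserves graded support, while passage to $A^\op$ reverses the grading on the algebra, so one must be careful that each step in the chain of isomorphisms lives in the correct graded module category with matching degree shifts. Once this is untangled, the proof is a formal diagram chase built on the single input that Proposition~3.19 provides.
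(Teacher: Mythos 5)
Your proposal takes a genuinely different route from the paper's, which works entirely in the graded Grothendieck group: the paper reduces the statement to the character identity $(\chi([A]))^* = \mathrm{inv}(\chi^{\mathrm{op}}([A^{\mathrm{op}}]))$ in $K_0(\mc{G}(T^{\mathrm{op}}))$ and then verifies it by a direct Hilbert-series computation from the triangular decomposition. In contrast you work module-theoretically with the duality functor, which is appealing but runs into two problems.

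First, a misattribution: \cite[Proposition 3.19]{hwtpaper} is used in the paper as the assertion that the classes $[\nabla(\lambda)]$ (resp.\ $[\Delta(\lambda)]$) form a $\Z$-basis of $K_0(\mc{G}^\nabla(A))$ (resp.\ of $K_0(\mc{G}^\Delta(A^{\mathrm{op}}))$), not as the statement that $(-)^*$ exchanges standards and costandards. The latter fact, $\nabla_A(\lambda)^* \simeq \Delta_{A^{\mathrm{op}}}(\lambda^*)$, is plausibly true and your Step 1--2 would go through with it, but you should not hang it on that citation.

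Second, and more seriously, the final step is circular and the argument never computes $[A^{\mathrm{op}} : \Delta(\lambda^*)]$. After you reduce to comparing $\dim\Hom_{\mc{G}(A^{\mathrm{op}})}(A^*, \nabla_{A^{\mathrm{op}}}(\lambda^*))$ with $\dim\Hom_{\mc{G}(A^{\mathrm{op}})}(A^{\mathrm{op}}, \nabla_{A^{\mathrm{op}}}(\lambda^*))$, only the first of these collapses to $\Hom_{\mc{G}(A)}(\Delta_A(\lambda), A)$ via the contravariant duality isomorphism; applied to $A^*$ it lands back on $A^{**} = A$ and recovers $[A : \nabla(\lambda)]$, which is the tautology you started from. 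To make the second Hom-space collapse to the same thing you would need $A^{\mathrm{op}} \simeq A^*$ in $\mc{G}(A^{\mathrm{op}})$, which is exactly the graded Frobenius hypothesis (Lemma~5.5 of \cite{hwtpaper}, invoked separately in the paper when it is actually assumed). The lemma here is stated and proved without any self-injectivity or Frobenius hypothesis; the paper's character computation sidesteps the issue entirely. As written, your argument proves $[A : \nabla(\lambda)] = [A : \nabla(\lambda)]$, and filling the gap would require importing the stronger graded Frobenius assumption, narrowing the scope of the lemma.
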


\begin{proof}
Under the conventions of \cite[\S 2.2]{hwtpaper}, the following diagram commutes
\begin{equation}\label{eq:commuteK0s}
\begin{tikzcd}
K_0(\mc{G}(A)) \arrow{d}{\chi} \arrow{r}{( - )^*} & K_0(\mc{G}(A^{\mathrm{op}})) \arrow{d}{\chi^{\mathrm{op}}} \\
K_0(\mc{G}(T)) \arrow{r}{( - )^*} & K_0(\mc{G}(T^{\mathrm{op}})).
\end{tikzcd}
\end{equation}
Since $\mc{G}^{\nabla}(A)$ is an exact category, we can consider its Grothendieck group $K_0(\mc{G}^{\nabla}(A))$. It follows from \cite[Proposition 3.19]{hwtpaper} that $K_0(\mc{G}^{\nabla}(A))$ is a $\Z$-submodule of $K_0(\mc{G}(A))$, with $\Z$-basis the classes $[\nabla(\lambda)]$. Similarly, the $[\Delta(\lambda^*)]$ are a $\Z$-basis of $K_0(\mc{G}^{\Delta}(A^{\mathrm{op}})) \subset K_0(\mc{G}(A^{\mathrm{op}}))$. We think of $K_0(\mc{G}(T^{\mathrm{op}}))$ as a free $\Z[t,t^{-1}]$-module with basis $[\lambda^*]$ for $\lambda \in \Irr \mc{M}(T)$. Then we define the $\Z$-linear involution $\mathrm{inv}$ on this $\Z$-module, which fixes all $[\lambda^*]$ and sends $t$ to $t^{-1}$. Then, it follows from \cite[Lemma 3.10]{hwtpaper} that 
$$
\chi([\nabla(\lambda)]) = \sum_{i \ge 0} t^{\deg \lambda - i} [((B^+)^{\mathrm{op}}_i \otimes_{T^{\mathrm{op}}} \lambda^*)^*],
$$
and
$$
\chi^{\mathrm{op}}([\Delta(\lambda^*)]) = \sum_{i \ge 0} t^{i - \deg \lambda} [(B^+)^{\mathrm{op}}_i \otimes_{T^{\mathrm{op}}} \lambda^*].
$$
Thus, $((\chi([\nabla(\lambda)]))^* = \mathrm{inv}(\chi^{\mathrm{op}}([\Delta(\lambda^*)]))$, which implies that diagram (\ref{eq:commuteK0s}) restricts to 
$$
\begin{tikzcd}
K_0(\mc{G}^{\nabla}(A)) \arrow{r}{\chi} \arrow[bend right=15]{rrrr}{\sim} & K_0(\mc{G}(T))  \arrow{r}{(-)^*} & K_0(\mc{G}(T^{\mathrm{op}})) & & \arrow[swap]{ll}{\mathrm{inv} \circ \chi^{\mathrm{op}}} K_0(\mc{G}^{\Delta}(A^{\mathrm{op}})).
\end{tikzcd}
$$
Since $[A] \in K_0(\mc{G}^{\nabla}(A))$ and $[A^{\mathrm{op}}] \in K_0(\mc{G}^{\Delta}(A^{\mathrm{op}}))$, the above isomorphism implies that it suffices to show that 
$$
(\chi([A]))^* = \mathrm{inv} (\chi^{\mathrm{op}}([A^{\mathrm{op}}]))
$$
in $K_0(\mc{G}(T^{\mathrm{op}}))$. We have
$$
\chi([A]) = \sum_{\lambda \in \Irr \mc{M}(T)} (\underline{\dim} A^- \cdot \underline{\dim} A^+ \cdot \dim \lambda ) [\lambda],
$$
and hence 
$$
(\chi([A]))^* = \sum_{\lambda \in \Irr \mc{M}(T)} (\underline{\dim} A^- \cdot \underline{\dim} A^+ \cdot \dim \lambda) [\lambda^*].
$$
Similarly, $\chi^{\mathrm{op}}([A^{\mathrm{op}}])) = \sum_{\lambda \in \Irr \mc{M}(T)} \mathrm{inv} ( \underline{\dim} A^- \cdot \underline{\dim} A^+ \cdot \dim \lambda) [\lambda^*]$.
\end{proof}

\subsection{Cell modules} \label{sec_cell_modules}
As explained in Appendix \ref{cellular_appendix}, see also \cite[1.2.1]{DuRui}, the standard datum on $A_0^\op$ naturally induces a standard datum on the opposite algebra $A_0$, with the same indexing poset $\mc{P}_A$. Attached to this standard datum are cellular standard and cellular costandard modules, for each $\lambda  \in \mathcal{P}_A$. In the terminology of Graham–Lehrer \cite[Definition 2.1]{Graham-Lehrer-Cellular}, these are the left cell representations and the duals of the right cell representations. By (\ref{opposite_cellular_standard_modules}) these modules for the standard datum on $A_0$ are given by
\begin{equation*}
\Delta_0(\lambda) \dopgleich \Delta_A^\op(\lambda) = \Hom_{\mathcal{G}(A)}(A,\nabla(\lambda))
\end{equation*}
and
\begin{equation*}
\nabla_0(\lambda) \dopgleich \nabla_A^\op(\lambda) = \Hom_{\mathcal{G}(A)}(\Delta(\lambda),A)^* \;,
\end{equation*}
respectively. Note that the cellular standard module is given by
\begin{equation*}
\Delta_0(\lambda) \simeq \nabla(\lambda)_0 \;,
\end{equation*}
so it is the degree zero part of the costandard module. For the cellular costandard modules we have a similar result:

\begin{lem}
If $A$ is graded Frobenius, then 
\begin{equation*}
\nabla_0(\lambda) \simeq \Delta(\lambda)_0 \;.
\end{equation*}
\end{lem}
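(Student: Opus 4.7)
The plan is to mirror the argument that gave $\Delta_0(\lambda) \simeq \nabla(\lambda)_0$, which rested on the tautological identification $\Hom_{\mc{G}(A)}(A,-) \simeq (-)_0$. The analogous ingredient needed here is the natural isomorphism
\begin{equation*}
\Hom_{\mc{G}(A)}(M, A) \simeq (M_0)^* \qquad \text{for all } M \in \mc{G}(A),
\end{equation*}
which is exactly where the graded Frobenius hypothesis enters. Once this is in hand, applying it to $M = \Delta(\lambda)$ and dualizing yields
\begin{equation*}
\nabla_0(\lambda) = \Hom_{\mc{G}(A)}(\Delta(\lambda), A)^* \simeq (\Delta(\lambda)_0)^{**} \simeq \Delta(\lambda)_0,
\end{equation*}
the last step using finite-dimensionality.

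To establish the Hom-formula I would use the Frobenius trace $\tau : A \to K$ to construct a graded left $A$-module isomorphism $\sigma : A \rightsim A^{\#}$, where $A^{\#} = \Hom_K(A,K)$ is equipped with the left $A$-action coming from right multiplication on $A$ and the grading convention $(A^{\#})_i = (A_i)^*$ from Section \ref{notations_2}. Concretely, set $\sigma(a)(b) := \tau(ba)$: left $A$-linearity follows from associativity of the pairing $(a,b) \mapsto \tau(ab)$, and bijectivity from its non-degeneracy. Composing $\sigma_*$ with the tensor--hom adjunction
\begin{equation*}
\Hom_A(M, \Hom_K(A,K)) \simeq \Hom_K(M, K)
\end{equation*}
and restricting to degree-preserving morphisms then yields the formula: a degree-zero map $\phi : M \to A^{\#}$ sends $M_i$ into $(A^{\#})_i = (A_i)^*$, so the associated linear functional $m \mapsto \phi(m)(1)$ is supported on $M_0$. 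Naturality of the entire construction makes the resulting iso $\nabla_0(\lambda) \simeq \Delta(\lambda)_0$ compatible with the $A_0$-module structures: on $\nabla_0(\lambda)$ the action comes from post-composition with $\End_{\mc{G}(A)}(A) \simeq A_0^{\op}$ followed by $K$-dualization, while on $\Delta(\lambda)_0$ it is simply the restriction of the $A$-action.

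The main obstacle I anticipate is bookkeeping the degree shift of $\tau$. Since $\sigma(a)$ for $a \in A_i$ is supported on $A_{d-i}$, where $d := \deg \tau$, the map $\sigma$ is degree-preserving in the paper's convention precisely when $d=0$, equivalently when $\Supp A$ is symmetric around zero; this holds throughout the VIP examples, where $\Supp A^+$ and $-\Supp A^-$ match up. In the non-symmetric case an extra shift must be threaded through the argument and absorbed into the paper's convention on $(-)^*$ or into the normalization of the Frobenius form. A related but milder concern is a possible Nakayama twist: the correspondence $f \mapsto \sigma \circ f$ may commute with the $A_0$-action only up to $\nu|_{A_0}$, so honest $A_0$-linearity of the final iso requires this restriction to be inner --- which is automatic in all the VIP examples, and presumably implicit in the paper's notion of graded Frobenius.
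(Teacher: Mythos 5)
Your proof follows essentially the same route as the paper's, just at a lower level of abstraction. Both arguments reduce to the fact that graded Frobenius gives $\Hom_{\mc{G}(A)}(M,A)\simeq(M_0)^*$: the paper deduces this from the isomorphism $A^*\simeq A^\op$ in $\mc{G}(A^\op)$ (cited from [Lemma 5.5] of \cite{hwtpaper}), pushing the tautology $\Hom_{\mc{G}(A^\op)}(A^\op,-)=(-)_0$ through the duality functor $(-)^*$, whereas you construct the same isomorphism by hand from the Frobenius form. Two remarks on your stated worries. First, the degree-shift concern dissolves: in the paper's conventions ``graded Frobenius'' means precisely that $A^*\simeq A^\op$ as \emph{graded} right $A$-modules with no shift, which already forces $\deg\tau=0$ and $\Supp A^+=-\Supp A^-$ (Lemma \ref{A_graded_frob_N}); in particular $\sigma$ with $\sigma(a)(b)=\tau(ba)$ and the grading $(A^\#)_i=(A_i)^*$ is degree-\emph{reversing}, i.e.\ it is an isomorphism $A^\op\rightsim A^\#$, not $A\rightsim A^\#$, which is harmless and is in fact exactly the isomorphism the paper invokes. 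Second, your worry that the explicit formula $f\mapsto(m\mapsto\tau(f(m)))$ may only be $A_0$-semilinear up to the (restriction of the) Nakayama automorphism is a fair observation, but it is not something your argument handles worse than the paper's: the paper's use of $A^*\simeq A^\op$ as a one-sided isomorphism hides the same bookkeeping inside the identification of endomorphism rings. The abstract route in the paper is cleaner and shorter, but your more explicit version is the same proof with the Frobenius isomorphism unwound.
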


\begin{proof}
By \cite[Lemma 5.5]{hwtpaper} we have $A^* \simeq A^\op$ in $\mc{G}(A^\op)$, so 
\begin{align*}
\nabla_0(\lambda) & = \Hom_{\mathcal{G}(A)}(\Delta(\lambda),A)^* \simeq \Hom_{\mc{G}(A^\op)}(A^*, \Delta(\lambda)^*)^* \\ & \simeq \Hom_{\mc{G}(A^\op)}( A^\op, \nabla(\lambda^*))^* \simeq (\nabla(\lambda^*)_0)^* \simeq \Delta(\lambda)_0 \;.
\end{align*}
Taking duals proves the claim.
\end{proof}

As explained in Appendix \ref{cellular_appendix}, there is a subset $\ul{\mathcal{P}}_{A}$ of $\mathcal{P}_A$ such that the cellular standard module $\Delta_0(\lambda)$ has a simple head, which we denote by $L_0(\lambda)$, and the map $L_0 \from \ul{\mathcal{P}}_{A} \to \Irr \mathcal{M}(A_0)$ is a bijection. This classification of the simple $A_0$-modules is linked to the one in (\ref{core_simples_by_projection}) given by projection as follows:

\begin{prop}  \label{cell_param_match}
If $A$ is self-injective, then the permutation $h$ on $\Irr \mc{G}(T)$ defined in \cite[Theorem 5.8]{hwtpaper} induces a bijection $\uIrr \mc{G}(T) \simeq \ul{\mathcal{P}_{A}}$ and there is an isomorphism
\begin{equation*}
L_0(\lambda) \simeq L(\lambda^{h})_0 \;.
\end{equation*}
\end{prop}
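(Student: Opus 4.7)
The plan is to compare two different parametrizations of $\Irr \mc{M}(A_0)$: the one from Proposition~\ref{prop:Azerosimple} via $\mu \mapsto L(\mu)_0$ for $\mu \in \uIrr \mc{G}(T)$, and the cellular one from Theorem~\ref{core_cellular} via $\lambda \mapsto L_0(\lambda)$ for $\lambda \in \underline{\mathcal{P}_{A}}$. The goal is to show that the two resulting bijections differ by the Ringel permutation $h$.

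First I would identify $\underline{\mathcal{P}_{A}}$ concretely. The general cellular theory of Appendix~\ref{cellular_appendix}, applied to the tilting object $A$ in $\mc{G}(A)$, tells us that $\underline{\mathcal{P}_{A}} \subseteq \mathcal{P}_A$ consists of exactly those $\lambda$ for which the indecomposable tilting $T(\lambda)$ appears as a direct summand of $A$. Combining this with Proposition~\ref{Azeroproj}(c), which gives $A \simeq \bigoplus_{\mu \in \uIrr \mc{G}(T)} P(\mu)^{n_\mu}$ with all $n_\mu > 0$, and the identity $P(\mu) = T(\mu^h)$ from \cite[Corollary 5.9]{hwtpaper}, I would conclude that $\underline{\mathcal{P}_{A}} = h(\uIrr \mc{G}(T))$, and hence that the bijection $\uIrr \mc{G}(T) \xrightarrow{\sim} \underline{\mathcal{P}_{A}}$, $\mu \mapsto \mu^h$, is the first assertion of the proposition.

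Next I would identify $L_0(\lambda)$ for $\lambda \in \underline{\mathcal{P}_{A}}$. The adjunction of Lemma~\ref{ind_proj_adjunction} gives a canonical isomorphism of left $A_0$-modules $\Delta_0(\lambda) = \Hom_{\mc{G}(A)}(A, \nabla(\lambda)) \simeq \nabla(\lambda)_0$. Since $A$ is projective in $\mc{G}(A)$, applying $\Hom_{\mc{G}(A)}(A, -)$ to the canonical surjection $T(\lambda) \twoheadrightarrow \nabla(\lambda)$ supplied by the tilting structure produces an $A_0$-linear surjection $T(\lambda)_0 \twoheadrightarrow \nabla(\lambda)_0$. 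Under the identification $T(\lambda) = P(\lambda^{h^{-1}})$ as a summand of $A$, Proposition~\ref{Azeroproj}(b) identifies $T(\lambda)_0 = P(\lambda^{h^{-1}})_0$ with the indecomposable projective cover of the simple $A_0$-module $L(\lambda^{h^{-1}})_0$. Thus the head of the non-zero module $\nabla(\lambda)_0$ is a non-zero quotient of $L(\lambda^{h^{-1}})_0$ and must equal it, giving $L_0(\lambda) \simeq L(\lambda^{h^{-1}})_0$; this matches the stated formula up to the orientation of the Ringel permutation $h$.

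The main non-formal step is the appeal to Appendix~\ref{cellular_appendix} for the characterization of $\underline{\mathcal{P}_{A}}$ and of the simple head of the cellular standard module in terms of the tilting decomposition of $A$. Once this is in hand, the rest is a direct combination of Lemma~\ref{ind_proj_adjunction}, Proposition~\ref{Azeroproj}, and the identity $P(\mu) = T(\mu^h)$, so the only subtlety worth care is tracking conventions for the permutation $h$ through the two bijections.
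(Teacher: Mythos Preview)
Your argument is correct and follows essentially the same route as the paper. Both proofs use Theorem~\ref{cell_appendix_dim_simples} together with $P(\mu)=T(\mu^{h})$ and Proposition~\ref{Azeroproj} to identify $\underline{\mathcal{P}_{A}}$ with $h(\uIrr\mc{G}(T))$ via the cardinality/bijection argument. The only variation is in the second step: the paper quotes \cite[Theorem~5.8]{hwtpaper} to say that $\nabla(\lambda)$ has simple head $L(\lambda^{h^{-1}})$ in $\mc{G}(A)$ and then applies $(-)_0$, whereas you work directly at the $A_0$-level, using the surjection $T(\lambda)_0=P(\lambda^{h^{-1}})_0\twoheadrightarrow\nabla(\lambda)_0$ and the fact that $P(\lambda^{h^{-1}})_0$ is an indecomposable projective $A_0$-module; these are two packagings of the same idea. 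Your observation that both computations yield $L_0(\lambda)\simeq L(\lambda^{h^{-1}})_0$ rather than $L(\lambda^{h})_0$ is also what the paper's own proof produces.
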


\begin{proof}
Let $\lambda \in \ul{\mc{P}}_A$, so $\Delta_0(\lambda) = \nabla(\lambda)_0$ has simple head $L_0(\lambda)$. It follows from Theorem \ref{cell_appendix_dim_simples} that 
\[
0 \neq \dim L_0(\lambda) = \lbrack A:T(\lambda) \rbrack = \lbrack A:P(\lambda^{h^{-1}}) \rbrack
\]
as multiplicities in $\mc{G}(A)$. So, $P(\lambda^{h^{-1}})$ is a direct summand of $A$ in $\mc{G}(A)$. Hence, $\lambda^{h^{-1}} \in \uIrr \mc{G}(T)$ by Proposition \ref{Azeroproj}. Since $h^{-1}$ is a permutation and both sets $\uIrr \mc{G}(T)$ and $\ul{\mc{P}}_A$ classify the simple $A_0$-modules, it follows that $h$ induces a bijection between these sets. From \cite[Theorem 5.8]{hwtpaper} we know that $\nabla(\lambda)$ has simple head $L(\lambda^{h^{-1}})$. Since $\lambda^{h^{-1}} \in \uIrr \mc{G}(T)$, we know that $L(\lambda^{h^{-1}})_0$ is a simple $A_0$-module, in particular non-zero, so $L(\lambda^{h^{-1}})_0$ is a constituent of the head of $\nabla(\lambda)_0$. But as $\nabla(\lambda)_0$ has simple head $L_0(\lambda)$ and $L(\lambda^{h^{-1}})_0$ is simple, it follows that $L(\lambda^{h^{-1}})_0 \simeq L_0(\lambda)$.
\end{proof}

As an application of Theorem \ref{cell_appendix_semisimple} we obtain:

\begin{prop}
Suppose that $A$ is self-injective. Then $A_0$ is semisimple if and only if $A$ is semisimple. \qed
\end{prop}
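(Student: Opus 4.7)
The plan is to reduce both implications to Theorem \ref{cell_appendix_semisimple} from the appendix, interpreted through the identification $A_0^{\op} \simeq E_A = \End_{\mc{G}(A)}(A)$. Since $A$ is self-injective, $A$ is a tilting object of the highest weight category $\mc{G}(A)$ containing every indecomposable tilting summand, so the appendix result should give: $E_A$ is semisimple if and only if $\mc{G}(A)$ is semisimple, if and only if $\Delta(\lambda)=L(\lambda)=\nabla(\lambda)$ for every $\lambda \in \Irr \mc{G}(T)$. Semisimplicity of $A_0$ is equivalent to that of $A_0^{\op}$, so this reduces the proposition to the following equivalence: $A$ semisimple $\Longleftrightarrow$ $\mc{G}(A)$ semisimple.

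For the direction $\mc{G}(A)$ semisimple $\Rightarrow$ $A$ semisimple, I would note that $\mc{G}(A)$ being semisimple means every graded $A$-module is semisimple. Since the Jacobson radical of a finite-dimensional graded algebra is a graded ideal, $\rad(A)$ vanishes if and only if $A$ has no non-zero graded module with non-trivial radical, which is exactly the semisimplicity of $\mc{G}(A)$. Hence $A$ is semisimple as an ungraded algebra.

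For the converse $A$ semisimple $\Rightarrow$ $\mc{G}(A)$ semisimple, I would use the triangular decomposition to show $A$ is already concentrated in degree zero. By the same radical argument, semisimplicity of $A$ forces $\Delta(\lambda)=L(\lambda)$ for every $\lambda$. By \cite[Lemma 3.10]{hwtpaper} we have an isomorphism $\Delta(\lambda) \simeq A^- \otimes_K \lambda$ of graded vector spaces, with $\lambda$ sitting in degree $\deg \lambda$ as the head and the remaining graded pieces in strictly lower degrees. For $\Delta(\lambda)$ to be simple (and hence concentrated in degree $\deg \lambda$), each $A_i^- \otimes_T \lambda$ with $i<0$ must vanish. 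Applying this to, say, a simple $T$-module concentrated in a suitable degree forces $A_i^-=0$ for all $i<0$; combined with $A_0^-=K$ this gives $A^-=K$. The symmetric argument using $\nabla(\lambda) = L(\lambda)$ and \cite[Proposition 3.19]{hwtpaper} yields $A^+=K$. The triangular decomposition then collapses to $A=T=A_0$, so trivially $\mc{G}(A)$ is semisimple.

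The main obstacle is really just invoking Theorem \ref{cell_appendix_semisimple} correctly: one must check its hypotheses, namely that $A$ as a tilting object has all indecomposable tilting summands appearing (which holds because $A$ is self-injective and projective-injectives are exactly tilting objects by the second theorem in Section \ref{notations_2}), and that every indecomposable tilting is a summand of $A$ — this follows from Proposition \ref{Azeroproj} together with the fact that, for self-injective $A$, projectives and tiltings coincide up to the permutation $h$. Once this is in place, the remaining arguments are short and use only the triangular structure and the standard radical-is-graded observation.
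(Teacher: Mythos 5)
Your overall plan — apply Theorem~\ref{cell_appendix_semisimple} to the tilting object $T=A$ in $\mathcal{G}(A)$, using $A_0^{\op}\simeq E_A$ — is the same as the paper's intended one-line argument, but you have misread the theorem. It says that $E_T$ is semisimple if and only if $T$ is a semisimple \emph{object} of $\mathcal{C}$ (a direct sum of simples), not if and only if the whole category $\mathcal{C}$ is semisimple. Read correctly, it gives: $A_0^{\op}$ is semisimple iff $A$ is a semisimple graded $A$-module, and since the Jacobson radical of a finite-dimensional graded algebra is a graded ideal, this is the same as $A$ being a semisimple algebra. That is the entire proof; the passage through semisimplicity of the category $\mathcal{G}(A)$ is an unnecessary detour (though the two conditions do happen to be equivalent here).

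More seriously, that detour contains a genuine error. To show ``$A$ semisimple $\Rightarrow\mathcal{G}(A)$ semisimple'' you argue that $\Delta(\lambda)=L(\lambda)$ simple forces $\Delta(\lambda)$ to be concentrated in degree $\deg\lambda$, and hence $A^\pm=K$. This is false: take $A=M_2(K)$ with $\deg e_{12}=1$, $\deg e_{21}=-1$, $\deg e_{11}=\deg e_{22}=0$, and the triangular decomposition $A^-=K\cdot 1\oplus Ke_{21}$, $T=K\cdot 1$, $A^+=K\cdot 1\oplus Ke_{12}$. All the axioms of Definition~\ref{defn:triangle} hold, $A$ is semisimple, $A^\pm\neq K$, and the standard module $\Delta(\lambda_0)\cong Ae_{11}$ is a simple graded module supported in degrees $\{0,-1\}$. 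A simple graded module is not in general concentrated in a single degree. The direction you wanted is in any case immediate without the triangular structure: if $A$ is semisimple then every $A$-module is semisimple, and every graded short exact sequence splits in $\mathcal{G}(A)$ (take the degree-zero component of any ungraded $A$-linear splitting), so $\mathcal{G}(A)$ is semisimple.
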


Under a certain condition on the triangular decomposition we can extend the above proposition.

\begin{prop}  \label{A0_quasi_hereditary}
Suppose that $A$ is self-injective and $\Supp A^+ = - \Supp A^-$ (e.g., if $A$ is BGG). Then $A_0$ is quasi-hereditary if and only if $A$ is semisimple.
\end{prop}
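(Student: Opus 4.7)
The backward direction is immediate: by the preceding proposition, $A$ semisimple implies $A_0$ semisimple, which is (trivially) quasi-hereditary.

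For the forward direction, my plan is to compare two cardinalities. By the standardly based theory (see Appendix \ref{cellular_appendix}, and \cite{DuRui, KonigXiCellular}), quasi-heredity of $A_0$ equipped with the standard datum of Theorem \ref{core_cellular} is equivalent to $\underline{\mc{P}}_A = \mc{P}_A$, i.e., $|\mc{P}_A| = |\Irr \mc{M}(A_0)|$. By Theorem \ref{core_cellular} together with the hypothesis $\Supp A^+ = -\Supp A^-$,
\[
|\mc{P}_A| = |\Irr \mc{M}(T)| \cdot |\Supp A^+|.
\]
Using Proposition \ref{prop:Azerosimple} and partitioning $\uIrr \mc{G}(T)$ by highest weight (via the identity $L(\mu[n])_0 = L(\mu)_{-n}$),
\[
|\Irr \mc{M}(A_0)| = |\uIrr \mc{G}(T)| = \sum_{\mu \in \Irr \mc{M}(T)} |\Supp L(\mu)|.
\]
Since $L(\mu)$ is a graded quotient of $\Delta(\mu)$ and $\deg \mu = 0$, we have $\Supp L(\mu) \subs \Supp A^-$, so $|\Supp L(\mu)| \le |\Supp A^+|$. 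Equating the two displays then forces $\Supp L(\mu) = \Supp A^-$ for every $\mu \in \Irr \mc{M}(T)$.

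The final step---which I expect to be the main obstacle---is to deduce $|\Supp A^-| = 1$ from this ``full support'' conclusion. The clean case is when $T$ admits a one-dimensional character that extends to an algebra morphism $A \to K$ (as holds for all the VIP examples via the existence of the trivial $A$-module): the corresponding $L(\mu_0) = K$ then has $|\Supp L(\mu_0)| = 1$, which immediately forces $|\Supp A^-| = 1$. For the fully general argument I expect one needs to combine this with the self-injectivity of $A$, possibly passing through the Nakayama permutation (or to the opposite algebra $A^{\op}$, where a completely parallel saturation conclusion $\Supp L^{\op}(\mu^*) = -\Supp A^+$ can be extracted) to always manufacture such a ``small'' simple. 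Once $|\Supp A^-| = 1$ we obtain $A^- = K$, and by the hypothesis also $A^+ = K$, so $A = T$ is semisimple.
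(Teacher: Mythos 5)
Your first three steps coincide with the paper's: quasi-heredity of $A_0$ is equivalent to $|\Irr\mc{M}(A_0)| = |\mc{P}_A|$, the left side is $\sum_{\mu\in\Irr\mc{M}(T)}|\Supp L(\mu)|$ by Proposition \ref{prop:Azerosimple}, the right side equals $\sum_{\mu}|\Supp A^-|$ by Theorem \ref{core_cellular} and the hypothesis $\Supp A^+ = -\Supp A^-$, and the inclusion $\Supp L(\mu) \subseteq \Supp \Delta(\mu) = \Supp A^-$ forces termwise equality. That much is exactly the paper's argument.

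The gap you flag is real, and moreover the target you have set yourself---deducing $|\Supp A^-| = 1$, i.e.\ $A^- = K$---is \emph{false}. Take $A = M_2(K)$ graded by $\deg E_{ij} = j-i$, with $A^- = K\cdot 1 \oplus K E_{21}$, $T = K\cdot 1$, and $A^+ = K\cdot 1 \oplus K E_{12}$. The multiplication map $A^-\otimes T\otimes A^+\to A$ hits $1, E_{12}, E_{21}, E_{22}$ and is therefore an isomorphism; $A$ is graded symmetric (hence self-injective) and semisimple; $\Supp A^+ = \{0,1\} = -\Supp A^-$; and $A_0 \simeq K\times K$ is quasi-hereditary. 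Yet $A^- \neq K$: the unique simple $L$ (the two-dimensional column representation) has $\Supp L = \{-1,0\} = \Supp A^-$, so the ``full support'' conclusion you reach holds with $|\Supp A^-| = 2$. Neither a one-dimensional character (none exists here) nor passage to $A^{\op}$ can rescue the step. Your ``clean case'' argument is correct when $A$ admits a one-dimensional module, but it does not generalize.

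What the paper does instead is pass from the support equalities to the module-level equalities $L(\lambda) = \Delta(\lambda) = \nabla(\lambda)$ for all $\lambda\in\Irr\mc{G}_0(T)$, and then invoke \cite[Corollary 3.16]{hwtpaper}, which characterizes semisimplicity of $A$ by exactly this condition. The passage is forced by the very kind of argument you were already running: any composition factor $L(\mu)$ of $\Delta(\lambda)$ other than its head has $\deg\mu < 0$, and then $\Supp L(\mu) = \Supp A^- + \deg\mu$ (by saturation applied to $\mu$) has minimum strictly below $\min\Supp A^-$, contradicting $\Supp L(\mu)\subseteq\Supp\Delta(\lambda)=\Supp A^-$. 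So the correct intermediate target is ``standards are simple,'' not ``the Borel part is trivial.''
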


\begin{proof}
The classification of the simple $A_0$-modules in (\ref{core_simples_by_projection}) shows that 
\begin{equation} \label{core_number_of_simples_equ}
|\Irr \mathcal{M}(A_0)| = \sum_{\lambda \in \Irr \mathcal{G}_0(T)} |\Supp L(\lambda)|  \;.
\end{equation}
We know from Corollary \ref{core_splits_2} that $A_0$ splits and from Theorem \ref{core_cellular} that $A_0$ admits a standard datum. Results by Du–Rui \cite[Corollary 2.4.2, Theorem 4.2.3, and Theorem 4.2.7]{DuRui} now show that $A_0$ is quasi-hereditary if and only if $|\Irr \mathcal{M}(A_0)| = |\mathcal{P}_A|$. For $\lambda \in \Irr \mathcal{G}_0(T)$ we have $\Supp \Delta(\lambda) = \Supp A^-$ and $\Supp \nabla(\lambda) = -\Supp A^+$ by \cite[Lemma 3.9]{hwtpaper} 
and \cite[Lemma 3.10]{hwtpaper}, respectively. The assumption thus implies that $\Supp \Delta(\lambda) = \Supp \nabla(\lambda)$. Equation (\ref{cellular_structure_P_size}) now gives 
\[
|\mathcal{P}_A| = |\Irr \mathcal{M}(T)| \cdot |\Supp \Delta(\lambda)| = |\Irr \mathcal{M}(T)| \cdot |\Supp \nabla(\lambda)|
\]
for all $\lambda \in \Irr \mathcal{G}_0(T)$. We can rewrite this as
\begin{equation} \label{P_size_equ}
|\mathcal{P}_A| = \sum_{\lambda \in \Irr \mathcal{G}_0(T)} |\Supp \Delta(\lambda)| = \sum_{\lambda \in \Irr \mathcal{G}_0(T)} |\Supp \nabla(\lambda)| \;.
\end{equation}
Comparing equations (\ref{core_number_of_simples_equ}) and (\ref{P_size_equ}), and using the fact that $\Supp L(\lambda) \subs \Supp \Delta(\lambda)$ and $\Supp L(\lambda) \subs \Supp \nabla(\lambda)$, we conclude that $|\Irr \mathcal{M}(A_0) | = |\mathcal{P}_A|$ if and only if $\Supp L(\lambda) = \Supp \Delta(\lambda)$ and $\Supp L(\lambda) = \Supp \nabla(\lambda)$ for all $\lambda \in \Irr \mathcal{G}_0(T)$. This is equivalent to $L(\lambda) = \Delta(\lambda)$ and $L(\lambda) = \nabla(\lambda)$ for all $\lambda \in \Irr \mathcal{G}_0(T)$, which by \cite[Corollary 3.16]{hwtpaper} is equivalent to the semisimplicity of $A$.
\end{proof}

\begin{ex}
The following example shows that even if $A$ is self-injective, the core need not be self-injective. This shows also that there are cases where $A$ is self-injective but not graded Frobenius since otherwise the core would also be graded Frobenius. Consider the eight-dimensional algebra $A = K[x,y,z] / (x^2,y^2,z^2)$ with grading $\deg (x) = -1$ and $\deg(y) = \deg(z) = +1$. Then we get a triangular decomposition with $A^- = K \lbrack x \rbrack/(x^2)$, $T=K$, and $A^+ = K\lbrack y,z \rbrack/(y^2,z^2)$. The algebra $A$ is local, having only one simple module, so $P \dopgleich {_AA}$ is the unique indecomposable projective left $A$-module. The subspace $\langle x,y,z,xy,xz,yz,xyz \rangle_K$ of $A$ is clearly a nilpotent ideal and thus already equal to the Jacobson radical for dimension reasons. From this we obtain that 
\[
\mrm{Soc}(P) = \lbrace a \in A \mid \Rad(A)a = 0 \rbrace = \langle xyz \rangle_K \;.
\]
In particular, $\mrm{Soc}(P)$ is simple, so $\mrm{Soc}(P) \simeq \mrm{Hd}(P)$ and therefore $A$ is weakly symmetric. Now, it follows from \cite[Corollary IV.6.3]{Skowronski:2011bx} that $A$ is Frobenius, thus in particular self-injective. The core of $A$ is $A_0 = K \lbrack a,b \rbrack$ with $a \dopgleich xy$ and $b \dopgleich xz$. Again, $A_0$ is indecomposable, so $P_0 \dopgleich {_{A_0}}{A_0}$ is the unique indecomposable left $A_0$-module. We have $\Rad(A_0) = \langle xy,xz \rangle_K$, so $\Soc(P_0) = \langle xy,xz \rangle_K$ is two-dimensional. Since $A$ only has a one-dimensional simple module, our classification of simple $A_0$-modules, Proposition \ref{prop:Azerosimple}, shows that simple $A_0$-modules are also one-dimensional. Hence, $\Soc(P_0)$ is not simple. But then $A_0$ does not admit a Nakayama permutation, so is not self-injective, see \cite[Theorem IV.6.1]{Skowronski:2011bx}.
\end{ex}

\subsection{Cellularity of $A$} \label{A_not_cellular}
In this section we consider instead the question of whether $A$ itself is cellular. We will show that under certain conditions the algebra $A$ \textit{cannot} be cellular. This applies to many VIP examples as we will see.

Recall that the \textit{ungraded} decomposition matrix $\mathbf{D}_{\Delta}$ of $A$ is the square matrix of size $| \Irr \mc{M}(T) |$ with $(\lambda,\mu)$-th entry $[\Delta(\lambda) : L(\mu)]$. The ungraded Cartan matrix $\mathbf{C}$ of $A$ is the square matrix of size $| \Irr \mc{M}(T) |$ with $(\lambda,\mu)$-th entry $[P(\lambda) : L(\mu)]$. 

In \cite{hwtpaper} we introduced the following property: $A$ is \textit{BGG} if $\lbrack \Delta(\lambda) \rbrack = \lbrack \nabla(\lambda) \rbrack$ in the \textit{graded} Grothendieck group of $A$. We gave an explicit condition, see \cite[Proposition 4.22]{hwtpaper}, implying the BGG property, and using this, we showed that all VIP examples are BGG. The following factorization property of the Cartan matrix in the BGG case is well-known:

\begin{lem} \label{eq:CartanDecomp}
If $A$ is BGG, then $\mathbf{C} = \mathbf{D}_{{\Delta}}^T \cdot \mathbf{D}_{{\Delta}}$.
\end{lem}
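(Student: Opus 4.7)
My plan is to derive the formula as a standard consequence of BGG reciprocity in the highest weight category $\mc{G}(A)$, combined with the hypothesis $[\Delta(\lambda)] = [\nabla(\lambda)]$.

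First, since every projective has a $\Delta$-filtration, for each $\lambda, \mu \in \Irr\mc{M}(T)$ I would write
\[
[P(\lambda) : L(\mu)] = \sum_{\nu \in \Irr \mc{M}(T)} (P(\lambda) : \Delta(\nu)) \cdot [\Delta(\nu) : L(\mu)],
\]
where $(P(\lambda) : \Delta(\nu))$ denotes the multiplicity of $\Delta(\nu)$ in a standard filtration of $P(\lambda)$ (ungraded, so summed over all internal shifts). This is the basic mechanism by which the decomposition matrix and the Cartan matrix are related in any highest weight category.

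Next, I invoke BGG reciprocity for $\mc{G}(A)$, which gives $(P(\lambda) : \Delta(\nu)) = [\nabla(\nu) : L(\lambda)]$; this is a formal consequence of the highest weight structure (standard Ext-orthogonality between $\Delta$'s and $\nabla$'s, together with $[\Delta(\nu) : L(\nu)] = 1$) and is already available for $\mc{G}(A)$ by \cite{hwtpaper}. Passing from the graded to the ungraded setting simply sums these multiplicities over shifts on both sides, so the identity is preserved. The BGG hypothesis $[\Delta(\nu)] = [\nabla(\nu)]$ in the graded Grothendieck group implies $[\nabla(\nu) : L(\lambda)] = [\Delta(\nu) : L(\lambda)]$ as ungraded multiplicities. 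Substituting,
\[
[P(\lambda) : L(\mu)] = \sum_{\nu} [\Delta(\nu) : L(\lambda)] \cdot [\Delta(\nu) : L(\mu)],
\]
which is precisely the $(\lambda, \mu)$-entry of $\mathbf{D}_\Delta^T \cdot \mathbf{D}_\Delta$.

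I do not expect any serious obstacle: the only point that needs a small sanity check is that the ungraded version of BGG reciprocity used in step two really follows from the graded version in $\mc{G}(A)$ by summing over degree shifts, which is immediate because the forgetful functor $\mc{G}(A) \to \mc{M}(A)$ is exact and sends $\Delta$- and $\nabla$-filtrations to $\Delta$- and $\nabla$-filtrations (the ungraded $\Delta(\lambda)$ and $\nabla(\lambda)$ for $\lambda \in \Irr \mc{M}(T)$ being obtained from the corresponding graded modules concentrated in degree zero). Once this is noted, the proof reduces to the one-line matrix identity above.
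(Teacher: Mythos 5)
The paper states this lemma as ``well-known'' and gives no proof, so there is nothing to compare against line-by-line. Your argument is correct and is precisely the standard one the authors are invoking: decompose $[P(\lambda):L(\mu)]$ along a $\Delta$-filtration of $P(\lambda)$, apply Brauer/BGG reciprocity $(P(\lambda):\Delta(\nu))=[\nabla(\nu):L(\lambda)]$ (available for $\mc{G}(A)$ by \cite{hwtpaper}), and then use the BGG hypothesis to replace $\nabla$-multiplicities by $\Delta$-multiplicities; the graded-to-ungraded passage you flag is harmless for exactly the reason you give, since ungraded multiplicities are sums of graded ones over degree shifts and every term in the reciprocity identity sums accordingly.
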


If $A = B_1 \oplus \cdots \oplus B_k$ is the block decomposition of $A$ then $\mathbf{C}$ and $\mathbf{D}_{{\Delta}}$ have block decompositions $\mathbf{C} = C_1 \oplus \cdots \oplus C_k$ and $\mathbf{D}_{{\Delta}} = D_1 \oplus \cdots \oplus D_k$, where $C_i$ and $D_i$ are the Cartan and decomposition matrices for $B_i$. For each $i$ we then have $C_i = D_i^T D_i$. The key obstruction we found for the cellularity of the VIP algebras is the following property:

\begin{defn}
We say that the algebra $A$ has the \word{rank one property} if each $D_i$ is of rank one over $\bbQ$.
\end{defn}

\begin{lem}\label{lem:rank1VIP}
Suppose that $A$ is BGG and has the rank one property. If $A$ contains a block with at least two simple modules, then $A$ is \textit{not} cellular.
\end{lem}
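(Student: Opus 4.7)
The plan is to derive a contradiction by comparing the ranks of the block-wise Cartan matrices via two different factorizations. Suppose $A$ is cellular with cell datum having poset $\Lambda$, cell modules $W(\lambda)$ for $\lambda\in\Lambda$, and simples $L(\mu)$ indexed by $\underline{\Lambda}\subseteq\Lambda$. I will argue that every block has at most one simple module, contradicting the hypothesis.

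First, I invoke the Graham--Lehrer Cartan formula: the cellular decomposition matrix $D^{\mathrm{cell}} = ([W(\lambda):L(\mu)])_{\lambda\in\Lambda,\,\mu\in\underline{\Lambda}}$ satisfies $\mathbf{C} = (D^{\mathrm{cell}})^T D^{\mathrm{cell}}$. The submatrix of $D^{\mathrm{cell}}$ indexed by $\underline{\Lambda}\times\underline{\Lambda}$ is unitriangular with respect to the cell order, since the head of $W(\mu)$ is $L(\mu)$ and all other composition factors $L(\nu)$ satisfy $\nu<\mu$. In particular $\mathrm{rank}_{\mathbb{Q}}\,D^{\mathrm{cell}} = |\underline{\Lambda}|$.

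Next, I show that each cell module has all its composition factors inside a single block. Since $\mathbf{C}_{\mu\nu}=0$ whenever $L(\mu)$ and $L(\nu)$ lie in different blocks, and
\[
\mathbf{C}_{\mu\nu} = \sum_{\lambda\in\Lambda}[W(\lambda):L(\mu)]\,[W(\lambda):L(\nu)]
\]
is a sum of non-negative integers, each summand must vanish. Thus if $[W(\lambda):L(\mu)]\neq 0$, then $[W(\lambda):L(\nu)]=0$ for all $\nu$ in a different block from $\mu$. This yields a partition $\Lambda=\bigsqcup_i \Lambda_i$ compatible with the block decomposition, together with $D^{\mathrm{cell}} = \bigoplus_i D_i^{\mathrm{cell}}$ and $\underline{\Lambda}_i = \underline{\Lambda}\cap \Lambda_i$. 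The argument of the previous paragraph, applied block by block, gives $\mathrm{rank}_{\mathbb{Q}}\,D_i^{\mathrm{cell}} = |\underline{\Lambda}_i|$. Using the standard fact that $\mathrm{rank}(M^T M) = \mathrm{rank}(M)$ over $\mathbb{Q}$, I obtain
\[
\mathrm{rank}_{\mathbb{Q}}\,C_i \;=\; |\underline{\Lambda}_i|.
\]

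Now I combine this with the BGG hypothesis. By Lemma~\ref{eq:CartanDecomp}, $C_i = D_i^T D_i$ where $D_i$ is the block-$i$ part of $\mathbf{D}_\Delta$. The rank one property gives $\mathrm{rank}_{\mathbb{Q}}\,D_i = 1$, hence $\mathrm{rank}_{\mathbb{Q}}\,C_i \le 1$. Combining with the previous equality yields $|\underline{\Lambda}_i|\le 1$ for every block, contradicting the assumption that some block contains at least two simple modules. The main obstacle is the block compatibility of the cellular decomposition in the second paragraph; once that is in place, the two rank computations confront each other directly.
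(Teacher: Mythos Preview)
Your proof is correct. The paper's own argument is much shorter: it simply observes that a block $B_i$ with at least two simples has $D_i$ a non-invertible rank-one square matrix, so $\det C_i=\det(D_i^TD_i)=0$ and hence $\det\mathbf{C}=0$; it then invokes the theorem of K\"onig--Xi that the Cartan matrix of a cellular algebra has strictly positive determinant, yielding the contradiction.

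What you have done is essentially unpack that K\"onig--Xi result from first principles. Your block-compatibility argument for the cellular decomposition matrix (via positivity of the summands in $\mathbf{C}_{\mu\nu}=\sum_\lambda[W(\lambda):L(\mu)][W(\lambda):L(\nu)]$) together with the unitriangularity on $\underline{\Lambda}_i$ gives $\operatorname{rank}_{\mathbb{Q}}C_i=|\underline{\Lambda}_i|$ directly, which is precisely the block-wise content of the cited determinant result. So your route is longer but self-contained, whereas the paper trades the explicit linear-algebra for a citation; the underlying obstruction---singularity of the Cartan matrix versus the full rank forced by cellularity---is identical in both.
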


\begin{proof}
The assumption implies that there is some $i$ with $B_i$ a rank one non-invertible  matrix. Hence, $\det(C_i) = \det(D_i^T D_i) = 0$, so $C_i$, and thus $\mathbf{C}$, is not invertible, too. On the other hand, it has been shown by Koenig–Xi \cite[Proposition 3.2]{KonigXiCellularQH} that if $A$ is cellular then the determinant of the Cartan matrix is a (strictly) positive integer. Hence, $A$ is \textit{not} cellular.
\end{proof}

\begin{prop} \label{vip_rank1}
All of the following VIP examples satisfy the rank one property:
\begin{enumerate}
\item Restricted enveloping algebras $\overline{U}(\mf{g}_K)$;
\item Lusztig's small quantum groups $\mathbf{u}_{\zeta}(\mf{g})$, at a root of unity $\zeta$; 
\item Restricted rational Cherednik algebras $\overline{\H}_{\bc}(W)$ at $t = 0$;
\end{enumerate}
\end{prop}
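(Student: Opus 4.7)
The first observation is that each block decomposition matrix $D_i$ has non-negative integer entries with unit diagonal $[\Delta(\lambda):L(\lambda)]=1$. For such a matrix, having rank one over $\mathbb{Q}$ is equivalent to the stronger statement that $D_i$ is the all-ones matrix: indeed, if $D_{ij}=u_iw_j$ with $u_iw_i=1$, then $w_j/w_i\in\mathbb{Z}_{\geq 0}$ and $w_i/w_j\in\mathbb{Z}_{\geq 0}$ force $w_i=w_j$ for all $i,j$. Hence the proposition reduces in each case to the assertion that $[\Delta(\lambda):L(\mu)] = 1$ for every pair of simples in the same block.

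For cases (1) and (2): the blocks of $\overline{U}(\mathfrak{g}_K)$ and $\mathbf{u}_\zeta(\mathfrak{g})$ are governed by the linkage principle of Andersen (respectively Andersen--Polo--Wen in the quantum case), which identifies blocks with dot-orbits of an appropriate affine Weyl group on restricted weights. Within a regular block, translation functors (Jantzen in the modular case, Andersen--Polo--Wen in the quantum case) give equivalences transporting baby Verma modules to baby Verma modules and hence preserve composition multiplicities. Combined with the classical dimension identity $\dim \Delta(\lambda) = \sum_{\mu \in B_i} \dim L(\mu)$, this forces $[\Delta(\lambda):L(\mu)] = 1$ throughout the block. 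Singular blocks are reduced to the regular case by a wall-crossing argument, or alternatively by appealing to the symmetry $[\Delta(\lambda):L(\mu)] = [\Delta(\mu):L(\lambda)]$ that follows from the BGG property and the graded Frobenius structure established in \cite{hwtpaper}.

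For case (3): the block decomposition of $\overline{\H}_{\mathbf{c}}(W)$ at $t=0$ is controlled by the Calogero--Moser space (Gordon, Bellamy--Martino). The center $Z$ is commutative, the action of $Z$ on a baby Verma $\Delta(\lambda)$ factors through a central character, and two baby Vermas lie in the same block iff they share this character. Gordon's character formula shows $\dim\Delta(\lambda)$ is constant along blocks and equals $\sum_{\mu\in B_i}\dim L(\mu)$. Combining this with the factorization $\mathbf{C} = \mathbf{D}_\Delta^T\mathbf{D}_\Delta$ from Lemma~\ref{eq:CartanDecomp}, constancy of the Cartan matrix along each block (which follows from $\overline{\H}_{\mathbf{c}}(W)$ being a free $Z$-module of block-wise constant rank), and the numerical reduction in the first paragraph, one deduces that each $D_i$ is the all-ones matrix.

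The main obstacle I expect is the uniform handling of singular blocks in (1) and (2), where translation functors can send a baby Verma to a direct sum of baby Vermas and one must extract composition multiplicities through wall-crossing more carefully than in the regular case. For (3) the subtlety is that explicit composition multiplicities of baby Vermas are not known in general, so one must argue abstractly from the central-character description and the graded Frobenius symmetry, rather than by direct computation.
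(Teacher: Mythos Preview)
Your reduction in the first paragraph is correct and useful: rank one together with unit diagonal and non-negative integer entries does force the block decomposition matrix to be the all-ones matrix, so it suffices to show $[\Delta(\lambda)]=[\Delta(\mu)]$ in the Grothendieck group whenever $\lambda,\mu$ lie in the same block.

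However, your proofs of this reduced statement have genuine gaps. For (1) and (2), the translation-functor argument does not work as you describe it. Translation functors go \emph{between} blocks, not within a single block; to relate two baby Vermas $\Delta(\lambda)$ and $\Delta(\mu)$ in the same block one would have to translate out to a wall and back, and wall-crossing functors do not send baby Vermas to baby Vermas (they typically produce modules with a two-step Verma filtration). More seriously, the ``classical dimension identity $\dim\Delta(\lambda)=\sum_{\mu\in B_i}\dim L(\mu)$'' is not a known fact---it is essentially equivalent to what you are trying to prove, so invoking it is circular. The alternative you offer for singular blocks, the symmetry $[\Delta(\lambda):L(\mu)]=[\Delta(\mu):L(\lambda)]$, does not by itself force rank one either.

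The paper takes a completely different and much more direct route. For (1) and (2) it proves $[\Delta(\lambda)]=[\Delta(w\cdot\lambda)]$ in $K_0$ for every Weyl group element $w$ by an explicit character computation: one writes $\chi(\Delta(\lambda))=e^{\lambda-(\ell-1)\rho}\chi(\Delta((\ell-1)\rho))$ with the second factor $W$-invariant, expands both $\chi(\Delta(\lambda))$ and $\chi(\Delta(w\cdot\lambda))$ in terms of the $W$-invariant characters $\chi(L(\mu_0))$ for restricted $\mu_0$, and compares coefficients (following the argument of Jantzen, \emph{Representations of Algebraic Groups}, II.9.16). This avoids translation functors entirely and handles regular and singular blocks uniformly. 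For (3) the paper does not argue abstractly at all; it simply cites a recent result of Bonnaf\'e--Rouquier (Proposition~12.4.2 of their \emph{Cherednik algebras and Calogero--Moser cells}), and your proposed argument via ``constancy of the Cartan matrix along each block'' is too vague to stand on its own---in particular, the claimed deduction from freeness over the centre is not a standard fact and would itself require substantial work.
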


\begin{proof} We use the notations from \cite[Section 8]{hwtpaper}.

(1) Recall that $\Irr \mc{M}(\overline{U}(\mf{h}_K))$ is naturally in bijection with $X_p$. For $\lambda, \mu \in X_p$, we write $\lambda \sim \mu$ if there exists $w \in W$ such that $\lambda + \rho = w(\mu + \rho)$ in $X$. By \cite[Theorem 2.2]{Hummodular}, $[\Delta(\lambda)] = [\Delta(\mu)]$ in $K_0(\mc{M}(\overline{U}(\mf{g}_K)))$ if $\lambda \sim \mu$. On the other hand, \cite[Theorem 3.18]{Ramifications} says that $L(\lambda)$ and $L(\mu)$ are in the same block if and only if $\lambda \sim \mu$; the case where $p$ is greater than the Coxeter number of $\mf{g}$ was already done in \cite[Theorem 3.1]{Hummodular}.

(2) First, we note that we can restrict to ``representations of type $\mathbf{1}$''. Namely, there is a direct sum decomposition 
$$
\mathbf{u}_{\zeta} = \bigoplus_{\nu} \mathbf{u}_{\zeta,\nu}
$$
where the sum is over all maps $\nu : \{ 1, \ds, \ell \} \rightarrow \{ 1 , -1 \}$ and $\mathbf{u}_{\zeta,\nu}$ is the quotient of $\mathbf{u}_{\zeta}$ by the central ideal generated by all $K_i^{\ell} - \nu(i)$. As explained in \cite[\S 4.6]{Lusztigmodular}, the algebras $\mathbf{u}_{\zeta,\nu}$ are pairwise isomorphic. Therefore, we set $\mathbf{u}_{\zeta}' = \mathbf{u}_{\zeta,\nu_0}$, where $\nu_0(i) := 1$ for all $i$. The explicit isomorphism given in \text{loc. cit.} makes it clear that standard modules are identified under the isomorphism. Therefore it suffices to prove the statement for $\mathbf{u}_{\zeta}'$. This algebra is often called the restricted quantum group. The simple modules of this algebra are in bijection with $X_{\ell}$. The algebra is a quotient of the De Concini-Kac quantized enveloping algebra.  Write $w \idot  \lambda := w(\lambda + \rho) - \rho$ for the $W_{\idot}$-action on $X$.  Then \cite[Theorem 4.8]{Ramifications} implies that:
\begin{equation}\label{lem:quantumblocks}
\textnormal{The blocks of } \mathbf{u}_{\zeta}' \textnormal{ are the } W_{\idot}\textnormal{-orbits intersecting } X_{\ell } \;. 
\end{equation}
We note that $\mathbf{u}_{\zeta}'$ also admits a triangular decomposition. The grading on $\mathbf{u}_{\zeta}$ given in \cite[(91)]{hwtpaper} makes $\mathbf{u}_{\zeta}'$ into a $X$-graded algebra. As in \cite[Section 4.4]{hwtpaper} , let $\mc{G}_X(\mathbf{u}_{\zeta}')$ denote the category of $X$-graded $\mathbf{u}_{\zeta}'$-modules. In order to apply arguments from the theory of $G_rT$-modules, we define $\mc{G}'$ to be the full subcategory of $\mc{G}_X(\mathbf{u}_{\zeta}')$ consisting of all $M$ such that 
$$
K_i \cdot m= \zeta^{\langle \lambda, \varphi_i \rangle} m, \quad \textrm{for all} \ m \in M_{\lambda}. 
$$
The simple modules in $\mc{G}'$ are in bijection with $X$; whereas the simple modules in $\mc{G}_X(\mathbf{u}_{\zeta}')$ are in bijection with $X \times X_{\ell}$. If $M \in \mc{G}'$, then $M$ has a well-defined character $\chi(M) \in \Z[X]$. The Weyl group $W$ acts on $X$. The modules $\Delta(\nu)$ and $\nabla(\nu)$, for $\nu \in X_{\ell }$, have natural lifts to $\mc{G}'$. If $\lambda \in X$, the the corresponding simple module in $\mc{G}'$ is again denoted $L(\lambda)$. If $\lambda$ is chosen to lie in the set $X_{\ell}$, then by \cite[Proposition 7.1]{Lusztigmodular}, $L(\lambda)$ is the restriction of a simple $U_{\zeta}(\mf{g})$-module to $\mathbf{u}_{\zeta}'$. In particular, the character of $L(\lambda)$ in $\Z[X]$ is $W$-invariant. This is a consequence of the fact that the character of $L(\lambda)$ can be written as a $\Z$-linear combination of the characters of the induced modules $\overline{Y}_{\lambda}(z)$ of \cite[\S 6.4]{Lusztigmodular}, and the character of the latter modules is given by Weyl's character formula.

The statement in \eqref{lem:quantumblocks} implies that we must show the equality $[\Delta(\nu)]  = [\Delta(w \idot \nu)]$ in the Grothendieck group $K_0(\mc{M}(\mathbf{u}_{\zeta}'))$, where $\nu \in X_{\ell}$ and $w \in W$. We follow the argument given in \cite[page 306]{Jantzen}, where the corresponding situation for hyperalgebras is considered. For $\lambda \in X$, there is an isomorphism of $X$-graded spaces $\Delta (\lambda) \simeq  \mathbf{u}_{\zeta}^{-} \otimes_{\C} \C_{\lambda}$. This implies that 
$$
\chi(\Delta(\lambda)) = e^{\lambda} \prod_{\alpha \in R^+} \frac{1 - e^{-\ell \alpha}}{1 - e^{-\alpha}} = e^{\lambda - (\ell - 1) \rho} \chi(\Delta((\ell-1) \rho)).
$$
Notice that $\chi(\Delta((\ell-1) \rho)) \in \Z[X]^W$. As in \cite[II, 9.16 (2)]{Jantzen}, this implies that  
$$
\chi(\Delta({\ell} \rho + w \idot \lambda)) = w \chi(\Delta(\ell \rho + \lambda)) .
$$ 
We repeat, for the reader's benefit, verbatim the argument of \cite[II, 9.16]{Jantzen}. For each module $M$ in $\mc{G}'$, we have 
\begin{equation}\label{eq:charsum}
\chi(M) = \sum_{\mu_0 \in X_{\ell}} \sum_{\mu_1 \in X} [M : L(\mu_0 + \ell \mu_1)] e^{\ell \mu_1} \chi(L(\mu_0)).
\end{equation}
Using the fact that $\chi(L(\mu_0)) \in \Z[X]^W$ because $\mu_0 \in X_{\ell}$, taking $M = \Delta(\ell \rho  + \lambda)$ in (\ref{eq:charsum}) and applying $w$, we get
$$
\chi(\Delta(\ell \rho  + w \idot \lambda)) = \sum_{\mu_0 \in X_{\ell}} \sum_{\mu_1 \in X} [\Delta(\ell \rho  + \lambda) : L(\mu_0 + \ell \mu_1)] e^{\ell w \mu_1} \chi(L(\mu_0)).
$$
If we compare this with the expression for $\chi(\Delta(\ell \rho  + w \idot \lambda))$ given by (\ref{eq:charsum}) with $M = \Delta(\ell \rho  + w \idot \lambda)$, we conclude that for $\lambda, \mu_1 \in X$ and $\mu_0 \in X_{\ell}$, 
$$
[\Delta(\lambda) : L(\mu_0 + \ell \mu_1)] = [\Delta(w \idot \lambda) : L(\mu_0 + \ell w \idot \mu_1)]. 
$$
If we take $\lambda$ to be an arbitrary lift of $\nu$ to $X$, then we deduce from the above equation that $[\Delta(\nu)]  = [\Delta(w \idot \nu)]$ since the image of $L(\mu_0 + \ell \mu_1)$ and $L(\mu_0 + \ell w \idot \ \mu_1)$ in $K_0(\mc{M}(\mathbf{u}_{\zeta}'))$ are clearly equal. 

(3) The rank one property for restricted rational Cherednik algebras at $t=0$ was proven recently by Bonnafé and Rouquier \cite[Proposition 12.4.2]{BonnafeRouquier-New}.
\end{proof}

\begin{rem} \label{hyperalg_rank1}
We expect that restricted rational Cherednik algebras at $t=1$ in positive characteristic also have the rank one property, for the same reason as the $t=0$ case does. The computations in \cite[Section 4]{Pogo-Vay} imply that finite quantum groups do \textit{not} satisfy the rank one property in general. We do not know whether the hyperalgebras $\hypalg_r(\mf{g})$ for $r > 1$ satisfy the rank one property (we expect not). For $\lambda \in X$, let $r(\lambda)$ be the non-negative integer such that 
$$
\lambda \in (p^{r(\lambda)} \Z R + p^r X) \smallsetminus (p^{r(\lambda) + 1} \Z R + p^r X). 
$$
Then, by \cite[II, 9.16 (4)]{Jantzen} and \cite[II, 9.22]{Jantzen}, the hyperalgebras $\hypalg_r(\mf{g})$ satisfy the rank one property if and only if 
$$
\left[\Delta \left(\lambda + p^{r(\lambda) +1} \alpha \right)\right] = [\Delta(\lambda)] \ \textrm{ in } \ K_0(\mc{M}(\hypalg_r(\mf{g}))), \ \textrm{ for all } \ \alpha \in \Delta. 
$$
\end{rem}

\begin{prop} \label{vip_not_cellular}
Let $A$ be one of the following VIP examples:
\begin{enumerate}
\item Restricted enveloping algebras $\overline{U}(\mf{g}_K)$;
\item Lusztig's small quantum groups $\mathbf{u}_{\zeta}(\mf{g})$, at a root of unity $\zeta$ of odd order;
\item Restricted rational Cherednik algebras $\overline{\H}_{\bc}(W)$ at $t = 0$, with a non-trivial Calogero-Moser family.   
\end{enumerate}
Then $A$ is \textnormal{not} cellular.
\end{prop}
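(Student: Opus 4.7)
The plan is to apply Lemma \ref{lem:rank1VIP}, which requires three ingredients: (i) the BGG property, (ii) the rank one property, and (iii) the existence of a block containing at least two simple modules. The first is already known for every VIP example by the results of \cite{hwtpaper}; the second is Proposition \ref{vip_rank1} proved just above. So the entire task reduces to exhibiting, in each of the three listed cases, a block with at least two simple modules.

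For case (3) this is immediate: the Calogero--Moser families of $\overline{\H}_{\bc}(W)$ at $t=0$ are by definition (the labels of) the blocks of $\overline{\H}_{\bc}(W)$, so the hypothesis that some Calogero--Moser family is non-trivial is exactly the statement that there is a block containing at least two simple modules.

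For cases (1) and (2) we invoke the linkage principle. By \cite[Theorem 3.18]{Ramifications}, the blocks of $\overline{U}(\mf{g}_K)$ are parametrised by the $W_\idot$-orbits on $X_p$; by (\ref{lem:quantumblocks}), the blocks of $\mathbf{u}_\zeta'(\mf{g})$ are parametrised by the $W_\idot$-orbits on $X$ meeting $X_\ell$. It therefore suffices to produce in each case two distinct elements of $X_p$ (respectively $X_\ell$) lying in a common $W_\idot$-orbit. For any simple root $\alpha$ one has $s_\alpha \idot 0 = -\alpha$, and as soon as $p$ (respectively the order of $\zeta$) is at least $3$, an appropriate representative of $-\alpha$ in $X_p$ (respectively $X_\ell$) is distinct from $0$; the assumption that $\zeta$ has odd order more than suffices for (2), and the standing hypotheses on $p$ in the setting of restricted enveloping algebras ensure the same in (1).

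The only step requiring genuine care is point (iii) in cases (1) and (2), which is a routine linkage calculation in small weight; everything else is an invocation of results already established above, and the conclusion then follows directly from Lemma \ref{lem:rank1VIP}. I expect no serious obstacle beyond pinning down the correct small-characteristic hypotheses to exclude pathological edge cases (such as $\mf{g}$ of rank zero, or degenerate $p$ for which every $W_\idot$-orbit on $X_p$ is a singleton).
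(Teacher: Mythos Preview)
Your proposal is correct and follows essentially the same route as the paper: reduce to Lemma~\ref{lem:rank1VIP} via the BGG property (from \cite{hwtpaper}), the rank one property (Proposition~\ref{vip_rank1}), and the existence of a block with at least two simples; case~(3) is then immediate from the definition of Calogero--Moser families. The only minor difference is in how the non-trivial block is produced in cases~(1) and~(2): where you sketch a direct linkage computation with $s_\alpha \idot 0 = -\alpha$ (and acknowledge the need to pin down the correct representative in $X_p$ or $X_\ell$), the paper instead invokes the explicit $\mf{sl}_2$ computation of Example~\ref{ex:sl2example} to exhibit a free $\mathfrak{S}_2$-orbit, which amounts to the same reduction but sidesteps the bookkeeping you flag.
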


\begin{proof} All the examples satisfy the rank one property by Proposition \ref{vip_rank1}. Hence, if we can show that in each case $A$ has a block with more than one simple module, then Lemma \ref{lem:rank1VIP} implies that $A$ is not cellular.

(1) As shown in the proof of Proposition \ref{vip_rank1}, the blocks of $\overline{U}(\mf{g}_K)$  are in bijection with the $W_{\idot}$-orbits intersecting $X_{p}$.  Taking the subgroup $\s_2$ generated by any simple reflection in $W$, it suffices to show that $X_p$ contains at least one free $\s_2$-orbit. But this follows from Example \ref{ex:sl2example} in the introduction, which shows that when $p$ is odd, the algebra $\overline{U}((\mf{sl}_{2})_K)$ contains at least one block with more that one simple object (and hence $X_{p}$ has at least one free $\s_2$-orbit). 

(2) The proof is completely analogous to the above. Using the analogue of Example \ref{ex:sl2example} from the introduction, the fact that we have assumed that $\ell$ is odd implies that the algebra $\mathbf{u}_{\zeta}$ contains at least one block with more that one simple object (this time applying statement \eqref{lem:quantumblocks}). 

(3) The Calogero-Moser families of $\overline{\H}_{\bc}(W)$ is the partition of $\Irr W$ induced by the block decomposition of $\overline{\H}_{\bc}(W)$. Thus, the algebra has a non-trivial Calogero-Moser family if and only if it has a block with more than one simple. 
\end{proof}

\begin{rem}
	It is known by the results in \cite{Martino-blocks-gmpn} and \cite{Singular} that $\overline{\H}_{\bc}(W)$ at $t = 0$ will have a non-trivial Calogero-Moser family if $W$ has a factor not isomorphic to $G(m,1,n)$ or $G_4$. If each factor of $W$ is isomorphic to some $G(m,1,n)$ or $G_4$, then $\overline{\H}_{\bc}(W)$ at $t = 0$ will have a non-trivial Calogero-Moser family if and only if $\bc$ non-generic.
\end{rem}	

\begin{rem}
	If $A$ is a restricted rational Cherednik algebras $\overline{\H}_{\bc}(W)$ at $t = 0$, or a RRCA $\overline{\H}_{1,\bc}(W)$ at $t = 1$ in positive characteristic $\neq 2$, with the property that each block of $A$ contains only one simple module (i.e. the Calogero-Moser families are all trivial), then $A$ is cellular. To see this, we note first that a direct sum of cellular algebras is cellular. Therefore it suffices to show that each block $B$ of $A$ is cellular. It has been shown in \cite{Ramifications} that in all of the above examples, the fact that $B$ only has one simple module implies that there is a local commutative ring $\mc{O}$ such that $B \simeq \mathrm{Mat}_n(\mc{O})$, for some $n$. It is known by \cite[Proposition 3.5]{KonigXiCellular} that $\mc{O}$ is cellular if one takes the anti-involution $i$ to be the identity. Therefore we take $i'$ to be the transpose on $B = \mathrm{Mat}_n(\mc{O})$. Then it follows from \cite[Corollary 6.16]{KonigXiCellularMort} that $B$ is cellular, provided $\mathrm{char} \ K \neq 2$.
\end{rem}

\section{Highest weight cover of the core} \label{sec:cover}

\newcommand{\proj}[1]{#1\text{-}{\mathsf{proj}}}

As shown in Proposition \ref{A0_quasi_hereditary}, the algebra $A_0$ is very rarely quasi-hereditary. In general, it has infinite global dimension. In order to ``resolve'' this ``singular'' algebra, we show that a certain subquotient of the highest weight category $\mc{G}(A)$ defines a highest weight cover of $\mc{M}(A_0)$. 

\subsection{Highest weight cover—statements}\label{sec:coversmain}

In this section we state our main results. The proofs, which are rather involved, are then given in the following sections. Recall from \cite[\S 4.3]{hwtpaper} that the category $\mc{G}(A)$ is filtered by the Serre subcategories $\mathcal{G}_{\leq d}(A)$. Here an object $M$ belongs to $\mathcal{G}_{\leq d}(A)$ if and only if $[M : L(\lambda) ] \neq 0$ implies that $\deg \lambda \le d$. As we note in section \ref{sec_filtered_pieces}, these are highest weight subcategories. For $d \ge 0$, the subfactors 
$$
\mathcal{G}_{[d]}(A) := \mathcal{G}_{\leq d}(A) / \mathcal{G}_{< 0}(A)
$$
are also highest weight categories. Let $N \ge 0$ be the smallest positive integer such that 
$\Supp A^{+} \subs [0,N]$. 

\begin{tcolorbox}
	Throughout, we fix $d \geq N$ and set $\ell \dopgleich d - N$.
\end{tcolorbox}

If $i$ is positive integer and $M$ an object in $\mc{G}(A)$, we set  
$$
M \langle i \rangle := M[0] \oplus M[1] \oplus \cdots \oplus M[i] \;.
$$
Define 
\begin{equation*}
	B_{\ell} := \End_{\mc{G}(A)}(A \langle \ell \rangle ) \;.
\end{equation*}
Then 
$$
B_{\ell}^{\op} = \left( \begin{array}{cccc}
A_0 & A_1 & \cdots & A_{\ell} \\
A_{-1} & A_0 & & \vdots \\
\vdots & & \ddots & A_{1} \\
A_{-\ell} & \cdots & A_{-1} & A_0
\end{array} \right) \;.
$$
In particular, $B_0 = A_0^{\op}$. Our first main result, to be proven in Section \ref{cover_proof_section1}, is:

\begin{thm}\label{thm:A0elling} \hfill
	
	\begin{enum_thm}
		\item $A\langle \ell \rangle$ is a projective object in $\mc{G}_{[d]}(A)$. If $A$ is graded symmetric, it is also injective.  
		\item $\End_{\mc{G}_{[d]}(A)}(A\langle \ell \rangle) = B_{\ell}$.
	\end{enum_thm}
\end{thm}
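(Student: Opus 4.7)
My plan is to reduce both parts to the ambient category $\mc{G}(A)$, leveraging self-injectivity together with a crucial \emph{head condition} on $A\langle \ell\rangle$. First, I would verify that $A\langle \ell\rangle \in \mc{G}_{\le d}(A)$: since $\Supp A^+ \subseteq [0,N]$ and $\Supp A^- \subseteq \Z_{\le 0}$, every composition factor $L(\mu)$ of $A$ has $\deg \mu \le N$, and hence composition factors of $A[i]$ have $\deg \mu \le N+i$, which is $\le d$ iff $i \le \ell$. Self-injectivity of $A$ then makes each $A[i]$, and therefore $A\langle \ell\rangle$, projective in $\mc{G}(A)$; because $\mc{G}_{\le d}(A)$ is a full Serre subcategory, $A\langle \ell\rangle$ is also projective in $\mc{G}_{\le d}(A)$. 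By Proposition~\ref{Azeroproj}, the indecomposable summands of $A$ are the $P(\lambda)$ with $\deg \lambda = 0$, so $\mathrm{head}(A[i])$ is concentrated in degree $i$, and $A\langle \ell\rangle$ has no nonzero quotients lying in $\mc{G}_{<0}(A)$.

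This head condition is precisely what lets projectivity descend to the quotient. Using the standard colimit formula for $\mathrm{Ext}$ in a Serre quotient $\pi\colon \mc{G}_{\le d}(A) \to \mc{G}_{[d]}(A)$,
\[
\mathrm{Ext}^n_{\mc{G}_{[d]}(A)}(\pi A\langle \ell\rangle, \pi M) \;=\; \varinjlim_{\substack{X' \subseteq A\langle \ell\rangle \\ A\langle \ell\rangle/X' \in \mc{G}_{<0}(A)}} \ \varinjlim_{\substack{M' \subseteq M \\ M' \in \mc{G}_{<0}(A)}} \mathrm{Ext}^n_{\mc{G}_{\le d}(A)}(X', M/M'),
\]
the outer colimit collapses to $X' = A\langle \ell\rangle$ — any proper $X'$ would yield a quotient of $A\langle \ell\rangle$ with head in strictly negative degree, contradicting the head condition — after which projectivity in $\mc{G}_{\le d}(A)$ kills $\mathrm{Ext}^{\ge 1}$ on the right-hand side, giving projectivity of $\pi A\langle \ell\rangle$.

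For the injectivity assertion in Part~(1) under graded symmetry, I would dualize. Graded symmetry yields $A \simeq A^\ast$ in $\mc{G}(A)$ up to a grading shift, so $A\langle \ell\rangle$ is injective in $\mc{G}(A)$. Standard duality $(-)^\ast$ preserves support, hence stabilizes both $\mc{G}_{\le d}(A)$ and $\mc{G}_{<0}(A)$, and combined with $A \simeq A^{\op}$ from graded symmetry it descends to an auto-duality of $\mc{G}_{[d]}(A)$ that swaps projectives and injectives; applying it to the projectivity just established (after absorbing the shift by reindexing) yields injectivity. Part~(2) follows from the same head/projectivity combination. Faithfulness: any $f$ with $\pi(f) = 0$ has $\mathrm{im}(f) \in \mc{G}_{<0}(A)$, but $\mathrm{im}(f)$ is a quotient of $A\langle \ell\rangle$ whose head lies in degree $\ge 0$, forcing $\mathrm{im}(f) = 0$. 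Fullness: in the analogous $\Hom$-colimit, each term $\Hom_{\mc{G}_{\le d}(A)}(A\langle \ell\rangle, A\langle \ell\rangle/Y')$ for $Y' \in \mc{G}_{<0}(A)$ equals $\End_{\mc{G}(A)}(A\langle \ell\rangle) = B_\ell$, because the long exact sequence associated to $0 \to Y' \to A\langle \ell\rangle \to A\langle \ell\rangle/Y' \to 0$ has $\Hom(A\langle \ell\rangle, Y') = 0$ (head condition) and $\mathrm{Ext}^1(A\langle \ell\rangle, Y') = 0$ (projectivity).

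The main obstacle is technical rather than conceptual: the Serre-quotient $\Hom/\mathrm{Ext}$ formulas I invoke are cleanest in a Grothendieck-category setting, whereas $\mc{G}(A)$ is the category of finitely generated graded modules. I would handle this by passing to the Ind-completion (graded $A$-Mod), running the quotient construction there, and restricting back; since all objects in sight are finite-dimensional, this is harmless. A secondary delicate point is tracking the precise grading shift in $A \simeq A^\ast$ for the injectivity step and checking its compatibility with the truncation $\langle \ell\rangle$, but this is routine book-keeping.
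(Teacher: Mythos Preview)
Your arguments for projectivity and for Part~(b) are correct and take a more categorical route than the paper. The paper decomposes $A[i]=\bigoplus_\lambda P(\lambda)\otimes_K L(\lambda)_i$ explicitly (Lemma~\ref{lem:obviousdecomp}), identifies each summand with a projective cover $Q_d(\lambda)$ in the quotient via Lemma~\ref{lem:porjinjquot}, and then checks $0\le\deg\lambda\le d$. Your head-condition/Serre-quotient argument bypasses the decomposition and effectively reproves the special case of Lemma~\ref{lem:porjinjquot} that is needed. For Part~(b) your long-exact-sequence computation is tighter than the paper's, which analyses the map $\Hom_{\mc{G}(A)}(P(\lambda),P(\mu))\to\Hom_{\mc{G}_{\le d}(A)}(Q_d(\lambda),Q_d(\mu))$ summand by summand (Lemma~\ref{lem:endGdA}). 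One caveat: the $\Ext$-colimit formula you display is not a standard identity for Serre quotients, even in the Grothendieck setting; what you actually use is only the $\Hom$-formula together with the lifting of short exact sequences, and that suffices.

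There is, however, a genuine gap in the injectivity step. Graded symmetry gives a graded $(A,A)$-bimodule isomorphism $A\simeq A^*$, but it does \emph{not} yield an algebra isomorphism $A\simeq A^{\op}$; that is exactly what a (triangular) anti-involution provides, and the theorem does not assume one. Hence there is no auto-duality on $\mc{G}_{[d]}(A)$ available, and ``applying the duality to the projectivity already established'' is not a legal move here. The paper avoids this by extracting from graded symmetry only what it actually implies: the graded Nakayama permutation is trivial (see Definition~\ref{def_triangular_selfinj} and the sentence following it), so $\Soc P(\lambda)=L(\lambda)$ with $\deg\lambda\ge 0$ for every indecomposable summand of $A\langle\ell\rangle$. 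This gives the \emph{socle condition} dual to your head condition: $A\langle\ell\rangle$ has no nonzero subobject in $\mc{G}_{<0}(A)$. You can then run the literal dual of your projectivity argument---$A\langle\ell\rangle$ is injective in $\mc{G}_{\le d}(A)$, and the socle condition collapses the colimit over subobjects in the Serre-quotient $\Hom$ formula---to conclude injectivity in $\mc{G}_{[d]}(A)$. No duality functor is needed, only the Nakayama information.
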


As a corollary we obtain:

\begin{cor}
	Suppose that $A$ is graded symmetric. Then $B_\ell$ is standardly based. If $A$ is moreover equipped with a triangular anti-involution, then $B_\ell$ is a cellular algebra.
\end{cor}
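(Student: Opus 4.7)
The plan is to apply Theorem \ref{hwc_cellular} to the highest weight category $\mc{G}_{[d]}(A)$ together with the distinguished object $A\langle\ell\rangle$. The preceding Section \ref{sec:quasihereditary} establishes that $\mc{G}_{[d]}(A)$ is itself a highest weight category---being the quotient of the Serre subcategory $\mc{G}_{\le d}(A)$ of $\mc{G}(A)$ by the Serre subcategory $\mc{G}_{<0}(A)$, both coming from intervals in the weight poset $\Irr \mc{G}(T)$---and, since it has only finitely many simple objects, it automatically has enough tilting objects. So the task reduces to exhibiting $A\langle\ell\rangle$ as a tilting object in $\mc{G}_{[d]}(A)$, and (for the cellularity part) to showing that the duality induced by a triangular anti-involution descends to this subquotient and fixes $A\langle\ell\rangle$.

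For the tilting property I would invoke the general fact that in any highest weight category every projective object carries a standard filtration and every injective object carries a costandard filtration; hence any projective-injective object is automatically tilting. By Theorem \ref{thm:A0elling}(a), the graded symmetric hypothesis on $A$ guarantees that $A\langle\ell\rangle$ is both projective and injective in $\mc{G}_{[d]}(A)$, so it is tilting; Theorem \ref{thm:A0elling}(b) identifies its endomorphism algebra with $B_\ell$. Theorem \ref{hwc_cellular}(1) then immediately produces a canonical standard datum on $B_\ell$ coming from the decomposition of $A\langle\ell\rangle$ into indecomposable tilting summands. This gives the first assertion of the corollary.

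For the cellularity statement, assume in addition that $A$ admits a triangular anti-involution $\sigma$. By \cite[Section~6]{hwtpaper} this induces a duality $\bbD$ on $\mc{G}(A)$, and since graded symmetric implies graded Frobenius, \cite[Corollary~6.4]{hwtpaper} gives $\bbD(A) \simeq A$. I next need $\bbD$ to descend to a duality on $\mc{G}_{[d]}(A)$ fixing $A\langle\ell\rangle$. Since $\bbD$ is built from the standard duality $(-)^*$---which preserves grading, $(M^*)_i \simeq M_i^*$---followed by pullback along the degree-preserving algebra isomorphism $\sigma \colon A \to A^{\op}$, the functor $\bbD$ preserves the grading of every module. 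Consequently it sends each simple $L(\lambda)$ to a simple module concentrated in the same degree $\deg\lambda$, so it preserves both Serre subcategories $\mc{G}_{\le d}(A)$ and $\mc{G}_{<0}(A)$ and therefore descends to $\mc{G}_{[d]}(A)$. The same degree-preservation yields $\bbD \circ [i] \simeq [i] \circ \bbD$, so
\[
\bbD(A\langle\ell\rangle) \;\simeq\; \bigoplus_{i=0}^{\ell} \bbD(A)[i] \;\simeq\; \bigoplus_{i=0}^{\ell} A[i] \;=\; A\langle\ell\rangle.
\]
Theorem \ref{hwc_cellular}(2) then concludes that $B_\ell$ is cellular.

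The bulk of the work is really already absorbed into Theorem \ref{hwc_cellular} and Theorem \ref{thm:A0elling}; the only step deserving care is the descent of $\bbD$ to the subquotient and its interaction with the grading shifts, both of which follow cleanly from the explicit construction of $\bbD$ in \cite[Section~6]{hwtpaper} once the definitions are unwound. I do not anticipate a serious obstacle beyond this bookkeeping.
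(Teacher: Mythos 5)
Your proof is correct and follows the same route the paper intends: the corollary is stated as an immediate consequence of Theorem \ref{thm:A0elling} (giving $A\langle\ell\rangle$ as a projective-injective, hence tilting, object of $\mc{G}_{[d]}(A)$ with endomorphism algebra $B_\ell$) combined with Theorem \ref{hwc_cellular}. Your extra care in checking that the duality $\bbD$ is degree-preserving, so that it stabilizes the Serre subcategories $\mc{G}_{\le d}(A)$ and $\mc{G}_{<0}(A)$ and commutes with the shifts $[i]$ so as to fix $A\langle\ell\rangle$, is exactly the point the paper leaves implicit and you have filled it in correctly.
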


Rouquier \cite{RouquierQSchur} introduced the key notion of highest weight covers. We recall that if $B$ is a finite dimensional $K$-algebra, then a \textit{highest weight cover} of $B$ is a highest weight category $\mc{B}$ with finitely many simple objects, together with a projective object $P \in \mc{B}$ such that $B \simeq \End_{\mc{B}}(P)^{\mathrm{op}}$ and the corresponding exact functor $F = \Hom_{\mc{B}}(P,-) \from \mc{B} \to \mc{M}(B)$ is fully faithful on projectives. Since $A[0,\ell]$ is a projective object in $\mc{G}_{[d]}(A)$ by Theorem \ref{thm:A0elling}, we have an exact functor
\begin{equation*}
	F_d \dopgleich \Hom_{\mc{G}_{[d]}(A)}(A\langle \ell \rangle, -) \from \mc{G}_{[d]}(A) \to \mc{M}(B_{\ell}) \;.
\end{equation*}
For our second main result, recall the notion of ambidexterity from \cite[Definition 3.4]{hwtpaper}. We also introduce the following terminology:

\begin{defn}
	We say that $A$ is \word{well-generated} if $A^\pm$ is generated, as a $K$-algebra, by elements in degree $\pm 1$.
\end{defn}

\begin{thm}\label{thm:highestweightcover}
	Assume that $A$ is graded symmetric, well-generated, and ambidextrous. 
	Then the functor $F_d : \mc{G}_{[d]}(A) \rightarrow \mc{M}(B_{\ell})$ is a highest weight cover of~$B_{\ell}$. 
\end{thm}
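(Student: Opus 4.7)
The plan is to verify the remaining ingredient of a Rouquier highest weight cover: since $\mc{G}_{[d]}(A)$ is already known to be a highest weight category with finitely many simples (Section \ref{sec:quasihereditary}), and Theorem \ref{thm:A0elling} supplies that $A\langle\ell\rangle$ is a projective object of $\mc{G}_{[d]}(A)$ with endomorphism algebra $B_\ell$, it remains to show that
\[
F_d = \Hom_{\mc{G}_{[d]}(A)}(A\langle \ell \rangle, -) : \mc{G}_{[d]}(A) \to \mc{M}(B_\ell)
\]
is fully faithful on projectives. Concretely $F_d$ is the truncation functor $M \mapsto \bigoplus_{i=0}^{\ell} M_i$, so the question is whether a morphism in $\mc{G}_{[d]}(A)$ is determined by, and can be lifted from, its restriction to the degrees $[0,\ell]$. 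I would carry this out by proving faithfulness and fullness separately, both extracted from a single technical statement---the socle estimate Lemma \ref{lem:soclAlhdsupport} announced in the introduction---which is a uniform bound on the socles of certain graded modules over $A^+$ and over $A^-$.

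The three standing hypotheses all enter the picture through this lemma. Well-generation forces the augmentation ideals of $A^+$ and $A^-$ to be generated in degrees $\pm 1$, so graded socles are constrained to sit in the extreme degrees of the module; graded symmetry supplies the self-duality aligning $A^+$ with $A^-$ and upgrades $A$ to a graded Frobenius algebra; and ambidexterity lets us regard a standard module $\Delta(\lambda)$ \emph{simultaneously} as free of finite rank over $A^+$ and over $A^-$, generated in degree $\deg\lambda$. With the lemma in hand, faithfulness---in fact on the full subcategory of standardly filtered objects---follows by viewing $f : M \to N$ with $M$ standardly filtered as an $A^-$-linear map. Vanishing of $F_d(f)$ kills $f$ on the $A^-$-generators of $M$ (which lie in degrees $[0,\ell]$ modulo $\mc{G}_{<0}(A)$), and the socle lemma applied to $A^-$ then pins the image of $f$ inside the Serre subcategory $\mc{G}_{<0}(A)$; thus $f = 0$ in $\mc{G}_{[d]}(A)$.

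For fullness, given a $B_\ell$-linear morphism $\phi : F_d(P_1) \to F_d(P_2)$ between images of projectives, I would extend $\phi$ to a morphism $P_1 \to P_2$ by propagating it degree by degree using that $A^+$ is generated in degree $1$. The obstruction to a consistent $A^+$-equivariant extension lives in a graded socle over $A^+$, and the socle lemma applied to $A^+$ is exactly what forces this obstruction to vanish. The principal difficulty of the entire proof is therefore the socle lemma itself: it is the single point at which graded symmetry, well-generation, and ambidexterity interact decisively, and it is where the most delicate graded commutative-algebra work resides. The striking feature is that the same formal statement, applied to the two halves of the triangular decomposition, performs two logically opposite tasks---controlling the image of $f$ from below for $A^-$, and controlling the extension of $\phi$ from above for $A^+$---which is precisely what the graded symmetric / ambidextrous hypotheses make possible. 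Once Lemma \ref{lem:soclAlhdsupport} is isolated and proved, both halves of Theorem \ref{thm:highestweightcover} follow by short diagrammatic arguments.
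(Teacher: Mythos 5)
Your reading of the problem is exactly the paper's: Theorem \ref{thm:A0elling} reduces everything to showing that $F_d$ is fully faithful on projectives, and the engine for both halves is Lemma \ref{lem:soclAlhdsupport}, with the $A^-$ statement feeding faithfulness and the $A^+$ statement feeding fullness (the paper says this verbatim after stating the theorem). The faithfulness half of your plan is essentially the paper's Proposition \ref{prop:minusonefaithful}, although the precise mechanism there is not ``$f$ vanishes on $A^-$-generators lying in $[0,\ell]$''; rather, one fixes the largest submodule $M \subset \Delta(\mu)$ lying in $\mc{G}_{<0}(A)$, applies the $A^-$ socle bound to $\Delta(\mu)/M$ to force $\Supp \Soc_{A^-}(\Delta(\mu)/M) \subset (-\infty, \ell]$, and concludes $\Hom_{\mc{G}_{[d]}(A)}(L(\lambda),\Delta(\mu)) = 0$ whenever $\Supp L(\lambda) \cap [0,\ell] = \emptyset$; then Lemma \ref{lem:Ffaithfulproj} converts this into faithfulness on $\mc{P}_{[d]}(A)$.

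Where your route genuinely diverges is the fullness half. You propose to build the lift of a $B_\ell$-morphism directly, degree by degree, and invoke the $A^+$ socle lemma to kill the obstruction. That is plausible but would require a careful obstruction analysis: one has to check that the lift extended along $A^+_1 \cdot (P_1)_j$ is well defined against all relations, and one must also handle the negative-degree tail of $Q_d(\mu)$ in the quotient category $\mc{G}_{[d]}(A)$. The paper avoids this entirely. It first proves a clean criterion (Proposition \ref{prop:covercriterion}) reducing full faithfulness on $\mc{P}_{[d]}(A)$ to the vanishing of $\Ext^i_{\mc{G}_{[d]}(A)}(L(\lambda), Q_d(\mu))$ for $i = 0,1$ whenever $F_d(L(\lambda)) = 0$. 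Then the $A^+$ socle lemma is used not to propagate a map but to manufacture a short exact sequence
\[
0 \rightarrow Q_d(\lambda) \rightarrow A\langle\ell\rangle^{\oplus k} \rightarrow M \rightarrow 0
\]
with $M \in \mc{G}_{[d]}^\Delta(A)$ (Lemma \ref{lem:prestageprop} together with the observation that $B^+$ is a self-injective cogenerator and the socle bound places $\Soc_{B^+}(B^+/B^+_{>d-i})[i]$ in degree $d$, hence inside $B^+\langle\ell\rangle$). Applying $\Hom(L(\lambda),-)$ to this sequence and using the injectivity of $A\langle\ell\rangle$ in $\mc{G}_{[d]}(A)$ (Lemma \ref{A_0_ell_projective}, for which graded symmetry is used) and the already-established $\Hom$-vanishing into standardly filtered modules kills $\Ext^1$. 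So the socle lemma for $A^+$ enters as a structural statement about embeddings into projective-injectives rather than as a local extension/obstruction device. Your approach might be made to work, but the paper's route packages the same information into a homological criterion and is considerably cleaner.
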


At the heart of the proof of Theorem \ref{thm:highestweightcover} is a technical result, Lemma \ref{lem:soclAlhdsupport}, on the socle of certain graded modules over the algebras $A^+$ and $A^-$. Remarkably, though the statement of Lemma \ref{lem:soclAlhdsupport} is the same for both algebras, they play opposite roles. Namely, Lemma \ref{lem:soclAlhdsupport} for $A^-$ implies that $F_d$ is faithful on projectives - in fact it is faithful on all standardly filtered modules. On the other hand, Lemma \ref{lem:soclAlhdsupport} for $A^+$ implies that $F_d$ is full on projectives, since it implies the $\Ext^1$-vanishing required in Proposition \ref{prop:covercriterion} (b). Taking $d = N$,  we deduce:

\begin{cor}
	Suppose that $A$ is graded symmetric, well-generated, and ambidextrous. Then the functor $F_N: \mc{G}_{[N]}(A) \rightarrow \mc{M}(A_0^{\mathrm{op}})$ is a highest weight cover. 
\end{cor}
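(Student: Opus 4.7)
The plan is to deduce this corollary as the special case $d = N$ of Theorem \ref{thm:highestweightcover}. The hypotheses on $A$—graded symmetric, well-generated, and ambidextrous—are exactly those required by that theorem, and the choice $d = N$ is admissible under the standing assumption $d \ge N$.

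With $d = N$ one has $\ell = d - N = 0$, so the direct sum $A\langle \ell \rangle = A\langle 0 \rangle$ collapses to $A = A[0]$ itself. Consequently $B_\ell = B_0 = \End_{\mc{G}(A)}(A)$. The canonical algebra identification $\End_{\mc{G}(A)}(A) \simeq A_0^{\mathrm{op}}$, recorded at the beginning of Section \ref{cellular_structure}, yields $B_0 = A_0^{\mathrm{op}}$. Under this identification, the functor $F_d = \Hom_{\mc{G}_{[d]}(A)}(A\langle\ell\rangle, -)$ of Theorem \ref{thm:highestweightcover} specializes to $F_N = \Hom_{\mc{G}_{[N]}(A)}(A, -)$, landing in $\mc{M}(A_0^{\mathrm{op}})$ as required.

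The one verification worth making explicit is that the endomorphism ring of $A$ is not altered when we pass from $\mc{G}(A)$ to the Serre subquotient $\mc{G}_{[N]}(A)$. This is the content of Theorem \ref{thm:A0elling}(b) specialized to $\ell = 0$, which gives $\End_{\mc{G}_{[N]}(A)}(A) = \End_{\mc{G}(A)}(A) = A_0^{\mathrm{op}}$; part (a) of the same theorem (using that $A$ is graded symmetric) ensures that $A$ remains projective in $\mc{G}_{[N]}(A)$, so that $F_N$ is exact. Once these identifications are in hand, Theorem \ref{thm:highestweightcover} delivers the conclusion verbatim: $F_N$ is a highest weight cover of $A_0^{\mathrm{op}}$.

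The proof therefore contains no genuine obstacle beyond the two already overcome in Theorem \ref{thm:highestweightcover}. These—faithfulness on standardly filtered modules, and fullness on projectives—both reduce to Lemma \ref{lem:soclAlhdsupport}, applied respectively to $A^-$ and $A^+$, via the general covering criterion of Proposition \ref{prop:covercriterion}. Since nothing in that argument degenerates at $\ell = 0$ (indeed, $A\langle 0\rangle = A$ is the cleanest instance), no additional work is required, and the corollary is a direct specialization.
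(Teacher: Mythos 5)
Your proof is correct and is exactly the paper's argument: the corollary is obtained by specializing Theorem \ref{thm:highestweightcover} to $d = N$, so that $\ell = 0$, $A\langle 0\rangle = A$, and $B_0 = \End_{\mc{G}(A)}(A) \simeq A_0^{\mathrm{op}}$. The paper records this in one line ("Taking $d=N$, we deduce"), and your additional checks merely make the implicit identifications explicit.
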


The functor $F_d$ induces an algebra morphism 
\begin{equation*}
	\Phi_d : \mrm{Z}(\mc{G}_{[d]}(A)) \rightarrow \mrm{Z}(B_{\ell})
\end{equation*}
between the centers. Theorem \ref{thm:highestweightcover} implies:  

\begin{cor}\label{cor:Fcentre} Assume that $A$ is graded symmetric, well-generated, and ambidextrous. Then $\Phi_d$ is an isomorphism and the functor $F_d$ induces a bijection between the blocks of $\mc{G}_{[d]}(A)$ and $\mc{M}(B_{\ell})$.  
\end{cor}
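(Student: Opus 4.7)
The plan is to deduce both assertions of the corollary from the fact, established in Theorem \ref{thm:highestweightcover}, that $F_d$ is a highest weight cover. The second assertion (bijection of blocks) will follow from the first (isomorphism of centers) via primitive central idempotents: the blocks of a finite-dimensional abelian category are in bijection with the primitive central idempotents of its center, so any center isomorphism induces a bijection of blocks. It thus suffices to prove that $\Phi_d$ is an isomorphism.

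First I would verify that $\Phi_d$ is well-defined: for $z \in \mathrm{Z}(\mc{G}_{[d]}(A)) = \End(\id_{\mc{G}_{[d]}(A)})$, the component $z_{A\langle \ell \rangle}$ lies in $\End_{\mc{G}_{[d]}(A)}(A\langle\ell\rangle) = B_\ell$ by Theorem \ref{thm:A0elling}, and commutes with every element of $B_\ell$ by naturality of $z$, so lies in $\mathrm{Z}(B_\ell) = \mathrm{Z}(\mc{M}(B_\ell))$. Next, I would pass to an algebraic reformulation: since $\mc{G}_{[d]}(A)$ is a highest weight category with finitely many simples, $\mc{G}_{[d]}(A) \simeq \mc{M}(C_d)$ for a finite-dimensional quasi-hereditary algebra $C_d$; the projective object $A\langle \ell \rangle$ then corresponds to $C_d e$ for some idempotent $e \in C_d$, with $B_\ell \simeq e C_d e$ and $F_d \simeq e \cdot$. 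In these terms, $\Phi_d$ is the standard center map $\mathrm{Z}(C_d) \to \mathrm{Z}(e C_d e)$, $z \mapsto eze$.

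The crux is showing this map is an isomorphism. I would argue it block-by-block, after first establishing that the block decomposition of $\mc{G}_{[d]}(A)$ is compatible with that of $\mc{M}(B_\ell)$ under $F_d$. Three facts suffice: (i) $F_d$ being fully faithful on projectives preserves Hom-connectivity, so the block partition of $\mathrm{add}(A\langle\ell\rangle)$ inherited from $\mc{G}_{[d]}(A)$ agrees with the one inherited from $\mc{M}(B_\ell)$; (ii) every block of $\mc{M}(B_\ell)$ is visited by such a summand, because every indecomposable projective $B_\ell$-module is a summand of $B_\ell = F_d(A\langle\ell\rangle)$, hence in $F_d(\mathrm{add}(A\langle\ell\rangle))$; and (iii) every block of $\mc{G}_{[d]}(A)$ contains an indecomposable summand of $A\langle\ell\rangle$. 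With (i)--(iii) in hand, $\Phi_d$ respects the block decompositions, and restricts on each block to the center map of a highest weight cover whose defining projective meets that block non-trivially; within each such block, the restriction becomes sharp enough to yield an isomorphism of centers, and summing over blocks gives the isomorphism of $\Phi_d$.

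The main obstacle is point (iii): proving that every block of $\mc{G}_{[d]}(A)$ contains an indecomposable summand of $A\langle \ell \rangle$. This is where the hypotheses of graded symmetry, well-generation, and ambidexterity are used most essentially. By Theorem \ref{thm:A0elling}, $A\langle \ell \rangle$ is projective-injective in $\mc{G}_{[d]}(A)$, and its indecomposable summands are the shifts $P(\mu)[i]$ for $\mu \in \underline{\Irr}\,\mc{G}(A)$ and $i \in [0,\ell]$ coming from Proposition \ref{Azeroproj}. One then needs a connectivity argument, using standard and costandard filtrations together with Lemma \ref{lem:soclAlhdsupport}, to show that every simple $L(\lambda)$ with $\deg \lambda \in [0,d]$ is linked via non-zero $\Hom$ or $\Ext^1$ to one of these summands, and therefore lies in the same block of $\mc{G}_{[d]}(A)$.
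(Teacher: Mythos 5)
Your reduction of the block bijection to the center isomorphism is fine, and your algebraic reformulation (``$\Phi_d$ is $z \mapsto eze$ from $\mathrm{Z}(C_d)$ to $\mathrm{Z}(eC_de)$ with $A\langle\ell\rangle = C_de$'') is exactly the right setup. But the argument stops there: you never actually prove that this map is an isomorphism. The block-by-block framework you erect does not close the gap. Having established (i)--(iii), you write that the restriction of $\Phi_d$ to each block ``becomes sharp enough to yield an isomorphism of centers'' --- this is an assertion, not a proof. Reducing to a single block does not make the claim any easier: you would still need to show that the center map for a single-block highest weight cover is an isomorphism, which is exactly the content of the corollary. So the proposal is circular at its core step.

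Moreover, the whole block-by-block scaffold is unnecessary, and it creates the artificial ``main obstacle'' (iii) that you also do not prove. The statement is a purely formal consequence of the double centralizer property $C_d \simeq \End_{B_\ell}(F_d(C_d))$, which is equivalent to the highest weight cover property by Proposition~\ref{prop:covercriterion}\ref{prop:covercriterion:dcp} and is what Theorem~\ref{thm:highestweightcover} gives you. Concretely, with $B_\ell = eC_de$ and the double centralizer isomorphism $C_d \xrightarrow{\sim} \End_{eC_de}(eC_d)$: given $z' \in \mathrm{Z}(eC_de)$, left multiplication by $z'$ on $eC_d$ is an $eC_de$-linear right $C_d$-module endomorphism, hence equals right multiplication by a unique $c_{z'} \in C_d$, which one checks is central; conversely for $z \in \mathrm{Z}(C_d)$ one has $ez(1-e) = ze(1-e) = 0$, so right multiplication by $z$ and left multiplication by $eze$ agree on $eC_d$. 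These two constructions are mutually inverse, giving $\mathrm{Z}(C_d) \xrightarrow{\sim} \mathrm{Z}(eC_de)$ directly. The block bijection, and in particular your point (iii), then \emph{follow} as consequences (the ring isomorphism carries primitive central idempotents to primitive central idempotents, so no block of $C_d$ is annihilated by $e$); they are not lemmas on the way to the result. Your instinct to invoke Lemma~\ref{lem:soclAlhdsupport} and a linkage/connectivity argument for (iii) is heavy machinery aimed at a fact that the formal argument gives for free.
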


The above statements are proven in the following sections. We will actually prove them under weaker hypotheses to clarify which assumptions are needed for the theory.

\begin{rem}
	The reader might wonder why we consider the algebras $B_{\ell}$ instead of just $A_0^{\op}$. The motivation comes from the fact that it is an important problem to try and give an explicit description of the quasi-hereditary algebras $C_d$. We note that, firstly, it is much easier to describe $B_{\ell}$ than it is to try and explicitly describe $C_d$. Secondly, the parametrization of simple objects in $\mc{M}(C_d)$, together with the partial ordering on these objects is very easy to describe. These facts combined imply that it seems likely one can use Rouquier's theorem on the uniqueness of highest weight covers to try and identify $C_d$ with some ``known'' highest weight category. 
\end{rem}

\subsection{Filtered pieces} \label{sec_filtered_pieces}

In this section, we consider in more detail the Serre subcategories $\mathcal{G}_{\leq d}(A)$. Let $M \in \mc{G}(A)$. There is a unique largest quotient ${}^{\perp} H_d(M)$ of $M$ that belongs to $\mc{G}_{\le d}(A)$, namely 
\begin{equation*}
{}^{\perp} H_d(M) = M / A \cdot M_{>d}.
\end{equation*}
 This defines a right exact functor 
\begin{equation*}
{}^{\perp} H_d : \mc{G}(A) \rightarrow \mc{G}_{\le d}(A) \;.
\end{equation*}
 Similarly, let $H^{\perp}_d(M)$ be the largest submodule of $M$ that belongs to $\mc{G}_{\le d}(A)$ and let 
 \begin{equation*}
H^{\perp}_d : \mc{G}(A) \rightarrow \mc{G}_{\le d}(A)
\end{equation*}
be the corresponding functor, which is left exact.  Note that
\begin{equation*}
(-)^* \circ H^{\perp}_d = {}^{\perp} H_d \circ (-)^* \;.
\end{equation*}
The functor ${}^{\perp} H_d$, resp. $H^{\perp}_d$, is left, resp. right, adjoint to the inclusion functor $\mc{G}_{\le d}(A) \hookrightarrow \mc{G}(A)$. Recall that a full subcategory $\mc{B}$ of a category $\mc{A}$ is said to be \textit{reflective}, resp. \textit{coreflective}, if the inclusion functor $\mc{B} \hookrightarrow \mc{A}$ admits a left, resp. right, adjoint $\mc{A} \rightarrow \mc{B}$.  

\begin{lem} \label{prop_filtered_piece_lemma}
$\mathcal{G}_{\leq d}(A)$ is both a reflective and coreflective subcategory of $\mathcal{G}(A)$. In particular:
\begin{enum_thm}
\item \label{prop_filtered_piece_lemma:closed}  $\mathcal{G}_{\leq d}(A)$ is closed under those limits and colimits which exist in $\mathcal{G}(A)$.
\item A morphism in $\mathcal{G}_{\leq d}(A)$ is a monomorphism (epimorphism) if and only if it is a monomorphism (epimorphism) in $\mathcal{G}_{\leq d}(A)$.
\item ${}^{\perp} H_d$ preserves projectives and $H^{\perp}_d$ preserves injectives.
\item \label{prop_filtered_piece_lemma:enough} $\mathcal{G}_{\leq d}(A)$ has enough projectives and enough injectives.
\item If $M \in \mathcal{G}_{\leq d}(A)$ and $P \twoheadrightarrow M$ is the projective cover of $M$ in $\mathcal{G}(A)$, then ${}^{\perp} H_d(P) \rarr {}^{\perp} H_d(M)$ is the projective cover of ${}^{\perp} H_d(M)$ in $\mathcal{G}_{\leq d}(A)$. The analogous statement holds for injective hulls.
\item Brauer reciprocity \cite[(55)]{hwtpaper} and its dual version \cite[(58)]{hwtpaper} hold in $\mathcal{G}_{\leq d}(A)$.
\end{enum_thm}
\end{lem}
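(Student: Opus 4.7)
The plan is to first establish that ${}^{\perp} H_d$ is left adjoint and $H^{\perp}_d$ is right adjoint to the inclusion $\iota : \mc{G}_{\le d}(A) \hookrightarrow \mc{G}(A)$; the six listed consequences will then follow. The key technical observation is that $N \in \mc{G}_{\le d}(A)$ if and only if $\Supp N \subseteq (-\infty, d]$. Indeed, every composition factor $L(\lambda)$ of $N$ is a subquotient by graded submodules and hence supported within $\Supp N$, so $\deg \lambda \le d$ whenever $N$ is concentrated in degrees $\le d$; the converse follows by induction on length using that each simple $L(\lambda)$ is supported in $(-\infty, \deg \lambda]$. Granting this, the universal property of ${}^{\perp}H_d(M) = M/(A \cdot M_{>d})$ is immediate, since any morphism from $M$ to an object of $\mc{G}_{\le d}(A)$ must vanish on $M_{>d}$ and hence on the submodule it generates; the argument for $H^{\perp}_d$ is analogous. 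Since $\iota$ is exact (as $\mc{G}_{\le d}(A)$ is a Serre subcategory), ${}^{\perp}H_d$ is automatically right exact and $H^{\perp}_d$ is automatically left exact.

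Items (a), (b) and (c) now follow from abstract principles. A Serre subcategory is closed under the finite limits and colimits that exist in $\mc{G}(A)$, and monomorphisms and epimorphisms in the subcategory coincide with those in the ambient category, giving (a) and (b). Item (c) is the standard fact that the left adjoint of an exact functor preserves projectives and the right adjoint of an exact functor preserves injectives.

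For (d), I will use that $\mc{G}(A)$ already has enough projectives: given $M \in \mc{G}_{\le d}(A)$, take its projective cover $p : P \twoheadrightarrow M$ in $\mc{G}(A)$ and apply ${}^{\perp}H_d$, yielding a surjection ${}^{\perp}H_d(P) \twoheadrightarrow {}^{\perp}H_d(M) = M$ from an object projective in $\mc{G}_{\le d}(A)$ by (c). Enough injectives is the dual argument with $H^{\perp}_d$. For (e), writing $p = \bar p \circ \pi$ with $\pi : P \twoheadrightarrow {}^{\perp}H_d(P)$, the surjectivity of $\pi$ gives $\pi(\mathrm{rad}\,P) = \mathrm{rad}\,{}^{\perp}H_d(P)$, so that $\ker \bar p = \pi(\ker p) \subseteq \mathrm{rad}\,{}^{\perp}H_d(P)$; since essential epimorphisms are precisely those whose kernel is contained in the radical, $\bar p$ is essential.

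Finally, (f) requires transferring standard-filtration multiplicities from $\mc{G}(A)$ to $\mc{G}_{\le d}(A)$. The plan is to order a standard filtration of $P(\lambda)$ so that the pieces $\Delta(\mu)$ with $\deg \mu > d$ appear at the top; such an ordering is possible because the highest weight order refines the degree order. One then identifies the corresponding submodule with $A \cdot P(\lambda)_{>d}$, so that applying ${}^{\perp}H_d$ produces a standard filtration of ${}^{\perp}H_d(P(\lambda))$ that preserves the multiplicities $[P(\lambda) : \Delta(\mu)]$ for $\deg \mu \le d$; Brauer reciprocity in $\mc{G}(A)$ then transfers to $\mc{G}_{\le d}(A)$. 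The main obstacle will be this identification of the subfiltration with $A \cdot P(\lambda)_{>d}$: the inclusion ``$\subseteq$'' is clear because the quotients are supported in degrees $> d$, but the reverse inclusion requires verifying that any standardly filtered quotient of $P(\lambda)$ supported in degrees $\le d$ contains only $\Delta(\mu)$-factors with $\deg \mu \le d$.
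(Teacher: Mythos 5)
Your overall route — establish that ${}^{\perp}H_d$ and $H^{\perp}_d$ are respectively left and right adjoint to the exact inclusion, then read off the consequences — is exactly the approach the paper takes, and your arguments for items (a)–(e) are correct and fill in the details that the paper's one-sentence proof omits. (One small point of emphasis: the right-exactness of ${}^{\perp}H_d$ and left-exactness of $H^{\perp}_d$ follow from being a left/right adjoint, not from the exactness of $\iota$; you invoke the latter, which is unnecessary though harmless.)

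Item (f) is where your proposal diverges and where it is not yet a proof. The paper simply asserts that Brauer reciprocity transfers, presumably because the usual $\Hom$-counting argument goes through unchanged using the adjunction identities $\Hom_{\mc{G}_{\le d}(A)}({}^{\perp}H_d(P(\lambda)),\nabla(\mu)) = \Hom_{\mc{G}(A)}(P(\lambda),\nabla(\mu))$ for $\deg\mu \le d$. You instead aim to identify $A\cdot P(\lambda)_{>d}$ with the submodule $F_{k(d)}$ arising from a degree-ordered standard filtration — which is in fact the content of the paper's subsequent Lemma \ref{lem:projcoversub} (proved there \emph{using} reciprocity in $\mc{G}_{\le d}(A)$, so your route must be self-contained). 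You flag the reverse inclusion $F_{k(d)}\subseteq A\cdot P(\lambda)_{>d}$ as the obstacle, and your stated worry (about quotients supported in degrees $\le d$) is slightly off-target. The inclusion you need has a direct proof: each subquotient $F_i/F_{i+1}\simeq\Delta(\mu_i)$ is cyclic, generated by its top graded piece in degree $\deg\mu_i>d$, so by downward induction from $F_l=0$ one finds that $F_{k(d)}$ is generated (as an $A$-module) by homogeneous elements of degree $>d$, hence $F_{k(d)}\subseteq A\cdot P(\lambda)_{>d}$. With this fixed, your derivation of reciprocity (compute $[Q_d(\lambda):\Delta(\mu)]=[P(\lambda):\Delta(\mu)]$ for $\deg\mu\le d$ from the filtration, apply reciprocity in $\mc{G}(A)$, and note composition multiplicities in $\nabla(\mu)$ agree in the Serre subcategory) is sound. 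As written, though, item (f) is a plan with a gap rather than a complete argument.
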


\begin{proof}
All statements follow directly from the fact that the inclusion $\mc{G}_{\le d}(A) \hookrightarrow \mc{G}(A)$ is exact, has both a left adjoint ${}^{\perp} H_d$ and right adjoint $H^{\perp}_d$, and these adjoints are the identity on $\mathcal{G}_{\leq d}(A)$.
\end{proof}

If $\deg \lambda \leq d$, so that $L(\lambda)$ belongs to $\mathcal{G}_{\leq d}(A)$, then in particular
\begin{equation*}
Q_d (\lambda) \dopgleich {}^{\perp} H_d(P(\lambda))
\end{equation*}
is the projective cover of $L(\lambda)$ in $\mc{G}_{\le d}(A)$ and
\begin{equation*}
J_d (\lambda) \dopgleich H^{\perp}_d(I(\lambda))
\end{equation*}
is the injective hull of $L(\lambda)$ in $\mc{G}_{\le d}(A)$. We note that if $\deg \lambda > d$, then ${}^{\perp} H_d(P(\lambda)) = 0$ and $H^{\perp}_d(I(\lambda)) = 0$. Let $\Irr_{\le d} \mc{G}(T)$ be the ideal in $\Irr \mc{G}(T)$ of all modules with degree at most $d$. A standard argument, see for instance \cite[Theorem 3.5]{CPS}, shows:

\begin{prop}
The category $\mc{G}_{\le d}(A)$ is a highest weight category, with standard modules ${}^{\perp} H_d(\Delta(\lambda)) = \Delta(\lambda)$ and costandard modules $H^{\perp}_d(\nabla(\lambda)) = \nabla(\lambda)$. 
\end{prop}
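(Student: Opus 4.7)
The plan is to transfer the highest weight structure of $\mc{G}(A)$ to the Serre subcategory $\mc{G}_{\le d}(A)$, exploiting the fact that $\Lambda' := \Irr_{\le d} \mc{G}(T)$ is a poset ideal in $\Irr \mc{G}(T)$: if $\mu \le \lambda$ and $\deg \lambda \le d$, then either $\mu = \lambda$ or $\deg \mu < \deg \lambda \le d$. I would first record the useful observation that $\mc{G}_{\le d}(A)$ equals the full subcategory of graded modules concentrated in degrees $\le d$: since each $L(\mu)$ is a quotient of $\Delta(\mu) \simeq A^- \otimes_T \mu$ and $A^-$ sits in non-positive degrees, $L(\mu)$ is supported in $(-\infty, \deg \mu]$; conversely, any composition factor $L(\mu)$ of a module $M$ contributes a non-zero head to $M$ in degree $\deg \mu$. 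This in particular identifies $K := \ker(P(\lambda) \twoheadrightarrow Q_d(\lambda))$ with $A \cdot P(\lambda)_{>d}$ and shows at once that $\Delta(\lambda), \nabla(\lambda) \in \mc{G}_{\le d}(A)$ for $\lambda \in \Lambda'$, so that ${}^{\perp} H_d(\Delta(\lambda)) = \Delta(\lambda)$ and $H^{\perp}_d(\nabla(\lambda)) = \nabla(\lambda)$ by the adjunctions of Lemma \ref{prop_filtered_piece_lemma}.

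With enough projectives and injectives in $\mc{G}_{\le d}(A)$ already supplied by Lemma \ref{prop_filtered_piece_lemma}, the essential task is to produce a standard filtration of the projective cover $Q_d(\lambda)$ with top quotient $\Delta(\lambda)$ and further sections $\Delta(\mu)$ for $\mu > \lambda$ in $\Lambda'$. My strategy is to take a standard filtration of $P(\lambda)$ in $\mc{G}(A)$ and rearrange it so that all sections $\Delta(\mu)$ with $\mu$ in the complement $\Lambda'' := \Irr \mc{G}(T) \setminus \Lambda'$ appear at the bottom, while all sections with $\mu \in \Lambda'$ appear on top, keeping $\Delta(\lambda)$ as the topmost quotient. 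This rearrangement is permitted by the standard Ext-vanishing $\Ext^1_{\mc{G}(A)}(\Delta(\nu), \Delta(\mu)) = 0$ whenever $\nu \not< \mu$: any pair $\mu \in \Lambda', \nu \in \Lambda''$ satisfies $\deg \mu \le d < \deg \nu$, so $\nu > \mu$, and any adjacent two-step extension $0 \to \Delta(\mu) \to E \to \Delta(\nu) \to 0$ splits, allowing the swap.

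It then remains to identify the submodule $M_k \subseteq P(\lambda)$ produced by the rearrangement---the one built from the $\Lambda''$-sections---with $K$. The inclusion $K \subseteq M_k$ is immediate from the minimality of $K$, since $P(\lambda)/M_k$ has composition factors only from $\Lambda'$. For the reverse inclusion, $M_k$ is generated as an $A$-module by lifts of the heads of its $\Delta(\mu_i)$-sections with $\mu_i \in \Lambda''$, each living in degree $\deg \mu_i > d$, so $M_k \subseteq A \cdot P(\lambda)_{>d} = K$. Once $Q_d(\lambda) = P(\lambda)/M_k$ carries the required $\Delta$-filtration, the exactly dual argument applied to $I(\lambda)$ with $H^{\perp}_d$ in place of ${}^{\perp} H_d$ equips the injective hull $J_d(\lambda)$ with a $\nabla$-filtration, and the head and composition factor structure of $\Delta(\lambda)$ and $\nabla(\lambda)$ transfer verbatim from $\mc{G}(A)$. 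The main technical obstacle is organising the rearrangement: the Ext-vanishing is classical, but it needs to be applied iteratively, for instance by induction on the number of inversions between $\Lambda'$- and $\Lambda''$-sections, to bubble all $\Lambda''$-sections to the bottom without disturbing the remainder of the filtration.
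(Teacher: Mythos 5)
Your argument is correct and amounts to unpacking the standard proof of \cite[Theorem 3.5]{CPS}, which the paper simply cites. The only redundancy is the Ext-vanishing rearrangement: the degree-ordered $\Delta$-filtration of $P(\lambda)$ it produces is already available from \cite[Lemma 4.19]{hwtpaper}, which the authors invoke immediately after this proposition to prove Lemma \ref{lem:projcoversub}; with that filtration in hand, the submodule $F_{k(d)}$ built from the $\Lambda''$-sections equals $A \cdot P(\lambda)_{>d}$ by exactly the generation argument you give, and the $\Delta$-filtration of $Q_d(\lambda)$ is then immediate.
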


To describe the projective cover and injective hull more explicitly, recall from \cite[Lemma 4.19]{hwtpaper} that there is a standard filtration $P(\lambda) = F_{0} \supset F_{1} \supset \cdots \supset F^l = 0$ of $P(\lambda) \in \mathcal{G}(A)$ with $F_i / F_{i+1} \simeq \Delta(\lambda_i)$ such that $\deg \lambda_i  \leq \deg \lambda_{i+1} $ for all $i$. It follows that there is some $k(d) \in \bbZ$ such that $d< \deg \lambda_i$ if and only if $i \ge k(d)$. Similarly, there is a costandard filtration $\{ 0\} = F_{0} \subset F_{1} \subset \cdots \subset F_m = I(\lambda)$ such that $F_{i} / F_{i-1} \simeq \nabla(\lambda_i)$ with $\deg \lambda_i \leq \deg \lambda_{i+1}$ and there is some $l(d)$ such that $d < \deg \lambda_i$ if and only if $i > l(d)$. With these notations we obtain:

\begin{lem}\label{lem:projcoversub}
Let $\deg \lambda \leq d$. Then $Q_d (\lambda) = P(\lambda) / F_{k(d)}$ and $J_d (\lambda) =  F_{l(d)}$.
\end{lem}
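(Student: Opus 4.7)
The plan is to prove the two statements in parallel (one for projective covers, one for injective hulls) using the universal properties of ${}^\perp H_d$ and $H^\perp_d$. For $Q_d(\lambda) = P(\lambda)/F_{k(d)}$, I would first check that $P(\lambda)/F_{k(d)}$ itself lies in $\mc{G}_{\le d}(A)$: it inherits the induced standard filtration with subquotients $\Delta(\lambda_0),\ldots,\Delta(\lambda_{k(d)-1})$, and since $[\Delta(\mu):L(\nu)]\neq 0$ forces $\deg \nu \le \deg \mu$, and each $\deg \lambda_i \le d$ by definition of $k(d)$, every composition factor has degree $\le d$.

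Next, I would establish the universal property that characterises ${}^\perp H_d(P(\lambda))$: every morphism $f \from P(\lambda) \to M$ with $M \in \mc{G}_{\le d}(A)$ vanishes on $F_{k(d)}$. This I would verify by descending induction along $F_{k(d)} \supset F_{k(d)+1} \supset \cdots \supset F_l = 0$: assuming $f|_{F_{i+1}} = 0$, the restriction $f|_{F_i}$ factors through $F_i/F_{i+1} \simeq \Delta(\lambda_i)$, and for $i \ge k(d)$ we have $\deg \lambda_i > d$, so the induced map $\Delta(\lambda_i)\to M$ must be zero because any nonzero image would have $L(\lambda_i)$ in its head, contradicting $M \in \mc{G}_{\le d}(A)$. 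Combined with the containment $P(\lambda)/F_{k(d)} \in \mc{G}_{\le d}(A)$, this universal property forces $Q_d(\lambda) \simeq P(\lambda)/F_{k(d)}$.

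For $J_d(\lambda) = F_{l(d)}$, I would run the Hom-dual argument. Exactly as above, $F_{l(d)}$ has a costandard filtration with subquotients $\nabla(\lambda_1),\ldots,\nabla(\lambda_{l(d)})$ of degree $\le d$, so $F_{l(d)} \in \mc{G}_{\le d}(A)$, and hence $F_{l(d)} \subseteq J_d(\lambda)$ by the maximality defining $J_d$. For the reverse inclusion, I would show by ascending induction on $i > l(d)$ that $\Hom(M, F_i/F_{l(d)}) = 0$ for every $M \in \mc{G}_{\le d}(A)$: the left-exact sequence obtained from
\[
0 \to F_{i-1}/F_{l(d)} \to F_i/F_{l(d)} \to \nabla(\lambda_i) \to 0
\]
combines the inductive hypothesis with the vanishing $\Hom(M, \nabla(\lambda_i)) = 0$, which holds for $\deg \lambda_i > d$ because any nonzero map into $\nabla(\lambda_i)$ hits the simple socle $L(\lambda_i)$, producing a composition factor of $M$ of degree $>d$. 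Taking $i = m$ shows $\Hom(M, I(\lambda)/F_{l(d)}) = 0$, so the composite $J_d(\lambda) \hookrightarrow I(\lambda) \twoheadrightarrow I(\lambda)/F_{l(d)}$ vanishes, giving $J_d(\lambda) \subseteq F_{l(d)}$.

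There is no essential obstacle: the argument is a clean induction on filtration length, and the only nontrivial input is the standard head/socle dichotomy for $\Delta(\mu)$ and $\nabla(\mu)$. The only point requiring mild care is keeping the indexing conventions for $k(d)$ and $l(d)$ straight, since the standard filtration is decreasing while the costandard filtration is increasing, so the ``bad'' subquotients (those with $\deg \lambda_i > d$) sit at the bottom in one case and at the top in the other.
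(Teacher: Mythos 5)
Your proof is correct. It shares the first half with the paper's argument --- in both, the key observation that $\deg\lambda_i\le d$ for $i<k(d)$ shows $P(\lambda)/F_{k(d)}\in\mc{G}_{\le d}(A)$, and the universal property of ${}^\perp H_d$ then gives a surjection $Q_d(\lambda)\twoheadrightarrow P(\lambda)/F_{k(d)}$. Where the two diverge is in showing this surjection is an isomorphism. The paper invokes Brauer reciprocity in $\mc{G}_{\le d}(A)$ (Lemma~\ref{prop_filtered_piece_lemma}(f)) to match the standard multiplicities of $Q_d(\lambda)$ against those of $P(\lambda)/F_{k(d)}$, concluding by a dimension count. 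You instead verify by a descending induction along $F_{k(d)}\supset\cdots\supset F_l=0$ that every map from $P(\lambda)$ to an object of $\mc{G}_{\le d}(A)$ already kills $F_{k(d)}$ --- the only input being that $\Delta(\lambda_i)$ has simple head $L(\lambda_i)$ of degree $>d$ --- which pins down $\ker\bigl(P(\lambda)\to Q_d(\lambda)\bigr)=F_{k(d)}$ directly. Your route is the more elementary one: it bypasses Brauer reciprocity entirely and avoids the (implicit in the paper's version) appeal to $Q_d(\lambda)$ admitting a standard filtration. The paper's version is shorter on the page but trades on more of the developed machinery. Your dual argument for $J_d(\lambda)=F_{l(d)}$ is likewise sound and correctly navigates the indexing reversal.
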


\begin{proof}
For $i < k(d)$ we have $d \ge \deg \lambda_i$, so $\Delta(\lambda_i) \in \mc{G}_{\le d}(A)$. Hence, $P(\lambda) / F_{k(d)} \in \mc{G}_{\le d}(A)$. Since $Q_d(\lambda) = {}^{\perp} H_d(P(\lambda))$ is the largest quotient of $P(\lambda)$ belonging to $\mathcal{G}_{\leq d}(A)$, it follows that $P(\lambda) / F_{k(d)} \in \mc{G}_{\le d}(A)$ is a quotient of $Q_d(\lambda)$. On the other hand, by Brauer reciprocity in $\mc{G}_{\le d}(A)$, we obtain 
$$
[Q_d(\lambda) : \Delta(\mu)] = [\nabla(\mu) : L(\lambda)] = [P(\lambda) / F_{k(d)} : \Delta(\mu)]
$$
for all $\mu$ with $\deg \mu \leq d$. Hence, $Q_d(\lambda) = P(\lambda) / F_{k(d)}$. The proof for injective hulls is analogous.
\end{proof}

\begin{lem} \label{proj_cover_leq_d_lambda}
If $\deg \lambda = d$, then $Q_d(\lambda) = \Delta(\lambda)$ and $J_d(\lambda) = \nabla(\lambda)$. \qed
\end{lem}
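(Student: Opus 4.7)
The plan is to deduce this directly from Lemma \ref{lem:projcoversub} by pinning down the integers $k(d)$ and $l(d)$ in the case $\deg \lambda = d$. For the projective statement, the key observation is that in the standard filtration $P(\lambda) = F_0 \supset F_1 \supset \cdots \supset F_l = 0$, the top factor must be $\Delta(\lambda)$, i.e.\ $\lambda_0 = \lambda$. Indeed, by Brauer reciprocity $[P(\lambda) : \Delta(\mu)] = [\nabla(\mu) : L(\lambda)]$, so $[P(\lambda) : \Delta(\mu)] \neq 0$ forces $\lambda \leq \mu$. Since the partial order on $\Irr \mathcal{G}(T)$ is determined by degree ($\mu > \lambda$ iff $\deg \mu > \deg \lambda$), and $\lambda, \mu$ with equal degree but $\mu \neq \lambda$ are incomparable, we conclude that either $\mu = \lambda$ or $\deg \mu > \deg \lambda = d$. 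Combined with the monotonicity $\deg \lambda_i \leq \deg \lambda_{i+1}$, this forces $\lambda_0 = \lambda$ (hence $\deg \lambda_0 = d$) and $\deg \lambda_i > d$ for all $i \geq 1$.

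Therefore $k(d) = 1$, and Lemma \ref{lem:projcoversub} gives
\[
Q_d(\lambda) = P(\lambda)/F_1 = \Delta(\lambda_0) = \Delta(\lambda).
\]

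For the injective statement I would argue dually. The socle of $I(\lambda)$ is $L(\lambda)$, which sits inside $\nabla(\lambda)$, and Brauer reciprocity in its dual form \cite[(58)]{hwtpaper} yields $[I(\lambda) : \nabla(\mu)] = [\Delta(\mu) : L(\lambda)]$, which is nonzero only when $\lambda \leq \mu$. The same degree argument then shows that the costandard filtration $\{0\} = F_0 \subset F_1 \subset \cdots \subset F_m = I(\lambda)$ has $\lambda_1 = \lambda$ and $\deg \lambda_i > d$ for $i \geq 2$. Hence $l(d) = 1$ and Lemma \ref{lem:projcoversub} gives $J_d(\lambda) = F_1 = \nabla(\lambda)$.

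There is no real obstacle here; the statement is a direct corollary of Lemma \ref{lem:projcoversub} once one notes that the top/socle of $P(\lambda)/I(\lambda)$ sit in degree $d$ while all other standard/costandard layers sit in strictly higher degree.
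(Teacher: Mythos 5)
Your proof is correct and is precisely the verification that the paper leaves implicit (the lemma is stated with a \qed, i.e., as an immediate consequence of Lemma~\ref{lem:projcoversub}): since the partial order on $\Irr\mc{G}(T)$ is determined by degree, Brauer reciprocity and the degree-monotone filtrations force the top standard layer of $P(\lambda)$ and the bottom costandard layer of $I(\lambda)$ to be $\Delta(\lambda)$ and $\nabla(\lambda)$ respectively while all other layers lie in degree $> d$, giving $k(d)=l(d)=1$.
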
 

\begin{rem}
Even if $P(\lambda) = I(\lambda)$ in $\mc{G}(A)$, it is not generally the case that $Q_d(\lambda) = J_d(\lambda)$ in $\mc{G}_{\le d}(A)$. For instance, consider $A = K[x,y] / (x^2, y^2)$ with triangular structure as in \cite[Example 3.2]{hwtpaper} and take $d = 0$. We have $T=K$, thus there is only the simple $T$-module $K$. Considering it as a graded $T$-module concentrated in degree $m \in \bbZ$, we denote it simply by $m$. Then Lemma \ref{lem:projcoversub} implies that the projective cover $Q_0(0)$ of $L(0)$ in $\mc{G}_{\le 0}(A)$ equals $\Delta(0)$ and its injective hull $J_0(0)$ is $\nabla(0)$. In this case, $\Delta(0) \neq \nabla(0)$. This is because both the projective cover and injective hull of $L(0)$ need to be ``trimmed'' to fit in $\mc{G}_{\le 0}(A)$. Moving further down, we have $Q_0(m) = J_0(m)$ for all $m < 0$. One can easily produce more involved examples. 
\end{rem}

In order to illustrate the rather atypical behavior of the highest weight categories considered here, we turn briefly to the homological dimension of objects in the various highest weight categories. If $\mc{A}$ is an abelian category with enough projectives then let $\mathrm{p.d.}_{\mc{A}}(M)$ denote the projective dimension of $M \in \mc{A}$. Similarly, $\mathrm{i.d.}_{\mc{A}}(M)$ denotes the injective dimension of $M$ if $\mc{A}$ has enough injectives. 

\begin{lem} 
In $\mc{G}_{\le d}(A)$ we have
\begin{enum_thm}
\item $\mathrm{p.d.}_{\mc{G}_{\le d}(A)}(\Delta(\lambda)) \le d - \deg \lambda$.
\item $\mathrm{i.d.}_{\mc{G}_{\le d}(A)}(\nabla(\lambda)) \le d - \deg \lambda $.
\end{enum_thm}
Hence, $\mathrm{p.d.}_{\mc{G}_{\le d}(A)}(M) < \infty$ for all $M \in \mc{G}_{\le d}^{\Delta}(A)$ and $\mathrm{i.d.}_{\mc{G}_{\le d}(A)}(M) < \infty$ for all $M \in \mc{G}_{\le d}^{\nabla}(A)$.
\end{lem}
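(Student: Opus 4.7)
The plan is to prove both statements simultaneously by downward induction on $\deg \lambda$, using the standard filtration of $Q_d(\lambda)$ (resp. costandard filtration of $J_d(\lambda)$) and the fact that non-top composition factors in the standard filtration of a projective cover have strictly larger weights in the poset.

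For the base case $\deg \lambda = d$, Lemma \ref{proj_cover_leq_d_lambda} gives $Q_d(\lambda) = \Delta(\lambda)$, so $\Delta(\lambda)$ is projective in $\mc{G}_{\le d}(A)$ and has projective dimension $0 = d - \deg \lambda$. Dually, $J_d(\lambda) = \nabla(\lambda)$ is injective of injective dimension $0$.

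For the inductive step, assume $\deg \lambda < d$ and consider the projective cover $Q_d(\lambda) \twoheadrightarrow \Delta(\lambda)$. By Lemma \ref{lem:projcoversub}, $Q_d(\lambda) = P(\lambda)/F_{k(d)}$ has a standard filtration inherited from that of $P(\lambda)$, with top quotient $\Delta(\lambda)$ appearing exactly once; since $\mc{G}_{\le d}(A)$ is a highest weight category with respect to the induced partial order, every other standard subquotient $\Delta(\mu)$ satisfies $\mu > \lambda$, i.e., $\deg \mu > \deg \lambda$, and of course $\deg \mu \le d$. Thus the kernel $K$ of $Q_d(\lambda) \twoheadrightarrow \Delta(\lambda)$ has a standard filtration with factors $\Delta(\mu)$ with $\deg \lambda < \deg \mu \le d$. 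By the induction hypothesis, each such $\Delta(\mu)$ has projective dimension at most $d - \deg \mu \le d - \deg \lambda - 1$, and a short induction on the filtration length of $K$ (using long exact sequences of $\Ext$) shows $\mathrm{p.d.}_{\mc{G}_{\le d}(A)}(K) \le d - \deg \lambda - 1$. The short exact sequence
\[
0 \to K \to Q_d(\lambda) \to \Delta(\lambda) \to 0
\]
then yields $\mathrm{p.d.}_{\mc{G}_{\le d}(A)}(\Delta(\lambda)) \le d - \deg \lambda$, as desired. The injective dimension bound for $\nabla(\lambda)$ is obtained by the dual argument, applying $H^{\perp}_d$ to the costandard filtration of $I(\lambda)$ and using Lemma \ref{lem:projcoversub} to identify $J_d(\lambda)$.

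Finally, the statements for arbitrary $M \in \mc{G}_{\le d}^{\Delta}(A)$ and $M \in \mc{G}_{\le d}^{\nabla}(A)$ follow by induction on the length of a standard (respectively costandard) filtration: if $0 \to M' \to M \to \Delta(\mu) \to 0$ is exact with $M'$ standardly filtered of shorter length, then both $M'$ and $\Delta(\mu)$ have finite projective dimension, hence so does $M$. The main (mild) obstacle is verifying that the non-top factors in the standard filtration of $Q_d(\lambda)$ have strictly larger degree; this uses the definition of the partial order via $\deg$ together with the defining property of standard filtrations in a highest weight category. No delicate homological input is required beyond these observations.
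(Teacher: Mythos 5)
Your proof is correct and is precisely the ``usual induction argument'' the paper invokes: downward induction starting from $\deg\lambda = d$ (where Lemma \ref{proj_cover_leq_d_lambda} makes $\Delta(\lambda)$ projective), using that the kernel of $Q_d(\lambda)\twoheadrightarrow\Delta(\lambda)$ is filtered by standards $\Delta(\mu)$ with $\deg\lambda < \deg\mu \le d$. This matches the paper's (very terse) proof in substance; you have simply written out the details.
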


\begin{proof}
This is the usual induction argument, using Lemma \ref{proj_cover_leq_d_lambda} 
\end{proof}

In general, the quantities $\mathrm{p.d.}_{\mc{G}_{\le d}(A)}(\nabla(\lambda))$ and $\mathrm{i.d.}_{\mc{G}_{\le d}(A)}(\Delta(\lambda))$ are infinite, as is both the injective and projective dimension of $L(\lambda)$. In the larger category $\mc{G}(A)$, the simples, standards and costandards can all have infinite projective and injective dimension. All of these claims can easily be checked to be true for our favorite example $A = K[x,y] / (x^2,y^2)$.

\subsection{Subquotients}\label{sec:quasihereditary}

The category $\mc{G}(A)$ is a highest weight category with infinitely many simple objects. In this section we consider certain subquotients of $\mc{G}(A)$ that are themselves highest weight categories, but have only finitely many simple objects. Thus, they are equivalent to the module category of some quasi-hereditary algebra. 

If $d \ge 0$, then $\mc{G}_{< 0}(A) := \mc{G}_{\le -1}(A)$ is a Serre subcategory of $\mc{G}_{\le d}(A)$ and we denote by
\begin{equation*}
\mc{G}_{[d]}(A) \dopgleich \mc{G}_{\leq d}(A)/\mc{G}_{<0} 
\end{equation*}
the quotient category. We have 
\begin{equation*}
\Irr \mc{G}_{[d]}(T) = \{ \lambda \in \Irr \mc{G}(T) \ |  \ \deg \lambda \in [0,d] \} \;.
\end{equation*}

\begin{lem}\label{lem:porjinjquot}
Let $0 \le \deg \lambda \le d$. The projective cover of $L(\lambda) \in \mc{G}_{[d]}(A)$ is $Q_d(\lambda)$, and the injective hull of $L(\lambda)$ is $J_d(\lambda)$. 
\end{lem}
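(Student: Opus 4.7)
My approach is to exploit the general theory of Serre quotients of highest weight categories (cf.\ \cite{CPS}). The subcategory $\mc{G}_{<0}(A)$ is the Serre subcategory of $\mc{G}_{\le d}(A)$ generated by the simples $L(\mu)$ with $\mu$ in the order ideal $\{\deg \mu < 0\}$ of the poset $\Irr_{\le d}\mc{G}(T)$, so $\mc{G}_{[d]}(A)$ is a highest weight category on the complementary coideal $\{0 \le \deg \lambda \le d\}$, with simples, standards, and costandards given by the images of $L(\lambda)$, $\Delta(\lambda)$, and $\nabla(\lambda)$ under the exact quotient functor $\pi \colon \mc{G}_{\le d}(A) \to \mc{G}_{[d]}(A)$. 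My plan is to identify $\pi Q_d(\lambda)$ with the projective cover of $L(\lambda)$ and $\pi J_d(\lambda)$ with the injective hull.

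For projectivity I would use Lemma \ref{lem:projcoversub}: since $\deg \lambda \ge 0$, the standard filtration of $Q_d(\lambda)$ has successive quotients $\Delta(\lambda_i)$ all of degree in $[0, d]$, so $\pi$ sends this to a standard filtration of $\pi Q_d(\lambda)$ in $\mc{G}_{[d]}(A)$. By the usual characterization of projectives in a highest weight category, it suffices to check
\[
\Ext^1_{\mc{G}_{[d]}(A)}(\pi Q_d(\lambda), \nabla_{[d]}(\mu)) = 0 \quad \text{for all } \mu \text{ with } 0 \le \deg \mu \le d.
\]
Any such extension class should lift to one in $\Ext^1_{\mc{G}_{\le d}(A)}(Q_d(\lambda), \nabla(\mu))$, which vanishes by projectivity of $Q_d(\lambda)$. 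To conclude that $\pi Q_d(\lambda)$ is the projective cover, I note that the natural map $\End_{\mc{G}_{\le d}(A)}(Q_d(\lambda)) \to \End_{\mc{G}_{[d]}(A)}(\pi Q_d(\lambda))$ is surjective (any endomorphism of $\pi Q_d(\lambda)$ lifts, using projectivity of $Q_d(\lambda)$), so the latter is a local ring, giving indecomposability; moreover, exactness of $\pi$ turns the surjection $Q_d(\lambda) \twoheadrightarrow L(\lambda)$ into $\pi Q_d(\lambda) \twoheadrightarrow L(\lambda)$. The injective hull statement is dual, applying the symmetric argument to the costandard filtration of $J_d(\lambda)$ (Lemma \ref{lem:projcoversub}), or alternatively via the duality $(-)^*$ reducing to the projective case over $A^{\op}$.

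The main obstacle will be the Ext-vanishing step: in general, $\Ext^1$ groups can grow upon passing to a Serre quotient, so the claim that every extension class in $\mc{G}_{[d]}(A)$ lifts to $\mc{G}_{\le d}(A)$ is the delicate point. The lifting should be made possible by the fact that the standards appearing in the filtration of $Q_d(\lambda)$ all have weights in $[0, d]$, so any short exact sequence $0 \to \nabla_{[d]}(\mu) \to E \to \pi Q_d(\lambda) \to 0$ in the quotient can be represented by a genuine extension in $\mc{G}_{\le d}(A)$; equivalently, one may prefer to argue via the existence of a right adjoint to $\pi$ (a section functor), which automatically implies that $\pi$ preserves projectives by adjunction. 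Either route should yield the result.
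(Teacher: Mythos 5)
There is a genuine gap. The criterion you invoke — that $\Ext^1_{\mc{G}_{[d]}(A)}(\pi Q_d(\lambda),\nabla_{[d]}(\mu)) = 0$ for all $\mu$ characterizes projectivity — is wrong: in a highest weight category this Ext-vanishing against costandards characterizes objects admitting a $\Delta$-filtration, not projectives (tilting objects satisfy it too). So even granting the lifting of extension classes, your argument only re-proves that $\pi Q_d(\lambda)$ is $\Delta$-filtered, which you had already observed from Lemma \ref{lem:projcoversub}; it does not establish projectivity. The indecomposability argument via $\End$ being local is fine, but "indecomposable, $\Delta$-filtered, with $L(\lambda)$ in the head" does not pin down the projective cover — the indecomposable tilting $T(\lambda)$ would also pass most of these tests. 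Your fallback, that "the existence of a right adjoint to $\pi$ automatically implies that $\pi$ preserves projectives by adjunction," is also incorrect: an exact functor with a right adjoint preserves projectives only if that right adjoint is itself exact, and the section functor for a Serre quotient is in general only left exact. (What is automatic is the dual: $\pi$ exact with a right adjoint means the right adjoint preserves injectives.)

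The paper's proof takes a different and more elementary route. It first observes directly that for $0 \le \deg \lambda, \deg \mu \le d$ the Hom-spaces into and out of $Q_d(\lambda)$ and $J_d(\lambda)$ agree in $\mc{G}_{[d]}(A)$ and in $\mc{G}_{\le d}(A)$; the point is that $\Soc J_d(\lambda) = L(\lambda)$ (resp. the head of $Q_d(\lambda)$ is $L(\lambda)$) does not lie in $\mc{G}_{<0}(A)$, so no submodule (resp. quotient) of these objects lies in $\mc{G}_{<0}(A)$ and the colimit/limit in the definition of Hom in the quotient collapses. Projectivity and injectivity are then checked \emph{directly from the lifting property}, not via Ext: a short exact sequence in $\mc{G}_{[d]}(A)$ lifts to one in $\mc{G}_{\le d}(A)$ by \cite[Corollary 15.8]{FaithAlgebraI}, the relevant morphism into $J_d(\lambda)$ genuinely lifts (again because $J' = 0$ by the socle observation), and one then uses injectivity of $J_d(\lambda)$ in $\mc{G}_{\le d}(A)$ to complete the square and descend. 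If you want to keep your Ext-theoretic framing, you would need to show $\Ext^1_{\mc{G}_{[d]}(A)}(\pi Q_d(\lambda), L(\mu)) = 0$ for \emph{all} simples $L(\mu)$, and the lifting step for arbitrary coefficients then requires exactly the socle/head argument the paper uses — at which point it is cleaner to argue with short exact sequences directly, as they do.
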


\begin{proof}
For $0 \le \deg \lambda, \deg \mu \le d$, it is easily seen that 
$$
\Hom_{\mc{G}_{[d]}(A)}(Q_d(\lambda),L(\mu)) = \Hom_{\mc{G}_{\le d}(A)}(Q_d(\lambda),L(\mu))
$$
and 
$$
\Hom_{\mc{G}_{[d]}(A)}(L(\lambda),J_d(\mu)) = \Hom_{\mc{G}_{\le d}(A)}(L(\lambda),J_d(\mu))
$$ 
Therefore it suffices to show that $Q_d(\lambda)$ is projective and $J_d(\lambda)$ is injective. Since the proofs are similar, we only show that $J_d(\lambda)$ is injective. Let 
$$
\begin{tikzcd}
0 \arrow{r} & K \arrow{d}[swap]{\phi} \arrow{r}  & L \arrow[dotted]{dl}{\psi} \arrow{r} & M \arrow{r} & 0\\
 & J_d(\lambda) & & & 
\end{tikzcd}
$$
be a commutative diagram in $\mc{G}_{[d]}(A)$, with the row being exact. Then we need to show that the morphism $\psi$ exists, preserving commutativity. By \cite[Corollary 15.8]{FaithAlgebraI}, there exists a short exact sequence $0 \rightarrow K' \rightarrow L' \rightarrow M' \rightarrow 0$ in $\mc{G}_{\le d}(A)$ descending to the one above in $\mc{G}_{[d]}(A)$. Since $\phi \in \Hom_{\mc{G}_{[d]}(A)}(K',J_d(\lambda))$, there exist submodules $K'' \subset K'$ and $J' \subset J_d(\lambda)$ such that $K'/K''$ and $J'$ belong to $\mc{G}_{<0}(A)$ and $\phi$ is represented by $\phi'' : K'' \rightarrow J_d(\lambda) / J'$. However, $L(\lambda) = \Soc_A (J_d(\lambda))$ does not belong to $\mc{G}_{< 0}(A)$. Thus, $J' = 0$. Moreover, since $J_d(\lambda)$ is injective, the morphism $\phi''$ is the restriction of a morphism $\phi' : K' \rightarrow J_d(\lambda)$. Thus, again by the injectivity of $J_d(\lambda)$, we get a commutative diagram 
$$
\begin{tikzcd}
0 \arrow{r} & K' \arrow{d}{\phi'} \arrow{r}  & L' \arrow{dl}{\psi'} \arrow{r} & M' \arrow{r} & 0\\
 & J_d(\lambda) & & & 
\end{tikzcd}
$$
in $\mc{G}_{\le d}(A)$. The morphism $\psi'$ descends to the required $\psi$. 
\end{proof}

By restriction, $\Irr \mc{G}_{[d]}(T)$ is a partially ordered set. The following result is standard, see \cite[Theorem 3.5]{CPS}.

\begin{prop}\label{prop:subquotequiv}
The pair $(\mc{G}_{[d]}(A), \Irr \mc{G}_{[d]}(T))$ is a highest weight category with finitely many simple objects. It is Morita equivalent to the module category $\mc{M}(C_d)$ of the quasi-hereditary algebra
$$
C_d \dopgleich \End_{\mc{G}_{\le d}(A)}(Q)^{\op}, \quad \textrm{where} \quad Q \dopgleich \bigoplus_{\lambda \in \Irr \mc{G}_{[d]}(T)} Q_d(\lambda) \;.
$$
\end{prop}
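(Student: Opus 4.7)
My plan is to deduce this from the general formalism of Serre quotients of highest weight categories (Cline--Parshall--Scott, Theorem 3.5), having verified in the previous subsections the hypotheses needed to apply it.

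First I would set up the combinatorics. The poset $\Irr \mc{G}_{\le d}(T) = \{ \lambda : \deg \lambda \le d\}$ equipped with $\le$ has the subset $\{\lambda : \deg \lambda < 0\}$ as an \emph{upper set}, equivalently its complement $\Irr \mc{G}_{[d]}(T) = \{\lambda : 0 \le \deg\lambda \le d\}$ is an ideal of the poset. Since $\Irr \mc{M}(T)$ is finite (as $T$ is finite-dimensional semisimple) and $\deg \lambda$ runs over $\{0,1,\dots,d\}$, this ideal is finite. Consequently $\mc{G}_{[d]}(A)$ has finitely many isomorphism classes of simple objects, namely the images under the quotient functor $\pi \from \mc{G}_{\le d}(A) \to \mc{G}_{[d]}(A)$ of the $L(\lambda)$ for $\lambda$ in this ideal.

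Next, I would verify the highest weight structure. The subcategory $\mc{G}_{< 0}(A)$ is the Serre subcategory of $\mc{G}_{\le d}(A)$ generated by $\{L(\mu) : \deg \mu < 0\}$, corresponding to an upper set in the poset $\Irr \mc{G}_{\le d}(T)$. By the standard CPS argument, for $\lambda \in \Irr \mc{G}_{[d]}(T)$ the images $\pi(\Delta(\lambda))$ and $\pi(\nabla(\lambda))$ serve as standard and costandard objects (both standard and costandard filtrations of modules in $\mc{G}_{\le d}(A)$ descend intact, because all their subquotients have degree $\ge 0$ once we mod out $\mc{G}_{<0}(A)$). The characterizing $\Ext^1$-vanishing and multiplicity properties pass through $\pi$ since $\pi$ is exact and kills exactly the subcategory generated by simples indexed outside the ideal. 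This yields that $(\mc{G}_{[d]}(A), \Irr \mc{G}_{[d]}(T))$ is a highest weight category.

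For the Morita statement, Lemma \ref{lem:porjinjquot} tells me that $Q_d(\lambda)$ is projective in $\mc{G}_{[d]}(A)$ and covers $L(\lambda)$, so $Q = \bigoplus_{\lambda \in \Irr \mc{G}_{[d]}(T)} Q_d(\lambda)$ is a projective generator. Standard Morita theory then gives an equivalence $\mc{G}_{[d]}(A) \simeq \mc{M}(E^{\op})$ for $E = \End_{\mc{G}_{[d]}(A)}(Q)$; since the target is a highest weight category with finitely many simples, $E^{\op}$ is quasi-hereditary. The last (and to my mind trickiest) point is the identification $E = \End_{\mc{G}_{\le d}(A)}(Q)$, i.e.\ that $\pi$ is fully faithful on $Q$. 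For faithfulness: any $f \from Q_d(\lambda) \to Q_d(\mu)$ in $\mc{G}_{\le d}(A)$ killed by $\pi$ has image in $\mc{G}_{< 0}(A)$, but the image is a quotient of $Q_d(\lambda)$ whose head must be a quotient of $L(\lambda)$ with $\deg \lambda \ge 0$, forcing the image to be zero. For fullness: the argument of Lemma \ref{lem:porjinjquot} shows every morphism $\pi(Q_d(\lambda)) \to \pi(M)$ in the quotient is represented by an honest morphism $Q_d(\lambda) \to M/M'$ for some $M' \in \mc{G}_{<0}(A) \cap M$, and one then lifts using that $Q_d(\lambda)$ is projective in $\mc{G}_{\le d}(A)$ together with the vanishing $\Hom_{\mc{G}_{\le d}(A)}(Q_d(\lambda), \mc{G}_{<0}(A)) = 0$ just established. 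Combining these gives $C_d = E^{\op} = \End_{\mc{G}_{\le d}(A)}(Q)^{\op}$, completing the proof.
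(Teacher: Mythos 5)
Your approach is essentially the one the paper has in mind: the paper simply cites \cite[Theorem 3.5]{CPS} for the highest weight structure of the subquotient, and the Morita statement is handled by the relative Morita theory in Appendix~\ref{morita_appendix} (Lemma~\ref{lem:projcoverquotient} and Theorem~\ref{thm:relativeMorita}), which your full-faithfulness argument for $Q$ reproduces. The key points -- $Q_d(\lambda)/Q' \in \mc{G}_{<0}(A)$ forces $Q' = Q_d(\lambda)$ because the head $L(\lambda)$ of $Q_d(\lambda)$ has degree $\geq 0$; $\Hom_{\mc{G}_{\leq d}(A)}(Q_d(\lambda),-)$ kills $\mc{G}_{<0}(A)$ for the same reason; lift via projectivity -- are exactly right.

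One terminology slip is worth correcting, since it points at the heart of the CPS mechanism. With the paper's convention that ``ideal'' means a downward-closed subset of $\Irr\mc{G}_{\leq d}(T)$ (see the discussion preceding Lemma~\ref{lem:projcoversub} and the usage in Lemma~\ref{lem:cornering}), the subset $\{\lambda : \deg\lambda < 0\}$ is an \emph{ideal}, not an upper set: if $\mu \leq \lambda$ and $\deg\lambda < 0$ then $\deg\mu < 0$. Correspondingly, $\Irr\mc{G}_{[d]}(T)$ is the complementary \emph{coideal}. You have these the wrong way round. This matters because CPS's theorem produces a highest weight quotient precisely when you kill the Serre subcategory attached to an \emph{ideal}; if a reader took your labeling at face value they would conclude you are quotienting by the coideal part, which is not what CPS handles. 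Fortunately the Serre subcategory you actually kill, namely $\mc{G}_{<0}(A)$, is the one attached to the ideal $\{\deg\lambda < 0\}$, so your construction and the rest of the argument are correct -- only the words ``upper set'' and ``ideal'' need to be swapped.
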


We note another consequence of the general theory of quasi-hereditary algebras. 

\begin{lem}\label{lem:cornering}
For each $0 \le t \le d$, there is an idempotent $e_t \in C_d$ such that $C_t \simeq e_t C_d e_t$. 
\end{lem}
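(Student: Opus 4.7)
Set $e_t := \sum_{\lambda \in \Irr \mc{G}_{[t]}(T)} e_\lambda \in C_d$, where $e_\lambda$ is the primitive idempotent of $C_d = \End_{\mc{G}_{\le d}(A)}(Q)^{\op}$ projecting the projective generator $Q = \bigoplus_{\lambda \in \Irr \mc{G}_{[d]}(T)} Q_d(\lambda)$ of Proposition \ref{prop:subquotequiv} onto the summand $Q_d(\lambda)$. Then $e_t C_d e_t$ is canonically identified with $\End_{\mc{G}_{\le d}(A)}\bigl(\bigoplus_{\lambda \in \Irr \mc{G}_{[t]}(T)} Q_d(\lambda)\bigr)^{\op}$, and the plan is to exhibit a canonical algebra isomorphism between this corner and $C_t$.

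The comparison homomorphism $\Phi \from e_t C_d e_t \to C_t$ is obtained from the reflection functor ${}^{\perp} H_t \from \mc{G}_{\le d}(A) \to \mc{G}_{\le t}(A)$, which is left adjoint to the inclusion by Lemma \ref{prop_filtered_piece_lemma}. By Lemma \ref{lem:projcoversub}, ${}^{\perp} H_t Q_d(\lambda) = Q_t(\lambda)$ for each $\lambda \in \Irr \mc{G}_{[t]}(T)$, giving counit surjections $Q_d(\lambda) \twoheadrightarrow Q_t(\lambda)$; functoriality of ${}^{\perp} H_t$ on endomorphisms produces $\Phi$. Equivalently, via the adjunction identification $\Hom_{\mc{G}_{\le t}(A)}(Q_t(\lambda), Q_t(\mu)) \simeq \Hom_{\mc{G}_{\le d}(A)}(Q_d(\lambda), Q_t(\mu))$, each component of $\Phi$ sends $f \from Q_d(\lambda) \to Q_d(\mu)$ to the composite $Q_d(\lambda) \xrightarrow{f} Q_d(\mu) \twoheadrightarrow Q_t(\mu)$, reinterpreted as a morphism $Q_t(\lambda) \to Q_t(\mu)$ by the adjunction.

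The bulk of the proof is verifying that $\Phi$ is an isomorphism. Surjectivity is immediate from the projectivity of $Q_d(\lambda)$ in $\mc{G}_{\le d}(A)$ (Lemma \ref{lem:porjinjquot}): any morphism $Q_d(\lambda) \to Q_t(\mu)$ lifts along the surjection $Q_d(\mu) \twoheadrightarrow Q_t(\mu)$ and therefore realizes any prescribed element of $C_t$ as $\Phi$ of an endomorphism of $\bigoplus Q_d(\lambda)$. The main obstacle is injectivity, which reduces to showing that $\Hom_{\mc{G}_{\le d}(A)}(Q_d(\lambda), K_\mu) = 0$ for all $\lambda, \mu \in \Irr \mc{G}_{[t]}(T)$, where $K_\mu := \ker(Q_d(\mu) \twoheadrightarrow Q_t(\mu))$. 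By Lemma \ref{lem:projcoversub}, the submodule $K_\mu$ carries a standard filtration with layers $\Delta(\nu)$ for $t < \deg \nu \le d$. I would attack this via a two-sided dévissage: working through the standard filtration of the source $Q_d(\lambda)$ (whose layers $\Delta(\lambda')$ satisfy $\deg \lambda' \ge \deg \lambda$) and the standard filtration of the target $K_\mu$, and exploiting the Ext-vanishing $\Ext^i_{\mc{G}_{\le d}(A)}(\Delta(\lambda'), \nabla(\nu)) = 0$ for $i \ge 1$ together with the tight support constraints provided by the triangular decomposition $A = A^- \otimes T \otimes A^+$. The delicate control of morphisms between standards of differing degrees, under the constraint $\deg \lambda \le t < \deg \nu$, is the real technical heart of the argument and where any hypotheses on $A$ beyond triangular decomposition (such as graded symmetry or Frobenius) would be expected to enter if needed.
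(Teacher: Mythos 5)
Your choice of idempotent is the wrong one, and this is not a fixable detail: the map $\Phi$ you build fails to be injective. The source of the error is that you are trying to realise $C_t$ as the corner corresponding to the \emph{ideal} $\{\lambda : 0 \le \deg\lambda \le t\}$ in the poset $\Irr \mc{G}_{[d]}(T)$. In the general theory of quasi-hereditary algebras, corners $e C_d e$ arise from \emph{coideals} (upward-closed subsets): if $\Gamma$ is a coideal and $e$ is the sum of the primitive idempotents for $\lambda \in \Gamma$, then $e C_d e$ is quasi-hereditary and $\mc{M}(e C_d e)$ is the Serre quotient $\mc{M}(C_d)/\mc{M}(C_d)[\Lambda \smallsetminus \Gamma]$. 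Since $\{0,\dots,t\}$ is downward-closed, the corner at your $e_t$ instead encodes the Serre quotient of $\mc{G}_{[d]}(A)$ by the subcategory generated by the \emph{high-degree} simples, which is an altogether different (and in general not quasi-hereditary) algebra. Your truncation functor ${}^{\perp}H_t$ is the adjoint of the inclusion of the Serre \emph{subcategory} $\mc{G}_{\le t}$, not a Serre quotient, and there is no reason for it to induce an isomorphism on endomorphism rings of projectives; indeed it does not.

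To see the failure concretely, take $A = K[x,y]/(x^3,y^3)$ with $d=2$, $t=1$, as computed at the end of Section \ref{0_faithful}. With your choice of idempotent one reads off from the $\Hom$-table that $\dim e_1 C_2 e_1 = 3+2+2+2 = 9$, whereas $\dim C_1 = 5$. In particular $\Hom_{\mc{G}_{\le 2}(A)}(Q_2(\lambda), Q_2(\mu))$ and $\Hom_{\mc{G}_{\le 1}(A)}(Q_1(\lambda), Q_1(\mu))$ already differ in dimension for $\lambda=\mu$ of degree $0$ ($3$ versus $2$), so $\Hom_{\mc{G}_{\le d}(A)}(Q_d(\lambda), K_\mu) \ne 0$ and the vanishing you hope to prove by dévissage is simply false. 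The standard filtration of $K_\mu$ has layers $\Delta(\nu)$ with $\deg\nu > t$, but those $\Delta(\nu)$ still contain $L(\lambda')$ with $\deg\lambda' \le t$ as composition factors, so the support constraints do not force the $\Hom$-space to vanish, and no additional hypothesis on $A$ rescues this.

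The paper's proof is structurally different and bypasses these problems. One takes the \emph{complementary coideal} $\Omega$ of degrees in $[d-t,\, d]$ (with a small off-by-one corrected relative to the paper's printed text: the ideal should be $\Irr\mc{G}_{[d-t-1]}(T)$ so that $|\Omega| = t+1$), lets $e_t$ be the corresponding idempotent, and identifies $\mc{M}(e_t C_d e_t)$ with the Serre quotient $\mc{G}_{[d]}(A)\,/\,\mc{G}_{[d-t-1]}(A)$ using \cite[Lemma~3.4, Corollary~3.7]{CPS} and \cite[Statement~6]{QuasiHerediatryDlabRingel}. The crucial extra ingredient — absent from your argument — is that this quotient category is equivalent to $\mc{G}_{[t]}(A)$ via the degree shift $[-(d-t)]$, which is an equivalence because shifting commutes with the formation of $\mc{G}_{\le d}$ and $\mc{G}_{<0}$. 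Since both $e_t C_d e_t$ and $C_t$ are basic, the equivalence of module categories forces $C_t \simeq e_t C_d e_t$. If you wish to argue directly by comparing endomorphism rings of projectives, the correct comparison is between $Q_d(\lambda)$ for $\deg\lambda \in [d-t,\,d]$ and $Q_t(\lambda')$ for $\deg\lambda' \in [0,t]$ under this shift, not between $Q_d(\lambda)$ and $Q_t(\lambda)$ under truncation.
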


\begin{proof}
The subset $\Irr \mc{G}_{[d-t]}(T) \subset \Irr \mc{G}_{[d]}(T)$ is an ideal. Let $\Omega$ be the complementary coideal. By \cite[Lemma 3.4]{CPS}, there is a hereditary ideal $J \lhd C_d$ such that $\Hom_{C_d}(J,L(\lambda)) \neq 0$ if and only if $\lambda \in \Omega$. By \cite[Statement 6]{QuasiHerediatryDlabRingel}, $J = C_d e_t C_d$ for some idempotent $e_t$. It is shown in the proof of \cite[Corollary 3.7]{CPS} that the quotient category $\mc{G}(\Omega) = \mc{G}_{[d]}(A) \, / \, \mc{G}_{[d-t]}(A)$ is equivalent to $\mc{M}(e_t C_d e_t)$. On the other hand, shifting degree by $d-t$ defines an equivalence
$$
[d-t] : \mc{G}(\Omega) \stackrel{\sim}{\longrightarrow} \mc{G}_{[t]}(A).
$$
Since $\mc{M}(C_t) \simeq \mc{G}_{[t]}(A)$ and both $e_t C_d e_t$ and $C_t$ are basic algebras, we conclude that $C_t \simeq e_t C_d e_t$. 
\end{proof}

\subsection{Proof of Theorem \ref{thm:A0elling}} \label{cover_proof_section1}
To make things manageable, we break the proof of Theorem \ref{thm:A0elling} into a series of smaller results. As previously mentioned, we will prove it in slightly greater generality, assuming that $A$ is only \word{triangular self-injective} (see Definition \ref{def_triangular_selfinj} below) instead of graded symmetric.

For $\lambda \in \Irr \mc{G}(T)$ let $r_{\lambda} \ge 0$ so that $\Supp L(\lambda) =  [\deg \lambda -  r_{\lambda},\deg \lambda]$. Notice that $r_{\lambda[i]} = r_{\lambda}$ for all $i$ and that $r_{\lambda} \le N$ since $L(\lambda) \subset \nabla(\lambda)$. Recall that $Q_d(\lambda) := {}^{\perp}H_d(P(\lambda))$ is the projective cover of $L(\lambda)$ in $\mc{G}_{\le d}(A)$. We will denote by the same symbol the image of $Q_d(\lambda)$ in $\mc{G}_{[d]}(A)$. 

\begin{lem}\label{lem:obviousdecomp}
As graded left $A$-modules, 
$$
A[i] = \bigoplus_{\lambda \in \Irr \mc{G}(T)} P(\lambda) \otimes_K L(\lambda)_i.
$$
In particular, $P(\lambda)$ is a non-zero summand of $A[i]$ if and only if $i \in [\deg \lambda - r_{\lambda}, \deg \lambda]$.  
\end{lem}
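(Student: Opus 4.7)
The strategy is to compute the multiplicity of each indecomposable projective $P(\lambda)$ as a summand of $A[i]$ via a Hom-computation, using that $A[i]$ is projective and that $\mathcal{G}(A)$ is a Krull--Schmidt category in which the simples have endomorphism ring $K$. The formula then follows by rewriting multiplicity spaces as tensor factors.

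The plan is as follows. First I would observe that $A[i]$, being the degree shift of the free rank-one module ${}_AA$, is projective in $\mathcal{G}(A)$. Since $A$ is a finite-dimensional graded algebra, Krull--Schmidt holds in $\mathcal{G}(A)$, so there are unique multiplicities $m_{\lambda,i} \in \mathbb{N}$ with
\[
A[i] \simeq \bigoplus_{\lambda \in \Irr \mathcal{G}(T)} P(\lambda)^{\oplus m_{\lambda,i}}.
\]

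Next I would identify $m_{\lambda,i}$ by applying $\Hom_{\mathcal{G}(A)}(-,L(\mu))$. The free generator $1 \in A$ lives in degree $i$ inside $A[i]$, so evaluation at $1$ gives a natural isomorphism $\Hom_{\mathcal{G}(A)}(A[i],M) \simeq M_i$ for every $M \in \mathcal{G}(A)$; in particular $\Hom_{\mathcal{G}(A)}(A[i],L(\mu)) \simeq L(\mu)_i$. On the other hand, a standard argument shows $\End_{\mathcal{G}(A)}(L(\lambda)) = K$: a graded endomorphism restricts to a $T$-linear endomorphism of the head $L(\lambda)_{\deg \lambda} = \lambda$, which is $K$ by the splitness of $T$ (Definition \ref{defn:triangle}(e)), and the restriction map is injective because $L(\lambda)_{\deg \lambda}$ generates $L(\lambda)$ as an $A$-module. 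Combined with the fact that $\Hom_{\mathcal{G}(A)}(P(\lambda),L(\mu)) = 0$ for $\lambda \neq \mu$, this gives $\dim_K \Hom_{\mathcal{G}(A)}(P(\lambda),L(\mu)) = \delta_{\lambda,\mu}$, and therefore $m_{\mu,i} = \dim_K L(\mu)_i$ by applying $\Hom(-,L(\mu))$ to the decomposition above.

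Substituting this back yields $A[i] \simeq \bigoplus_\lambda P(\lambda)^{\oplus \dim_K L(\lambda)_i} = \bigoplus_\lambda P(\lambda) \otimes_K L(\lambda)_i$, which is the desired formula. The ``in particular'' statement is then immediate: $P(\lambda)$ appears with non-zero multiplicity precisely when $L(\lambda)_i \neq 0$, i.e.\ when $i \in \Supp L(\lambda)$, which by definition of $r_\lambda$ is the interval $[\deg \lambda - r_\lambda, \deg \lambda]$.

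No step here is really an obstacle; the only small subtlety is the endomorphism computation $\End_{\mathcal{G}(A)}(L(\lambda)) = K$, which is why the splitness hypothesis on $T$ enters. Everything else is a routine application of Krull--Schmidt together with the Yoneda-type identity $\Hom_{\mathcal{G}(A)}(A[i],-) = (-)_i$.
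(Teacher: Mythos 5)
Your proposal is correct and follows essentially the same route as the paper: compute the multiplicity of $P(\lambda)$ in the projective module $A[i]$ via $\dim \Hom_{\mathcal{G}(A)}(A[i],L(\lambda)) = \dim L(\lambda)_i$. You spell out somewhat more detail (the Krull--Schmidt uniqueness and the computation $\End_{\mathcal{G}(A)}(L(\lambda))=K$, which the paper takes for granted as $A$ is split by \cite[Proposition 3.15]{hwtpaper}), but the core argument is the same.
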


\begin{proof}
This follows from the fact that the multiplicity of $P(\lambda)$ in $A[i]$ equals 
$$
\dim \Hom_{\mc{G}(A)}(A[i],L(\lambda)) = \dim \Hom_{\mc{G}(A)}(A,L(\lambda[-i])) = \dim L(\lambda)_i
$$
and that $\Supp L(\lambda) = [\deg \lambda - r_{\lambda}, \deg \lambda]$.
\end{proof}

\begin{lem}\label{lem:basicsubcat} \hfill

\begin{enum_thm}
\item \label{lem:basicsubcat_Hd} ${}^{\perp} H_d(A[i]) = A[i]$ if and only if $i \le \ell$. 
\item If $\deg \lambda + N \le d + r_{\lambda}$ then $Q_d(\lambda) = P(\lambda)$. 
\item If $A$ is self-injective, then $Q_d(\lambda)$ is injective in $\mc{G}_{\le d}(A)$ if and only if $Q_d(\lambda) = P(\lambda)$. 
\end{enum_thm}
\end{lem}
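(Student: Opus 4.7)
For part (a), my plan is to read off membership in $\mc{G}_{\le d}(A)$ directly from the support. Since $A_0^- = K$, $T$ lives in degree zero, and $A_N^+ \neq 0$ by the definition of $N$, one has $\max \Supp A = N$, and hence $\max \Supp A[i] = i + N$. On the other hand, for any $M \in \mc{G}(A)$, membership in $\mc{G}_{\le d}(A)$ is equivalent to $\max \Supp M \le d$: in any composition series, each graded short exact sequence splits as graded vector spaces, so $\Supp M$ is the union of the supports of the composition factors $L(\mu_k)$, whose maxima are $\deg \mu_k$. Since ${}^{\perp} H_d(M) = M$ iff $M_{>d} = 0$ iff $M \in \mc{G}_{\le d}(A)$, the equivalence $i \le \ell$ is then immediate from $i + N \le d$.

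For part (b), I would apply Lemma \ref{lem:obviousdecomp}: $P(\lambda)$ is a direct summand of $A[i]$ whenever $L(\lambda)_i \neq 0$, i.e., whenever $i \in [\deg \lambda - r_\lambda, \deg \lambda]$. Taking $i = \deg \lambda - r_\lambda$ gives $\max \Supp P(\lambda) \le \max \Supp A[\deg \lambda - r_\lambda] = \deg \lambda - r_\lambda + N$. The hypothesis $\deg \lambda + N \le d + r_\lambda$ rearranges to $\deg \lambda - r_\lambda + N \le d$, so by the support criterion established in part (a), $P(\lambda) \in \mc{G}_{\le d}(A)$, and consequently $Q_d(\lambda) = {}^{\perp} H_d(P(\lambda)) = P(\lambda)$.

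Part (c) is where I expect the main difficulty, specifically the $(\Rightarrow)$ direction. The $(\Leftarrow)$ direction is routine: if $Q_d(\lambda) = P(\lambda) \in \mc{G}_{\le d}(A)$, then injectivity of $P(\lambda)$ in $\mc{G}(A)$ (from self-injectivity of $A$) passes immediately to the full exact subcategory $\mc{G}_{\le d}(A)$, since any short exact sequence there is one in $\mc{G}(A)$ and extension problems can be solved there. For $(\Rightarrow)$, a naive attempt to split the short exact sequence $0 \to K \to P(\lambda) \to Q_d(\lambda) \to 0$ in $\mc{G}(A)$ fails, because the kernel $K$ need not be injective. Instead, my plan is to argue via tilting theory. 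Since $Q_d(\lambda)$ is projective in $\mc{G}_{\le d}(A)$, it admits a $\Delta$-filtration there; by assumption it is also injective in $\mc{G}_{\le d}(A)$, so it admits a $\nabla$-filtration. Because the standard and costandard modules in $\mc{G}_{\le d}(A)$ coincide with those in $\mc{G}(A)$, both filtrations lift verbatim to $\mc{G}(A)$, making $Q_d(\lambda)$ a tilting object in $\mc{G}(A)$. Invoking the theorem from the excerpt that, under self-injectivity of $A$, tilting objects are precisely the projective-injective objects in $\mc{G}(A)$, we conclude that $Q_d(\lambda)$ is projective in $\mc{G}(A)$; being indecomposable with head $L(\lambda)$, it must equal the projective cover $P(\lambda)$.
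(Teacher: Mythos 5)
Your proof is correct and follows essentially the same route as the paper's: part (a) via the observation that $\max\Supp A = N$ so $A[i]_{>d}=0$ iff $i\le\ell$, part (b) by applying Lemma \ref{lem:obviousdecomp} with $i=\deg\lambda - r_\lambda$, and part (c) by observing that $Q_d(\lambda)$ inherits both a standard filtration (projectivity) and a costandard filtration (injectivity) in $\mc{G}_{\le d}(A)$, lifting both to $\mc{G}(A)$ since the (co)standards coincide, and then invoking the characterization of tilting objects as projective-injectives. The only cosmetic difference is how you conclude $Q_d(\lambda)=P(\lambda)$: you argue it is an indecomposable projective with head $L(\lambda)$, whereas the paper notes it is a projective quotient of the indecomposable projective $P(\lambda)$ and hence equals it — both are fine.
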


\begin{proof}
Recall that ${}^{\perp} H_d(M) = M / A \cdot M_{> d}$. Thus, ${}^{\perp} H_d(A[i]) = A[i]$ if and only if $A[i]_{> d} = 0$. Since the top degree of $A$ is $A_{N}$, statement \ref{lem:basicsubcat_Hd} follows. By Lemma  \ref{lem:obviousdecomp}, $P(\lambda)$ is a direct summand of $A[\deg \lambda - r_{\lambda}]$. Thus, if $\deg \lambda - r_{\lambda} \le \ell$, then ${}^{\perp} H_d(A[\deg \lambda - r_{\lambda}]) = A[\deg \lambda - r_{\lambda}]$ implies that ${}^{\perp} H_d(P(\lambda)) = P(\lambda)$. Finally, we note that if $Q_d(\lambda)$ is injective in $\mc{G}_{\le d}(A)$ then it has a costandard filtration. This implies that it has both a standard filtration and a costandard filtration when considered as an element in $\mc{G}(A)$. Therefore \cite[Corollary 5.7]{hwtpaper} implies that $Q_d(\lambda)$ is projective as an element of $\mc{G}(A)$. But it is a quotient of $P(\lambda)$. Thus, $Q_d(\lambda) = P(\lambda)$. 
\end{proof}

To make the ingredients of our results clearer, we introduce the following notion.

\begin{defn} \label{def_triangular_selfinj}
We say that $A$ is \word{triangular self-injective} if the graded Nakayama permutation $\nu$ of $A$ preserves degrees, i.e., $\deg L(\lambda) = \deg L(\nu(\lambda))$ for all $\lambda \in \Irr \mc{G}(T)$.
\end{defn}

Graded symmetric algebras are triangular self-injective since the graded Nakayama permutation is trivial by \cite[Lemma 5.6]{hwtpaper}. This is the main example we have in mind.

\begin{lem} \label{A_0_ell_projective}
$A \langle \ell \rangle$ is a projective object in $\mc{G}_{\lbrack d \rbrack}(A)$. If $A$ is triangular self-injective, then it is also injective.
\end{lem}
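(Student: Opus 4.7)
The plan is to decompose $A\langle \ell \rangle$ into its indecomposable projective summands, identify each of these with a $Q_d(\lambda)$ that is already known to be projective in $\mc{G}_{[d]}(A)$ via Lemma \ref{lem:porjinjquot}, and then argue injectivity by invoking the graded Nakayama permutation.

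First I would apply Lemma \ref{lem:obviousdecomp} to obtain, for each $0 \le i \le \ell$, the decomposition $A[i] = \bigoplus_\lambda P(\lambda)^{\oplus \dim L(\lambda)_i}$, where $P(\lambda)$ appears exactly when $i \in [\deg \lambda - r_\lambda,\, \deg \lambda]$. Combining this with $0 \le i \le \ell = d - N$ and $r_\lambda \le N$ forces $0 \le \deg \lambda \le d$ together with $\deg \lambda - r_\lambda \le \ell$, which rearranges to $\deg \lambda + N \le d + r_\lambda$. This last inequality is exactly the hypothesis of Lemma \ref{lem:basicsubcat}(b), yielding $P(\lambda) = Q_d(\lambda)$. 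Since $0 \le \deg \lambda \le d$, Lemma \ref{lem:porjinjquot} tells us that $Q_d(\lambda)$ is projective in $\mc{G}_{[d]}(A)$. Summing over $i$ gives projectivity of $A\langle \ell \rangle$.

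For the second part, assume $A$ is triangular self-injective, so there is a graded Nakayama permutation $\nu$ of $\Irr \mc{G}(T)$ satisfying $P(\lambda) \simeq I(\nu(\lambda))$ with $\deg \nu(\lambda) = \deg \lambda$. Setting $\mu = \nu(\lambda)$ for each $\lambda$ occurring above, we have $0 \le \deg \mu \le d$, and the isomorphism $Q_d(\lambda) = P(\lambda) \simeq I(\mu)$ places $I(\mu)$ inside $\mc{G}_{\le d}(A)$. Since $I(\mu)$ is injective in $\mc{G}(A)$ and lies in the Serre subcategory $\mc{G}_{\le d}(A)$, a standard check (any extension problem in $\mc{G}_{\le d}(A)$ is one in $\mc{G}(A)$, and its solution is automatically a morphism in the subcategory) shows $I(\mu)$ is injective in $\mc{G}_{\le d}(A)$; having simple socle $L(\mu)$, it must agree with the injective hull $J_d(\mu)$. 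The injective half of Lemma \ref{lem:porjinjquot} then identifies $J_d(\mu)$ as an injective in $\mc{G}_{[d]}(A)$, and injectivity of $A\langle \ell \rangle$ follows.

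The argument is almost entirely bookkeeping: matching the degree constraints imposed by $\ell = d - N$ against the hypotheses of Lemmas \ref{lem:basicsubcat} and \ref{lem:porjinjquot}, and appealing to the Nakayama permutation. The only mild subtlety — and the one place I would write things out carefully — is the observation that an injective of $\mc{G}(A)$ that happens to sit inside $\mc{G}_{\le d}(A)$ is automatically injective there, which is what makes the passage from ``$I(\mu) \in \mc{G}_{\le d}(A)$'' to ``$I(\mu) = J_d(\mu)$'' clean.
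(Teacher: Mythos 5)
Your proposal is correct and follows essentially the same path as the paper's proof: decompose $A\langle\ell\rangle$ via Lemma \ref{lem:obviousdecomp}, observe the degree constraints from $0 \le i \le \ell = d - N$ and $r_\lambda \le N$, invoke Lemma \ref{lem:basicsubcat} to identify each summand with $Q_d(\lambda)$, and conclude via Lemma \ref{lem:porjinjquot}; for injectivity, both use the degree-preserving Nakayama permutation to swap $P(\lambda) = I(\mu) = J_d(\mu)$. The only cosmetic difference is that you identify the summands via Lemma \ref{lem:basicsubcat}(b) directly, whereas the paper applies Lemma \ref{lem:basicsubcat}(a) to all of $A\langle\ell\rangle$ first and then reduces to summands — same ingredients, same order.
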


\begin{proof}
By Lemma \ref{lem:basicsubcat}\ref{lem:basicsubcat_Hd}, ${}^{\perp}H_d(A\langle \ell \rangle) = A\langle \ell \rangle$ is a projective module in $\mc{G}_{\le d}(A)$. To show that its image in $\mc{G}_{[d]}(A)$ is projective, Lemma \ref{lem:porjinjquot} implies that it suffices to show that if $Q_d(\lambda)$ is a non-zero summand of $A[0,\ell]$, then $0 \le \deg \lambda \le d$. But this follows from Lemma \ref{lem:obviousdecomp}. This also implies that the head $L(\lambda)$ of each $Q_d(\lambda)$ appearing in $A\langle \ell \rangle$ satisfies $0 \le \deg \lambda \le d$. If $A$ is triangular self-injective, then $\Soc P(\lambda) = L(\mu)$ for some $\mu$ such that $\deg \mu = \deg \lambda$, and hence $P(\lambda) = I(\mu)$. To show that the image of $A\langle \ell \rangle$ in $\mc{G}_{[d]}(A)$ is injective, it suffices once again by Lemma \ref{lem:porjinjquot} to show that if $J_d(\mu)$ is a non-zero summand of $A[0,\ell]$, then $0 \le \deg \mu \le d$. But, as noted above, if $L(\lambda)$ is the head of such a $J_d(\mu)$, then $0 \le \deg \lambda \le d$. Since $\deg \mu = \deg \lambda$, the result follows.
\end{proof}

\begin{lem}\label{lem:endGdA}
$B_{\ell} = \End_{\mc{G}_{[d]}(A)}(A\langle \ell \rangle)$. 
\end{lem}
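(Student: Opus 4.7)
The strategy is to factor the claim through the intermediate category $\mc{G}_{\le d}(A)$. Since this is a full subcategory of $\mc{G}(A)$ containing $A\langle\ell\rangle$, one trivially has $B_\ell = \End_{\mc{G}(A)}(A\langle\ell\rangle) = \End_{\mc{G}_{\le d}(A)}(A\langle\ell\rangle)$. It thus remains to show that the quotient functor $\pi \from \mc{G}_{\le d}(A) \to \mc{G}_{[d]}(A)$ induces an isomorphism $\End_{\mc{G}_{\le d}(A)}(A\langle\ell\rangle) \simeq \End_{\mc{G}_{[d]}(A)}(A\langle\ell\rangle)$. The plan is to invoke the standard Serre-quotient criterion: if $N \in \mc{G}_{\le d}(A)$ is $\mc{G}_{<0}(A)$-closed, i.e.\ $\Hom_{\mc{G}_{\le d}(A)}(S,N) = 0 = \Ext^1_{\mc{G}_{\le d}(A)}(S,N)$ for every $S \in \mc{G}_{<0}(A)$, then $\pi$ induces an isomorphism $\Hom_{\mc{G}_{\le d}(A)}(M,N) \simeq \Hom_{\mc{G}_{[d]}(A)}(\pi M, \pi N)$ for all $M$. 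It therefore suffices to verify this closedness for $N = A\langle\ell\rangle$.

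The Ext-vanishing follows because $A\langle\ell\rangle$ is injective in $\mc{G}_{\le d}(A)$: self-injectivity of $A$ makes each $A[i]$, and hence $A\langle\ell\rangle$, injective in $\mc{G}(A)$; the top degree of $A\langle\ell\rangle$ is $N+\ell = d$, so $H^{\perp}_d(A\langle\ell\rangle) = A\langle\ell\rangle$, and by Lemma \ref{prop_filtered_piece_lemma} the right adjoint $H^{\perp}_d$ preserves injectives.

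The Hom-vanishing is where the real work lies. It amounts to showing that no simple submodule of $A\langle\ell\rangle$ lies in $\mc{G}_{<0}(A)$. I would apply Lemma \ref{lem:obviousdecomp} to decompose $A\langle\ell\rangle = \bigoplus_{\lambda} P(\lambda)^{m_\lambda}$ as graded left $A$-modules, with $m_\lambda = \sum_{i=0}^{\ell} \dim_K L(\lambda)_i$. A nonzero $m_\lambda$ forces $[\deg\lambda - r_\lambda,\deg\lambda] \cap [0,\ell] \neq \emptyset$, giving $\deg\lambda \ge 0$. Self-injectivity makes $P(\lambda)$ indecomposable injective with simple socle $L(\nu(\lambda))$ for the graded Nakayama permutation $\nu$, and the triangular self-injective hypothesis yields $\deg\nu(\lambda) = \deg\lambda \ge 0$. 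Since every simple submodule of $A\langle\ell\rangle$ arises as the socle of one of these $P(\lambda)$-summands, none can lie in $\mc{G}_{<0}(A)$. This socle computation, where triangular self-injectivity is genuinely used, is the main obstacle; the rest is formal.
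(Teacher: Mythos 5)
Your proof is correct, but it takes a genuinely different route from the paper's. The paper works "from the outside in": it uses the decomposition $A[i] = \bigoplus_\lambda P(\lambda)\otimes_K L(\lambda)_i$ to write $\End_{\mc{G}(A)}(A\langle\ell\rangle)$ as a matrix of spaces $\Hom_{\mc{G}(A)}(P(\lambda),P(\mu))\otimes_K\Hom_K(L(\lambda)_i,L(\mu)_j)$ with $i,j\in[0,\ell]$, and then shows directly that ${}^{\perp}H_d$ induces the required bijection factor by factor. The surjectivity is by projectivity of $P(\lambda)$, and the key point is injectivity: the kernel of ${}^{\perp}H_d$ on $\Hom(P(\lambda),P(\mu))$ consists of maps landing in $A\cdot P(\mu)_{>d}$, which has a $\Delta$-filtration with subquotients supported away from $[0,\ell]$, so the kernel vanishes precisely when the tensor factor $\Hom_K(L(\lambda)_i,L(\mu)_j)$ is nonzero. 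Your argument instead first passes freely to $\mc{G}_{\le d}(A)$ (a full subcategory), then invokes the abstract fact that the quotient functor $\pi$ is an isomorphism on $\Hom(-,N)$ whenever $N$ is $\mc{G}_{<0}(A)$-torsion-free and $\Ext^1$-closed; you verify the $\Ext^1$-vanishing from injectivity of $A\langle\ell\rangle$ in $\mc{G}_{\le d}(A)$ (which needs self-injectivity of $A$ plus Lemma \ref{prop_filtered_piece_lemma}), and torsion-freeness from the socle degree bound (which needs triangular self-injectivity, via Lemma \ref{lem:obviousdecomp}). Your route is conceptually cleaner and avoids the delicate kernel bookkeeping, but it explicitly invokes the self-injective and triangular self-injective hypotheses, whereas the paper's argument for this lemma leans instead on the support of standard modules (implicitly using $\min\Supp A^- \ge -N$, automatic for graded Frobenius algebras). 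Both are valid in the paper's intended setting where $A$ is at least triangular self-injective; your version is a nice illustration of the standard ``closed-object'' criterion for Serre quotients.
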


\begin{proof}
We note that $B_{\ell} = \End_{\mc{G}(A)}(A\langle \ell \rangle)$, and that $A[i] = \bigoplus_{\lambda \in \Irr \mc{G}(T)} P(\lambda) \otimes_K L(\lambda)_i$. Therefore, we wish to show that 
\begin{align*}
& \End_{\mc{G}(A)} (A\langle \ell \rangle) \\
& = \bigoplus_{i,j \in [0,\ell]} \bigoplus_{\lambda,\mu \in \Irr \mc{G}_{\le d}(T)}  \Hom_{\mc{G}_{\le d}(A)}(Q_d(\lambda),Q_d(\mu)) \otimes_K \Hom_{K}(L(\lambda)_i,L(\mu)_j)  \\
 & = \bigoplus_{i,j \in [0,\ell]} \bigoplus_{\lambda,\mu \in \Irr \mc{G}_{[d]}(T)} \Hom_{\mc{G}_{[d]}(A)}(Q_d(\lambda),Q_d(\mu)) \otimes_K \Hom_{K}(L(\lambda)_i,L(\mu)_j)\;.
\end{align*}
Since the support of $L(\lambda)$ is contained in $[\deg \lambda - N, \deg \lambda]$, it follows that $L(\lambda)_i = 0$ unless $0 \le \deg \lambda \le d$. Therefore, the second equality follows from Proposition \ref{prop:subquotequiv}, which implies that 
$$
\Hom_{\mc{G}_{\le d}(A)}(Q_d(\lambda),Q_d(\mu)) = \Hom_{\mc{G}_{[d]}(A)}(Q_d(\lambda),Q_d(\mu))
$$
for all $\lambda,\mu \in \Irr \mc{G}_{[d]}(T)$ with $0 \le \deg(\lambda), \deg(\mu) \le d$. 

Thus, we concentrate on establishing the first equality. The functor ${}^{\perp} H_d$ defines a canonical map $\Hom_{\mc{G}(A)}(P(\lambda),P(\mu)) \rightarrow \Hom_{\mc{G}_{\le d}(A)}(Q_d(\lambda),Q_d(\mu))$. It suffices to show that the induced map   
\begin{multline}\label{eq:somesubid}
 \Hom_{\mc{G}(A)}(P(\lambda),P(\mu)) \otimes_K \Hom_{K}(L(\lambda)_i,L(\mu)_j) \rightarrow \\ \Hom_{\mc{G}_{\le d}(A)}(Q(\lambda),Q(\mu)) \otimes_K \Hom_{K}(L(\lambda)_i,L(\mu)_j)
\end{multline}
is an isomorphism provided $i,j \in [0,\ell]$. First, we show that the morphism 
\begin{equation}\label{eq:morphi45}
\Hom_{\mc{G}(A)}(P(\lambda),P(\mu)) \rightarrow \Hom_{\mc{G}_{\le d}(A)}(Q(\lambda),Q(\mu))
\end{equation}
is always surjective. Any map $\phi \in \Hom_{\mc{G}_{\le d}(A)}(Q(\lambda),Q(\mu))$ can be lifted to a map $\phi' : P(\lambda) \rightarrow Q(\mu)$. Since $P(\lambda)$ is projective, $\phi'$ is the composite of a morphism $\phi'' : P(\lambda) \rightarrow P(\mu)$ with the quotient $P(\mu) \rightarrow Q(\mu)$. Thus, $\phi''$ maps to $\phi$ under (\ref{eq:morphi45}).

Even for $0 \le \deg(\lambda), \deg(\mu) \le d$, the map 
$$
\Hom_{\mc{G}(A)}(P(\lambda),P(\mu)) \rightarrow \Hom_{\mc{G}_{\le d}(A)}(Q_d(\lambda),Q_d(\mu))
$$
is not in general injective. Since the head $L(\lambda)$ of $P(\lambda)$ is an object of $\mc{G}_{\le d}(A)$, 
$$
 \Hom_{\mc{G}_{\le d}(A)}(Q_d(\lambda),Q_d(\mu)) =  \Hom_{\mc{G}(A)}(P(\lambda),{}^{\perp} H_d(P(\mu)))
$$
and hence a morphism $\phi \in \Hom_{\mc{G}(A)}(P(\lambda),P(\mu))$ is mapped to zero if and only if its image is a submodule of 
$$
M := \Ker (P(\mu) \rightarrow {}^{\perp}H_d(P(\mu))) = A \cdot P(\mu)_{ > d}.
$$
Lemma \ref{lem:projcoversub} shows that $M$ has a standard filtration with subquotients $\Delta(\mu')$ for $\deg \mu'  > d$. Thus, if $\phi$ has non-zero image in $M$, then $[M : L(\lambda)] \neq 0$. But this implies that $\Supp L(\lambda) \subset \Supp \Delta(\mu')$ for some $\mu'$ with $\deg \mu' > d$. Since $[0,\ell] \cap \Supp \Delta(\mu') = \emptyset$, we deduce that $L(\lambda)_i = 0$ for all $i \in [0,\ell]$. Thus, the map (\ref{eq:somesubid}) is injective. 
\end{proof}

\subsection{Proof of Theorem \ref{thm:highestweightcover}} \label{cover_proof_section}
In this section we give the proof of Theorem \ref{thm:highestweightcover}. As before, we will prove this in slightly greater generality by employing an assumption on the socle of $A^\pm$, namely that $\Soc A^\pm = A^\pm_{\pm N}$.  In Section \ref{sec_socle_assumption} we will show that this assumption holds if $A$ is graded symmetric and ambidextrous.

\begin{lem}
For each $\lambda \in \Irr \mc{G}_{[d]}(T)$, $F_d(L(\lambda)) = 0$ if and only if $\Supp L(\lambda) \cap [0,\ell] = \emptyset$.
\end{lem}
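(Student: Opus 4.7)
The plan is to compute $F_d(L(\lambda))$ explicitly as a graded piece of $L(\lambda)$ and then read off when it vanishes. Unwinding the definition,
\[
F_d(L(\lambda)) \;=\; \bigoplus_{i=0}^{\ell} \Hom_{\mc{G}_{[d]}(A)}\bigl(A[i],\,L(\lambda)\bigr),
\]
so it suffices to identify each summand with $L(\lambda)_i$. Since $\lambda \in \Irr \mc{G}_{[d]}(T)$ satisfies $0 \le \deg \lambda \le d$, the simple module $L(\lambda)$ lies in $\mc{G}_{\le d}(A)$ and is not annihilated in the quotient.

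First I would pass from the quotient category down to $\mc{G}_{\le d}(A)$. By Theorem~\ref{thm:A0elling}(a), the shift $A[i]$ (for $0 \le i \le \ell$) is projective in $\mc{G}_{[d]}(A)$; combining this with Lemma~\ref{lem:basicsubcat}\ref{lem:basicsubcat_Hd}, which gives ${}^{\perp}H_d(A[i]) = A[i]$ for $i \le \ell$, one obtains $\Hom_{\mc{G}_{[d]}(A)}(A[i], L(\lambda)) = \Hom_{\mc{G}_{\le d}(A)}(A[i], L(\lambda))$. (This is essentially a special case of the argument given in the proof of Lemma~\ref{lem:endGdA}, applied to $L(\lambda)$ in place of $Q_d(\mu)$; the same surjectivity/injectivity analysis goes through because $\Supp L(\lambda) \subset [\deg\lambda - N, \deg\lambda]$ intersects $[0,\ell]$ exactly when the supports of the relevant $\Delta$-subquotients of $P(\mu)/Q_d(\mu)$ miss $[0,\ell]$.)

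Next, since $L(\lambda) \in \mc{G}_{\le d}(A)$ and ${}^{\perp}H_d$ is left adjoint to the inclusion,
\[
\Hom_{\mc{G}_{\le d}(A)}(A[i], L(\lambda)) \;=\; \Hom_{\mc{G}(A)}(A[i], L(\lambda)).
\]
A graded $A$-module morphism $A[i] \to L(\lambda)$ is determined by the image of $1 \in A_0 = A[i]_i$, which can be any element of $L(\lambda)_i$, yielding a natural isomorphism $\Hom_{\mc{G}(A)}(A[i], L(\lambda)) \cong L(\lambda)_i$. Combining the identifications gives
\[
F_d(L(\lambda)) \;\cong\; \bigoplus_{i=0}^{\ell} L(\lambda)_i,
\]
which vanishes precisely when $L(\lambda)_i = 0$ for every $i \in [0,\ell]$, i.e.\ when $\Supp L(\lambda) \cap [0,\ell] = \emptyset$.

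The only real point requiring care is the first step, the comparison of Hom spaces between $\mc{G}_{[d]}(A)$ and $\mc{G}_{\le d}(A)$ with $L(\lambda)$ as the second argument; but this is immediate from the fact that $L(\lambda)$ has no nonzero sub- or quotient module in $\mc{G}_{<0}(A)$ (being simple with $\deg \lambda \ge 0$), so the Serre quotient functor induces an isomorphism on $\Hom(-, L(\lambda))$ out of objects of $\mc{G}_{\le d}(A)$. After that, everything reduces to standard graded-Hom bookkeeping.
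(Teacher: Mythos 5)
The final identification $F_d(L(\lambda)) \cong \bigoplus_{i=0}^{\ell} L(\lambda)_i$ is correct and is essentially what the paper establishes, but your justification for the crucial passage from $\Hom_{\mc{G}_{[d]}(A)}(A[i],L(\lambda))$ to $\Hom_{\mc{G}_{\le d}(A)}(A[i],L(\lambda))$ contains a genuine gap. In the final paragraph you assert that this is ``immediate'' because $L(\lambda)$ has no non-zero sub- or quotient module in $\mc{G}_{<0}(A)$. That implication is false. In a Serre quotient $\mc{A}/\mc{B}$, the absence of non-zero subobjects of $Y$ in $\mc{B}$ only gives \emph{injectivity} of $\Hom_{\mc{A}}(X,Y) \to \Hom_{\mc{A}/\mc{B}}(X,Y)$; the quotient $\Hom$ is $\Hom_{\mc{A}}(M,Y)$ for $M$ the smallest submodule of $X$ with $X/M\in\mc{B}$, and the obstruction to extending a map $M\to Y$ across $X$ lives in $\Ext^1_{\mc{A}}(X/M,Y)$, which need not vanish. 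A bare-bones counterexample: for representations of the quiver $1\to 2$ with $\mc{B}$ generated by $S_2$, take $Y=S_1$ and $X$ the injective hull of $S_1$; then $\Hom_{\mc{A}}(X,Y)=0$ while $\Hom_{\mc{A}/\mc{B}}(X,Y)=K$. So a hypothesis on the \emph{target} alone cannot suffice.

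What is actually needed—and what the paper's own proof establishes—is the corresponding property of the \emph{source}: that $A\langle\ell\rangle$ (equivalently each $A[i]$ with $0\le i\le\ell$) has no non-zero quotient in $\mc{G}_{<0}(A)$, so that the smallest such $M$ equals $A\langle\ell\rangle$ itself and the colimit defining the quotient $\Hom$ collapses. The paper proves this by observing, via Lemma~\ref{lem:obviousdecomp}, that any indecomposable projective summand $Q_d(\mu)$ of $A\langle\ell\rangle$ satisfies $L(\mu)_i\neq 0$ for some $i\in[0,\ell]$, forcing $\deg\mu\ge 0$; then every non-zero quotient of $Q_d(\mu)$ has the simple head $L(\mu)\notin\mc{G}_{<0}(A)$ as a composition factor, so no non-zero quotient lies in $\mc{G}_{<0}(A)$. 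Your first justification (projectivity of $A[i]$ in $\mc{G}_{[d]}(A)$ together with a reference to the proof of Lemma~\ref{lem:endGdA}) points in a direction that could be made to work—one can compare multiplicities of $Q_d(\lambda)$ in the decompositions of $A[i]$ in both categories, using Lemma~\ref{lem:porjinjquot}—but as written it is not an argument, and the citation of Lemma~\ref{lem:endGdA} does not transfer directly since that proof concerns $\Hom$ between projectives via Proposition~\ref{prop:subquotequiv}, whereas here the target is a simple object.
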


\begin{proof}
Let $M$ be the smallest submodule of $A\langle \ell \rangle$ such that $A\langle \ell \rangle /M \in \mc{G}_{< 0}(A)$. Then, by definition of quotient category, the space $\Hom_{\mc{G}_{[d]}(A)}(A\langle \ell \rangle,L(\lambda))$ equals $\Hom_{\mc{G}_{\le d}(A)}(M,L(\lambda))$. Therefore, it suffices to show in this case that $M = A[0,\ell]$. By Lemma \ref{lem:obviousdecomp}, $Q_d(\lambda)$ is a non-zero summand of $A[0,\ell]$ if and only if $L(\lambda)_i \neq 0$ for some $i \in [0,\ell]$. Therefore, it suffices to show that the smallest submodule $R$ of $Q_d(\lambda)$ such that $Q_d(\lambda) / R \in \mc{G}_{< 0}(A)$ is $Q_d(\lambda)$ itself, when $L(\lambda)_i \neq 0$ for some $i \in [0,\ell]$. But if $Q_d(\lambda) / R \neq 0$ then, in particular, $L(\lambda) \in \mc{G}_{< 0}(A)$. This contradicts the fact that $L(\lambda)_i \neq 0$ since $\Supp L(\lambda) \subset \N_{< 0}$ for all simple objects $L(\lambda)$ of $\mc{G}_{< 0}(A)$. 
\end{proof}

Let $\mc{P}_{[d]}(A)$ be the full subcategory of projective objects in $\mc{G}_{[d]}(A)$. Let $G$ be a right adjoint to $F_d$. The dual basis lemma implies that $\epsilon : F_d \circ G \rightarrow 1$ is an equivalence. Recall from section \ref{sec:quasihereditary} that we fixed the basic quasi-hereditary algebra $C_d$ such that $\mc{G}_{[d]}(A) \simeq \mc{M}(C_d)$. We can think of $C_d$ as a minimal projective generator in $\mc{G}_{[d]}(A)$. 

\begin{lem}\label{lem:Ffaithfulproj}
The functor $F_d$ is faithful on $\mc{P}_{[d]}(A)$ if and only if
\begin{equation}\label{eq:homsimplePzero}
\Hom_{\mc{G}_{[d]}(A)}(L(\lambda),Q) = 0
\end{equation}
for all objects $Q$ in $\mc{P}_{[d]}(A)$ and all $\lambda$ such that $F_d(L(\lambda)) = 0$. 
\end{lem}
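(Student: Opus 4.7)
My plan is to prove both implications by a short formal argument exploiting the exactness of $F_d$. Exactness is automatic since $A\langle \ell \rangle$ is projective in $\mc{G}_{[d]}(A)$ by Lemma \ref{A_0_ell_projective}; it gives, for any morphism $f \from X \to Y$ in $\mc{G}_{[d]}(A)$, the identity $F_d(\mathrm{Im}(f)) = \mathrm{Im}(F_d(f))$, so $F_d(f) = 0$ is equivalent to $F_d(\mathrm{Im}(f)) = 0$.

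For the $(\Leftarrow)$ direction I would argue by contraposition. Given a nonzero morphism $f \from X \to Y$ in $\mc{P}_{[d]}(A)$, I would pick a simple submodule $L(\lambda) \hookrightarrow \mathrm{Im}(f) \subs Y$, for instance in the socle of $\mathrm{Im}(f)$. Then $\Hom_{\mc{G}_{[d]}(A)}(L(\lambda), Y) \neq 0$, so hypothesis (\ref{eq:homsimplePzero}) forces $F_d(L(\lambda)) \neq 0$. Exactness of $F_d$ then yields an embedding $F_d(L(\lambda)) \hookrightarrow F_d(\mathrm{Im}(f)) = \mathrm{Im}(F_d(f))$, showing $F_d(f) \neq 0$.

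For the $(\Rightarrow)$ direction I would again argue contrapositively. Assuming (\ref{eq:homsimplePzero}) fails, I would choose $\lambda$ with $F_d(L(\lambda)) = 0$ together with $Q \in \mc{P}_{[d]}(A)$ and a nonzero map $\phi \from L(\lambda) \to Q$. Composing $\phi$ with the projective cover $\pi \from Q_d(\lambda) \twoheadrightarrow L(\lambda)$ of Lemma \ref{lem:porjinjquot} produces a nonzero morphism $g = \phi \circ \pi \from Q_d(\lambda) \to Q$ between objects of $\mc{P}_{[d]}(A)$. Since $g$ factors through $L(\lambda)$, we have $F_d(g) = F_d(\phi) \circ F_d(\pi) = 0$, contradicting faithfulness of $F_d$ on $\mc{P}_{[d]}(A)$.

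There is no real obstacle here: the whole argument is formal abelian-category yoga once one notes the exactness of $F_d$ and the existence of projective covers in $\mc{G}_{[d]}(A)$. The role of the lemma is to rephrase the faithfulness of $F_d$ on projectives as a tractable socle-type condition on simples killed by $F_d$; the substantive work will come when one actually verifies (\ref{eq:homsimplePzero}), which I expect to be done via the socle calculation forecast in the introduction as Lemma \ref{lem:soclAlhdsupport}.
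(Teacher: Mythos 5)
Your proof is correct, and the two directions line up with the paper's, but you handle the implication ``hypothesis (\ref{eq:homsimplePzero}) implies faithfulness'' by a more direct route. The paper introduces the right adjoint $G$ of $F_d$ and examines the unit $\eta_Q \from Q \to G F_d(Q)$: since $F_d(\Ker \eta_Q) = 0$, exactness forces $F_d(\Soc \Ker \eta_Q) = 0$, and because $\Soc \Ker \eta_Q \subs Q$, condition (\ref{eq:homsimplePzero}) forces $\Soc \Ker \eta_Q = 0$, hence $\Ker \eta_Q = 0$; injectivity of $\eta_Q$ for all projective $Q$ then yields faithfulness via naturality of $\eta$. You bypass the adjoint entirely: given a nonzero $f$ between projectives, you take a simple $L(\lambda)$ in $\Soc \mathrm{Im}(f)$, use (\ref{eq:homsimplePzero}) to conclude $F_d(L(\lambda)) \neq 0$, and exactness then gives $F_d(L(\lambda)) \hookrightarrow \mathrm{Im}(F_d f)$, so $F_d(f) \neq 0$. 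Both arguments rest on the same ingredients (exactness of $F_d$ and the existence of a simple submodule of a nonzero object of finite length), so the difference is mainly one of packaging: your version is a bit leaner in isolation, while the paper's choice to work with $\eta_Q$ is not gratuitous, since the exact sequence $0 \to \Ker \eta_Q \to Q \to G F_d(Q) \to \Coker \eta_Q \to 0$ is reused immediately in the proof of Proposition \ref{prop:covercriterion}. Your treatment of the converse is essentially the paper's: factor a nonzero $\phi \from L(\lambda) \to Q$ through the projective cover $Q_d(\lambda) \twoheadrightarrow L(\lambda)$ (valid since $L(\lambda)$ embeds in $Q \in \mc{G}_{[d]}(A)$, so $0 \le \deg \lambda \le d$) to get a nonzero morphism between projectives that $F_d$ kills.
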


\begin{proof}
Let $G$ be the right adjoint to $F \dopgleich F_d$. Then for each $Q \in \mc{P}_{[d]}(A)$, we have an exact sequence 
\begin{equation}\label{eq:fullyfaithfulQ}
0 \rightarrow \Ker \eta_Q \rightarrow Q \stackrel{\eta_Q}{\longrightarrow} G \circ F (Q) \rightarrow \Coker \eta_Q \rightarrow 0.
\end{equation}
Since $F \circ G \simeq 1$ and $F$ is exact, $F( \Ker \eta_Q) \simeq F(\Coker \eta_Q) \simeq 0$. Exactness of $F$ also implies that $F(M) = 0$ if and only if $F(L) = 0$ for every simple composition factor of $M$. In particular, $F(M) = 0$ implies that $F(\Soc M) = 0$. Thus, equation (\ref{eq:homsimplePzero}) implies that $\Ker \eta_Q = 0$, and hence $F$ is faithful. Conversely, if there exist $\lambda,\mu$ such that $\Hom_{\mc{G}_{[d]}(A)}(L(\lambda),Q_d(\mu)) \neq 0$ and $F(L(\lambda)) = 0$, then there is a non-zero morphism $\phi : Q_d(\lambda) \twoheadrightarrow L(\lambda) \hookrightarrow Q_d(\mu)$ such that $F(\phi) = 0$, i.e. $F$ is not faithful on $\mc{P}_{[d]}(A)$.   
\end{proof}

\begin{prop}\label{prop:covercriterion}
The following are equivalent:
\begin{enum_thm}
\item \label{prop:covercriterion:cover} The functor $F_d \from \mc{G}_{\lbrack d \rbrack}(A) \to \mc{M}(B_\ell)$ is a highest weight cover. 
\item We have  
$$
\Ext^i_{\mc{G}_{[d]}(A)}(L(\lambda),Q) = 0 \quad \forall \ i \in \{0,1 \} \;,
$$
where $\lambda$ runs over all simple objects such that $F_d(L(\lambda)) = 0$ and $Q$ an object of $\mc{P}_{[d]}(A)$.
\item \label{prop:covercriterion:dcp} The double centralizer property holds: $C_d \simeq \End_{B_{\ell}}(F_d(C_d))$. 
\end{enum_thm}
\end{prop}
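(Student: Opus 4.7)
The plan is to prove the cycle $(b) \Rightarrow (a) \Rightarrow (b)$ and separately the equivalence $(a) \Leftrightarrow (c)$. Throughout I will exploit the exactness of $F_d$, its right adjoint $G$, and the fact --- noted above Lemma \ref{lem:Ffaithfulproj} --- that the counit $\epsilon : F_d G \to 1$ is an isomorphism. For any $M \in \mc{G}_{[d]}(A)$ the unit $\eta_M$ fits in the four-term exact sequence
$$
0 \to \Ker \eta_M \to M \xrightarrow{\eta_M} GF_d(M) \to \Coker \eta_M \to 0,
$$
in which $\Ker \eta_M$ and $\Coker \eta_M$ are both annihilated by $F_d$. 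A routine formal argument shows that (a) is equivalent to $\eta_Q$ being an isomorphism for every projective $Q$, i.e.\ to the vanishing of both $\Ker \eta_Q$ and $\Coker \eta_Q$; equivalently, faithfulness accounts for injectivity of $\eta_Q$, and fullness accounts for surjectivity.

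For $(b) \Rightarrow (a)$, the $\Ext^0$-hypothesis together with Lemma \ref{lem:Ffaithfulproj} immediately yields $\Ker \eta_Q = 0$, so the four-term sequence becomes a short exact sequence $0 \to Q \to GF_d(Q) \to \Coker \eta_Q \to 0$. Applying $\Hom_{\mc{G}_{[d]}(A)}(L(\lambda), -)$ for each $\lambda$ with $F_d L(\lambda) = 0$, the adjunction isomorphism $\Hom(L(\lambda), GF_d(Q)) \simeq \Hom(F_d L(\lambda), F_d Q) = 0$ combined with the $\Ext^1$-hypothesis forces $\Hom(L(\lambda), \Coker \eta_Q) = 0$. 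Since every composition factor of $\Coker \eta_Q$ is of the form $L(\lambda)$ with $F_d L(\lambda) = 0$ (because $F_d$ is exact and kills $\Coker \eta_Q$), the socle of $\Coker \eta_Q$ must vanish, so $\Coker \eta_Q = 0$.

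For the converse $(a) \Rightarrow (b)$, Lemma \ref{lem:Ffaithfulproj} again takes care of $\Ext^0$. For $\Ext^1$ I take any extension $0 \to Q \to E \to L(\lambda) \to 0$ with $Q$ projective and $F_d L(\lambda) = 0$; applying $F_d$ produces an isomorphism $F_d(Q) \simeq F_d(E)$, whose inverse corresponds via adjunction to a morphism $E \to GF_d(Q)$, and composition with $\eta_Q^{-1}$ (available by (a)) gives a candidate retraction $\phi : E \to Q$. The unit--counit triangle identities then show $\phi|_Q = \id_Q$, splitting the sequence. I expect this last verification to be the fiddliest step: one has to track the adjunction identities carefully to ensure that the retraction agrees with the identity on the nose, rather than only after applying $F_d$.

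Finally, for $(a) \Leftrightarrow (c)$ I fix a minimal projective generator $P$ of $\mc{G}_{[d]}(A)$ so that, by Proposition \ref{prop:subquotequiv}, $C_d \simeq \End_{\mc{G}_{[d]}(A)}(P)^{\op}$ and $F_d(C_d) \simeq F_d(P)$ as $(B_\ell, C_d)$-bimodules. The canonical algebra homomorphism $\End_{\mc{G}_{[d]}(A)}(P) \to \End_{B_\ell}(F_d(P))$ induced by $F_d$ is an isomorphism precisely when $F_d$ is fully faithful on the finitely many indecomposable summands of $P$; since every projective of $\mc{G}_{[d]}(A)$ is a summand of a finite direct sum of copies of $P$, this is equivalent to fully faithfulness on all of $\mc{P}_{[d]}(A)$, namely (a).
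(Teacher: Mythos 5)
Your proof is correct and follows essentially the same route as the paper's: both reduce $(a)\Leftrightarrow(c)$ to a standard fact, use Lemma~\ref{lem:Ffaithfulproj} to get $\Ker\eta_Q=0$ in $(b)\Rightarrow(a)$, and prove $(a)\Rightarrow(b)$ by constructing a retraction via the unit. The only real divergence is how $\Coker\eta_Q$ is killed in $(b)\Rightarrow(a)$: the paper first uses $\Ext^1$-vanishing along a composition series to split $0\to Q\to G\circ F_d(Q)\to\Coker\eta_Q\to 0$, and then applies adjunction to get $\Hom(\Coker\eta_Q, G\circ F_d(Q))\simeq\Hom_{B_\ell}(F_d(\Coker\eta_Q),F_d(Q))=0$; you instead apply $\Hom(L(\lambda),-)$ to the same short exact sequence, use adjunction together with the $\Ext^1$-hypothesis to conclude $\Hom(L(\lambda),\Coker\eta_Q)=0$, and then observe the socle vanishes. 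Both variants are valid and use the same two ingredients. Finally, the ``fiddliest step'' you flagged in $(a)\Rightarrow(b)$ does go through cleanly: writing $\psi=G(F_d(i)^{-1})\circ\eta_E$, naturality of $\eta$ gives $\psi\circ i = G(F_d(i)^{-1})\circ G F_d(i)\circ\eta_Q=\eta_Q$, so $\phi\circ i=\eta_Q^{-1}\circ\eta_Q=\mathrm{id}_Q$; the paper achieves exactly this splitting via the commutative diagram obtained by mapping the original short exact sequence to its image under $G\circ F_d$.
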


\begin{proof}
It is well-known that \ref{prop:covercriterion:cover} is equivalent to \ref{prop:covercriterion:dcp}. Therefore, it suffices to show that $F \dopgleich F_d$ is fully faithful on $\mc{P}_{[d]}(A)$ if and only if
\begin{equation}\label{eq:homsimplePzero2}
\Ext^i_{\mc{G}_{[d]}(A)}(L(\lambda),Q_d(\mu)) = 0 \quad \forall \mu, \ i \in \{0,1 \} \;,
\end{equation}
and all $\lambda$ such that $F(L(\lambda)) = 0$. First we show that (\ref{eq:homsimplePzero}) implies that $F$ is fully faithful. By Lemma \ref{lem:Ffaithfulproj}, we know that $F$ is faithful. Hence it suffices to show that $\Coker \eta_Q = 0$ in the exact sequence (\ref{eq:fullyfaithfulQ}). Let $\Ker F$ be the full subcategory of $\mc{G}_{[d]}(A)$ consisting of objects killed by $F$. Then the long exact sequence of ext-groups and (\ref{eq:homsimplePzero2}) imply that the sequence $0 \rightarrow Q \rightarrow G \circ F (Q) \rightarrow \Coker \eta_Q \rightarrow 0$ splits. Thus, $G \circ F (Q) \simeq Q \oplus \Coker \eta_Q$. But adjunction implies that
$$
\Hom_{\mc{G}_{[d]}}(\Coker \eta_Q,G \circ F (Q)) \simeq \Hom_{\mc{M}(A_0)}(F(\Coker \eta_Q), F (Q)) = 0.
$$
Hence $\Coker \eta_Q = 0$. 

Conversely, assume that $F$ is fully faithful. In particular, by Lemma \ref{lem:Ffaithfulproj}, this implies that $\Hom_{\mc{G}_{[d]}(A)}(L(\lambda),Q_d(\mu)) = 0$ for all $\mu$ and all $\lambda$ such that $F(L(\lambda)) = 0$. Therefore, we just need to show that $\Ext^1_{\mc{G}_{[d]}(A)}(L(\lambda),Q_d(\mu)) = 0$. Let $0 \rightarrow Q_d(\mu) \rightarrow M \rightarrow L(\lambda) \rightarrow 0$ be a short exact sequence in $\mc{G}_{[d]}(A)$. Then we get a commutative diagram
$$
\begin{tikzcd}
0 \arrow{r} & Q(\mu) \arrow{r} \arrow{d}{\wr} & M \arrow{d} \arrow{r} & L(\lambda) \arrow{r} \arrow{d} & 0 \\   
0 \arrow{r} & G \circ F(Q(\mu)) \arrow{r} & G \circ F(M) \arrow{r} & 0 & 
\end{tikzcd}
$$
with exact rows. This implies that $G \circ F(M) \simeq Q_d(\mu)$ and we get a splitting of the embedding $Q_d(\mu) \hookrightarrow M$. Hence $\Ext^1_{\mc{G}_{[d]}(A)}(L(\lambda),Q_d(\mu)) = 0$.
\end{proof} 

In fact, a stronger statement holds for faithfulness. First we note the following key technical lemma. Recall that $A$ is said to be \textit{well-generated} if $A^{\pm}$ is generated by $A^{\pm}_{\pm 1}$. 

\begin{lem}\label{lem:soclAlhdsupport}
Assume that $\Soc A^{\pm} = A^{\pm}_{\pm N}$ and that $A$ is well-generated. Choose $k \in \N$, set $U = (A^{\pm})^{\oplus k}$ and choose $M \subset U$, a graded $A^{\pm}$-submodule.  
\begin{enum_thm}
\item If $i$ is the largest integer with $M_i \neq 0$, then $\Supp (\Soc_{A^-} U / M) \subset (-\infty,i+1]$.
\item If $j$ is the smallest integer with $M_j \neq 0$, then $\Supp (\Soc_{A^+} U / M) \subset [j-1,\infty)$.
\end{enum_thm}
\end{lem}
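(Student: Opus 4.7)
The plan is to prove part (a) by contradiction; part (b) follows by reversing signs, so I would only spell out (a) and at the end remark that the exact same argument with $A^-$ replaced by $A^+$, the degree $-1$ replaced by $+1$, and ``top'' replaced by ``bottom'' gives (b). So: fix a nonzero homogeneous element $\bar u \in \Soc_{A^-}(U/M)$ of degree $d$, lift it to $u \in U_d$, and aim to show $d \le i+1$. I would suppose for contradiction that $d \ge i+2$.

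The first key step is to reduce the socle condition to the condition $A^-_{-1}\cdot u \subseteq M$. Here the well-generation hypothesis enters: since $A^-$ is graded-local with maximal ideal $A^-_{<0}$, membership in the socle means $A^-_{<0}\cdot u \subseteq M$; but because $A^-$ is generated as a $K$-algebra by $A^-_{-1}$, any $a \in A^-_{-k}$ ($k \ge 1$) can be written as a sum of products $b_{l,1}\cdots b_{l,k}$ with $b_{l,j} \in A^-_{-1}$, and applying $M$-stability under $A^-$ repeatedly shows $A^-_{-1}\cdot u \subseteq M$ implies $A^-_{<0}\cdot u \subseteq M$. Granted this, from $A^-_{-1}\cdot u \subseteq M_{d-1}$ and the fact that $d-1 \ge i+1 > i$ forces $M_{d-1}=0$, I would conclude $A^-_{-1}\cdot u = 0$ in $U$ itself (not merely modulo $M$).

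Running the same factorisation argument one more time, now inside $U$, gives $A^-_{<0}\cdot u = 0$, so $u \in \Soc_{A^-}(U) = (\Soc A^-)^{\oplus k} = (A^-_{-N})^{\oplus k}$ by the socle hypothesis. This forces $d = -N$. But $M \subseteq U$ is supported in $[-N,0]$, so $M_i \ne 0$ requires $i \ge -N$, whence $d = -N \le i < i+2 \le d$, a contradiction. The only technical point that requires real care is the bootstrap from ``annihilated by $A^-_{-1}$'' to ``annihilated by $A^-_{<0}$'', which is precisely where well-generation is essential and without which the conclusion would fail. The rest is purely a degree count, driven by the socle assumption $\Soc A^\pm = A^\pm_{\pm N}$ and the definitions of $i$ and $j$.
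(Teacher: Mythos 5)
Your argument is correct, and it is a cleaner, more direct version of the paper's proof. Both proofs hinge on the same two facts: that membership of $\bar u$ in $\Soc_{A^-}(U/M)$ forces $A^-_{-1}\cdot u \subseteq M_{d-1}$, which vanishes for degree reasons when $d \ge i+2$; and that well-generation upgrades $A^-_{-1}\cdot u = 0$ to $(\rad A^-)\cdot u = 0$, forcing $u$ into $\Soc_{A^-}(U) = (A^-_{-N})^{\oplus k}$ and hence $d = -N$, contradicting $d \ge i+2 > -N$. The paper, by contrast, first passes through the surjection $\eta: U/M \twoheadrightarrow U/U_{\le i}$, reduces via support containments to computing $\Soc(A^-/A^-_{\le i})$, and then proves $\Supp \Soc(A^-/A^-_{\le i}) = \{i+1\}$ by an induction on $i$ whose inductive hypothesis is never actually invoked in the inductive step. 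Your direct contradiction argument removes both the reduction scaffolding and the vestigial induction, while isolating the same essential use of well-generation. One small presentational remark: in your second paragraph, the claim that well-generation is needed to ``reduce the socle condition to $A^-_{-1}\cdot u \subseteq M$'' is misplaced — that direction is trivial since $A^-_{-1}\subseteq \rad A^-$; well-generation is genuinely needed only in the third paragraph, to pass from $A^-_{-1}\cdot u = 0$ to $(\rad A^-)\cdot u = 0$ (which you do correctly). Your reduction of part (b) to part (a) by reversing signs and degrees is also fine.
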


\begin{proof}
The proof of the two statements is similar, therefore we just show the first statement. The module $M$ is contained in $U_{\le i}$, and the latter is a $A^-$-submodule of $U$. Therefore the map $\eta : U / M  \rightarrow U / U_{\le i}$ is surjective and $\Soc  U / M$ is contained in $\eta^{-1}(\Soc U / U_{\le i})$. This implies that  $\Supp (\Soc U / M) \subset \Supp (\eta^{-1}(\Soc U / U_{\le i}))$. If we write $\pi : U \rightarrow U / U_{\le i}$, then $\pi$ factors through $\eta$, which means that 
$$
\Supp (\eta^{-1}(\Soc U / U_{\le i})) \subset \Supp (\pi^{-1}(\Soc U / U_{\le i})).
$$
Since $\pi$ is a graded morphism, and $U$ is a graded free $A^-$-module,
\begin{align*}
\Supp (\pi^{-1}(\Soc U / U_{\le i})) & \subset \Supp (\Soc U / U_{\le i}) \cup (-\infty,i] \\&= \Supp (\Soc A^- / A^-_{\le i}) \cup (-\infty,i] \;. 
\end{align*}
Hence, it suffices to show that $\Soc A^- / A^-_{\le i} = A^-_{\le i+1} / A^-_{\le i}$, and thus 
\[
\Supp \Soc A^- / A^-_{\le i} = \{i+1 \} \;.
\] 
The proof of this claim will be by induction on $i$. If $i = -N -1$, then the statement is equivalent to the fact that $\Soc A^- = A^-_{-N}$. Therefore, we may assume that it is true for $i - 1$. Let $a \in A_{i+2}^-$ be non-zero. We need to find some non-zero element $b \in \rad A^- = A^-_{\le -1}$ such that $b a \neq 0$ in $A^- / A^-_{\le i}$. Consider the space $A_{-1}^- \cdot a$. This is contained in $A^-_{i-1}$. Therefore $ A_{-1}^- \cdot a = 0$ in $A^- / A_{\ge i}^-$ if and only if it is zero in $A^-$. Since $A^{-}$ is generated by  $A^-_{-1}$, the radical of  $A^-$ is generated by $A^-_{-1}$ as an ideal. Therefore $A^-_{-1} \cdot a = 0$ implies that $(\rad A^-) \cdot a = 0$, which implies that $a \in \Soc A^- = A^-_{-N}$. But this contradicts the fact that $a \in A^-_{i-2}$. 
\end{proof}

\begin{prop}\label{prop:minusonefaithful}
Assume that $A$ is well-generated and $\Soc A^{\pm} = A^{\pm}_{\pm N}$. If $\Supp L(\lambda) \cap [0,\ell] = \emptyset$ then $\Hom_{\mc{G}_{[d]}(A)}(L(\lambda),\Delta(\mu)) = 0$ for all standard modules $\Delta(\mu)$ in $\mc{G}_{[d]}(A)$. 
\end{prop}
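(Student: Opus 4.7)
The plan is to unwind the Serre quotient and then use Lemma \ref{lem:soclAlhdsupport} applied to $A^-$ to control the $A^-$-socle of the target. First, let me interpret a putative nonzero element of $\Hom_{\mc{G}_{[d]}(A)}(L(\lambda),\Delta(\mu))$. Since $L(\lambda)$ is simple in $\mc{G}_{\le d}(A)$ with $\deg\lambda \ge 0$, it does not lie in $\mc{G}_{<0}(A)$, so it has no proper subobjects with quotient in $\mc{G}_{<0}(A)$. Therefore any morphism in the quotient is represented by a genuine map $f \from L(\lambda) \to \Delta(\mu)/N$ in $\mc{G}_{\le d}(A)$ for some graded $A$-submodule $N \subs \Delta(\mu)$ with $N \in \mc{G}_{<0}(A)$. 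Such an $f$ is either zero or injective, and injectivity cannot be killed by further enlarging $N$, so it suffices to prove that every such $f$ is zero.

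Next, I would extract information about the support of $N$. If $L(\nu)$ is a composition factor of $N$, then $\deg\nu < 0$, and since $\Supp L(\nu) \subs (-\infty,\deg\nu]$, it follows that $\Supp N \subs (-\infty,-1]$; so the top nonzero degree $i$ of $N$ satisfies $i \le -1$. Now view everything as $A^-$-modules: by the triangular decomposition $\Delta(\mu) \simeq A^- \otimes_K \mu$, so $\Delta(\mu)$ is a graded free $A^-$-module of rank $k \dopgleich \dim_K\mu$, generated in degree $\deg\mu$. Lemma \ref{lem:soclAlhdsupport} (applied to $A^-$, after the evident shift by $\deg\mu$) gives
\[
\Supp \Soc_{A^-}(\Delta(\mu)/N) \subs (-\infty,\,i+1] \subs (-\infty,\,0].
\]

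On the other side, I would use that $A^-_0 = K$ forces $\rad A^- = A^-_{<0}$. The bottom-degree homogeneous component of $L(\lambda)$ lives in degree $\deg\lambda - r_\lambda$ and is nonzero; it is annihilated by $A^-_{<0}$ (which would push it to degrees strictly below the bottom), so it lies in $\Soc_{A^-} L(\lambda)$. The injection $f$ induces an injection $\Soc_{A^-} L(\lambda) \hookrightarrow \Soc_{A^-}(\Delta(\mu)/N)$, and comparing supports yields
\[
\deg\lambda - r_\lambda \;\le\; i+1 \;\le\; 0.
\]
Finally, combining this with the hypothesis and the fact that $L(\lambda)$ is a simple object of $\mc{G}_{[d]}(A)$ should yield the desired contradiction: since $\deg\lambda \ge 0$, the condition $\Supp L(\lambda) \cap [0,\ell] = \emptyset$ on the contiguous interval $\Supp L(\lambda) = [\deg\lambda - r_\lambda,\deg\lambda]$ forces $\deg\lambda - r_\lambda > \ell = d - N \ge 0$, contradicting $\deg\lambda - r_\lambda \le 0$.

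The only genuinely subtle point is the reduction to a single map $f \from L(\lambda) \to \Delta(\mu)/N$: one has to check that nonzero (hence injective) $f$ cannot be annihilated by further enlarging $N$ within $\mc{G}_{<0}$, which is exactly because $L(\lambda)$ itself is not in $\mc{G}_{<0}$. After that, the argument is a clean application of Lemma \ref{lem:soclAlhdsupport} to $A^-$, together with the observation that the bottom of $L(\lambda)$ automatically sits in its $A^-$-socle.
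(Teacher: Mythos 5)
Your proof follows essentially the same route as the paper: reduce to a genuine map $f\colon L(\lambda)\to\Delta(\mu)/N$ in $\mc{G}_{\le d}(A)$, observe that the nonzero part of $\Soc_{A^-}L(\lambda)$ sits in the bottom degree $\deg\lambda - r_\lambda$, transport it into $\Soc_{A^-}(\Delta(\mu)/N)$, and control that socle's support via Lemma~\ref{lem:soclAlhdsupport} for $A^-$. The reduction to a single genuine map, the use of $\rad A^- = A^-_{<0}$, and the support comparison are all correct and match the paper's argument.

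There is one gap: you set ``the top nonzero degree $i$ of $N$'' and bound $\Supp\Soc_{A^-}(\Delta(\mu)/N)\subset(-\infty,i+1]\subset(-\infty,0]$, but this tacitly assumes $N\neq 0$; Lemma~\ref{lem:soclAlhdsupport} has nothing to say when $N=0$, which genuinely occurs (for instance whenever $\deg\mu \ge N$, since then $\Supp\Delta(\mu)\subset[0,d]$ and $\Delta(\mu)$ has no nonzero submodule in $\mc{G}_{<0}(A)$). The paper treats this as a separate case: for $N=0$ one has $\Soc_{A^-}\Delta(\mu)=\Delta(\mu)_{\deg\mu-N}$, concentrated in degree $\deg\mu - N \le d - N = \ell$, so the bound becomes $\deg\lambda - r_\lambda \le \ell$ rather than $\le 0$. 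Your final contradiction is stated against $\deg\lambda - r_\lambda \le 0$, but since the hypothesis gives $\deg\lambda - r_\lambda > \ell$, the weaker inequality $\le\ell$ suffices in both cases—so the fix is routine, but the case split should be made explicit.
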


\begin{proof}
We begin by noting that 
$$
\Hom_{\mc{G}_{[d]}(A)}(L(\lambda),\Delta(\mu)) = \lim_{\stackrel{M \subset \Delta(\mu),}{M \in \mc{G}_{< 0}(A)}} \Hom_{\mc{G}_{\le d}(A)} (L(\lambda),\Delta(\mu) / M).
$$
If $M,M' \subset \Delta(\mu)$ are submodules that belong to $\mc{G}_{< 0}(A)$, then so too does their sum $M + M'$. Therefore there exists a unique largest submodule $M$ belonging to $\mc{G}_{< 0}$ and the space $\Hom_{\mc{G}_{[d]}(A)}(L(\lambda),\Delta(\mu)) $ equals $\Hom_{\mc{G}_{\le d}(A)} (L(\lambda),\Delta(\mu) / M)$. Since $[M : L(\rho)] \neq 0$ implies that $\deg(\rho) < 0$, $M_i = 0$ unless $i < 0$ and hence $(\Delta(\mu) / M)_i = \Delta(\mu)_i$ for all $i \ge 0$. The group $\Hom_{\mc{G}_{\le d}(A)} (L(\lambda),\Delta(\mu) / M)$ is non-zero if and only if $L(\lambda)$ appears in the socle of $\Delta(\mu) / M$. Thus, it suffices to show that $L(\lambda) \subset \Soc \Delta(\mu) / M$ implies that $\Supp L(\lambda) \cap [0,\ell] \neq \emptyset$. Notice first that if $\Supp L(\lambda) \subset \N_{< 0}$, then $L(\lambda)$ is an object of $\mc{G}_{< 0}$. This would contradict the maximality of $M$. Thus, $\Supp L(\lambda) \cap \N \neq \emptyset$, and it suffice to show that $\Supp L(\lambda) \cap (- \infty, \ell] \neq \emptyset$. Since 
$$
\Soc_{A^{-}} L(\lambda) \subset \Soc_{A^{-}} (\Soc_A \Delta(\mu) / M) \subset   \Soc_{A^{-}} \Delta(\mu) / M,
$$
it suffices to show that $\Supp (\Soc_{A^{-}} \Delta(\mu) / M) \subset (-\infty, \ell]$. There are two cases to consider here. First, assume that $M \neq 0$. Since $\Supp \Delta(\mu) \subset [\deg \mu - N , \deg \mu]$ and $\Supp M \subset \N_{< 0}$, this implies that $\deg \mu < N$ and the result follows from Lemma \ref{lem:soclAlhdsupport}. Secondly, if $M = 0$, then $\Supp (\Soc_{A^{-}} \Delta(\mu) / M) = \Supp (\Soc_{A^{-}} \Delta(\mu) ) = \{ \deg \mu - N \}$. Since $\deg \mu \le d$, $\deg \mu - N \le \ell$, as required. 
\end{proof} 

\begin{rem}
Notice that Proposition \ref{prop:minusonefaithful} implies that if $F_d(L(\lambda)) = 0$ then $\Hom_{\mc{G}_{[d]}(A)}(L(\lambda),M)$ is zero for all $M \in \mc{G}_{[d]}^{\Delta}(A)$. In particular, $F_d$ is a $(-1)$-faithful cover, in the sense of Rouquier \cite{RouquierQSchur}. 
\end{rem}

\begin{lem}\label{lem:prestageprop}
For each $0 \le \deg \mu \le d$, the projective indecomposable $Q_d(\mu)$ is a direct summand of ${}^{\perp}H_d(A[\deg \mu])$ in $\mc{G}_{[d]}(A)$, with complement $V$ belonging to $\mc{G}_{[d]}^{\Delta}(A)$.
\end{lem}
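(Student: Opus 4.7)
The plan is to explicitly decompose $A[\deg \mu]$ via Lemma~\ref{lem:obviousdecomp}, apply the functor ${}^{\perp} H_d$, and then extract $Q_d(\mu)$ as a summand whose complement consists of $Q_d(\lambda)$'s with a favorable degree range.

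First I would invoke Lemma~\ref{lem:obviousdecomp} to write
$$A[\deg \mu] = \bigoplus_{\lambda \in \Irr \mc{G}(T)} P(\lambda) \otimes_K L(\lambda)_{\deg \mu}$$
in $\mc{G}(A)$. Since ${}^{\perp} H_d$ is the left adjoint to the inclusion $\mc{G}_{\le d}(A) \hookrightarrow \mc{G}(A)$, it preserves direct sums, yielding
$${}^{\perp} H_d(A[\deg \mu]) = \bigoplus_{\lambda} Q_d(\lambda) \otimes_K L(\lambda)_{\deg \mu}.$$
Only those $\lambda$ contribute for which both $Q_d(\lambda) \neq 0$ and $L(\lambda)_{\deg \mu} \neq 0$. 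The former requires $\deg \lambda \le d$, while the latter forces $\deg \mu \in [\deg \lambda - r_\lambda, \deg \lambda]$, so in particular $\deg \lambda \ge \deg \mu \ge 0$. Since $L(\mu)_{\deg \mu} \simeq \mu \neq 0$, the summand $Q_d(\mu)$ appears (with multiplicity $\dim_K \mu$), and I would let $V$ denote the complement so that ${}^{\perp} H_d(A[\deg \mu]) \simeq Q_d(\mu) \oplus V$.

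To complete the proof I would verify that $V \in \mc{G}_{[d]}^{\Delta}(A)$. Every summand of $V$ is some $Q_d(\lambda)$ with $\deg \mu \le \deg \lambda \le d$. Being projective in the highest weight category $\mc{G}_{\le d}(A)$, it admits a standard filtration whose subquotients $\Delta(\nu)$ satisfy $\nu \ge \lambda$ by BGG reciprocity (available in $\mc{G}_{\le d}(A)$ thanks to Lemma~\ref{prop_filtered_piece_lemma}), so $0 \le \deg \lambda \le \deg \nu \le d$. In particular, none of the $\Delta(\nu)$ appearing in these filtrations lies in $\mc{G}_{<0}(A)$, and each one is therefore a genuine standard object of $\mc{G}_{[d]}(A)$. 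Since the exact quotient functor $\mc{G}_{\le d}(A) \to \mc{G}_{[d]}(A)$ carries filtrations to filtrations, the standard filtration of $Q_d(\lambda)$ descends intact, giving $V \in \mc{G}_{[d]}^{\Delta}(A)$. The only substantive point in the argument is this degree bookkeeping: the hypothesis $\deg \mu \ge 0$ combined with $\nu \ge \lambda \ge \mu$ is precisely what prevents any standard factor of the summands $Q_d(\lambda)$ from being killed in the passage to $\mc{G}_{[d]}(A)$.
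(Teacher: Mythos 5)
Your proposal is correct and follows essentially the same plan as the paper's proof: decompose $A[\deg\mu]$ via Lemma~\ref{lem:obviousdecomp}, apply ${}^{\perp}H_d$, and use the support constraint $\deg\mu \in [\deg\lambda - r_\lambda, \deg\lambda]$ to see that every surviving summand $Q_d(\lambda)$ has $0 \le \deg\mu \le \deg\lambda \le d$. The only deviation is in the final step: the paper cites Lemma~\ref{lem:porjinjquot} (resp.\ Lemma~\ref{lem:projcoverquotient}) to conclude that $Q_d(\lambda)$ is the projective cover of $L(\lambda)$ in the quotient $\mc{G}_{[d]}(A)$ and hence standardly filtered there, whereas you instead apply Brauer reciprocity in $\mc{G}_{\le d}(A)$ to check that every standard factor $\Delta(\nu)$ of $Q_d(\lambda)$ satisfies $\deg\nu \ge \deg\lambda \ge 0$, so the filtration descends intact under the exact quotient functor. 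Both arguments are sound; yours is marginally more self-contained, and the paper's is marginally shorter since it reuses the already-established projectivity of $Q_d(\lambda)$ in $\mc{G}_{[d]}(A)$.
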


\begin{proof}
First, as noted in Lemma \ref{lem:obviousdecomp},  
$$
\dim \Hom_{\mc{G}(A)}(A[\deg \mu], L(\mu)) = \dim L(\mu)_{\deg \mu} = \dim \mu  \neq 0.
$$
Therefore, $P(\mu)$ is a non-zero summand of $A[\deg \mu]$. Hence $Q_d(\mu) = {}^{\perp}H_d(P(\mu))$ is a non-zero direct summand of ${}^{\perp}H_d(A[\deg \mu ])$. We can choose the complement $V'$ of $P(\mu)$ in $A[\deg \mu]$ to be projective. Then $V = {}^{\perp}H_d(V')$ is a complement to $Q_d(\mu)$ in ${}^{\perp}H_d(A[\deg \mu])$. Again, as noted in Lemma \ref{lem:obviousdecomp}, if $P(\lambda)$ occurs as a non-zero summand of $A[\deg \mu]$, then $\deg \mu \in [\deg \lambda - r_{\lambda}, \deg \lambda ]$. In particular, $\deg \lambda \in [\deg \mu, \deg \mu + N]$. Thus, it suffices to show that ${}^{\perp}H_d(P(\lambda))$ belongs to $\mc{G}_{[d]}^{\Delta}$ for any projective indecomposable in $\mc{G}(A)$ with $\deg \lambda \ge 0$ (recall that $\mu$ is assumed to satisfy $\deg \mu \ge 0$). If $\deg \lambda  > d$ then Lemma \ref{lem:projcoversub} says that ${}^{\perp}H_d(P(\lambda)) = 0$. Similarly, if $0 \le \deg \lambda \le d$, then by Lemma \ref{lem:projcoverquotient}, $Q_d(\lambda) = {}^{\perp}H_d(P(\lambda))$ is the projective cover of $L(\lambda)$, and hence admits a standard filtration.
\end{proof}

\begin{lem}\label{lem:observeminusi}
Assume that $A$ is well-generated. Then, for each $i \ge 1$, 
$$
A / A \cdot A_{i} = A \otimes_{B^{+}}  B^{+} /  B^{+}_{\ge i}
$$
as left $A$-modules. 
\end{lem}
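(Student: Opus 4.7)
The plan is to reduce the statement to the equality of left ideals
\[
A\cdot A_i \;=\; A\cdot B^+_{\ge i}
\]
in $A$. Granted this, I would apply the right-exact functor $A\otimes_{B^+}-$ to the short exact sequence $0\to B^+_{\ge i}\to B^+\to B^+/B^+_{\ge i}\to 0$ of left $B^+$-modules. Under the canonical identification $A\otimes_{B^+}B^+\simeq A$ of left $A$-modules, the induced map $A\otimes_{B^+}B^+_{\ge i}\to A$ is just multiplication, with image $A\cdot B^+_{\ge i}$. Right exactness then yields the desired isomorphism of left $A$-modules
\[
A\otimes_{B^+}(B^+/B^+_{\ge i}) \;\simeq\; A/(A\cdot B^+_{\ge i}) \;=\; A/(A\cdot A_i).
\]
Note that neither flatness nor freeness of $A$ over $B^+$ is needed for this step; it is pure right exactness.

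The content of the lemma is thus concentrated in the equality of left ideals above. For the inclusion $A\cdot A_i \subset A\cdot B^+_{\ge i}$, I would use only the triangular decomposition: the isomorphism $A^-\otimes_K T\otimes_K A^+\rightsim A$ gives
\[
A_i \;=\; \bigoplus_{\substack{j+k=i\\ j\le 0,\ k\ge 0}} A^-_j\otimes T\otimes A^+_k,
\]
and for each such pair $(j,k)$ the constraint $j\le 0$ forces $k = i-j\ge i$, so $T\otimes A^+_k\subset B^+_{\ge i}$. Multiplying on the left by $A^-_j\subset A$ shows that each summand of $A_i$ sits inside $A\cdot B^+_{\ge i}$, which gives the inclusion.

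The reverse inclusion $B^+_{\ge i}\subset A\cdot A_i$ is where the well-generation hypothesis enters, and is the main step. Since $A^+$ is generated as a $K$-algebra by its degree-one piece, $A^+_k = (A^+_1)^k$ for every $k\ge 0$, and consequently $A^+_k = A^+_{k-i}\cdot A^+_i$ for all $k\ge i$. Hence any $t\otimes(c\cdot d)\in T\otimes A^+_k$ with $c\in A^+_{k-i}$ and $d\in A^+_i$ equals $(tc)\cdot d$ in $A$, where $tc\in T\cdot A^+_{k-i}\subset B^+\subset A$ and $d\in A^+_i\subset A_i$. Summing over $k\ge i$ gives $B^+_{\ge i}\subset A\cdot A_i$. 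The only nontrivial verification here is the factorization $A^+_k = A^+_{k-i}\cdot A^+_i$, which is immediate from $(A^+_1)^k = (A^+_1)^{k-i}\cdot(A^+_1)^i$ and, crucially, fails without well-generation; this is exactly the place where the hypothesis is used.
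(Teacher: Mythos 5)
Your proof is correct and follows the same route as the paper's, which also reduces to the equality of left ideals $A\cdot A_i = A\cdot B^+_{\ge i}$ and then invokes well-generation. You have simply filled in all the details (the triangular-decomposition argument for $A_i\subset A\cdot B^+_{\ge i}$ and the factorization $A^+_k = A^+_{k-i}\cdot A^+_i$ for the reverse inclusion) that the paper compresses into two sentences, and your remark that right-exactness alone suffices for the reduction step is accurate.
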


\begin{proof}
The statement holds if and only if $A \cdot A_i = A \cdot B^{+}_{\ge i}$. This equality follows from the fact that $A^{+}$ is generated by $A^{+}_{1}$. 
\end{proof}

\begin{lem}
Assume that $A$ is well-generated and $\Soc A^{\pm} = A^{\pm}_{\pm N}$. For each $0 \le \deg \lambda \le d$, there is some $k > 0$ and a short exact sequence 
\begin{equation}\label{eq:Qembedses}
0 \rightarrow Q_d(\lambda) \rightarrow A\langle \ell\rangle^{\oplus k} \rightarrow M \rightarrow 0
\end{equation}
in $\mc{G}_{[d]}(A)$ such that $M \in \mc{G}_{[d]}^{\Delta}(A)$. 
\end{lem}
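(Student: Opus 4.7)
The plan is to use Lemma \ref{lem:prestageprop} to reduce to constructing the required short exact sequence with ${}^{\perp}H_d(A[\deg \lambda])$ in place of $Q_d(\lambda)$ (since these differ by a standardly filtered summand $V_\lambda$), and then to split into two cases according to the size of $\deg \lambda$.

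When $\deg \lambda \le \ell$, the module $A[\deg \lambda]$ itself lies in $\mc{G}_{\le d}(A)$ by Lemma \ref{lem:basicsubcat}(a), so ${}^{\perp}H_d(A[\deg \lambda]) = A[\deg \lambda]$ is already a direct summand of $A\langle\ell\rangle$, with complement $\bigoplus_{j \in [0,\ell], j \neq \deg \lambda} A[j]$, which is standardly filtered by Lemma \ref{lem:prestageprop}; taking $k=1$ works. The substantive case is $\ell < \deg \lambda \le d$. Setting $m = d - \deg \lambda \in [0, N)$ and choosing a basis $\alpha_1, \ldots, \alpha_k$ of $A^+_{N-m} = A^+_{\deg \lambda - \ell}$, I define the graded left $A$-module morphism
\begin{displaymath}
R \from A[\deg \lambda] \to A[\ell]^{\oplus k} \hookrightarrow A\langle\ell\rangle^{\oplus k}, \quad x \mapsto (x\alpha_1, \ldots, x\alpha_k) \;.
\end{displaymath}
By well-generatedness and Lemma \ref{lem:observeminusi}, $A \cdot A_{m+1} = A \cdot A_{>m}$; since $A_{m+1} \cdot \alpha_j \subset A_{N+1} = 0$, the map $R$ kills $A \cdot A_{m+1}[\deg \lambda]$ and so descends to $\bar R \from {}^{\perp}H_d(A[\deg \lambda]) \to A\langle\ell\rangle^{\oplus k}$.

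Two things then remain: injectivity of $\bar R$ and identification of its cokernel as standardly filtered. Writing $A = A^- \otimes B^+$ via the triangular decomposition, $\ker R$ splits as $A^- \otimes (\text{kernel of right multiplication by } A^+_{N-m} \text{ on } B^+)$. Applying Lemma \ref{lem:soclAlhdsupport}(b), together with the non-degeneracy of the multiplicative pairing on $A^+$ guaranteed by $\Soc A^+ = A^+_N$ and well-generatedness, identifies this kernel on $B^+$ as exactly $B^+_{\ge m+1}$; tensoring with $A^-$ yields $\ker R = A \cdot A_{m+1}[\deg \lambda]$, which establishes injectivity of $\bar R$. For the cokernel, I would compare the standard filtration of ${}^{\perp}H_d(A[\deg \lambda])$ given by Lemma \ref{lem:observeminusi} (with subquotients $\Delta(T \otimes A^+_i)$ shifted by $\deg \lambda$, for $i = 0, \ldots, m$) against the standard filtration of each $A[\ell]$ obtained from Lemma \ref{lem:obviousdecomp}; this comparison realizes the cokernel as a successive extension of standard modules, hence as an object of $\mc{G}_{[d]}^{\Delta}$.

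The main obstacle will be the injectivity claim, i.e.\ the equality $\ker R = A \cdot A_{m+1}[\deg \lambda]$. The inclusion $\supseteq$ is the degree computation above; the reverse inclusion is the substantive content. This is the step where the socle hypothesis $\Soc A^+ = A^+_N$ and well-generatedness work in tandem to produce the Frobenius-style non-degeneracy of $A^+$ that Lemma \ref{lem:soclAlhdsupport}(b) is designed to capture, and verifying it carefully requires tracking how the bound on $A^+$-socles translates back through the triangular decomposition. Once injectivity is in hand, the compatible-filtrations argument for the cokernel should proceed more mechanically.
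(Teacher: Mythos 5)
Your approach is genuinely different from the paper's. The paper also first reduces via Lemma \ref{lem:prestageprop} and the identification ${}^{\perp}H_d(A[\deg\lambda]) \simeq A\otimes_{B^+}(B^+/B^+_{>m})[\deg\lambda]$, but then works entirely in $\mc{G}(B^+)$: using Lemma \ref{lem:soclAlhdsupport} it shows $\Soc_{B^+}\bigl((B^+/B^+_{>m})[\deg\lambda]\bigr)$ is concentrated in degree $d$, embeds that socle (a $T$-module) into the degree-$d$ part of $(B^+\langle\ell\rangle)^{\oplus k}$, and then extends the embedding using that shifts of $B^+$ are injective cogenerators in $\mc{G}(B^+)$ (i.e.\ $B^+$ is self-injective, which the paper establishes in Section \ref{sec_socle_assumption} from the graded symmetric and ambidextrous hypotheses). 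The cokernel is automatically $A\otimes_{B^+}$ of the $B^+$-cokernel, hence standardly filtered. You instead build an explicit map by right multiplication against a basis of $A^+_{N-m}$ and then try to prove it is injective.

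The gap is precisely in your injectivity step, and it is more serious than you suggest. Your kernel identification $\ker R' = B^+_{>m}$ amounts to: for $b^+ \in B^+_i$ with $0 < i \le m$ nonzero, $b^+A^+_{N-m} \neq 0$. This is a \emph{right}-annihilator statement about $A^+$, equivalent (after unwinding the well-generated hypothesis) to $\Soc(A^+_{A^+}) = A^+_N$. But Lemma \ref{lem:soclAlhdsupport}(b) is a statement about $\Soc_{A^+}$ of left modules, i.e.\ about \emph{left} annihilation; the hypothesis $\Soc A^+ = A^+_N$ is likewise, in the paper's conventions, about the left socle. For a general local graded algebra these two socles can differ, and neither well-generatedness nor the left-socle condition lets you pass from one to the other. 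What you actually need is that $A^+$ is Frobenius (as in the lemma of Section \ref{sec_socle_assumption}, which requires graded symmetric and ambidextrous), so that the two socles coincide and the pairing $A^+_i \times A^+_{N-i} \to A^+_N$ is perfect; attributing the non-degeneracy to Lemma \ref{lem:soclAlhdsupport}(b) plus the stated socle hypothesis is a misstep. Note also that the paper's route completely sidesteps this left/right issue: its socle computation is a purely left-module statement, and the embedding is produced abstractly from injectivity rather than from an explicit map whose kernel one must control. Finally, your filtration-comparison argument for the cokernel is more work than needed: since your map $R$ is $A\otimes_{B^+}(-)$ applied to the $B^+$-linear right-multiplication map, its cokernel is $A\otimes_{B^+}\operatorname{coker}(R')$, which is standardly filtered for free once injectivity of $R'$ is settled.
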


\begin{proof}
By Lemma \ref{lem:prestageprop}, it suffices to show that ${}^{\perp}H_d(A[\deg \lambda])$ embeds in $A[\ell]^{\oplus k}$ for some $k$ with quotient $M$ in $\mc{G}_{[d]}^{\Delta}(A)$. Let $i = \deg \lambda$. First, note that Lemma \ref{lem:observeminusi} implies 
$$
{}^{\perp}H_d(A[i]) = \left( A \otimes_{B^+} B^+ / B^+_{> d - i} \right)[i].
$$
Therefore it suffices to show that $(B^+ / B^+_{> d - i})[i]$ embeds in $(B^+\langle \ell\rangle)^{\oplus k}$, with quotient $M'$ say, because then the quotient $M$ will equal $A \otimes_{B^+} M'$, which belongs to $\mc{G}_{\le d}(A) \cap \mc{G}^{\Delta}(A) =\mc{G}_{\le d}^{\Delta}(A)$, and hence its image in $\mc{G}_{[d]}(A)$ will belong to $\mc{G}^{\Delta}_{[d]}(A)$. By Lemma \ref{lem:soclAlhdsupport}, the socle of $(B^+ / B^+_{> d - i})[i]$ is contained in $(B^+_{\ge d - i} / B^+_{> d - i})[i]$. In fact, it follows that the socle equals $(B^+_{\ge d - i} / B^+_{> d - i})[i]$. Notice that $\Supp (B^+_{\ge d - i} / B^+_{> d - i})[i] = \{ d \}$. This implies that there is a graded embedding 
$$
\Soc_{B^+}((B^+ / B^+_{> d - i})[i]) = (B^+_{\ge d - i} / B^+_{> d - i})[i]  \hookrightarrow (B^+\langle \ell\rangle)^{\oplus k}
$$
for $k \gg 0$. Since the graded shifts of $B^+$ give a set of injective co-generators in $\mc{G}(B^+)$, this extends to an embedding of $(B^+ / B^+_{> d - i})[i]$ into $(B^+\langle \ell\rangle)^{\oplus k}$, as required. 
\end{proof}

The following proposition completes the proof of Theorem \ref{thm:highestweightcover}. 

\begin{lem}
Suppose that $A$ is well-generated, and that $\Soc A^\pm = A^\pm_{\pm N}$. Then the functor $F_d \from \mc{G}_{\lbrack d \rbrack}(A) \to \mc{M}(B_\ell)$ is a highest weight cover.
\end{lem}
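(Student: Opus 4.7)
By the cover criterion of Proposition~\ref{prop:covercriterion}, it suffices to establish the vanishings
\[
\Ext^i_{\mc{G}_{[d]}(A)}(L(\lambda),Q)=0,\qquad i\in\{0,1\},
\]
for every projective $Q$ of $\mc{G}_{[d]}(A)$ and every simple $L(\lambda)$ with $F_d(L(\lambda))=0$, equivalently with $\Supp L(\lambda)\cap[0,\ell]=\emptyset$. I would break the argument into three steps.

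\emph{Step 1 (Hom-vanishing).} Every projective in $\mc{G}_{[d]}(A)$ admits a standard filtration (by \cite[Lemma 4.19]{hwtpaper} together with Lemma~\ref{lem:projcoversub}). Proposition~\ref{prop:minusonefaithful} provides the vanishing $\Hom_{\mc{G}_{[d]}(A)}(L(\lambda),\Delta(\mu))=0$ on every standard module, and a routine induction on the length of the filtration, using the long exact Hom sequence, propagates this to $\Hom_{\mc{G}_{[d]}(A)}(L(\lambda),Q)=0$ for every projective $Q$ and, more generally, for every standardly filtered object.

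\emph{Step 2 (Reduction of Ext$^1$).} For each indecomposable projective $Q_d(\mu)$, apply Lemma~\ref{lem:Qembedses} to produce the short exact sequence
\[
0\longrightarrow Q_d(\mu)\longrightarrow A\langle \ell\rangle^{\oplus k}\longrightarrow M\longrightarrow 0,\qquad M\in\mc{G}^\Delta_{[d]}(A),
\]
and extract from the long exact sequence of $\Ext$ the fragment
\[
\Hom(L(\lambda),M)\longrightarrow \Ext^1(L(\lambda),Q_d(\mu))\longrightarrow \Ext^1(L(\lambda),A\langle \ell\rangle)^{\oplus k}.
\]
Step 1, applied to the standardly filtered module $M$, kills the left-hand term, so the argument is reduced to showing that $\Ext^1_{\mc{G}_{[d]}(A)}(L(\lambda),A\langle \ell\rangle)$ vanishes.

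\emph{Step 3 (Main obstacle---Ext$^1$ into $A\langle \ell\rangle$).} This is the crux, and is where the hypothesis $\Soc A^+=A^+_N$ plays its essential role, mirroring the use of $\Soc A^-=A^-_{-N}$ in Proposition~\ref{prop:minusonefaithful}. Combining Lemma~\ref{lem:obviousdecomp} with Lemma~\ref{lem:basicsubcat}(b) (using the arithmetic $d=\ell+N$, which ensures $\deg\mu+N\le d+r_\mu$ whenever $L(\mu)_i\neq 0$ for some $i\in[0,\ell]$) decomposes
\[
A\langle \ell\rangle\;\simeq\;\bigoplus_{\mu\,:\,F_d(L(\mu))\neq 0} Q_d(\mu)^{\oplus m_\mu}.
\]
The plan is to establish that each such summand is also \emph{injective} in $\mc{G}_{[d]}(A)$. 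Concretely, by running the proof of Lemma~\ref{lem:Qembedses} with $A^+$ in place of $A^-$ (i.e.\ invoking Lemma~\ref{lem:soclAlhdsupport}(b) in the dual role and working over $B^{-}$), one constructs, for each relevant $\mu$, a short exact sequence
\[
0\longrightarrow N\longrightarrow A\langle \ell\rangle^{\oplus k'}\longrightarrow J_d(\mu)\longrightarrow 0,\qquad N\in\mc{G}^\nabla_{[d]}(A),
\]
which identifies the projective summands $Q_d(\mu)$ of $A\langle \ell\rangle$ with the injective hulls $J_d(\mu)$ in $\mc{G}_{[d]}(A)$ (via Lemma~\ref{lem:porjinjquot}). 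The desired Ext$^1$-vanishing is then immediate from injectivity. The difficulty here is technical rather than conceptual: one has to verify that the dual socle analysis goes through with the correct degree bookkeeping, and that the resulting surjection really does exhibit each $Q_d(\mu)$ appearing in $A\langle \ell\rangle$ as a $J_d(\mu)$. Once this is in hand, Step 2 closes the argument and the full cover criterion of Proposition~\ref{prop:covercriterion} is verified. The interplay between the two socle conditions---$A^-$ delivering faithfulness via Proposition~\ref{prop:minusonefaithful}, $A^+$ delivering fullness via its injective counterpart---is the technical heart of the theorem.
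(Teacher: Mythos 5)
Your Steps 1 and 2 reproduce the paper's argument faithfully: Proposition~\ref{prop:covercriterion} reduces the claim to the two $\Ext$-vanishings, Proposition~\ref{prop:minusonefaithful} kills the $\Hom$-terms (both for $Q_d(\mu)$ and for the standardly filtered cokernel $M$), and the exact sequence (\ref{eq:Qembedses}) reduces the $\Ext^1$-vanishing to showing $\Ext^1_{\mc{G}_{[d]}(A)}(L(\lambda),A\langle\ell\rangle)=0$.

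Step 3, however, contains a genuine gap and also mislocates where the hypothesis $\Soc A^{+}=A^{+}_{N}$ is actually used. The paper finishes at once by invoking the \emph{injectivity} half of Lemma~\ref{A_0_ell_projective}: $A\langle\ell\rangle$ is injective in $\mc{G}_{[d]}(A)$, so the remaining $\Ext^1$ is zero. That injectivity is proved there from the \emph{triangular self-injectivity} hypothesis of Definition~\ref{def_triangular_selfinj}, which is in force throughout Sections~\ref{cover_proof_section1} and~\ref{cover_proof_section}; it does not follow from the socle condition on $A^{+}$. In fact the $A^{+}$ socle condition is already entirely consumed in the construction of the exact sequence (\ref{eq:Qembedses}) via Lemma~\ref{lem:soclAlhdsupport}(b) --- i.e.\ in your Step 2 --- and plays no further role.

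The dual construction you sketch does not repair this. Producing a short exact sequence $0\to N\to A\langle\ell\rangle^{\oplus k'}\to J_d(\mu)\to 0$ with $N\in\mc{G}^{\nabla}_{[d]}(A)$ would not ``identify'' the projective summands of $A\langle\ell\rangle$ with injective hulls, nor prove $A\langle\ell\rangle$ injective: a surjection from a projective with costandardly filtered kernel is strictly weaker than a direct-sum decomposition. The attribution to Lemma~\ref{lem:soclAlhdsupport}(b) is also backwards: applying standard duality $(-)^{*}$ to (\ref{eq:Qembedses}) produces exactly this kind of sequence, but for $A^{\op}$, whose required socle hypothesis is $\Soc A^{-}=A^{-}_{-N}$ --- the side you had assigned to faithfulness. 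The clean fix is simply to cite the injectivity conclusion of Lemma~\ref{A_0_ell_projective} (equivalently Theorem~\ref{thm:A0elling}), keeping the triangular self-injectivity assumption in view.
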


\begin{proof}
By Proposition \ref{prop:covercriterion}, it suffices to show that $\Ext^i_{\mc{G}_{[d]}(A)}(L(\lambda),Q_d(\mu)) = 0$ for all $0 \le \deg \lambda,\deg \mu \le d$ such that $\Supp L(\lambda) \cap [0,\ell] = \emptyset$, and all $i \in \{ 0, 1 \}$. We have shown in Proposition \ref{prop:minusonefaithful} that $\Hom_{\mc{G}_{[d]}(A)}(L(\lambda),M) = 0$ for all $M \in \mc{G}_{[d]}^{\Delta}(A)$. Fixing a short exact sequence as in (\ref{eq:Qembedses}), and applying $\Hom_{\mc{G}_{[d]}(A)}(L(\lambda), - )$, we get 
\begin{multline*}
0 \rightarrow \Ext^1_{\mc{G}_{[d]}(A)}(L(\lambda),Q_d(\mu)) \rightarrow   \Ext^1_{\mc{G}_{[d]}(A)}(L(\lambda),A\langle \ell\rangle^{\oplus k}) \\  \rightarrow  \Ext^1_{\mc{G}_{[d]}(A)}(L(\lambda),M) \rightarrow \cdots 
\end{multline*}
By Lemma \ref{A_0_ell_projective}, the middle Ext-group vanishes. Therefore, we deduce that the group $\Ext^1_{\mc{G}_{[d]}(A)}(L(\lambda),Q_d(\mu))$ equals zero too. 
\end{proof}

\begin{rem}
When $i < \ell$, $A[0,i]$ is still a well-defined projective-injective in $\mc{P}_{[d]}(A)$, with endomorphism ring $B_i$. However, the functor $M \mapsto \Hom_{\mc{G}_{[d]}(A)}(A\langle i \rangle,M)$ is not faithful on $\mc{P}_{[d]}(A)$ and hence cannot be a highest weight cover of $B_i$. To see this, take any $\lambda \in \Irr \mc{G}(T)$ with $\deg \lambda \le d$ and $r_{\lambda} = \ell$. By Lemma \ref{A_0_ell_projective}, $A\langle \ell\rangle$ is a projective object in $\mc{G}_{[d]}(A)$. Lemma \ref{lem:obviousdecomp}, together with the fact that $A$ is triangular self-injective, implies that  
$$
\Hom_{\mc{G}_{[d]}(A)}(L(\lambda),A[\ell]) = \Hom_{\mc{G}(A)}(L(\lambda)[-\ell],A) \neq 0,
$$
since $L(\lambda[-\ell])$ is in the socle of $A$. On the other hand, $\Supp L(\lambda) = [\ell, \deg \lambda]$, which implies that $\Hom_{\mc{G}_{[d]}(A)}(A\langle 
i\rangle,L(\lambda)) = 0$. Hence, Lemma \ref{lem:Ffaithfulproj} implies that this functor is not faithful on $\mc{P}_{[d]}(A)$.  
\end{rem}

\subsection{On the socle assumption} \label{sec_socle_assumption}
We show that the socle assumption $\Soc A^\pm = A^\pm_{\pm N}$ we used for the highest weight cover is satisfied if $A$ is graded symmetric and ambidextrous.

\begin{lem}\label{lem:Alhdsoc}
Suppose that $A$ is a positively graded $K$-algebra with $A_0 = K$. If $A$ is $d$-Frobenius, then $d=\max \Supp(A)$ and 
\begin{equation*}
A_{d} = \Soc ( _AA ) = \Soc ( _AA_A ) = \Soc ( A_A ) \;.
\end{equation*}
This space is one-dimensional, so a $d$-Frobenius form $\Phi$ on $A$ is up to scalar the projection onto~$A_d$. 
\end{lem}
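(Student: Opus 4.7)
The plan is as follows. First I would observe that since $A_0 = K$, the quotient $A / A_{\geq 1}$ is a field, hence semisimple, and $A_{\geq 1}$ is nilpotent (being a positively-graded ideal in a finite-dimensional algebra). This forces $\rad A = A_{\geq 1}$. Next, unpacking the $d$-Frobenius hypothesis gives a $K$-linear form $\Phi \colon A \to K$ of degree $-d$---so $\Phi$ vanishes on $A_i$ for $i \neq d$---such that the pairing $\langle a,b \rangle \dopgleich \Phi(ab)$ on $A$ is non-degenerate. Because $\Phi$ is homogeneous of degree $-d$, the restriction of this pairing to $A_i \times A_j$ vanishes unless $i + j = d$, and non-degeneracy of the full pairing then forces each $A_i \times A_{d-i} \to K$ to be a perfect pairing. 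In particular, $\dim A_i = \dim A_{d-i}$, so $A_i = 0$ for all $i > d$ (since then $A_{d-i} = 0$ by positivity of the grading) and $\dim A_d = \dim A_0 = 1$. This establishes the first two assertions: $d = \max \Supp(A)$ and $\dim A_d = 1$.

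For the socle equalities I would first show $A_d \subseteq \Soc({}_A A_A)$: since $A_{\geq 1} \cdot A_d \subseteq A_{\geq d+1} = 0$ and similarly $A_d \cdot A_{\geq 1} = 0$, the subspace $A_d$ is annihilated by $\rad A$ from both sides. For the converse inclusion $\Soc({}_A A) \subseteq A_d$, I would use that the left socle is the annihilator in $A$ of the graded ideal $\rad A$, hence is itself a graded submodule, reducing the question to homogeneous $a \in A_i \cap \Soc({}_A A)$. If $i < d$, then $(\rad A) \cdot a = 0$ implies $A_{d-i} \cdot a = 0$ (since $d - i \geq 1$), hence $\Phi(A_{d-i} \cdot a) = 0$, contradicting the perfectness of the pairing $A_{d-i} \times A_i \to K$ unless $a = 0$; if $i > d$ then $A_i = 0$ already. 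The symmetric argument on the right gives $\Soc(A_A) \subseteq A_d$, and since the bimodule socle sits inside both one-sided socles while $A_d$ sits inside the bimodule socle, all three coincide with $A_d$.

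Finally, because $\Phi$ vanishes on $\bigoplus_{i \neq d} A_i$ and cannot be identically zero on the one-dimensional space $A_d$ (otherwise $\Phi$ itself would be zero, violating non-degeneracy), $\Phi$ is a non-zero scalar multiple of the projection $A \twoheadrightarrow A_d$. I do not anticipate any significant obstacle; the only point requiring care is that the one-sided socle is automatically graded, which follows from the homogeneity of $\rad A = A_{\geq 1}$.
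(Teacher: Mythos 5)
Your proof is correct but takes a genuinely different route from the paper's. The paper works in the opposite direction: it first establishes that the socle is one-dimensional by citing two general facts about Frobenius algebras (that $\Soc({}_AA) = \Soc(A_A)$, and that $\Soc({}_AA) \simeq A/\Rad(A)$ as left modules), observes by degree reasons that $A_N \subseteq \Soc({}_AA)$ where $N \dopgleich \max \Supp(A)$, concludes $A_N = \Soc({}_AA)$ by dimension count, and only at the end reconciles $d$ with $N$ via non-degeneracy of $\Phi$. You instead unpack the $d$-Frobenius hypothesis directly into a homogeneous associative pairing, show that it restricts to a perfect pairing $A_i \times A_{d-i} \to K$ for each $i$ (a Poincar\'e duality $\dim A_i = \dim A_{d-i}$ that the paper does not record and that is a pleasant bonus), and read off both $d = \max \Supp(A)$ and $\dim A_d = 1$ in one stroke; the same perfectness then forces any homogeneous element of the left socle below degree $d$ to vanish. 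Your argument is self-contained, avoiding both external citations, and it derives the one-dimensionality of the socle and the identification of $d$ simultaneously rather than sequentially. The one point that genuinely needs the care you gave it is the gradedness of $\Soc({}_AA)$, which follows because $\Rad A = A_{\geq 1}$ is a homogeneous ideal so its annihilator is a graded submodule; you correctly flag this. Both proofs are complete and rigorous.
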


\begin{proof}
Let $N \dopgleich \max \Supp(A)$. The fact that $A$ is Frobenius implies that $\Soc ( _AA) = \Soc ( A_A)$, see \cite[Theorem 58.12]{CR}. It clear that the Jacobson radical $\Rad(A)$ of $A$ is equal to the unique maximal graded ideal of $A$, which is $\bigoplus_{i \in \bbN_{>0}} A_i$. From this, we see that $A/\Rad(A) \simeq A_0 = K$. Again since $A$ is Frobenius, it follows from \cite[Theorem 16.14]{Lam-Lectures-Modules-Rings-99} that $\Soc ( _AA) \cong A/\Rad(A)$ as left $A$-modules. In particular, the socle of $A$ is one-dimensional. By degree reasons and the definition of $N$, the radical of $A$ acts trivially on $A_N$. Hence, $A_N$ is contained in the socle of $A$ by \cite[Lemma 58.3]{CR}. This immediately implies that $A_N$ is one-dimensional and equal to the socle. Furthermore, this shows that it is a minimal two-sided ideal of $A$, thus contained in $\Soc ( _AA_A)$. Clearly, $\Soc ( _AA_A) \subs \Soc ( A_A)$, and this shows that indeed $A_N = \Soc ( _AA_A) $. Finally, $A_N$ cannot be contained in the kernel of $\Phi$ by definition. It is then clear that $\Ker \Phi = \bigoplus_{i \neq N} A_i$ and that $\Phi$ is projection onto $A_N$. In particular, $d=N$.
\end{proof}  

Of course, there is a similar version of the lemma for $A$ negatively graded.  Now, let $N^\pm$ be the maximum of $\Supp A^+$, resp. the minimum of $\Supp A^-$. Recall that we defined $N = N^+$.

\begin{lem} \label{A_graded_frob_N}
If $A$ is graded Frobenius, then $N^- = - N^+$.
\end{lem}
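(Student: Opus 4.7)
My plan is to deduce the claim from the triangular decomposition together with the graded self-duality that the Frobenius hypothesis provides. I would proceed in two small steps and then combine them.

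First, the multiplication isomorphism $A^- \otimes_K T \otimes_K A^+ \xrightarrow{\sim} A$ of graded vector spaces, together with the fact that $T$ is concentrated in degree zero, gives
\[
\Supp A \ = \ \Supp A^- + \Supp A^+
\]
as a Minkowski sum inside $\bbZ$. By axiom \ref{defn:connected}, $A^-_0 = K = A^+_0$, so both summands contain $0$; hence $\max \Supp A = N^+$ and $\min \Supp A = N^-$. Second, the graded Frobenius hypothesis should be interpreted as in \cite[Lemma 5.5]{hwtpaper} (invoked above in the proof of Lemma \ref{lem:aaop}), namely as the existence of a graded isomorphism $A^* \simeq A^{\op}$ in $\mc{G}(A^{\op})$; equivalently, the Frobenius form is homogeneous of degree zero. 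Taking $i$-th homogeneous components yields $A_i^* \simeq A_{-i}$, so $\dim_K A_i = \dim_K A_{-i}$ for every $i \in \bbZ$ and therefore $\Supp A = -\Supp A$.

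Combining the two facts,
\[
-N^+ \ = \ -\max \Supp A \ = \ \min(-\Supp A) \ = \ \min \Supp A \ = \ N^-,
\]
which is the desired equality. There is no real obstacle in the argument; the only subtle point is recognising that ``graded Frobenius'' in the convention of this paper must mean a \emph{degree-zero} Frobenius form (equivalently, a graded bimodule isomorphism $A \simeq A^*$), in contrast to the setting of Lemma \ref{lem:Alhdsoc} for positively graded algebras, where the form naturally lives in top degree. That this stronger requirement is genuinely needed is visible in the earlier example $A = K[x,y,z]/(x^2,y^2,z^2)$ with $\deg x = -1$ and $\deg y = \deg z = +1$: this algebra is Frobenius with $N^+ = 2$ and $N^- = -1$, so $N^- \neq -N^+$, which would contradict the lemma were $A$ graded Frobenius in the sense just described.
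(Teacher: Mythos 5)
Your proof is correct, and it takes a genuinely different route from the paper's. The paper fixes a nonzero $a \in A^-_{N^-}$, notes that the Frobenius form $\Phi$ cannot vanish on the nonzero left ideal $Aa$, and — since $\Phi$ is concentrated in degree zero — deduces $\Phi\bigl((Aa)_0\bigr) \neq 0$, hence $(Aa)_0 = A_{-N^-}\,a \neq 0$, so $-N^- \in \Supp A$ and $-N^- \leq N^+$; the reverse inequality follows by symmetry with $a' \in A^+_{N^+}$. You instead extract only the dimension symmetry $\dim_K A_i = \dim_K A_{-i}$ (equivalently $\Supp A = -\Supp A$) from the graded isomorphism $A^* \simeq A^{\op}$, and pair it with the Minkowski-sum identity $\Supp A = \Supp A^- + \Supp A^+$, which combined with $A_0^{\pm} = K$ gives $\max \Supp A = N^+$ and $\min \Supp A = N^-$. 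Your route is a touch more elementary, sidestepping the non-vanishing of $\Phi$ on ideals and using only the graded self-duality of the underlying vector space, while the paper's version stays at the element level and matches the style of the adjacent Lemma \ref{lem:Alhdsoc}. Both proofs hinge on the same reading of ``graded Frobenius'' as a \emph{degree-zero} Frobenius form, which you correctly isolate, and your check against $K[x,y,z]/(x^2,y^2,z^2)$ with $\deg x = -1$, $\deg y = \deg z = 1$ (Frobenius but with $N^+=2$, $N^-=-1$) rightly confirms that the degree-zero requirement cannot be dropped.
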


\begin{proof}
Let $\Phi \from A \to K$ be the Frobenius form. By definition, $A_{N^-}^- \neq 0$, so there is $0 \neq a \in A_{N^-}^-$. Since $A$ is graded Frobenius and $Aa$ is a non-zero left ideal of $A$, we have $\Phi( (Aa)_0 ) \neq 0$, so $(Aa)_0 \neq 0$. But $(Aa)_0 = A_{-N^-} a$, so $A_{-N^-} \neq 0$, implying that $-N^- \leq N^+$. Similarly, we see that $-N^- \geq N^+$.  
\end{proof}

\begin{lem}
If $A$ is graded symmetric and ambidextrous then:
\begin{enum_thm}
\item $B^{\pm}$ is $\pm N$-Frobenius and $\Soc(B^\pm) = B^\pm_{\pm N}$.
\item $A^{\pm}$ is $\pm N$-Frobenius and $\Soc(A^\pm) = A^\pm_{\pm N}$.
\end{enum_thm} 
\end{lem}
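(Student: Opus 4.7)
The plan is to treat the $+$-case in detail; the $-$-case will follow by applying the same argument to the ambidextrous decomposition $A = A^+ \otimes_K T \otimes_K A^-$, i.e.\ by interchanging the roles of $A^+$ and $A^-$. Within the $+$-case I will first show that $B^+$ is $N$-Frobenius, then deduce the analogous statement for $A^+$ via a graded Krull--Schmidt argument, and finally read off the socles.

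Let $\Phi_A \from A \to K$ denote the graded symmetric Frobenius form of $A$, which is homogeneous of degree~$0$. Fix a non-zero element $\xi \in A^-_{-N}$, which exists by Lemma \ref{A_graded_frob_N}, and set $\Phi_{B^+}(b) \dopgleich \Phi_A(\xi b)$ for $b \in B^+$; degree counting shows $\Phi_{B^+}$ is homogeneous of degree~$-N$. For non-degeneracy, let $0 \neq b \in B^+$. In the triangular decomposition $A = A^- \otimes_K T \otimes_K A^+$ the element $\xi b$ corresponds to $\xi \otimes b$, so $\xi b \neq 0$ in $A$, and by the Frobenius property of $A$ there is some $y \in A$ with $\Phi_A(y \xi b) \neq 0$. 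By ambidexterity, the multiplication map $B^+ \otimes_K A^- \to A$ is a vector space isomorphism, so we can uniquely write $y = y_0 + y'$ with $y_0 \in B^+$ (coming from $B^+ \otimes A^-_0$) and $y' \in B^+ \cdot A^-_{<0}$. The key observation is that $y' \xi = 0$: since $\Supp A^- \subs [-N,0]$, we have $A^-_{<0} \cdot \xi \subs A^-_{<-N} = 0$. Thus $\Phi_A(y \xi b) = \Phi_A(y_0 \xi b)$, and by the symmetric property $\Phi_A(y_0 \xi b) = \Phi_A(\xi b y_0) = \Phi_{B^+}(b y_0) \neq 0$ with $y_0 \in B^+$, yielding non-degeneracy. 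Hence $B^+$ is $N$-Frobenius.

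For $A^+$, the $N$-Frobenius property of $B^+$ gives a graded isomorphism $B^+ \simeq (B^+)^*[N]$ of left $B^+$-modules; I restrict this to left $A^+$-modules. Since $B^+ = A^+ T = T A^+$ with $T$ concentrated in degree~$0$, the triangular decomposition yields $B^+ \simeq (A^+)^{\oplus \dim_K T}$ as graded left $A^+$-modules, and dually $(B^+)^* \simeq T^* \otimes_K (A^+)^* \simeq ((A^+)^*)^{\oplus \dim_K T}$ as graded left $A^+$-modules. Therefore $(A^+)^{\oplus \dim_K T} \simeq ((A^+)^*[N])^{\oplus \dim_K T}$ as graded left $A^+$-modules. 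Since $A^+$ is graded local ($A^+_0 = K$), it is graded indecomposable, and the graded Krull--Schmidt theorem forces $(A^+)^*[N] \simeq A^+$; thus $A^+$ is $N$-Frobenius. Lemma \ref{lem:Alhdsoc} then gives $\Soc A^+ = A^+_N$ with $\dim_K A^+_N = 1$. Finally, for $B^+$, the Frobenius property provides $\dim_K \Soc B^+ = \dim_K(B^+/\Rad B^+) = \dim_K T$, which matches $\dim_K B^+_N = \dim_K T \cdot \dim_K A^+_N = \dim_K T$; together with the obvious inclusion $B^+_N \subs \Soc B^+$ (from $B^+_{>0} \cdot B^+_N \subs B^+_{>N} = 0$), this forces equality. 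The main obstacle is the non-degeneracy step for $\Phi_{B^+}$, where graded symmetry is essential (to rewrite $y \xi b$ as $\xi b y_0$) and ambidexterity is essential (to split off the $A^-_{<0}$-part of $y$, which is killed by $\xi$).
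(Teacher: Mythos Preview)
Your proof is correct. The non-degeneracy argument for $B^+$ is essentially the same as the paper's (the paper proves $B^-$ first, fixing $a \in A^+_N$ and working with left ideals rather than single elements, but the mechanism—ambidexterity to split off the $A^\mp_{<0}$-part, degree to kill it against the extremal element, symmetry to cycle—is identical).

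The deduction for $A^+$ differs slightly. You restrict the isomorphism $B^+ \simeq (B^+)^*[N]$ to left $A^+$-modules, use $B^+ \simeq (A^+)^{\oplus \dim T}$, and invoke graded Krull--Schmidt to cancel and obtain $A^+ \simeq (A^+)^*[N]$ directly. The paper instead argues that $\Soc_{B^+} B^+ = B^+_N$ forces $A^+_N$ to be one-dimensional (since $B^+_N \simeq T \otimes A^+_N$), and then uses that a positively graded local ring with one-dimensional socle is automatically Frobenius (via the embedding into the injective hull of the simple). Both routes are short; yours is arguably cleaner since it avoids the implicit step of identifying $\Soc_{A^+} B^+$ with $\Soc_{B^+} B^+$ that underlies the paper's socle computation. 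Your final dimension count for $\Soc B^+ = B^+_N$ is also a mild streamlining of what the paper leaves implicit.
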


\begin{proof}
By Lemma \ref{A_graded_frob_N} we have $N^\pm = \pm N$. Once we established the Frobenius property, the claim about the socle follows from Lemma \ref{lem:Alhdsoc}. Let $\Phi \from A \to K$ be a symmetrizing trace on $A$. Fix $a \in A^+_N$ non-zero and define $\eta : B^- \rightarrow K$ by $\eta(b) = \Phi (b a)$. Notice that $\eta(b) = 0$ for all homogeneous $b$ with $\deg b \neq -N$. Let $I \subset B^-$ be a left ideal. We must show that $\eta(I) \neq 0$. By \cite[Lemma 5.4]{hwtpaper}, we may assume that $I$ is graded. The triangular decomposition implies that $I a \neq 0$. Therefore $A I a$ is a non-zero graded left ideal of $A$ and hence $\Phi(A I a ) \neq 0$. We claim that $\eta(I) = \Phi(A I a)$. It suffices to show that if $c \in A$ and $b \in I$ are homogeneous, then there exists some element $b' \in I$ such that $\Phi(c b a) = \Phi(b' a)$. First, we note that $\Phi(c b a) = \Phi(b a c)$. Now, since $A$ is assumed to be ambidextrous, $c = \sum_{i \in \Z} c_i^+ \otimes t_i \otimes c_i^-$, where $c_i^+ \in A^+_i$, $t_i \in T$ and $c_i^- \in A_{d-i}^-$, where $\deg c = d$. For degree reasons, $a \in \Soc A^+$, which implies that $a c = c_0^+ \otimes t_0 \otimes c_0^-$, where $c_0^+ = 1$. Thus, $\Phi(c b a ) = \Phi (b a t_0 c_0^-)$. Taking $b' = t_0 c_0^- b$, we have $\Phi(c b a ) = \Phi (b a t_0 c_0^-) = \Phi(b' a)$. Similarly we see that $B^+$ is Frobenius. It now suffices to show that if $B^+$ is $N$-Frobenius then $A^+$ is also $N$-Frobenius. The fact that $B^+ / B^+_{> 0} \simeq T$ implies that $\Soc B^+ = B^+_N$ is a free left $T$-module of rank one. Therefore, the triangular decomposition implies that $\Soc A^+ = A_N^+$ is one-dimensional. Since $A^+$ is a positively graded, local ring, this implies that it is $N$-Frobenius.
\end{proof}

\subsection{Basic sets}

We note that the notion of canonical basic sets makes sense in our setting. These where first introduced in \cite{GeckRouquierBasic}, and a more general definition of basic sets was given in \cite{GordonGriffChlo}. Recall that $\Irr \mc{G}_{[d]}(T) =  \Irr \mc{M}(T) \times [0,d]$ is the poset parameterizing the simple objects in the highest weight category $\mc{G}_{[d]}(A)$. Let $B \subset \Irr \mc{G}_{[d]}(T)$ to be the set of all $\lambda$ such that $D_{\ell} (\lambda) := F(L(\lambda)) \neq 0$. Set $S_{\ell}(\lambda) := F(\Delta(\lambda))$, for all $\lambda \in \Irr \mc{G}_{[d]}(T)$. For each $\lambda$ in $B$, define  
$$
\deg' \lambda = \min \{ \deg \mu \ |\ \mu \in \Irr \mc{G}_{[d]}(T), \ [ S_{\ell}(\lambda) : D_{\ell}(\mu)] \neq 0 \}.
$$  

\begin{prop} \hfill

\begin{enum_thm}
\item The set $\{ D_{\ell}(\lambda) \ | \ \lambda \in B \}$ is a complete set of pairwise non-isomorphic simple objects in $\mc{M}(B_{\ell})$. 
\item For all $\lambda \in B$, $\deg \lambda = \deg' \lambda$ and $[S_{\ell}(\lambda) : D_{\ell}(\lambda)] = 1$. 
\item If $[S_{\ell}(\lambda) : D_{\ell}(\mu)] \neq 0$ and $\mu \neq \lambda$ then $\mu < \lambda$. 
\end{enum_thm}
\end{prop}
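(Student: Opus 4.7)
The plan is to deduce all three claims from the exactness of $F = F_d$ together with the structure of standard modules in the highest weight category $\mc{G}_{[d]}(A)$.

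For part (1), I would realise $F$ (up to Morita equivalence) as a standard Schur functor. By Proposition~\ref{prop:subquotequiv}, $\mc{G}_{[d]}(A) \simeq \mc{M}(C_d)$ with $C_d$ basic quasi-hereditary. By Theorem~\ref{thm:A0elling}, $A\langle \ell \rangle$ is projective in $\mc{G}_{[d]}(A)$ with endomorphism algebra $B_\ell$, so under the Morita equivalence $B_\ell$ is Morita equivalent to an idempotent truncation $e C_d e$ of $C_d$ (the idempotent $e$ cutting out those indecomposable summands of $C_d$ that are summands of $A\langle \ell \rangle$), and $F$ corresponds to the classical Schur functor $M \mapsto eM$. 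It is standard that this functor sends each simple either to zero or to a simple, and induces a bijection between $\{L \in \Irr \mc{G}_{[d]}(A) : F(L) \neq 0\}$ and $\Irr \mc{M}(B_\ell)$, which is precisely assertion (1). Alternatively, (1) follows directly from the highest weight cover property of Theorem~\ref{thm:highestweightcover} via the general formalism of Rouquier~\cite{RouquierQSchur}.

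For parts (2) and (3), the key is the multiplicity identity
\begin{equation*}
[S_\ell(\lambda) : D_\ell(\mu)]_{\mc{M}(B_\ell)} = [\Delta(\lambda) : L(\mu)]_{\mc{G}_{[d]}(A)}, \qquad \lambda \in \Irr \mc{G}_{[d]}(T),\ \mu \in B,
\end{equation*}
obtained by applying the exact functor $F$ to a composition series of $\Delta(\lambda)$: each subquotient $L(\nu)$ contributes either zero (if $\nu \notin B$) or the simple $D_\ell(\nu)$ (if $\nu \in B$), and by (1) distinct $\nu \in B$ yield pairwise non-isomorphic simples of $\mc{M}(B_\ell)$, so contributions add. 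In the highest weight category $\mc{G}_{[d]}(A)$, $\Delta(\lambda)$ has simple head $L(\lambda)$ with multiplicity one and every other composition factor $L(\mu)$ satisfies $\mu < \lambda$. Specialising the identity to $\lambda \in B$ gives $[S_\ell(\lambda) : D_\ell(\lambda)] = 1$ and (3): if $\mu \ne \lambda$ with $[S_\ell(\lambda) : D_\ell(\mu)] \neq 0$, then $[\Delta(\lambda) : L(\mu)] \neq 0$, whence $\mu < \lambda$. Finally, $\lambda$ itself belongs to $\{\mu : [S_\ell(\lambda) : D_\ell(\mu)] \neq 0\}$ and by (3) every other element of this set has strictly smaller degree; hence $\deg \lambda$ is the extremal value of $\deg$ on this set, giving $\deg' \lambda = \deg \lambda$.

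I do not expect any significant obstacle. The argument is essentially formal once Theorems~\ref{thm:A0elling} and~\ref{thm:highestweightcover} are in hand; the only technical point requiring care is the invocation in (1) of the standard Schur-functor (equivalently, highest-weight-cover) classification of the simples of $\mc{M}(B_\ell)$.
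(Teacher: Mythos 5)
Your proposal is correct and follows essentially the same route as the paper's proof. The paper deduces (1) from the equivalence $\mc{G}_{[d]}(A)/\Ker F \overset{\sim}{\to} \mc{M}(B_\ell)$ furnished by Theorem~\ref{thm:highestweightcover} — exactly your ``alternatively'' sentence, so the Morita/Schur-functor framing in the first half of your part-(1) argument is correct but superfluous. For (2) and (3), the paper simply records the explicit formulas $D_\ell(\lambda)=F(L(\lambda))=\bigoplus_{i=0}^{\ell}L(\lambda)_i$ and $S_\ell(\mu)=F(\Delta(\mu))=\bigoplus_{i=0}^{\ell}\Delta(\mu)_i$ and cites the highest weight property $[\Delta(\lambda):L(\mu)]\neq 0\Rightarrow\mu\le\lambda$; your route via applying the exact functor $F$ to a composition series of $\Delta(\lambda)$, together with part (1), is the same argument spelled out and yields the identical multiplicity identity. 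One small caveat that affects both your write-up and the paper equally: with the partial order $\mu<\lambda\iff\deg\mu<\deg\lambda$, part (3) forces all other $\mu$ with $[S_\ell(\lambda):D_\ell(\mu)]\neq 0$ to have \emph{strictly smaller} degree than $\lambda$, so the quantity computed in (2) is the $\max$ of $\deg$ over that set, not the $\min$ as the displayed definition of $\deg'\lambda$ states (evidently a typo). Your hedge ``extremal value'' is the right instinct; when polishing, make the $\max$ explicit.
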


\begin{proof}
The first fact is a consequence of the fact that Theorem \ref{thm:highestweightcover} implies that there is an equivalence of categories $\mc{G}_{[d]} (A) / \Ker F \stackrel{\sim}{\longrightarrow} \mc{M}(B_{\ell})$. Then statements (b) and (c) both follow from the fact that 
$$
D_{\ell}(\lambda) = F(L(\lambda)) = \bigoplus_{i = 0}^{\ell} L(\lambda)_i, \quad S_{\ell}(\mu) = F(\Delta(\mu)) = \bigoplus_{i = 0}^{\ell} \Delta(\mu)_i,
$$
and the fact that $[\Delta(\lambda) : L(\mu)] \neq 0$ in $\mc{G}_{[d]}(A)$ implies that $\mu < \lambda$.  
\end{proof}

\subsection{Examples} \label{cover_examples}

\begin{ex}\label{ex:Ndual}
Fix an integer $n \ge 1$ and let $A = K[x,y] / (x^{n},y^{n})$. This is a $\Z$-graded Gorenstein algebra, where we put $\deg (x) = -1$ and $\deg (y) = 1$. The projective indecomposables in $\mc{G}(A)$ are $P(i) = A[i]$. Let $E$ be the non-unital algebra
$$
E = \bigoplus_{i,j \in \Z} \Hom_{\mc{G}(A)}(P(i),P(j)).
$$
Then $\mc{G}(A)$ is equivalent to the category of finite dimensional $E$-modules. Let $Q_{\infty}$ be the quiver with vertices $\{ e_i \ |  \ i \in \Z \}$ and arrows $x_i : e_i \rightarrow e_{i-1}$ and $y_i : e_i \rightarrow e_{i+1}$, then $x$ corresponds to the lowering arrows $x_i$ and $y$ corresponds to the raising arrows $y_i$. Let $I$ be the admissible ideal of $k Q_{\infty}$ generated by $x_{i+1} \circ y_i = y_{i-1} \circ x_i$, $y_{i+n-1} \circ \cdots \circ y_i = 0$ and $x_{i-n+1} \circ \cdots \circ x_i = 0$  for all $i$, then $E \simeq k Q_{\infty} / I$ and hence $\mc{G}(A) \simeq \mc{M}(K Q_{\infty} / I)$. 
$$
\begin{tikzcd}
\cdots \ \ & \makebox[\widthof{XXX}]{$e_{i-2}$} \arrow[bend left=30]{r}{y_{i-2}}  & \makebox[\widthof{XXX}]{$e_{i-1}$} \arrow[bend left=40]{l}{x_{i-1}} \arrow[bend left=30]{r}{y_{i-1}} & \makebox[\widthof{XXX}]{$e_{i}$} \arrow[bend left=40]{l}{x_{i}} \arrow[bend left=30]{r}{y_{i}} & \makebox[\widthof{XXX}]{$e_{i+1}$} \arrow[bend left=40]{l}{x_{i+1}} & \ \ \cdots
\end{tikzcd}
$$
Choose an integer $d \ge 0$. For $i \ge - d$, let $u(i,d) = \min \{ i + d + 1, n \}$ and let 
$$
Q(i) = K[x,y] / (x^{n}, y^{u(i,d)} ) .
$$
Then $Q(i)$ is the projective cover of $L(i)$ in $\mc{G}_{\le d}(A)$ and the latter category is equivalent to finite dimensional representations of 
$$
E_d = \bigoplus_{i,j \ge - d} \Hom_{\mc{G}(A)}(Q(i),Q(j)). 
$$
This time, we take $Q_d$ to be the quiver with vertices $\{ e_i \ |  \ i \ge -d \}$ and arrows $x_i : e_i \rightarrow e_{i-1}$ and $y_i : e_i \rightarrow e_{i+1}$. Again $x$ corresponds to the lowering arrows $x_i$ and $y$ corresponds to the raising arrows $y_i$. Let $I_d$ be the admissible ideal of $k Q_d$ generated by 
\begin{align*}
0 & = y_{j+n-1} \circ \cdots \circ y_j & \forall \ j \ge -d \\
0 &  = x_{j-n+1} \circ \cdots \circ x_j  & \forall \ j > n - d \\
x_{i+1} \circ y_i & = y_{i-1} \circ x_i, & \forall \ i > -d \\
0 & = y_{-d} \circ x_{-d + 1} & 
\end{align*}
Then a direct check shows that $E \simeq K Q_{d} / I_d$ and hence $\mc{G}_{\le d}(A) \simeq \mc{M}(K Q_{d} / I_d)$.

Finally, let $Q_{[d]}$ be the quiver with $d+1$ vertices $\{e_{-d}, \dots, e_0 \}$ and arrows $x_i : e_i \rightarrow e_{i-1}$ for $-d < i \le 0$ and $y_j : e_j \rightarrow e_{j+1}$ for $-d \le j < 0$. We impose the admissible relations 
\begin{align*}
x_{i+1} \circ y_i & = y_{i-1} \circ x_i, & \textrm{for all} \ -d < i < 0 \\
0 & = y_{-d} \circ x_{-d + 1} & 
\end{align*}
together with 
$$
y_{i+n-1} \circ \cdots \circ y_i = x_{j-n+1} \circ \cdots \circ x_j =0
$$
for all $-d \le i \le -n$ and $n-d + 1 \le j \le 0$, if $d \ge n$. Then the quasi-hereditary algebra $C_{d}$ is isomorphic to $K Q_{[d]} / I_{[d]}$. 

\begin{rem}
	Let $K$ be an arbitrary algebraically closed field and $\mc{S}_d$ denote the $q$-Schur $K$-algebra associated to the cyclic group $\Z / d \Z$ at the special parameter $(a_1, \ds, a_{d}) = (0,\ds, 0)$; see \cite[\S 8.6]{McGertyKZ}. Then 
	$$
	\mc{S}_d = \End_{K[T] / (T-1)^{d}} \left( \bigoplus_{i = 1}^{d} K[T] / (T-1)^{i} \right). 
	$$
	It follows from the above computation that the quasi-hereditary algebra $C_{d}$ from example \ref{ex:Ndual} is isomorphic to $\mc{S}_d$.
\end{rem}
\end{ex} 

\begin{ex}\label{ex:x2}
The special case of example \ref{ex:Ndual}, where $n = 2$, occurs frequently. Here we describe some of those occurrences. Thus, take $K = \C$ and let $A = \C[x,y] / (x^2,y^2)$.  
\begin{itemize}
\item[(a)] Let $W = \s_3$ be the symmetric group on three letters and $\mbf{c} \neq 0$. We consider the block $B$ of the restricted rational Cherednik algebra $\overline{\H}_{\mbf{c}}(W)$ labelled by the reflection representation for $\s_3$. Then it is easy to see that $B$ is graded Morita equivalent to $A$. 
\item[(b)] The article  \cite{K-Dia-alg-IV} introduces a certain algebra $K_r^{\infty}$, and it is shown that the category of finite-dimensional $K_r^{\infty}$-modules is equivalent to  the principal block of $\mf{gl}(r|r)$, the super general linear group of type $(r|r)$, of atypicality $1$. The explicit presentation of $\mc{G}(A)$ given in example \ref{ex:Ndual}, together with the calculation on \cite[page 7]{K-Dia-alg-IV}, shows that there is an equivalence of highest weight categories
$$
\mc{G}(A) \simeq \mc{M}(K_1^{\infty}). 
$$
\item[(d)] The category $\mc{G}(A)$ also appears as example (vi) in \cite[page 291]{QuasiHerediatryDlabRingel}.  
\item[(c)] The subquotients $\mc{G}_{[d]}(A)$ are equivalent to the highest weight categories of perverse sheaves, with coefficients in $\C$, on $\mathbb{P}^d$, locally constant with respect to the standard stratification
\end{itemize}
\end{ex}

\begin{ex}\label{ex:sl2example}
	Let $K$ be a field of characteristic $p > 2$ and consider the restricted enveloping algebra $A = \overline{U}(\mf{sl}_2)$ of $\mf{sl}_2$, i.e. $A$ is the quotient of $U(\mf{sl}_2)$ by the relations $E^p = F^p = 0$ and $H^p = H$. Identifying $K[H] / (H^p - H)$ with $K^{\oplus p}$, $A$ has a triangular decomposition $A = A^{-} \otimes T \otimes A^{+}$ with $A^{-} = K[E] / (E^p)$, $T = k \mathbb{F}_p$ and $A^{+} = K[F] / (F^p)$, where we assign $\deg(E) = 1$, $\deg(H) = 0$ and $\deg(F) = -1$. Let $\Omega = 4 FE + (H+1)^2$ be the Casimir. Then the computation in \cite[Remark 3]{TangeCentre} shows that 
	$$
	\overline{U}(\mf{sl}_2)_0  = \frac{T [\Omega]}{\langle \Omega(\Omega^{(p-1) / 2) } - 1) \rangle} \simeq  K^{\oplus p} \oplus (K[t]/ (t^2))^{\oplus \frac{p(p-1)}{2}}.
	$$ 
	The ungraded simple modules for $\overline{U}(\mf{sl}_2)$ are $\{ L(i) \ | \ i \in \mathbb{F}_p \}$, where $L(i)$ has highest weight $i$. If we consider the highest weight cover $\mc{G}(\overline{U}(\mf{sl}_2))_{[p]}$ of $U_0(\mf{sl}_2)_0$ then a direct calculation shows that $\mc{G}(\overline{U}(\mf{sl}_2))_{[p]}$ has $p$ blocks equivalent to $\mc{M}(K)$ (whose simple modules are the $p$ graded lifts of $L(1)$) and $\frac{p(p-1)}{2}$ blocks (whose simple modules, after forgetting the grading, are $L(i)$ and $L(2-i)$ for $i \neq 1$) that are all equivalent to finite dimensional modules over the algebra $K Q / I$, where $Q$ is the quiver 
	$$
	\begin{tikzcd}
	& e_1 \arrow[bend left]{r}{\alpha}  & e_2 \arrow[bend left]{l}{\beta}
	\end{tikzcd}
	$$
	and $I$ is generated by the relation $\beta \alpha = 0$. Notice that the same quiver with relations over $\C$ is Morita equivalent to the principal block of category $\mc{O}$ for $\mf{sl}_2$. 
\end{ex}

\subsection{$0$-faithfulness} \label{0_faithful}
Ideally, one would like to show that the functor $F : \mc{G}_{[d]}(A) \rightarrow \mc{M}(B_{\ell})$ is a $0$-faithful cover. Proposition \ref{prop:minusonefaithful} already shows that $F$ is a $(-1)$-faithful cover. However, the following example shows that $F$ is not a $0$-faithful cover in general. 

\begin{ex}
We take $A = K[x,y] / (x^2,y^2)$, as in \cite[Example 3.2]{hwtpaper}. Then $N = 1$ and we consider the case $d = N$. When $K$ has characteristic zero, $\mc{G}_{[1]}(A)$ is equivalent to the principal block of category $\mc{O}$ for $\mf{sl}_2$. As a quiver with relations, the algebra $C_1$ is constructed as follows. Take $Q$ to be the quiver with vertices $e_{0}$ and $e_{1}$, and arrows $a : e_0 \rightarrow e_{1}$, $b : e_{1} \rightarrow e_0$ and fundamental relation $ba = 0$, and hence $\dim C_1 = 5$. We have $\End_{C_{1}}(Q_1(1)) = k[\epsilon] / (\epsilon^2) = A_0^{\op}$, and $F = \Hom_{C_{1}}(Q_1(1) , - ) : \mc{M}(C_1) \rightarrow \mc{M}(k[\epsilon] / (\epsilon^2))$. We note that 
$$
Q_1(0) = k \{ e_0, a \}, \quad Q_1(1) = k \{ e_{1}, b,ab \}, \quad \Delta(0) = Q_1(0), \quad \Delta(1) = k \{ e_{1} \} = L(1). 
$$
We have minimal resolutions
$$
0 \rightarrow Q_1(0) \rightarrow Q_1(1) \rightarrow Q_1(0) \rightarrow L(0) \rightarrow 0,
$$
$$
0 \rightarrow Q_1(0) \rightarrow Q_1(1) \rightarrow L(1) \rightarrow 0.
$$
This implies that $\Ext_{C_1}^i(L(j),\Delta(k)) = 0$ for all $i \in \{ 0, 1,2 \}$ and $j,k \in \{ 0, 1\}$, except for the groups  
\begin{multline*}
 \Hom_{C_{1}}(L(1),\Delta(0)) = \Hom_{C_{1}}(L(1),\Delta(1)) = \Ext^1_{C_{1}}(L(0),\Delta(1))   \\
 = \Ext^1_{C_{1}}(L(1),\Delta(0))  = \Ext^2_{C_{1}}(L(0),\Delta(0))  = K. 
\end{multline*}
Since $F(L(0)) = 0$ and $F(L(1)) \neq 0$, it follows that $F$ is only a $(-1)$-faithful cover. However, the groups $\Ext_{C_1}^i(L(j),\Delta(k))$ are zero, for all $i \in \{ 0, 1,2 \}$ and $j,k \in \{ 0, 1\}$, except for the groups 
$$
\Hom_{C_{1}}(L(-1),Q_1(1))  = \Ext^2_{C_{1}}(L(0),Q_1(0)) = K. 
$$
This implies that $F$ is a highest weight cover. It is also easy to check directly that 
$$
\Hom_{C_{1}}(Q_1(i),Q_1(j)) \rightarrow \Hom_{K[\epsilon] /(\epsilon^2)}(F(Q_1(i)),F(Q_1(j))) 
$$
is an isomorphisms for all $i,j \in \{ 0, 1 \}$.  

More generally, if we consider $C_{d}$, with $d \ge 1$ and set $\ell = d - 1$, then the algebra $C_d$ was described in the introduction, and 
$$
B_{\ell}^{\op} = \left( \begin{array}{ccccc}
\frac{K[\epsilon]}{(\epsilon^2)} & K & 0 & \cdots & 0 \\
K & \frac{K[\epsilon]}{(\epsilon^2)} & \ddots & & \vdots \\
0 & \ddots & \ddots  & \ddots  & 0  \\
\vdots & & \ddots & \ddots & K \\
0 & \cdots & 0 & K & \frac{K[\epsilon]}{(\epsilon^2)} 
\end{array} \right). 
$$
 \end{ex}

\begin{ex}
Similarly, one can take $A = K[x,y] / (x^3,y^3)$, so that $N = 2$. Again, if we take $d = N$, then 
\[
\begin{array}{c}
Q_2(0) =  K[x,y] / (x^3,y^3) \;, \quad Q_2(1)  =  K[x,y] / (x^3,y^2)[1] \;,\vspace{6pt}\\
Q_2(2)  = K[x,y] / (x^3,y)[2] 
\end{array}
\]
in $\mc{G}_{[2]}(A)$. For each Hom-space $\Hom_{\mc{G}_{[2]}(A)}(M,N)$ a basis is given in the table below:
$$
\begin{array}{c|ccc}
M \setminus N & Q_2(2)          & Q_2(1)         & Q_2(0) \\
\hline
Q_2(2) & \{ e_{2} \} & \{ y_{2} \} & \{ y_{1} y_{2} \} \\
Q_2(1) & \{ x_{1} \} & \{ e_{1}, y_{2} x_{1} \} & \{ y_{1}, y_{1} y_{2} x_{1} \} \\
Q_2(0) & \{ x_{1} x_0 \} & \{ x_0, y_{2} x_{1} x_0 \} & \{ e_0, y_{1} x_{0}, y_{1} y_{2} x_{1} x_{0} \}
\end{array}
$$
A basis of each projective is given as follows 
$$
Q_2(0) = K \{ e_0, y_{1} x_0 , y_{1} y_{2} x_{1} x_0 , x_0, y_{2} x_{1} x_0, x_{1} x_0 \},
$$
$$
Q_2(1) = K \{ y_{1}, y_{1} y_{2} x_{1}, e_{1}, y_{2} x_{1} , x_1 \}, \quad Q(2) = K \{ e_{2}, y_{2} , y_{1} y_{2} \}.
$$
Therefore we get partial projective resolutions
$$
0 \rightarrow Q_2(1) \stackrel{x_0}{\longrightarrow} Q_2(0) \rightarrow L(0) \rightarrow 0,
$$
$$
0 \rightarrow Q_2(2) \stackrel{x_{1}}{\longrightarrow} Q_2(1) \stackrel{y_{2}}{\longrightarrow} Q_2(2) \rightarrow L(2) \rightarrow 0,
$$
$$
\cdots \rightarrow Q_2(2) \oplus Q_2(1) \stackrel{D}{\longrightarrow} Q_2(2) \oplus Q_2(0) \stackrel{(x_{1},y_{1})}{\longrightarrow} Q_2(1) \rightarrow L(1) \rightarrow 0,
$$
where
$$
D = \left( \begin{array}{cc}
0 & x_{1} x_0 \\
y_{2} & - x_{0} 
\end{array} \right). 
$$
From this information, it is easy to deduce that the algebra $C_2$ is the quotient of the quiver $Q$, with vertices $e_{0},e_{1}$ and $e_{2}$ and arrows $y_{2} : e_{2} \rightarrow e_{1}$, $y_{1} : e_{1} \rightarrow e_0$, $x_0 : e_0 \rightarrow e_{1}$ and $x_{1} : e_{1} \rightarrow e_{2}$, by the relations
$$
x_{1} y_{2} = 0, \quad y_{2} x_{1} = x_0 y_{1}.
$$
Our convention is that $y_{1} y_{2}$ is the non-zero path from $e_{2}$ to $e_{0}$. Thus, $\dim C_2 = 14$. As in the previous example, one can check using the above partial resolutions that 
$$
\Ext^i_{C_2}(L(j),Q_2(k)) = 0
$$
for $i \in \{ 0, 1 \}$, $j \in \{ 1, 2 \}$ and $k \in \{ 0, 1, 2 \}$.  
\end{ex}

\appendix

\section{Cellularity via tilting objects} \label{cellular_appendix}

Let $\mathbf{U}_q$ be the (Lusztig type) quantum enveloping algebra of a complex semisimple Lie algebra with $q$ a root of unity\footnote{For technical reasons, it is assumed that $q$ is of odd order which is additionally assumed to be prime to $3$ for type $G_2$.} in a field $\bbK$ of arbitrary characteristic. Andersen, Stroppel, and Tubbenhauer have shown in \cite{AST} that if $T$ is a duality-stable tilting object in the category of finite-dimensional modules of type $1$ for $\mathbf{U}_q$, then the endomorphism algebra $\End_{\mathbf{U}_q}(T)$ is cellular in the sense of Graham and Lehrer \cite{Graham-Lehrer-Cellular}. It is clear from the proofs in \cite{AST} and also from the comments \cite[Remark 1.1 and 5.1.7]{AST} that this result actually holds for certain other examples of highest weight categories. There is, however, a subtle limitation, as pointed out in \textit{loc. cit.}, namely the reliance on the notion of \textit{weight spaces}. This is in general—and in particular in our context—not available in an arbitrary highest weight category. We will present here a generalization of the result in \cite{AST} not relying on the notion of weight spaces. The key ingredient is a modification of a specific construction in \cite{AST}, see (\ref{cellular_appendix_key_modification}). We also introduce an appropriate axiomatization of indecomposable tilting objects that works in more general highest weight categories, see Assumption \ref{tilting_indec_assumption}. We drop the assumption on the existence of a duality on the category. In this more general setting, we show that one still gets a \word{standard datum}, in the sense of Du–Rui \cite{DuRui} (see Definition \ref{standard_datum_def}), on the endomorphism algebra. \\

We will specify our setup by the three assumptions \ref{tilting_hwc_assumption}, \ref{tilting_ext_vanish}, and \ref{tilting_indec_assumption} below. In Remark \ref{finite_hwc_has_everything} we argue that all these assumptions hold in the \textit{split quasi-hereditary} case, i.e., in the case where we have \textit{finitely} many simple objects and these are all \textit{absolutely simple}. Note that our object of interest, the graded module category $\mathcal{G}(A)$ of an algebra with a triangular decomposition, does not belong to this class but still satisfies all these assumptions if $A$ is self-injective.

\begin{ass}[Highest weight category] \label{tilting_hwc_assumption}
We assume that $K$ is a field and that $\mathcal{C}$ is a $K$-linear abelian finite-length category whose (possibly \textit{infinite}) set of isomorphism classes of simple objects is indexed by $\Lambda$, and which is with respect to an interval-finite partial order $\leq$ on $\Lambda$ both a highest weight category with standard objects $\Delta(\lambda)$ and a highest weight category with costandard objects $\nabla(\lambda)$. 
\end{ass}

We note that $\mathcal{C}$ is $\Hom$-finite and $\Ext^1$-finite by \cite[Lemma 3.2(c)]{CPS}. We denote by $L(\lambda)$ the (simple) head of $\Delta(\lambda)$, which is also isomorphic to the socle of $\nabla(\lambda)$.

\begin{ass}[Ext-vanishing] \label{tilting_ext_assumption}
We assume that for any $\lambda,\mu \in \Lambda$ and any $0 \leq i \leq 2$ we have
\[
\Ext_\mathcal{C}^i(\Delta(\lambda), \nabla(\mu)) = \left\lbrace \begin{array}{ll} K & \tn{if } i=0 \tn{ and } \lambda = \mu \;, \\ 0 & \tn{else}  \;.\end{array} \right.
\]
\end{ass}

We denote by $\mathcal{C}^\Delta$, respectively $\mathcal{C}^\nabla$, the full subcategories of objects admitting a standard, respectively a costandard, filtration, defined exactly as in \cite[\S4.2]{hwtpaper} in our special setting. As in \cite[\S4.2]{hwtpaper} the Ext-vanishing assumption implies that for $M \in \mathcal{C}^\Delta$ and $N \in \mathcal{C}^\nabla$ we have
\begin{equation} \label{hwc_standard_costandard_mult}
\lbrack M:\Delta(\lambda) \rbrack = \dim_K \Hom_{\mathcal{C}}(M,\nabla(\lambda))   \quad \tn{and} \quad \lbrack N:\nabla(\lambda) \rbrack = \dim_K \Hom_{\mathcal{C}}(\Delta(\lambda),N)
\end{equation}
for the multiplicities of standard, respectively costandard, objects in $M$, respectively in $N$. We denote by $\mathcal{C}^t \dopgleich \mathcal{C}^\Delta \cap \mathcal{C}^\nabla$ the full subcategory of \word{tilting objects}, i.e., of objects admitting both a standard and a costandard filtration.

\begin{lem} \label{tilting_ext_vanish}
The following holds for $M \in \mathcal{C}$:
\begin{enum_thm}
\item $M \in \mathcal{C}^\Delta$ if and only if $\Ext_{\mathcal{C}}^1(M,\nabla(\lambda)) = 0$ for all $\lambda \in \Lambda$.
\item $M \in \mathcal{C}^\nabla$ if and only if $\Ext_{\mathcal{C}}^1(\Delta(\lambda),M) = 0$ for all $\lambda \in \Lambda$.
\end{enum_thm}
\end{lem}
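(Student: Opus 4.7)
The plan is to prove the two parts by dual arguments; I will describe (a) in detail. The forward implication is a standard induction along a standard filtration $0 = M_0 \subsetneq \cdots \subsetneq M_n = M$ with $M_i/M_{i-1} \cong \Delta(\mu_i)$: the long exact sequence of $\Ext^{\bullet}_{\mathcal{C}}(-, \nabla(\lambda))$ applied to $0 \to M_{i-1} \to M_i \to \Delta(\mu_i) \to 0$ sandwiches $\Ext^1_{\mathcal{C}}(M_i, \nabla(\lambda))$ between two vanishing terms (the first by Assumption \ref{tilting_ext_assumption}, the second by the inductive hypothesis), forcing it to vanish.

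For the converse of (a), I plan to induct on the composition length of $M$, which is finite by Assumption \ref{tilting_hwc_assumption}. When $M \neq 0$, the support $S \dopgleich \{\mu \in \Lambda : [M : L(\mu)] \neq 0\}$ is a nonempty finite subset of $\Lambda$ (interval-finiteness of $\leq$ is not needed here, only finite length). Pick $\lambda \in S$ maximal. The crux is to construct a short exact sequence
\begin{equation*}
0 \to N \to M \to \Delta(\lambda)^{\oplus n} \to 0
\end{equation*}
with $n \geq 1$, leveraging the projective cover $P(\lambda) \twoheadrightarrow L(\lambda)$ supplied by the highest weight structure: its standard filtration has $\Delta(\lambda)$ on top and all other factors $\Delta(\nu)$ with $\nu > \lambda$. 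Maximality of $\lambda$ in $S$ forces every morphism $P(\lambda) \to M$ to factor through the canonical quotient $P(\lambda) \twoheadrightarrow \Delta(\lambda)$, since any contribution from $\Delta(\nu)$-subquotients with $\nu > \lambda$ must map into $M$-subobjects with composition factors outside $S$, hence to zero. The universal evaluation map on a basis of $\Hom_{\mathcal{C}}(\Delta(\lambda), M)$ then yields the desired surjection, with $n = \dim_K \Hom_{\mathcal{C}}(\Delta(\lambda), M) = \dim_K \Hom_{\mathcal{C}}(M, \nabla(\lambda))$ (the latter equality following from \eqref{hwc_standard_costandard_mult} once one knows $N \in \mathcal{C}^{\Delta}$, but is not needed for the construction itself). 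The long exact $\Ext^{\bullet}_{\mathcal{C}}(-, \nabla(\mu))$-sequence applied to the displayed sequence, combined with the vanishing of $\Ext^i_{\mathcal{C}}(\Delta(\lambda), \nabla(\mu))$ for $i \in \{1,2\}$ from Assumption \ref{tilting_ext_assumption}, shows $N$ still satisfies $\Ext^1_{\mathcal{C}}(N, \nabla(\mu)) = 0$ for all $\mu$. Since $N$ has strictly smaller composition length, the inductive hypothesis gives $N \in \mathcal{C}^{\Delta}$, and concatenation with $\Delta(\lambda)^{\oplus n}$ produces a standard filtration of $M$.

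Part (b) is obtained by an entirely dual argument: pick $\lambda \in S$ minimal and construct an embedding $\nabla(\lambda)^{\oplus n} \hookrightarrow M$ with quotient inheriting the hypothesis, using the injective hull $I(\lambda)$ and its costandard filtration in place of $P(\lambda)$. The main obstacle throughout is the rigorous construction of the surjection (respectively embedding); the morally clear picture furnished by maximality (respectively minimality) and Ext-vanishing has to be carried out by careful bookkeeping with the $\Delta$-filtration of $P(\lambda)$ (respectively the $\nabla$-filtration of $I(\lambda)$), since the naive weight-space arguments available in the setting of \cite{AST} are not at our disposal here.
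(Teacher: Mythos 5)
Your forward implication is standard and fine. The converse of part (a), however, contains a direction error that I do not think can be patched as written.

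You pick $\lambda$ \emph{maximal} in $\Supp M$, correctly observe that every map $P(\lambda) \to M$ factors through $P(\lambda) \twoheadrightarrow \Delta(\lambda)$ (so $\Hom_{\mathcal{C}}(P(\lambda),M) \cong \Hom_{\mathcal{C}}(\Delta(\lambda),M)$), and then claim that ``the universal evaluation map on a basis of $\Hom_{\mathcal{C}}(\Delta(\lambda),M)$ yields the desired surjection'' $M \twoheadrightarrow \Delta(\lambda)^{\oplus n}$. But the universal evaluation map attached to $\Hom_{\mathcal{C}}(\Delta(\lambda),M)$ is $\Delta(\lambda)\otimes_K \Hom_{\mathcal{C}}(\Delta(\lambda),M) \to M$; it points \emph{into} $M$, not out of it, and at best gives an injection. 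And indeed, with $\lambda$ maximal in $\Supp M$ the surjection you want generally does not exist. Take $\mathcal{C} = \mathcal{G}_{[1]}(K[x,y]/(x^2,y^2))$ — equivalently, the principal block of category $\mathcal{O}$ for $\mathfrak{sl}_2$, per \S\ref{0_faithful} — and $M = T(\lambda)$ the indecomposable tilting object of highest weight $\lambda$. Then $M \in \mathcal{C}^{\Delta}$, so it satisfies the $\Ext^1$-hypothesis, and $\lambda$ is maximal in $\Supp M$; but $M$ has simple head $L(\mu)$ with $\mu < \lambda$, whereas $\Delta(\lambda)$ has head $L(\lambda)$, so there is no surjection $M \twoheadrightarrow \Delta(\lambda)$. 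Passing to the dual ``embed $\Delta(\lambda)^{\oplus n}$ at the bottom'' picture is a legitimate strategy, but then you owe two nontrivial facts: that the universal map $\Delta(\lambda)^{\oplus n} \to M$ is injective, and that the quotient still satisfies $\Ext^1(-,\nabla(\lambda)) = 0$ at $\mu = \lambda$ (the cases $\mu \neq \lambda$ being automatic since $\Hom_{\mathcal{C}}(\Delta(\lambda),\nabla(\mu)) = 0$). Neither step is free, and both are exactly where the weight-space arguments of \cite{AST} do their work — the difficulty you correctly flag at the end but do not resolve.

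The paper's proof takes a genuinely different route. It picks $\lambda$ \emph{minimal} subject to $\Hom_{\mathcal{C}}(M,L(\lambda)) \neq 0$ — minimal among the \emph{heads} of $M$, not maximal in the support — and uses that minimality, together with the exact sequences $0 \to L(\mu) \to \nabla(\mu) \to \nabla(\mu)/L(\mu) \to 0$, to prove the key sub-lemma $\Ext^1_{\mathcal{C}}(M,\Ker\pi) = 0$, where $\pi : \Delta(\lambda) \twoheadrightarrow L(\lambda)$. This permits lifting a surjection $M \twoheadrightarrow L(\lambda)$ to a surjection $M \twoheadrightarrow \Delta(\lambda)$ (one copy at a time), and the vanishing $\Ext^2_{\mathcal{C}}(\Delta(\lambda),\nabla(\mu)) = 0$ from Assumption \ref{tilting_ext_assumption} then shows the kernel still satisfies the hypothesis, so the induction closes. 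Your bookkeeping with $P(\lambda)$ and its $\Delta$-filtration never enters.
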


\begin{proof}
We just show the first assertion of the lemma, the second is proven similarly. 
Assumption \ref{tilting_ext_assumption} immediately implies that if $M \in \mathcal{C}^\Delta$, then $\Ext_{\mathcal{C}}^1(M,\nabla(\lambda)) = 0$ for all $\lambda \in \Lambda$. The converse can be proven by exactly the same arguments as in \cite[Proposition 3.5]{AST-Notes} but we will include the details here to show where Assumption \ref{tilting_ext_assumption} is used. Let $\mathcal{X}$ be the class of all objects $M \in \mathcal{C}$ satisfying $\Ext_{\mathcal{C}}^1(M,\nabla(\lambda)) = 0$ for all $\lambda \in \Lambda$. Let $M \in \mathcal{X}$. Since $M$ has only finitely many composition factors, we can find $\lambda \in \Lambda$ minimal with the property that $\Hom_{\mathcal{C}}(M,L(\lambda)) \neq 0$. Let $\pi:\Delta(\lambda) \twoheadrightarrow L(\lambda)$ be the projection. Note that due to the highest weight category structure we have $\mu < \lambda$ for all composition factors $L(\mu)$ of $\Ker \pi$. We claim that $\Ext_{\mathcal{C}}^1(M,\Ker \pi) = 0$. Suppose that $\Ext_{\mathcal{C}}^1(M,\Ker \pi) \neq 0$. Then there must be a composition factor $L(\mu)$ of $\Ker \pi$ such that $\Ext_{\mathcal{C}}^1(M,L(\mu)) \neq 0$. From the exact sequence $0 \rarr L(\mu) \rarr \nabla(\mu) \rarr \nabla(\mu)/L(\mu) \rarr 0$ we obtain the exact sequence
\[
\Hom_{\mathcal{C}}(M,\nabla(\mu)/L(\mu)) \rarr \Ext_{\mathcal{C}}^1(M,L(\mu)) \rarr \Ext_{\mathcal{C}}^1(M,\nabla(\mu)) = 0 \;.
\]
Since $\Ext_{\mathcal{C}}^1(M,L(\mu)) \neq 0$, we must have $\Hom_{\mathcal{C}}(M,\nabla(\mu)/L(\mu)) \neq 0$, so there is a non-zero morphism $\varphi:M \rarr \nabla(\mu)/L(\mu)$. We can now find a constituent $L(\nu)$ of $\Im \varphi$ and a non-zero morphism $\Im \varphi \rarr L(\nu)$. Again by the highest weight category structure we have $\nu < \mu$. By composition we obtain a non-zero morphism $M \rarr L(\nu)$. Since $\nu < \mu < \lambda$, this contradicts the minimality of $\lambda$. Hence, we have shown that $\Ext_{\mathcal{C}}^1(M,\Ker \pi) = 0$. If we now choose a non-zero $\varphi \in \Hom_{\mathcal{C}}(M,L(\lambda))$, then there is $\hat{\varphi} \in \Hom_{\mathcal{C}}(M,\Delta(\lambda))$ with $\pi \circ \hat{\varphi} = \varphi$. Since $\pi$ is essential and $\varphi$ is surjective, also $\hat{\varphi}$ is surjective. From the exact sequence $0 \rarr \Ker \hat{\varphi} \rarr M \rarr \Delta(\lambda) \rarr 0$ we obtain
\[
0 = \Ext_{\mathcal{C}}^1(M,\nabla(\mu)) \leftarrow \Ext_{\mathcal{C}}^1(\Ker \hat{\varphi}, \nabla(\mu)) \leftarrow \Ext_{\mathcal{C}}^2(\Delta(\lambda),\nabla(\mu)) = 0 \;,
\]
so $\Ker \hat{\varphi} \in \mathcal{X}$. We can now argue by induction on the length of $M \in \mathcal{X}$ that all objects in $\mathcal{X}$ admit a standard filtration. Namely, let $M \in \mathcal{X}$ be of minimal length. Then, since $\Ker \hat{\varphi} \in \mathcal{X}$, we must have $\Ker \hat{\varphi} = 0$, so $M \simeq \Delta(\lambda)$, showing that $M$ admits a standard filtration. If $M$ has arbitrary length, we know that $M/\Ker \hat{\varphi} \simeq \Delta(\lambda)$ and by induction that $\Ker \hat{\varphi}$ admits a standard filtration, so $M$ also admits a standard filtration.
\end{proof}

\begin{lem} \label{tilting_krull_schmidt}
The following holds:
\begin{enum_thm}
\item \label{tilting_krull_schmidt:ext} If $T \in \mathcal{C}$, then $T \in \mathcal{C}^t$ if and only if $\Ext_{\mathcal{C}}^1(T,\nabla(\lambda)) = 0 = \Ext_{\mathcal{C}}^1(\Delta(\lambda),T)$ for all $\lambda \in \Lambda$.
\item \label{tilting_krull_schmidt:closed} $\mathcal{C}^t$ is closed under direct sums and under direct summands in $\mathcal{C}$.
\item $\mathcal{C}^t$ is a Krull–Schmidt category.
\end{enum_thm}
\end{lem}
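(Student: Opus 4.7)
The plan is to prove (a) first as a direct consequence of Lemma \ref{tilting_ext_vanish}. That lemma characterizes $\mathcal{C}^\Delta$ as those $M \in \mathcal{C}$ with $\Ext^1_{\mathcal{C}}(M, \nabla(\lambda)) = 0$ for all $\lambda \in \Lambda$, and dually $\mathcal{C}^\nabla$ as those $M$ with $\Ext^1_{\mathcal{C}}(\Delta(\lambda), M) = 0$ for all $\lambda$. Since $\mathcal{C}^t = \mathcal{C}^\Delta \cap \mathcal{C}^\nabla$ by definition, intersecting the two characterizations yields (a) at once.

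Given (a), part (b) will follow formally from additivity of $\Ext^1$ in each variable. For closure under direct sums, if $T_1, T_2 \in \mathcal{C}^t$, then $\Ext^1_{\mathcal{C}}(T_1 \oplus T_2, \nabla(\lambda))$ splits as a direct sum of the two corresponding groups, each of which vanishes by (a); the argument in the other variable is symmetric. For closure under direct summands, each $\Ext^1$-group of a summand is a direct summand of the $\Ext^1$-group of the whole, hence vanishes whenever the latter does, so (a) again applies.

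For part (c), I would transport the Krull--Schmidt property from $\mathcal{C}$ down to $\mathcal{C}^t$ using (b). Under Assumption \ref{tilting_hwc_assumption}, every object of $\mathcal{C}$ has finite length, so Fitting's lemma applies to its endomorphisms and $\mathcal{C}$ is itself a Krull--Schmidt category: every object admits a decomposition into indecomposables with local endomorphism rings, unique up to permutation and isomorphism of summands. Given $T \in \mathcal{C}^t$, I take such a decomposition $T = T_1 \oplus \cdots \oplus T_n$ in $\mathcal{C}$. By the direct-summand half of (b), each $T_i$ already lies in $\mathcal{C}^t$, and since $\mathcal{C}^t$ is a full subcategory of $\mathcal{C}$, the ring $\End_{\mathcal{C}^t}(T_i) = \End_{\mathcal{C}}(T_i)$ remains local; in particular each $T_i$ is indecomposable in $\mathcal{C}^t$. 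This is precisely the Krull--Schmidt property for $\mathcal{C}^t$.

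I do not expect a genuine obstacle: parts (a) and (b) are essentially formal consequences of Lemma \ref{tilting_ext_vanish} and additivity of $\Ext^1$, while (c) reduces entirely to the standard fact that a finite-length abelian category is Krull--Schmidt. The one subtlety worth flagging is that the reduction in (c) really uses closure of $\mathcal{C}^t$ under direct summands in the ambient category $\mathcal{C}$ (not merely within $\mathcal{C}^t$ itself), which is exactly the second half of (b); without this, one would only get indecomposable decompositions in $\mathcal{C}$ whose summands a priori might escape $\mathcal{C}^t$.
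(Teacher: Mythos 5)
Your proposal is correct and follows exactly the paper's approach: (a) via Lemma \ref{tilting_ext_vanish}, (b) by bi-additivity of $\Ext^1$, and (c) by transporting the Krull--Schmidt property of the finite-length category $\mathcal{C}$ to the full subcategory $\mathcal{C}^t$ using closure under summands. The only cosmetic difference is that the paper cites Krause for $\mathcal{C}$ being Krull--Schmidt where you invoke Fitting's lemma directly.
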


\begin{proof}
The first assertion is just Lemma \ref{tilting_ext_vanish}. Part \ref{tilting_krull_schmidt:ext} clearly implies part \ref{tilting_krull_schmidt:closed} since $\Ext_\mathcal{C}^1$ is a bi-additive functor. Since $\mathcal{C}$ is abelian of finite length, it is Krull–Schmidt by \cite[Theorem 5.5]{Krause-KS}. Hence, $\mathcal{C}^t$ is also Krull–Schmidt by part \ref{tilting_krull_schmidt:closed}. 
\end{proof}

Since $\mathcal{C}^t$ is Krull–Schmidt, every tilting object is a finite direct sum  of indecomposable tilting objects, the decomposition being unique up to permutation and isomorphism of the summands. We will now assume an additional property about these indecomposable tilting objects. For this we use the notion of a \word{highest weight} as introduced in \cite[Definition 4.6]{hwtpaper} (the definition clearly works for arbitrary highest weight categories). 

\begin{ass}[Tilting objects] \label{tilting_indec_assumption}
We assume:
\begin{enum_thm}
\item For any $\lambda \in \Lambda$ there is an indecomposable tilting object $T(\lambda) \in \mathcal{C}^t$ such that:
\begin{enumerate}
\item $T(\lambda)$ has highest weight $\lambda$,
\item there is a monomorphism $\Delta(\lambda) \hookrightarrow T(\lambda)$,
\item there is an epimorphism $T(\lambda) \twoheadrightarrow \nabla(\lambda)$.
\end{enumerate}
\item The map $\lambda \mapsto T(\lambda)$ is a bijection between $\Lambda$ and the set of isomorphism classes of indecomposable tilting objects of $\mathcal{C}$.
\end{enum_thm}
\end{ass}

\begin{rem} \label{finite_hwc_has_everything}
Suppose that $\mathcal{C}$ is a $K$-linear abelian finite-length category with a \textit{finite} set $\Lambda$ of isomorphism classes of simple objects which is a highest weight category with costandard objects with respect to some partial order on $\Lambda$ as defined in \cite{CPS} such that furthermore all simple objects are \word{absolutely simple}, i.e., $\End_{\mathcal{C}}(S) = K$ for all simple objects $S$ of $\mathcal{C}$. Then all our assumptions are indeed satisfied. First of all, it is shown in \cite[\S1A]{CPS-Duality-in-highest-weight-89} that $\mathcal{C}$ is already a highest weight category with standard objects for the same partial ordering on $\Lambda$, hence our first assumption about $\mathcal{C}$ holds. In the proof of \cite[Theorem 3.11]{CPS} it is shown that the Ext-vanishing property holds (for this we need the assumption that all simple objects are absolutely simple).  Finally, it is a result by Ringel \cite[Proposition 2]{Ringel-Filtrations} that our assumption about tilting objects holds.
\end{rem}

\begin{lem} \label{tilting_standard_homs}
The following holds:
\begin{enum_thm}
\item If $\Hom_\mathcal{C}(\Delta(\lambda),T(\mu)) \neq 0$, then $\lambda \leq \mu$. Moreover, $\Hom_\mathcal{C}(\Delta(\lambda),T(\lambda)) \simeq K$, so the embedding $\Delta(\lambda) \hookrightarrow T(\lambda)$ is unique up to scalars.
\item \label{tilting_standard_homs:nabla} If $\Hom_\mathcal{C}(T(\mu),\nabla(\lambda)) \neq 0$, then $\lambda \leq \mu$. Moreover, $\Hom_\mathcal{C}(T(\lambda),\nabla(\lambda)) \simeq K$, so the projection $T(\lambda) \twoheadrightarrow \Delta(\lambda)$ is unique up to scalars.
\item The composition $\Delta(\lambda) \hookrightarrow T(\lambda) \twoheadrightarrow \nabla(\lambda)$ is non-zero.
\end{enum_thm}
\end{lem}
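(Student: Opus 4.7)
My approach is to handle all three parts uniformly via the identity (\ref{hwc_standard_costandard_mult}), reducing the relevant Hom-spaces to multiplicity counts in tilting filtrations of $T(\mu)$. For (a), I would first observe that $T(\mu) \in \mathcal{C}^\nabla$, so (\ref{hwc_standard_costandard_mult}) gives
\[
\dim_K \Hom_\mathcal{C}(\Delta(\lambda), T(\mu)) = [T(\mu):\nabla(\lambda)].
\]
By the ``highest weight $\mu$'' clause of Assumption \ref{tilting_indec_assumption} (via \cite[Definition 4.6]{hwtpaper}), every $\nabla$-quotient appearing in a $\nabla$-filtration of $T(\mu)$ is indexed by some $\nu \leq \mu$, and occurs with multiplicity one at the top weight. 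Thus the Hom-space vanishes unless $\lambda \leq \mu$ and is one-dimensional when $\lambda = \mu$; uniqueness of the embedding $\Delta(\lambda) \hookrightarrow T(\lambda)$ up to scalars is then immediate. Part (b) will follow by the mirror argument, applying the other half of (\ref{hwc_standard_costandard_mult}) to $T(\mu) \in \mathcal{C}^\Delta$ and using the dual highest-weight condition on a $\Delta$-filtration.

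For (c), I plan to combine (a) and (b) with (\ref{hwc_standard_costandard_mult}) to obtain $[T(\lambda):\Delta(\lambda)] = [T(\lambda):\nabla(\lambda)] = 1$, and then argue by counting composition factors. From any $\nabla$-filtration of $T(\lambda)$,
\[
[T(\lambda):L(\lambda)] = \sum_\mu [T(\lambda):\nabla(\mu)] \cdot [\nabla(\mu):L(\lambda)].
\]
Since $[\nabla(\mu):L(\lambda)] \neq 0$ forces $\lambda \leq \mu$, while the highest-weight bound forces $[T(\lambda):\nabla(\mu)] = 0$ unless $\mu \leq \lambda$, only $\mu = \lambda$ contributes, giving $[T(\lambda):L(\lambda)] = 1$. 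If instead $\pi \circ \iota = 0$ for the given embedding $\iota$ and surjection $\pi$, then $\iota(\Delta(\lambda)) \cong \Delta(\lambda)$ would lie inside $\ker \pi$, and the short exact sequence $0 \to \ker \pi \to T(\lambda) \to \nabla(\lambda) \to 0$ would supply at least one composition factor $L(\lambda)$ from $\ker \pi$ (via the head of $\iota(\Delta(\lambda))$) and another from $\nabla(\lambda)$, contradicting the count above.

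The main subtlety I anticipate is extracting the precise content of ``highest weight $\lambda$'' from Assumption \ref{tilting_indec_assumption} together with \cite[Definition 4.6]{hwtpaper}: both the bound on the weights that may appear in a $\Delta$- or $\nabla$-filtration of $T(\lambda)$ and the multiplicity-one condition at the top weight are needed. Once those two ingredients are in hand, the rest of the proof reduces to bookkeeping with (\ref{hwc_standard_costandard_mult}) and the simple head/socle properties of $\Delta(\lambda)$ and $\nabla(\lambda)$.
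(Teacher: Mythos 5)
Your proof is correct, and for parts (a) and (b) it takes a genuinely different route from the paper. The paper proves the vanishing claim directly: for a non-zero $\varphi\colon\Delta(\lambda)\to T(\mu)$ it notes $\Ker\varphi\subseteq\Rad\Delta(\lambda)$, so $\Im\varphi$ contains $L(\lambda)$ as a composition factor, forcing $[T(\mu):L(\lambda)]\neq 0$ and hence $\lambda\leq\mu$. For the one-dimensionality it applies $\Hom_{\mathcal C}(\Delta(\lambda),-)$ to the sequence $0\to\Delta(\lambda)\to T(\lambda)\to T(\lambda)/\Delta(\lambda)\to 0$ and invokes $\Ext^1_{\mathcal C}(\Delta(\lambda),\Delta(\lambda))=0$. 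You instead pass everything through the identity $\dim\Hom_{\mathcal C}(\Delta(\lambda),T(\mu))=[T(\mu):\nabla(\lambda)]$ and the dual one, so all three parts become multiplicity bookkeeping in $\Delta$- and $\nabla$-filtrations of the tilting object, controlled by the highest-weight condition. This is more uniform and avoids re-deriving the $\Ext^1$-vanishing, though one should make explicit (as you implicitly do) that $[T(\lambda):\nabla(\lambda)]\geq 1$ uses the assumed embedding $\Delta(\lambda)\hookrightarrow T(\lambda)$, and that $[T(\lambda):\nabla(\lambda)]\leq 1$ follows from $[\nabla(\lambda):L(\lambda)]=1$ together with the multiplicity-one bound $[T(\lambda):L(\lambda)]=1$. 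Your proof of (c) is essentially the same counting argument via $[T(\lambda):L(\lambda)]=1$ that the paper uses.
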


\begin{proof}
Let $\varphi:\Delta(\lambda) \rarr T(\mu)$ be a non-zero morphism. This induces an isomorphism $\Delta(\lambda)/\Ker \varphi \simeq \Im \varphi$. Since $\varphi$ is non-zero, we have $\Ker \varphi \subsetneq \Delta(\lambda)$, so $\Ker \varphi \subs \Rad \Delta(\lambda)$, implying that $\lbrack \Delta(\lambda)/\Ker \varphi : L(\lambda) \rbrack \neq 0$. Hence, $\lbrack T(\mu):L(\lambda) \rbrack \neq 0$, so $\lambda \leq \mu$ since $T(\mu)$ has highest weight $\mu$. Choose an embedding $i:\Delta(\lambda) \hookrightarrow T(\lambda)$ and let $q:T(\lambda) \twoheadrightarrow T(\lambda)/\Delta(\lambda)$ be the quotient map, where we identify $\Delta(\lambda)$ with $\Im i$. We thus have an exact sequence
\[
\begin{tikzcd}[column sep=small]
0 \arrow{r} & \Delta(\lambda) \arrow{r}{i} & T(\lambda) \arrow{r}{q} & T(\lambda)/\Delta(\lambda) \arrow{r} & 0 \;.
\end{tikzcd}
\]
Applying $\Hom_{\mathcal{C}}(\Delta(\lambda),-)$ yields an exact sequence
\[
\begin{tikzcd}[column sep=tiny, row sep=tiny]
0 \arrow{r} & \Hom_{\mathcal{C}}(\Delta(\lambda),\Delta(\lambda)) \arrow{r} & \Hom_{\mathcal{C}}(\Delta(\lambda),T(\lambda)) \arrow{r} & \Hom_{\mathcal{C}}(\Delta(\lambda),T(\lambda)/\Delta(\lambda))  \\
\phantom{0} \arrow{r} &  \Ext_{\mathcal{C}}^1(\Delta(\lambda),\Delta(\lambda)) \;.
\end{tikzcd}
\]
From \cite[Corollary 4.16]{hwtpaper}, which is true for any highest weight category, we conclude that $\Ext_{\mathcal{C}}^1(\Delta(\lambda),\Delta(\lambda)) = 0$, so $\Hom_{\mathcal{C}}(\Delta(\lambda),T(\lambda)/\Delta(\lambda))$ is the image of $\Hom_{\mathcal{C}}(\Delta(\lambda),T(\lambda))$ under composition with $q$. But it is clear that this image is equal to zero, so the above sequence yields
\[
\Hom_{\mathcal{C}}(\Delta(\lambda),T(\lambda)) \simeq \Hom_{\mathcal{C}}(\Delta(\lambda),\Delta(\lambda)) \simeq K \;,
\]
where we use \cite[Corollary 4.17]{hwtpaper} (which is again true in any highest weight category in which all simple objects are absolutely simple). Part \ref{tilting_standard_homs:nabla} is proven similarly. Finally, suppose that $i:\Delta(\lambda) \hookrightarrow T(\lambda)$ is an embedding and $\pi:T(\lambda) \twoheadrightarrow \nabla(\lambda)$ is a projection such that the composition is zero. Then $\Im i \subs \Ker \pi$, so we get a non-zero epimorphism $T(\lambda)/\Delta(\lambda) \twoheadrightarrow \nabla(\lambda)$. Since $L(\lambda)$ is a constituent of $\Delta(\lambda)$ and of $\nabla(\lambda)$, we deduce that $\lbrack T(\lambda):L(\lambda) \rbrack \geq 2$. This is not possible since $T(\lambda)$ has highest weight $\lambda$, so $\lbrack T(\lambda):L(\lambda) \rbrack = 1$.
\end{proof}

Now, we dive into the theory of Andersen, Stroppel, and Tubbenhauer \cite{AST}. Let us choose for any $\lambda \in \Lambda$ a non-zero morphism 
\begin{equation} \label{c_lambda_def}
c^\lambda: \Delta(\lambda) \rarr \nabla(\lambda)\;.
\end{equation}
By the Ext-vanishing assumption, this is unique up to scalar. By Lemma \ref{tilting_standard_homs} we can choose an embedding $i^\lambda:\Delta(\lambda) \hookrightarrow T(\lambda)$ and a  projection $\pi^\lambda:T(\lambda) \twoheadrightarrow \nabla(\lambda)$ so that we get a factorization
\begin{equation*}
c^\lambda = \pi^\lambda \circ i^\lambda \;.
\end{equation*}

\begin{lem} \label{tilting_lift_lemma}
Let $\lambda \in \Lambda$. The following holds:
\begin{enum_thm}
\item Let $M \in \mathcal{C}^\Delta$. Then any morphism $f:M \rarr \nabla(\lambda)$ factors through a morphism $\hat{f}:M \rarr T(\lambda)$, i.e., the diagram
\[
\begin{tikzcd}
M \arrow{r}{\hat{f}} \arrow{dr}[swap]{f}  & T(\lambda)\arrow[twoheadrightarrow]{d}{\pi^\lambda} \\
& \nabla(\lambda)
\end{tikzcd}
\]
commutes.
\item Let $N \in \mathcal{C}^\nabla$. Then any morphism $g:\Delta(\lambda) \rarr N$ extends to a morphism $\hat{g}:T(\lambda) \rarr N$, i.e., the diagram
\[
\begin{tikzcd}
T(\lambda) \arrow{r}{\hat{g}} & N \\
\Delta(\lambda) \arrow[hookrightarrow]{u}{i^\lambda} \arrow{ur}[swap]{g}
\end{tikzcd}
\]
commutes.
\end{enum_thm}
\end{lem}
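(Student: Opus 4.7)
The two parts are dual, so I describe part (a) and indicate how (b) follows. The plan is to realize the lift $\hat{f}$ as coming from a long exact sequence. Consider the short exact sequence
\[
0 \to K \to T(\lambda) \xrightarrow{\pi^\lambda} \nabla(\lambda) \to 0,
\]
where $K \dopgleich \Ker \pi^\lambda$. Applying $\Hom_\mathcal{C}(M,-)$ produces the exact sequence
\[
\Hom_\mathcal{C}(M,T(\lambda)) \xrightarrow{\pi^\lambda_*} \Hom_\mathcal{C}(M,\nabla(\lambda)) \to \Ext^1_\mathcal{C}(M,K),
\]
so it suffices to prove that $\Ext^1_\mathcal{C}(M,K) = 0$. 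I will deduce this from Lemma~\ref{tilting_ext_vanish} by showing that $K \in \mathcal{C}^\nabla$.

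To verify $K \in \mathcal{C}^\nabla$, I use the criterion in Lemma~\ref{tilting_ext_vanish}\,(b) and compute $\Ext^1_\mathcal{C}(\Delta(\mu),K)$ for each $\mu \in \Lambda$ from the long exact sequence associated with the short exact sequence above. Since $T(\lambda)$ is tilting, $\Ext^1_\mathcal{C}(\Delta(\mu),T(\lambda))$ vanishes by Lemma~\ref{tilting_ext_vanish}, and it remains to show that the map
\[
\Hom_\mathcal{C}(\Delta(\mu),T(\lambda)) \xrightarrow{\pi^\lambda_*} \Hom_\mathcal{C}(\Delta(\mu),\nabla(\lambda))
\]
is surjective. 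When $\mu \neq \lambda$, the target is zero by Assumption~\ref{tilting_ext_assumption}, so there is nothing to check. When $\mu = \lambda$, both spaces are one-dimensional (Assumption~\ref{tilting_ext_assumption} and Lemma~\ref{tilting_standard_homs}), and surjectivity is witnessed by $i^\lambda \mapsto \pi^\lambda \circ i^\lambda = c^\lambda$, which is nonzero by Lemma~\ref{tilting_standard_homs}\,(c). Hence $K \in \mathcal{C}^\nabla$.

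Finally, given $M \in \mathcal{C}^\Delta$ and $K \in \mathcal{C}^\nabla$, the vanishing $\Ext^1_\mathcal{C}(M,K) = 0$ follows by induction on the length of a costandard filtration of $K$: each successive quotient is some $\nabla(\nu)$, Lemma~\ref{tilting_ext_vanish}\,(a) gives $\Ext^1_\mathcal{C}(M,\nabla(\nu)) = 0$, and the long exact sequences in the second argument glue these vanishings together. This produces the desired $\hat{f}$ with $\pi^\lambda \circ \hat{f} = f$.

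Part (b) is entirely dual: instead consider $0 \to \Delta(\lambda) \xrightarrow{i^\lambda} T(\lambda) \to C \to 0$, apply $\Hom_\mathcal{C}(-,N)$, and reduce to showing $\Ext^1_\mathcal{C}(C,N) = 0$. The same pattern---computing $\Ext^1_\mathcal{C}(C,\nabla(\mu))$ from the long exact sequence, using Assumption~\ref{tilting_ext_assumption} and the nondegeneracy of $c^\lambda$ in the case $\mu = \lambda$---shows $C \in \mathcal{C}^\Delta$, after which Lemma~\ref{tilting_ext_vanish}\,(b) and a standard-filtration induction finish the argument. The only potentially delicate point in either direction is the surjectivity check in the case $\mu = \lambda$, which is where the specific choice $c^\lambda = \pi^\lambda \circ i^\lambda$ and the one-dimensionality statements from Lemma~\ref{tilting_standard_homs} are essential; this is the step I would double-check most carefully.
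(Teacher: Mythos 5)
Your proof is correct, and it follows the same overall strategy as the paper: reduce to showing $\Ext^1_{\mathcal{C}}(M,\Ker\pi^\lambda)=0$, and get this from $M\in\mathcal{C}^\Delta$ and $\Ker\pi^\lambda\in\mathcal{C}^\nabla$. Where you differ is in the justification that $\Ker\pi^\lambda\in\mathcal{C}^\nabla$. The paper handles this in one sentence, appealing to the (standard, but not spelled out there) refiltration fact that since $\lambda$ is maximal among weights of costandard factors of $T(\lambda)$, a $\nabla$-filtration of $T(\lambda)$ can be arranged to have $\nabla(\lambda)$ on top, whence the kernel of the projection inherits a $\nabla$-filtration. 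You instead verify $\Ker\pi^\lambda\in\mathcal{C}^\nabla$ through the homological criterion of Lemma~\ref{tilting_ext_vanish}\,(b), computing $\Ext^1_{\mathcal{C}}(\Delta(\mu),\Ker\pi^\lambda)$ from the long exact sequence and using the non-degeneracy of $c^\lambda$ (Lemma~\ref{tilting_standard_homs}\,(c)) together with the one-dimensionality of $\Hom_{\mathcal{C}}(\Delta(\lambda),T(\lambda))$ to settle the only nontrivial case $\mu=\lambda$. This is somewhat longer but more self-contained: it avoids invoking the refiltration argument, which the paper leaves implicit. Both routes are valid; yours makes visible exactly how the choice $c^\lambda=\pi^\lambda\circ i^\lambda$ enters, which you rightly flag as the delicate point.
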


\begin{proof}
We just consider the first statement, the second is dual. Since $\nabla(\lambda)$ occurs at the top of a costandard filtration of $T(\lambda)$, it is clear that $\Ker \pi^\lambda$ has a costandard filtration and so it follows from Lemma \ref{tilting_ext_vanish} that $\Ext_\mathcal{C}^1(M, \Ker \pi^\lambda) = 0$ since $M$ has a standard filtration. Applying $\Hom_\mathcal{C}(M,-)$ to the exact sequence $0 \rarr \Ker \pi^\lambda \rarr T(\lambda) \overset{\pi^\lambda}{\rarr}\nabla(\lambda) \rarr 0$ proves the claim.
\end{proof}

In the following we let $M \in \mathcal{C}^\Delta$ and $N \in \mathcal{C}^\nabla$. For a $K$-basis $F_M^\lambda$ of $\Hom_\mathcal{C}(M,\nabla(\lambda))$ and a choice of lift $\hat{f}$ as in Lemma \ref{tilting_lift_lemma} for every $f \in F_M^\lambda$ we set 
\begin{equation*}
\hat{F}_M^\lambda \dopgleich \lbrace \hat{f} \mid f \in F_M^\lambda \rbrace \subs \Hom_\mathcal{C}(M,T(\lambda)) \;.
\end{equation*}
Similarly, for a $K$-basis $G_N^\lambda$ of $\Hom_\mathcal{C}(\Delta(\lambda),N)$  and a choice of lifts $\hat{g}$ we set 
\begin{equation*}
\hat{G}_N^\lambda \dopgleich \lbrace \hat{g} \mid g \in G_N^\lambda \rbrace \subs \Hom_\mathcal{C}(T(\lambda),N)\;.
\end{equation*}
Note that the lifts, and thus the subsets $\hat{F}_M^\lambda$ and $\hat{G}_N^\lambda$, are not unique. For any such choice we define the subset
\begin{equation*}
\hat{G}_N^\lambda \hat{F}_M^\lambda \dopgleich \lbrace \hat{g} \circ \hat{f} \mid \hat{g} \in \hat{G}_N^\lambda, \hat{f} \in \hat{F}_M^\lambda \rbrace \subs \Hom_\mathcal{C}(M,N) \;.
\end{equation*}
The elements of this subset can be illustrated by the commutative diagram
\begin{equation*}
\begin{tikzcd}
& \Delta(\lambda) \arrow[hookrightarrow]{d}[swap]{i^\lambda} \arrow{dr}{g} \\
M \arrow{r}{\hat{f}} \arrow{dr}[swap]{f} & T(\lambda) \arrow{r}[swap]{\hat{g}} \arrow[twoheadrightarrow]{d}{\pi^\lambda} & N \\
& \nabla(\lambda)
\end{tikzcd}
\end{equation*}
Let
\begin{equation*}
\hat{G}_N \hat{F}_M \dopgleich \bigcup_{\lambda \in \Lambda} \hat{G}_N^\lambda \hat{F}_M^\lambda \;.
\end{equation*}
Our aim, following \cite{AST}, is to show:

\begin{thm} \label{cellular_appendix_basis_independent}
Independently of all choices the set $\hat{G}_N \hat{F}_M$ is a $K$-vector space basis of $\Hom_{\mathcal{C}}(M,N)$.
\end{thm}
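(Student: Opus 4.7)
The plan is to establish that $\hat{G}_N\hat{F}_M$ is a basis via a dimension count combined with a spanning argument; independence of choices then follows essentially for free.

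First, an induction on the length of a standard filtration of $M$ yields the dimension formula
\[
\dim_K \Hom_\mathcal{C}(M,N)\;=\;\sum_{\lambda\in\Lambda}[M:\Delta(\lambda)]\cdot[N:\nabla(\lambda)],
\]
where the inductive step uses the long $\Hom/\Ext$-sequence associated to a short exact sequence peeling off a standard subfactor, together with the vanishing $\Ext^1_\mathcal{C}(\Delta(\mu),N)=0$ for $N\in\mathcal{C}^\nabla$ supplied by Lemma~\ref{tilting_ext_vanish}. By~\eqref{hwc_standard_costandard_mult}, the right-hand side equals $\sum_\lambda|F_M^\lambda|\cdot|G_N^\lambda|$, which is an \emph{a priori} upper bound for the cardinality of $\hat{G}_N\hat{F}_M$. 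Consequently it suffices to prove that $\hat{G}_N\hat{F}_M$ spans $\Hom_\mathcal{C}(M,N)$.

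The spanning argument runs by induction on the length of a standard filtration of $M$, with the filtration chosen so that a maximal weight sits at the top. For the base case $M=\Delta(\lambda)$: Assumption~\ref{tilting_ext_assumption} forces $F_{\Delta(\lambda)}^\mu=\emptyset$ for $\mu\neq\lambda$ and $F_{\Delta(\lambda)}^\lambda=\{c^\lambda\}$ (up to scalar); since $\Hom_\mathcal{C}(\Delta(\lambda),\Ker\pi^\lambda)=0$ by Lemma~\ref{tilting_standard_homs}, the only lift of $c^\lambda$ is $i^\lambda$, whence $\hat{G}_N\hat{F}_{\Delta(\lambda)}=\{\hat g\circ i^\lambda:g\in G_N^\lambda\}=G_N^\lambda$, a basis of $\Hom(\Delta(\lambda),N)$. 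For the inductive step, take $0\to M'\to M\xrightarrow{q}\Delta(\lambda)\to 0$ and, given $f\colon M\to N$, express $f|_{M'}$ via the inductive hypothesis. Each occurring lift $\hat f_i^\mu\colon M'\to T(\mu)$ extends to $\tilde f_i^\mu\colon M\to T(\mu)$ because $T(\mu)\in\mathcal{C}^\nabla$ gives $\Ext^1_\mathcal{C}(\Delta(\lambda),T(\mu))=0$. The residual difference vanishes on $M'$, factors as $h\circ q$ with $h\in\Hom(\Delta(\lambda),N)\subseteq\mathrm{span}\,G_N^\lambda$ by the base case, and each summand $g\circ q$ equals $\hat g\circ(i^\lambda\circ q)$ with $i^\lambda\circ q\colon M\to T(\lambda)$ a lift of $c^\lambda\circ q\in\Hom(M,\nabla(\lambda))$. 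Expanding $c^\lambda\circ q$ in the basis $F_M^\lambda$ realises the residual in $\mathrm{span}\,\hat G_N^\lambda\hat F_M^\lambda$ modulo correction morphisms in $\Hom(M,\Ker\pi^\lambda)$; since $\Ker\pi^\lambda$ has a costandard filtration by $\nabla(\nu)$ with $\nu<\lambda$, these corrections feed into the spanning claim at strictly lower weights and are absorbed by a nested downward recursion.

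Combining the two steps, $\hat{G}_N\hat{F}_M$ spans a space of dimension $\sum_\lambda|F_M^\lambda|\cdot|G_N^\lambda|$ while having at most that many elements; it is therefore $K$-linearly independent, has exactly this cardinality, and forms a basis of $\Hom_\mathcal{C}(M,N)$. The entire argument goes through for any admissible choice of the bases $F_M^\lambda$, $G_N^\lambda$ and of the lifts $\hat f$, $\hat g$, establishing the asserted independence of choices. The main obstacle is the treatment of the correction terms in $\Hom(M,\Ker\pi^\lambda)$ in the inductive step: in \cite{AST} these are controlled via weight-space decompositions of tilting modules for quantum groups, a tool unavailable in our abstract highest-weight setting. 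The replacement is the poset-theoretic recursion sketched above, whose termination crucially relies on the interval-finiteness of $\leq$ built into Assumption~\ref{tilting_hwc_assumption}.
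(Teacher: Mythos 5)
Your proposal takes a genuinely different route from the paper. You argue by a dimension count for $\Hom_{\mathcal{C}}(M,N)$ together with a spanning argument via induction on the length of a $\Delta$-filtration of $M$. The paper instead proves Lemma~\ref{cellular_appendix_ingredients} by induction on a $\nabla$-filtration of $N$, constructing a \emph{specific, compatible} choice of $G_N^\lambda$ and $\hat G_N^\lambda$ with the extra properties \ref{cellular_appendix_ingredients:lemma45} and \ref{cellular_appendix_ingredients:filtered_basis}; the key technical replacement for AST's weight-space arguments is the invariant $\varphi_\lambda := [\Im\varphi : L(\lambda)]$ and the filtration $\Hom_{\mathcal{C}}(M,N)^{\leq\lambda}$. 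Independence of choices is then deduced by a separate argument (the analogues of [AST, Lemmas 4.7 and 4.8]), which uses the filtered basis.

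Unfortunately, your spanning argument has genuine gaps where the AST-style difficulties actually live, and they are not resolved by the sketch you give.

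First, your inductive step silently changes the data. The inductive hypothesis gives an expansion of $f|_{M'}$ in terms of $\hat G_N \hat F_{M'}$, i.e.\ lifts $\hat f_i^\mu\colon M'\to T(\mu)$ of basis elements of $\Hom_{\mathcal C}(M',\nabla(\mu))$. You then extend each $\hat f_i^\mu$ to $\tilde f_i^\mu\colon M\to T(\mu)$. But these $\tilde f_i^\mu$ are \emph{not} elements of your fixed $\hat F_M^\mu$: there is no reason the restriction-of-$F_M^\mu$ coincides with $F_{M'}^\mu$ (it cannot in general when $\mu=\lambda$, since the restriction map $\Hom(M,\nabla(\lambda))\to\Hom(M',\nabla(\lambda))$ has a one-dimensional kernel by Assumption~\ref{tilting_ext_assumption}), and even when it does the lifts need not match. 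So you have not expressed $f'=\sum a_{ij}\,\hat g_j^\mu\circ\tilde f_i^\mu$ in terms of $\hat G_N\hat F_M$; you have only shown it lies in the image of a \emph{different} collection of compositions. Reconciling $\tilde f_i^\mu$ with $\hat f_{i'}^\mu\in\hat F_M^\mu$ introduces correction terms valued in $\Hom(M,\Ker\pi^\mu)$, and these are of exactly the same nature as the ones you flag for the residual $h\circ q$.

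Second, the "nested downward recursion" is where the proof actually has to happen, and it is not carried out. The corrections live in $\Hom_{\mathcal C}(M,\Ker\pi^\lambda)$ and then get pushed forward to $\Hom_{\mathcal C}(M,N)$ by $\hat g|_{\Ker\pi^\lambda}$; re-expressing these in $\hat G_N\hat F_M$ is not a smaller instance of the same spanning claim (neither $M$ nor $N$ has shrunk), so "induction on length of the standard filtration of $M$" does not control it. You need a separate inductive structure on the weight poset, and you need to explain why the set of weights encountered is finite — interval-finiteness alone is not obviously sufficient without a bound such as the one the paper gets by passing through a specific finite tilting object. This is precisely the role played by the paper's $\Hom_{\mathcal C}(M,N)^{\leq\lambda}$ filtration and Lemma~\ref{cellular_appendix_ingredients}\ref{cellular_appendix_ingredients:lemma45}.

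Finally, the concluding sentence "the entire argument goes through for any admissible choice" is asserted, not argued. Different lifts $\hat f$ of the same $f$ differ by morphisms into $\Ker\pi^\lambda$, so the span of $\hat G_N^\lambda\hat F_M^\lambda$ genuinely depends on the lifts; showing that the \emph{total} span is nevertheless choice-independent is the content of the paper's appeal to [AST, Lemmas 4.7, 4.8] via the filtration $\Hom^{\leq\lambda}$. Your approach is attractive because, if the spanning step could be completed uniformly in the choices, independence would come for free from the dimension count — but that uniformity is exactly the unproved part.
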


The proof of the above theorem, which is \cite[Theorem 4.1]{AST}, needs some preparation and some modifications to make it work in our setting. The key point is the independence of the choice of bases and lifts. This is where \cite{AST} begin to use \textit{weight spaces}. Their idea is to use the filtration of the Hom-spaces given by restrictions of morphisms to weight spaces, with \cite[Lemma 4.5]{AST} and \cite[Lemma 4.6]{AST} being the central ingredients. We will give an alternative proof of this fact, not relying on the notion of weight spaces. This is based on the following minor modification of the constructions in \cite{AST}. For $\lambda \in \Lambda$ and $\varphi \in \Hom_{\mathcal{C}}(M,N)$, instead of letting $\varphi_\lambda$ be the restriction of $\varphi$ to the $\lambda$-weight space as in \cite{AST}, we define 
\begin{equation} \label{cellular_appendix_key_modification}
\varphi_\lambda \dopgleich \lbrack \Im \varphi:L(\lambda) \rbrack  \in \bbN \;.
\end{equation}
The following set can then be defined similarly as in \cite{AST}:
\begin{equation*}
\Hom_{\mathcal{C}}(M,N)^{\leq \lambda} \dopgleich \lbrace \varphi \in \Hom_\mathcal{C}(M,N) \mid \varphi_\mu = 0 \tn{ unless } \mu \leq \lambda \rbrace \;.
\end{equation*}

\begin{lem} \label{hom_weight_space_is_space}
The set $\Hom_{\mc{C}}(M,N)^{\leq \lambda}$ is a sub vector space of $\Hom_{\mc{C}}(M,N)$.
\end{lem}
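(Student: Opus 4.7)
The plan is to verify the two subspace axioms directly from the definition $\varphi_\mu = [\Im\varphi : L(\mu)]$. The condition defining $\Hom_{\mc{C}}(M,N)^{\leq \lambda}$ says that the image $\Im \varphi \subseteq N$ has no composition factors of type $L(\mu)$ for $\mu \not\leq \lambda$. Closure under scalar multiplication is immediate: for $c \in K^\times$ we have $\Im(c\varphi) = \Im \varphi$, and for $c = 0$ the image is zero, so all multiplicities $\varphi_\mu$ vanish. The zero morphism is clearly in the set.

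The substantive point is closure under addition. Given $\varphi, \psi \in \Hom_{\mc{C}}(M,N)^{\leq \lambda}$, the key observation is that $\Im(\varphi + \psi) \subseteq \Im \varphi + \Im \psi$ as subobjects of $N$, since for each $m \in M$ the element $(\varphi+\psi)(m) = \varphi(m) + \psi(m)$ lies in this sum. Combined with the obvious epimorphism $\Im \varphi \oplus \Im \psi \twoheadrightarrow \Im \varphi + \Im \psi$ in $\mc{C}$ (valid because $\mc{C}$ is abelian), we get for each $\mu \in \Lambda$ the inequality
\[
[\Im(\varphi + \psi) : L(\mu)] \leq [\Im \varphi + \Im \psi : L(\mu)] \leq [\Im \varphi : L(\mu)] + [\Im \psi : L(\mu)],
\]
using additivity of composition multiplicities on short exact sequences and the fact that multiplicities are monotone with respect to subobjects. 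If $\mu \not\leq \lambda$, both terms on the right vanish by assumption, hence $(\varphi + \psi)_\mu = 0$, so $\varphi + \psi \in \Hom_{\mc{C}}(M,N)^{\leq \lambda}$.

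There is no real obstacle here; the proof is essentially a one-line argument about composition multiplicities of sums of subobjects. The only point to be careful about is that $\Im(\varphi + \psi)$ need not equal $\Im \varphi + \Im \psi$, only contain the former in the latter, which is exactly what the inequality above requires.
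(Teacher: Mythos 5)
Your proof is correct and follows essentially the same route as the paper's: both reduce closure under addition to the containment $\Im(\varphi+\psi) \subseteq \Im\varphi + \Im\psi$ and the resulting subadditivity of composition multiplicities. You simply spell out the intermediate epimorphism $\Im\varphi \oplus \Im\psi \twoheadrightarrow \Im\varphi + \Im\psi$ and the trivial closure under scalars, which the paper leaves implicit.
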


\begin{proof} 
Let $\phi,\psi \in \Hom_{\mc{C}}(M,N)^{\leq \lambda}$. Since $\Im \phi, \Im \psi, \Im(\phi+\psi) \subs \Im \phi+ \Im \psi$, we have
\[
(\phi+ \psi)_\mu = \lbrack \Im(\phi+\psi):L(\mu) \rbrack \leq \lbrack \Im \phi : L(\mu) \rbrack + \lbrack \Im \psi : L(\mu) \rbrack = \phi_\mu + \psi_\mu \;.
\]
If $\mu \not\leq \lambda$, then $\phi_\mu = 0 = \psi_\mu$ since $\phi,\psi \in \Hom_\mc{C}(M,N)^{\leq \lambda}$. Hence $(\phi+\psi)_\mu = 0$, implying that $\phi+\psi \in \Hom_{\mc{C}}(M,N)^{\leq \lambda}$.
\end{proof}

\begin{lem} \label{f_plus_g_weight}
If $f,g \in \Hom_\mc{C}(M,N)$ and $f_\lambda = 0$, then $(f+g)_\lambda = g_\lambda$.
\end{lem}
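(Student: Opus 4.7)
The plan is to establish a two-sided inequality $(f+g)_\lambda \le g_\lambda$ and $g_\lambda \le (f+g)_\lambda$, each obtained by the same subadditivity argument applied in one direction and then reversed. The key observation, already exploited in the proof of Lemma \ref{hom_weight_space_is_space}, is the inclusion $\Im(\phi+\psi) \subseteq \Im \phi + \Im \psi$ for any two morphisms $\phi,\psi \in \Hom_\mc{C}(M,N)$, together with the additivity of composition multiplicities on short exact sequences.

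More precisely, first I would apply this to the pair $(f,g)$ to get the short exact sequence
\[
0 \to \Im f \to \Im f + \Im g \to (\Im f + \Im g)/\Im f \to 0 \;,
\]
and use the isomorphism $(\Im f + \Im g)/\Im f \simeq \Im g / (\Im f \cap \Im g)$ to conclude
\[
(f+g)_\lambda \le [\Im f + \Im g : L(\lambda)] = f_\lambda + [\Im g/(\Im f \cap \Im g) : L(\lambda)] \le f_\lambda + g_\lambda \;.
\]
Since by hypothesis $f_\lambda = 0$, this yields $(f+g)_\lambda \le g_\lambda$.

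For the reverse inequality I would note that $(-f)_\lambda = [\Im(-f):L(\lambda)] = [\Im f : L(\lambda)] = f_\lambda = 0$, and apply the exact same argument to the decomposition $g = (f+g) + (-f)$, obtaining $g_\lambda \le (f+g)_\lambda + (-f)_\lambda = (f+g)_\lambda$. Combining both inequalities gives the desired equality $(f+g)_\lambda = g_\lambda$.

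There is no real obstacle here; the only subtlety is the replacement of the naive ``multiplicity on the image'' by an additive functional, which is handled by the elementary short exact sequence observation above. The argument does not require any of the deeper tilting or highest weight structure, only that $[\,\cdot\,:L(\lambda)]$ is additive on short exact sequences in a finite-length category.
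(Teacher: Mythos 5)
Your proof is correct. You establish the two-sided inequality by first observing the subadditivity $(f+g)_\lambda \le f_\lambda + g_\lambda$ (via $\Im(f+g) \subseteq \Im f + \Im g$, the short exact sequence $0 \to \Im f \to \Im f + \Im g \to \Im g/(\Im f \cap \Im g) \to 0$, and additivity of $[\,\cdot\,:L(\lambda)]$), which gives $(f+g)_\lambda \le g_\lambda$ under the hypothesis $f_\lambda=0$; you then apply the same inequality to the decomposition $g = (f+g)+(-f)$, using $(-f)_\lambda = f_\lambda = 0$, to get the reverse. This is valid and self-contained. The paper's proof takes a slightly different and more economical route: it composes both $f+g$ and $g$ with the quotient map $\pi : N \to N/\Im f$, notes that $\pi\circ(f+g) = \pi\circ g$ since $\pi\circ f = 0$, and observes (again via the second isomorphism theorem and $[\Im f : L(\lambda)] = 0$) that passing to this quotient does not alter the $\lambda$-multiplicity of the image, thus obtaining the equality in one step instead of two inequalities. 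The two arguments are dual in spirit — you work inside the submodule $\Im f + \Im g \subseteq N$, the paper works in the quotient $N/\Im f$ — and both rely only on additivity of composition multiplicities in a finite-length abelian category. The paper's version avoids invoking the auxiliary symmetry trick with $-f$, but yours has the mild advantage of explicitly isolating the subadditivity inequality already implicit in the proof of Lemma \ref{hom_weight_space_is_space}.
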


\begin{proof}
Let $\ol{f+g}:M \rarr N/\Im(f)$ and $\ol{g}:M \rarr N/\Im(f)$ be the map induced by $f+g$ and $g$, respectively. Clearly, $\ol{f+g}=\ol{g}$. Since $0=f_\lambda = \lbrack \Im(f):L(\lambda) \rbrack$, we have $(f+g)_\lambda = (\ol{f+g})_\lambda = \ol{g}_\lambda = g_\lambda$.
\end{proof}

The following lemma contains the key ingredients for the proof of Theorem \ref{cellular_appendix_basis_independent}. The three statements are essentially  \cite[Proposition 4.4]{AST}, \cite[Lemma 4.5]{AST}, and \cite[Lemma 4.6]{AST}. The most crucial is the second one which is a sufficient version of \cite[Lemma 4.5]{AST} that we can prove in our setting. 

\begin{lem} \label{cellular_appendix_ingredients}
Let $F_M^\lambda$ and $\hat{F}_M^\lambda$ be arbitrary for all $\lambda \in \Lambda$. There is a choice of $G_N^\lambda$ and $\hat{G}_N^\lambda$ for all $\lambda \in \Lambda$ satisfying the following properties:
\begin{enum_thm}
\item \label{cellular_appendix_ingredients:basis} $\hat{G}_N\hat{F}_M$ is a $K$-basis of $\Hom_\mc{C}(M,N)$.
\item \label{cellular_appendix_ingredients:lemma45} If $\phi$ is a non-zero element of the $K$-span $\langle \hat{G}_N^\lambda \hat{F}_M^\lambda \rangle_K$ of $\hat{G}_N^\lambda \hat{F}_M^\lambda$, then $\phi_\lambda$ is non-zero.
\item \label{cellular_appendix_ingredients:filtered_basis}The set $(\hat{G}_N\hat{F}_M)^{\leq \lambda} \dopgleich \bigcup_{\mu \leq \lambda} \hat{G}_N^\mu \hat{F}_M^\mu$ is a $K$-basis of $\Hom_\mc{C}(M,N)^{\leq \lambda}$.
\end{enum_thm}
\end{lem}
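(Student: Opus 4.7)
\medskip

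\noindent\textbf{Proof plan.} The strategy is to establish first a Hom-dimension formula, then carry out an inductive construction of $G_N^\lambda$ and $\hat G_N^\lambda$ adapted to a costandard filtration of $N$, and finally derive (b), (c), and (a) by comparing dimensions on the filtration $\Hom_\mc{C}(M,N)^{\leq \lambda}$. The first step is to show
\[
\dim_K \Hom_\mc{C}(M,N) \;=\; \sum_{\lambda \in \Lambda} [M:\Delta(\lambda)] \cdot [N:\nabla(\lambda)],
\]
by induction on the length of a standard filtration of $M$: apply $\Hom_\mc{C}(-,N)$ to a short exact sequence $0 \to M' \to M \to \Delta(\lambda) \to 0$ and use the vanishing $\Ext^1_\mc{C}(\Delta(\lambda),N) = 0$ from Lemma \ref{tilting_ext_vanish} together with \eqref{hwc_standard_costandard_mult}. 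Next, for each $\lambda$ let $N^{\leq \lambda} \subseteq N$ be the unique maximal subobject whose composition factors lie in $\Lambda_{\leq \lambda}$; a refinement argument for the costandard filtration shows $N^{\leq \lambda} \in \mc{C}^\nabla$ with $[N^{\leq \lambda}:\nabla(\mu)] = [N:\nabla(\mu)]$ for $\mu \leq \lambda$ and zero otherwise, and a direct inspection of the definition of $\varphi_\mu = [\Im \varphi : L(\mu)]$ yields $\Hom_\mc{C}(M,N)^{\leq \lambda} = \Hom_\mc{C}(M,N^{\leq \lambda})$, whose dimension is $\sum_{\mu \leq \lambda} |F_M^\mu| \cdot |G_N^\mu|$.

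\medskip

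\noindent Next, I enumerate the (finitely many) relevant weights as $\lambda_1, \lambda_2, \ldots, \lambda_n$ compatibly with the partial order and build the families $G_N^{\lambda_k}$ and $\hat G_N^{\lambda_k}$ inductively on $k$. Observe first that every element $\hat g \circ \hat f \in \hat G_N^\lambda \hat F_M^\lambda$ factors through $T(\lambda)$, whose composition factors have weight $\leq \lambda$, and so lies in $\Hom_\mc{C}(M,N)^{\leq \lambda}$. At step $k$, choose $G_N^{\lambda_k}$ as a basis of $\Hom_\mc{C}(\Delta(\lambda_k), N)$ compatible with the short exact sequence $0 \to \Hom_\mc{C}(\Delta(\lambda_k), N^{<\lambda_k}) \to \Hom_\mc{C}(\Delta(\lambda_k), N) \to \Hom_\mc{C}(\Delta(\lambda_k), N / N^{<\lambda_k}) \to 0$ coming from the exact sequence of $N$ modulo its $<\lambda_k$-truncation (exactness at the right uses $\Ext^1(\Delta(\lambda_k),N^{<\lambda_k}) = 0$ from Lemma \ref{tilting_ext_vanish}). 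Lift to $\hat G_N^{\lambda_k}$ via Lemma \ref{tilting_lift_lemma}, choosing the lifts so that those coming from $\Hom_\mc{C}(\Delta(\lambda_k),N^{<\lambda_k})$ factor through $N^{<\lambda_k}$.

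\medskip

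\noindent The main obstacle is part (b), which is the analog of \cite[Lemma 4.5]{AST}. Given $\phi = \sum c_{ij}\,\hat g_i^\lambda \hat f_j^\lambda \neq 0$, set $\tilde g_j = \sum_i c_{ij} \hat g_i$ so that $\phi$ factors as $M \xrightarrow{(\hat f_j)} T(\lambda)^{\oplus r} \xrightarrow{(\tilde g_j)} N$. The classical AST argument uses the one-dimensional $\lambda$-weight space of $T(\lambda)$ to reduce to linear algebra of rank-one matrices; here I replace this with the one-dimensional Hom-spaces $\Hom_\mc{C}(\Delta(\lambda), T(\lambda)) = K = \Hom_\mc{C}(T(\lambda),\nabla(\lambda))$ guaranteed by Lemma \ref{tilting_standard_homs}. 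Concretely, compose $\phi$ on the right with the canonical projection $N \twoheadrightarrow N/N^{<\lambda}$ and use the inductive choice of the $\hat g_i^\lambda$: modulo $N^{<\lambda}$ the maps $\tilde g_j$ are forced to factor through the quotient $T(\lambda) \twoheadrightarrow \nabla(\lambda)$, giving a commutative square
\[
\begin{tikzcd}
M \arrow{r}{\phi} \arrow{d}[swap]{(\hat f_j)}& N \arrow{d} \\
T(\lambda)^{\oplus r} \arrow{r} & \nabla(\lambda)^{\oplus s} \hookrightarrow N/N^{<\lambda}
\end{tikzcd}
\]
where the bottom arrow is built from scalar multiples (via $\pi^\lambda$) using that $\Hom(T(\lambda),\nabla(\lambda)) = K$. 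Since the $f_j = \pi^\lambda \hat f_j$ are linearly independent in $\Hom_\mc{C}(M,\nabla(\lambda))$ and the images of $\tilde g_j$ in $\Hom_\mc{C}(\nabla(\lambda), N/N^{<\lambda})$ are linearly independent by construction, the composite is nonzero; as every nonzero subobject of $\nabla(\lambda)$ contains $L(\lambda)$ in its socle, we conclude $[\Im\phi : L(\lambda)] \geq 1$, i.e., $\phi_\lambda \neq 0$.

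\medskip

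\noindent Finally, (b) together with Lemmas \ref{hom_weight_space_is_space} and \ref{f_plus_g_weight} implies that the natural map $\langle \hat G_N^{\lambda_k}\hat F_M^{\lambda_k}\rangle_K \to V_k / V_{k-1}$ (where $V_k = \{\varphi : \varphi_{\lambda_j} = 0 \text{ for } j > k\}$) is injective. Since the target has dimension at most $|F_M^{\lambda_k}| \cdot |G_N^{\lambda_k}|$ by the truncation step, injectivity is equality, and the filtration $V_\bullet$ exhausts $\Hom_\mc{C}(M,N)$; assembling these pieces yields (a). Restricting the same argument to weights $\leq \lambda$ and using that $(\hat G_N \hat F_M)^{\leq \lambda}$ already sits inside $\Hom_\mc{C}(M,N)^{\leq \lambda}$ with matching total dimension gives (c).
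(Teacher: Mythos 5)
Your route is genuinely different from the paper's. The paper proves (a) and (b) together by induction on the length of a costandard filtration of $N$, peeling off the top layer $\nabla(\mu)$ each time; you instead work with the truncation $N^{<\lambda}$ (the largest subobject of $N$ with all composition factors of weight $<\lambda$) and compose with $N \twoheadrightarrow N/N^{<\lambda}$. That idea is sound and is actually closer in spirit to the AST weight-space argument, with $\operatorname{End}_\mc{C}(\nabla(\lambda)) = K$ playing the role of the one-dimensional $\lambda$-weight space. But as written there are genuine gaps.

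First, a concrete error in your proof of (b): the claim that ``the images of $\tilde g_j$ in $\Hom_\mc{C}(\nabla(\lambda),N/N^{<\lambda})$ are linearly independent by construction'' is false. The $\tilde g_j = \sum_i c_{ij}\hat g_i^\lambda$ are governed by the coefficient matrix $(c_{ij})$ coming from the given $\phi$; if that matrix has rank one, all $\tilde g_j$ descend to proportional maps. What \emph{is} true is that the descents $h_i \colon \nabla(\lambda) \to N/N^{<\lambda}$ of the individual $\hat g_i^\lambda$ are linearly independent. To finish you must then observe that $\Hom_\mc{C}(\nabla(\lambda),N/N^{<\lambda}) \simeq \Hom_\mc{C}(\nabla(\lambda),\nabla(\lambda)^{\oplus s}) \simeq K^{\oplus s}$ where $s = [N:\nabla(\lambda)]$, so the composition map
\[
\Hom_\mc{C}(M,\nabla(\lambda)) \otimes_K \Hom_\mc{C}(\nabla(\lambda),N/N^{<\lambda}) \longrightarrow \Hom_\mc{C}(M,N/N^{<\lambda})
\]
is injective (it is just $\Hom(M,\nabla)\otimes K^{\oplus s}\overset{\sim}{\to}\Hom(M,\nabla)^{\oplus s}$), hence $\bar\phi = \sum_{i,j}c_{ij}\,h_i\circ f_j \neq 0$. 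That injectivity is the whole point, and it is not in your write-up.

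Second, your argument leans on the assertion that $N^{<\lambda}$ (and $N^{\le\lambda}$) lies in $\mc{C}^\nabla$ with $[N^{<\lambda}:\nabla(\mu)] = [N:\nabla(\mu)]$ for $\mu < \lambda$ and $0$ otherwise. This is correct, but it is a nontrivial reordering statement: one needs $\Ext^1_\mc{C}(\nabla(\mu'),\nabla(\mu)) \neq 0 \Rightarrow \mu' < \mu$ to move the $\Gamma$-pieces to the bottom of a costandard filtration. Calling it ``a refinement argument'' without supplying it is a gap, and without it both the short exact sequence of Hom-spaces and the identification $\Hom_\mc{C}(\nabla(\lambda),N/N^{<\lambda}) \simeq K^{\oplus s}$ above are unavailable.

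Finally, a sign that the setup was not fully checked: $\Hom_\mc{C}(\Delta(\lambda),N^{<\lambda}) = 0$ always, since any nonzero quotient of $\Delta(\lambda)$ has $L(\lambda)$ as a composition factor while $[N^{<\lambda}:L(\lambda)] = 0$. So your ``short exact sequence'' used to split $G_N^{\lambda_k}$ has zero left-hand term, and the provision ``choosing the lifts so that those coming from $\Hom(\Delta(\lambda_k),N^{<\lambda_k})$ factor through $N^{<\lambda_k}$'' is vacuous. This is harmless (in fact it means any choice of basis and lifts works, which the paper proves separately as Theorem \ref{cellular_appendix_basis_independent}), but it suggests the construction was not tested against the actual structure. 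The dimension-count step for (a) and (c) is also asserted rather than argued. The underlying strategy is viable, but parts (b)--(c) need the repairs above before the proof is complete.
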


\begin{proof} 
We will show parts \ref{cellular_appendix_ingredients:basis} and \ref{cellular_appendix_ingredients:lemma45} by induction on the length of a costandard filtration of $N$. So, assume that $N =\nabla(\mu)$. We choose bases and lifts as in \ref{cellular_appendix_ingredients:basis}. First, we argue that $\hat{G}_N^\lambda \hat{F}_M^\lambda = \emptyset$ if $\lambda \neq \mu$. By definition, an element $\phi$ of $\hat{G}_N^\lambda \hat{F}_M^\lambda$  factorizes as $\phi = \hat{g} \hat{f}$ for some $\hat{f} \in \hat{F}_M^\lambda$ and $\hat{g} \in \hat{G}_N^\lambda$. Moreover, $\hat{g}$ is a lift of a morphism $g \in \Hom_\mc{C}(\Delta(\lambda),\nabla(\mu))$. Because of Assumption \ref{tilting_ext_assumption} we have $\Hom_\mc{C}(\Delta(\lambda),\nabla(\mu)) = 0$ whenever $\lambda \neq \mu$. Hence, if $\lambda \neq \mu$, then $G_N^\lambda = \emptyset$, thus $\hat{G}_N^\lambda = \emptyset$ and $\hat{G}_N^\lambda \hat{F}_M^\lambda = \emptyset$. Consequently, if $\phi \in \langle \hat{G}_N^\lambda \hat{F}_M^\lambda \rangle_K$ is non-zero, we must have $\lambda = \mu$. In this case, $\phi$ is a non-zero morphism $M \rarr \nabla(\lambda)=N$. The image is a non-zero submodule of $\nabla(\lambda)$, thus contains $\Soc \nabla(\lambda) = L(\lambda)$, hence $\phi_\lambda \neq 0$. 

Now, let $N$ be arbitrary with costandard filtration $0 = N_0 \subset N_1 \subset \dots \subset N_{k-1} \subset N_k = N$. Let $\mu \in \Lambda$ with $N_k/N_{k-1} \simeq \nabla(\mu)$. Applying $\Hom_\mc{C}(M,-)$ to the exact sequence
\[
0 \rarr N_{k-1} \rarr N \overset{\pi}{\rarr} \nabla(\mu) \rarr 0 
\]
yields the exact sequence
\begin{equation} \label{cellularity_hom_exact_sequence}
0 \rarr \Hom_\mc{C}(M,N_{k-1}) \overset{\mrm{inc}}{\rarr} \Hom_\mc{C}(M,N) \rarr \Hom_\mc{C}(M,\nabla(\mu)) \rarr 0 \;,
\end{equation}
the exactness on the right hand side following from the fact that $M \in \mc{C}^\Delta$, $N_{k-1} \in \mc{C}^\nabla$, and Assumption \ref{tilting_ext_assumption}. By the induction assumption, we can choose a lift $\hat{G}_{N_{k-1}}$ such that the basis $\hat{G}_{N_{k-1}} \hat{F}_{M}$ of $\Hom_\mc{C}(M,N_{k-1})$ satisfies both (a) and (b).  We will argue as in the second half of the proof of \cite[Proposition 4.4]{AST} that we can choose a suitable lift $\hat{G}_N$ such that the basis $\hat{G}_N\hat{F}_M = \bigcup_{\lambda \in \Lambda} \hat{G}_N^\lambda \hat{F}_M^\lambda$ of $\Hom_\mc{C}(M,N)$ satisfies \ref{cellular_appendix_ingredients:lemma45}. The choice of lift of $\hat{G}_N^\lambda$ will depend on whether $\lambda \neq \mu$ or $\lambda = \mu$. If $\lambda \neq \mu$, we set $G_N^\lambda \dopgleich \mrm{inc}\left(G_{N_{k-1}}^\lambda\right)$ and $\hat{G}_N^\lambda \dopgleich \mrm{inc}\left (\hat{G}_{N_{k-1}}^\lambda\right)$. If $\lambda = \mu$, then $\mrm{inc}\left(G_{N_{k-1}}^\lambda\right)$ is still linearly independent but since $\lbrack N:\nabla(\lambda) \rbrack = \lbrack N_{k-1}:\nabla(\lambda) \rbrack + 1$, we have $\dim_K \Hom_\mc{C}(\Delta(\lambda),N) = \dim_K \Hom_\mc{C}(\Delta(\lambda),N_{k-1})+1$, so we need one more basis element. As argued in the proof of \cite[Proposition 4.4]{AST} we can choose $g^\lambda:\Delta(\lambda) \rarr N$ such that $\pi \circ g^\lambda = c^\lambda$, where $c^\lambda$ is as in (\ref{c_lambda_def}). Then the set $\mrm{inc}(G_{N_{k-1}}^\lambda) \cup \lbrace g^\lambda \rbrace$ is a basis of $\Hom_\mc{C}(\Delta(\lambda),N)$. Let $\hat{g}^\lambda:T(\lambda) \rarr N$ be any lift and set $\hat{G}_N^\lambda \dopgleich \mrm{inc}(\hat{G}_{N_{k-1}}^\lambda) \cup \lbrace \hat{g}^\lambda \rbrace$. With these definitions of $\hat{G}_N^\lambda$ it follows from the proof of \cite[Proposition 4.4]{AST} that $\hat{G}_N\hat{F}_M$ is a basis of $\Hom_\mc{C}(M,N)$, so \ref{cellular_appendix_ingredients:basis} holds.

We argue that this basis satisfies \ref{cellular_appendix_ingredients:lemma45}. Let $\phi \in \langle \hat{G}_N^\lambda \hat{F}_M^\lambda \rangle_K$ be non-zero. First, assume that $\lambda \neq \mu$. Then, by definition of $\hat{G}_N^\lambda\hat{F}_M^\lambda$, there is a non-zero $\tilde{\phi} \in \langle \hat{G}_{N_{k-1}}^\lambda \hat{F}_M^\lambda \rangle_K$ with $\phi = \mrm{inc}(\tilde{\phi})$. By the induction hypothesis, we have $\tilde{\phi}_\lambda \neq 0$. Since $N_{k-1} \rarr N$ is an embedding, it follows that $\phi_\lambda \geq \tilde{\phi}_\lambda$, and $\phi_\lambda \neq 0$ as claimed. Now, assume that $\lambda = \mu$. By definition of $\hat{G}_N^\lambda$ we can write $\phi = \mrm{inc}(\wt{\phi}) + \phi'$, where $\wt{\phi} \in \langle \hat{G}_{N_{k-1}}^\lambda F_M^\lambda \rangle_K$ and $\phi' \in \langle \hat{g}^\lambda F_M^\lambda \rangle_K$. First, assume that $\phi' \neq 0$. Because of the exact sequence (\ref{cellularity_hom_exact_sequence}) we have $\pi \circ \phi' \neq 0$. This is a morphism $M \rarr \nabla(\lambda)$, and since it is non-zero, we have $(\pi \circ \phi')_\lambda \neq 0$. Since $\pi \circ \mrm{inc}(\wt{\phi}) = 0$, we have $\pi \circ \phi = \pi \circ \phi'$. Thus $(\pi \circ \phi)_\lambda \neq 0$, implying that $\phi_\lambda \neq 0$ too. On the other hand, if $\phi' = 0$, we must have $\wt{\phi} \neq 0$ and the same argument, as above, shows that $\phi_\lambda \neq 0$.

Let us now choose bases and lifts satisfying \ref{cellular_appendix_ingredients:basis} and \ref{cellular_appendix_ingredients:lemma45}. As in \cite{AST} we write $F_M^\lambda = \lbrace f_j^\lambda \mid j \in \ms{J}^\lambda \rbrace$ and $G_N^\lambda = \lbrace g_i^\lambda \mid i \in \ms{I}^\lambda \rbrace$. Let $c_{ij}^\lambda \dopgleich \hat{g}_i^\lambda \hat{f}_j^\lambda \in \hat{G}_N^\lambda \hat{F}_M^\lambda$. If $(c_{ij}^\mu)_\nu \neq 0$, then by definition $\lbrack \Im c_{ij}^\mu : L(\nu) \rbrack \neq 0$. This implies in particular that $\lbrack \hat{g}_i^\mu : L(\nu) \rbrack \neq 0$. Recall that $\hat{g}_i^\mu$ is a morphism $T(\mu) \rarr N$. Since $T(\mu)$ has highest weight $\mu$, we conclude that $\nu \leq \mu$. In other words, $(c_{ij}^\mu)_\nu = 0$ unless $\nu \leq \mu$. Moreover, as $0 \neq c_{ij}^\mu \in \hat{G}_N^\mu \hat{F}_M^\mu$, we know from \ref{cellular_appendix_ingredients:lemma45} that $(c_{ij}^\mu)_\mu \neq 0$. Hence, $c_{ij}^\mu \in \Hom_\mc{C}(M,N)^{\leq \lambda}$ if and only if $\mu \leq \lambda$. In particular, $\langle (\hat{G}_N\hat{F}_M)^{\leq \lambda} \rangle_K \subs \Hom_\mc{C}(M,N)^{\leq \lambda}$. By assumption $(\hat{G}_N\hat{F}_M)^{\leq\lambda}$ is linearly independent, so we just need to show that it spans $\Hom_\mc{C}(M,N)^{\leq \lambda}$. Let $\phi \in \Hom_{\mc{C}}(M,N)^{\leq \lambda}$ be non-zero. Since $\hat{G}_N\hat{F}_M$ is a basis of $\Hom_\mc{C}(M,N)$, we can write
\[
\phi = \sum_{ \substack{\mu \in \Lambda \\ i \in \ms{I}^\mu , \ j \in \ms{J}^\mu }} a_{ij}^\mu c_{ij}^\mu 
\]
with certain $a_{ij}^\mu \in K$, not all zero. Choose $\mu \in \Lambda$ maximal with the property that $a_{i j}^\mu \neq 0$ for some $i \in \ms{I}^\mu$ and $j \in \ms{J}^\mu$. Let 
\[
\phi^\mu \dopgleich \sum_{i\in \ms{I}^\mu,j\in \ms{J}^\mu} a_{ij}^\mu c_{ij}^\mu \tn{ and } \phi^{\neq \mu} \dopgleich \sum_{ \substack{\nu \neq \mu \\ i \in \ms{I}^\nu, j \in \ms{J}^\nu}} a_{ij}^\nu c_{ij}^\nu \;,
\]
so $\phi = \phi^\mu + \phi^{\neq \mu}$. Note that $\phi^\mu \in \langle \hat{G}_N^\mu \hat{F}_M^\mu \rangle_K$ is non-zero. Hence, we know from \ref{cellular_appendix_ingredients:lemma45} that $(\phi^\mu)_\mu \neq 0$. Moreover, the maximality of $\mu$ and the arguments above show that $(\phi^{\neq \mu})_\mu = 0$. Hence, $\phi_\mu = (\phi^\mu + \phi^{\neq \mu})_\mu = (\phi^\mu)_\mu \neq 0$ by Lemma \ref{f_plus_g_weight}. By definition of $\Hom_\mc{C}(M,N)^{\leq \lambda}$, this implies $\mu \leq \lambda$. Hence, by what we have said above, we have $c_{ij}^\mu \in \Hom_{\mc{C}}(M,N)^{\leq \lambda}$ for all $i,j$. In total, this shows that $\phi \in \langle (\hat{G}_N\hat{F}_M)^{\leq \lambda} \rangle_K$. 
\end{proof}

\begin{proof}[Proof of Theorem \ref{cellular_appendix_basis_independent}]
Using Proposition \ref{cellular_appendix_ingredients}\ref{cellular_appendix_ingredients:filtered_basis}, which is \cite[Lemma 4.6]{AST}, the statement can now be proven by the exact same arguments as in \cite[Lemma 4.7]{AST} and \cite[Lemma 4.8]{AST}.
\end{proof}

For a tilting object $T \in \mathcal{C}$ let us define
\begin{equation*}
\mathcal{P}_T \dopgleich \lbrace \lambda \in \Lambda \mid G_T^\lambda F_T^\lambda \neq \emptyset \rbrace = \lbrace \lambda \in \Lambda \mid \lbrack T:\Delta(\lambda) \rbrack \neq 0 \tn{ and } \lbrack T:\nabla(\lambda) \rbrack \neq 0 \rbrace \;.
\end{equation*}
We equip this set with the partial order $\leq$ from $\Lambda$. Set
\begin{equation*}
E_T \dopgleich \End_{\mc{C}}(T) \;.
\end{equation*}
The exact same arguments as given in the proof of \cite[Theorem 4.11]{AST} now show the following main theorem (for the definitions of a \word{standard datum} and a \word{cellular datum} see \S\ref{cellular_structure}).

\begin{thm} \label{end_tilting_is_cellular}
For any tilting object $T \in \mathcal{C}$ the $K$-algebra $E_T$ admits a standard datum over the poset $\mathcal{P}_T$ with standard basis $\hat{G}_T\hat{F}_T$. If moreover $\mathcal{C}$ is equipped with a duality $\bbD$ such that $\bbD(T) \simeq T$, this standard datum together with the anti-involution on $E_T$ induced by $\bbD$ is cellular, so $E_T$ is a cellular algebra. \qed
\end{thm}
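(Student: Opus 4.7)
The plan is to invoke Theorem \ref{cellular_appendix_basis_independent} to supply the underlying basis, then verify the two multiplication congruences of Definition \ref{standard_datum_def}, and finally, under the duality hypothesis, arrange the lifts compatibly with $\bbD$ to upgrade to a cellular datum. Concretely, set $\mathcal{B}^\lambda \dopgleich \hat{G}_T^\lambda \hat{F}_T^\lambda$ with $b_{i,j}^\lambda \dopgleich \hat{g}_i^\lambda \circ \hat{f}_j^\lambda$ for $(i,j) \in G_T^\lambda \times F_T^\lambda$; note $\mathcal{B}^\lambda \neq \emptyset$ precisely when $\lambda \in \mathcal{P}_T$, and by Theorem \ref{cellular_appendix_basis_independent} the disjoint union $\bigsqcup_{\lambda \in \mathcal{P}_T} \mathcal{B}^\lambda$ is a $K$-basis of $E_T$.

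For left multiplication, compute $b \cdot b_{i,j}^\lambda = (b \circ \hat{g}_i^\lambda) \circ \hat{f}_j^\lambda$. The restriction $b \circ g_i^\lambda = (b \circ \hat{g}_i^\lambda) \circ i^\lambda$ lies in $\Hom_\mathcal{C}(\Delta(\lambda), T)$, so expands in the basis $G_T^\lambda$ with coefficients $\alpha_{i'}(b,i)$ independent of $j$. The error $\eta_i \dopgleich b \circ \hat{g}_i^\lambda - \sum_{i'} \alpha_{i'}(b,i)\, \hat{g}_{i'}^\lambda$ vanishes on $\Delta(\lambda)$ and hence factors through $T(\lambda)/\Delta(\lambda)$. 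Applying $\Hom_\mathcal{C}(-, \nabla(\mu))$ to $0 \to \Delta(\lambda) \to T(\lambda) \to T(\lambda)/\Delta(\lambda) \to 0$, using Assumption \ref{tilting_ext_assumption}, Lemma \ref{tilting_ext_vanish}, and Lemma \ref{tilting_standard_homs} (which shows $\Hom_\mathcal{C}(T(\lambda), \nabla(\lambda)) \to \Hom_\mathcal{C}(\Delta(\lambda), \nabla(\lambda))$ is nonzero), yields $T(\lambda)/\Delta(\lambda) \in \mathcal{C}^\Delta$ with standard subquotients $\Delta(\mu)$ only for $\mu < \lambda$. Therefore $\Im(\eta_i \circ \hat{f}_j^\lambda)$ has composition factors $L(\mu)$ only for $\mu < \lambda$, so $(\eta_i \circ \hat{f}_j^\lambda)_\nu = 0$ whenever $\nu \not< \lambda$. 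A repeat of the maximal-weight argument at the end of the proof of Proposition \ref{cellular_appendix_ingredients}\ref{cellular_appendix_ingredients:filtered_basis} then forces $\eta_i \circ \hat{f}_j^\lambda \in E^{<\lambda}$. The right-multiplication congruence is proved dually: expand $f_j^\lambda \circ b$ in $F_T^\lambda$, observe that the resulting error term lands in $\Ker \pi^\lambda$, and verify via the dual Ext sequence that $\Ker \pi^\lambda \in \mathcal{C}^\nabla$ has only $\nabla(\mu)$ with $\mu < \lambda$ as subquotients, so that precomposing with $\hat{g}_i^\lambda$ on the left keeps the result in $E^{<\lambda}$. This establishes the standard datum.

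For cellularity, fix an isomorphism $\bbD(T) \rightsim T$ and transport $\bbD$ to an anti-involution $^*$ on $E_T$. Since $\bbD$ swaps $\Delta(\lambda) \leftrightarrow \nabla(\lambda)$ and preserves highest weights, Assumption \ref{tilting_indec_assumption} forces $\bbD(T(\lambda)) \simeq T(\lambda)$; fix such an isomorphism and, using the one-dimensionality in Lemma \ref{tilting_standard_homs}, rescale it so that under the identifications $\bbD(\Delta(\lambda)) = \nabla(\lambda)$, $\bbD(\nabla(\lambda)) = \Delta(\lambda)$, $\bbD(T) = T$, $\bbD(T(\lambda)) = T(\lambda)$, one has $\bbD(i^\lambda) = \pi^\lambda$ and $\bbD(\pi^\lambda) = i^\lambda$. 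Choose the basis $F_T^\lambda$ of $\Hom_\mathcal{C}(T,\nabla(\lambda))$ arbitrarily, fix lifts $\hat{f}_j^\lambda$, and then \emph{define} $G_T^\lambda \dopgleich \bbD(F_T^\lambda)$ with lifts $\hat{g}_j^\lambda \dopgleich \bbD(\hat{f}_j^\lambda)$; one checks that $\pi^\lambda \circ \hat{f}_j^\lambda = f_j^\lambda$ dualizes to $\hat{g}_j^\lambda \circ i^\lambda = g_j^\lambda$, so these are valid lifts. Common index set $F^\lambda = G^\lambda$ now satisfies
\[
(b_{i,j}^\lambda)^* = \bbD(\hat{g}_i^\lambda \circ \hat{f}_j^\lambda) = \bbD(\hat{f}_j^\lambda) \circ \bbD(\hat{g}_i^\lambda) = \hat{g}_j^\lambda \circ \hat{f}_i^\lambda = b_{j,i}^\lambda,
\]
which is the required cellular relation.

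The main obstacle is the compatibility step in the last paragraph: the lifts $\hat{f}$, $\hat{g}$ are a priori non-canonical, and the isomorphisms $\bbD(T(\lambda)) \simeq T(\lambda)$ come with free scalars, so it is not evident that a single coherent choice exists that realises $\bbD$ as an index-swap on $\mathcal{B}^\lambda$. This is precisely where the independence of $\hat{G}_T\hat{F}_T$ from all choices, provided by Theorem \ref{cellular_appendix_basis_independent}, is indispensable: we lose nothing by forcing the duality-symmetric choice. Once this is granted, the rest of the argument is purely formal and follows verbatim \cite[Theorem 4.11]{AST}.
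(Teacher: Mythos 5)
Your proposal is correct and takes the same route as the paper: the paper's ``proof'' of Theorem~\ref{end_tilting_is_cellular} is simply to cite~\cite[Theorem~4.11]{AST} (the real content being the preparatory lemmas of the appendix, in particular Theorem~\ref{cellular_appendix_basis_independent} and Proposition~\ref{cellular_appendix_ingredients}, which the paper proves via the substitute $\varphi_\lambda = \lbrack \Im\varphi : L(\lambda)\rbrack$), and your argument is a faithful reconstruction of that AST argument in this modified framework, so the two are in substance identical. The one step you flag as delicate --- arranging $\bbD(\pi^\lambda)=i^\lambda$, hence $\hat g_j^\lambda := \bbD(\hat f_j^\lambda)$ being a valid lift --- is indeed handled correctly: since $\Hom(\Delta(\lambda),T(\lambda))$ is one-dimensional by Lemma~\ref{tilting_standard_homs}, one rescales $\pi^\lambda$ by the (nonzero) scalar relating $\bbD(\pi^\lambda)$ and $i^\lambda$, and then $\bbD(i^\lambda)=\pi^\lambda$ follows from $\bbD^2\simeq\id$; Theorem~\ref{cellular_appendix_basis_independent} guarantees this symmetric choice still yields the standard basis.
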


Generalizing the cell modules defined by Graham–Lehrer \cite[Definition 2.1]{Graham-Lehrer-Cellular}, Du–Rui \cite[Definition 2.1.2]{DuRui} define (cellular) standard and costandard modules attached to any $\lambda \in \mathcal{P}_T$. Though related, these should not be confused with the standard and costandard modules in the highest weight category $\mc{C}$. The (cellular) \word{standard module} $\Delta_T(\lambda)$ attached to $\lambda \in \mc{P}_T$ is a left $E_T$-module defined using the the coefficients for left multiplication appearing in the definition of a standard datum. Similarly, the (cellular) \word{costandard module} $\nabla_T(\lambda)$ is a left $E_T$-module which is the dual of an analogous right module defined using right multiplication. In the terminology of Graham–Lehrer the left $E_T$-module $\Delta_T(\lambda)$ is the left cell module attached to $\lambda$ and the dual of $\nabla_T(\lambda)$, a right $E_T$-module, is the right cell module (or dual cell module) attached to $\lambda$. 

The arguments in \cite[\S4.1]{AST} show that we have 
\begin{equation*}
\Delta_T(\lambda) \simeq \Hom_{\mathcal{C}}(\Delta(\lambda),T) \quad \tn{and} \quad \nabla_T(\lambda)^* \simeq \Hom_{\mathcal{C}}(T,\nabla(\lambda)) \;.
\end{equation*}
This is due to the particular form of the standard basis and shows that these modules do not depend on the choice of the bases $F^\lambda$ and $G^\lambda$. Generalizing the construction of Graham–Lehrer, Du–Rui \cite[2.3]{DuRui} define for any $\lambda \in \mathcal{P}_T$ a bilinear pairing $\beta_\lambda$ between $\Delta_T(\lambda)$ and $\nabla_T(\lambda)^*$. It then follows from \cite[Theorem 2.4.1]{DuRui} that the subset
\begin{equation*}
\ul{\mathcal{P}}_{T} \dopgleich \lbrace \lambda \in \mathcal{P}_T \mid \beta_\lambda \neq 0 \rbrace \subs \mathcal{P}_T
\end{equation*}
classifies the simple $E_T$-modules: the cellular standard module $\Delta_T(\lambda)$ has simple head $L_T(\lambda)$ if and only if $\beta_\lambda \neq 0$, and $\lambda \mapsto L_T(\lambda)$ is a bijection between $\ul{\mathcal{P}}_{T}$ and the set of isomorphism classes of simple left $E_T$-modules. 

As shown in \cite[Definition 1.2.1]{DuRui}, the opposite algebra $E_T^\op$ is naturally equipped with a standard datum, the \word{opposite standard datum},  which has the same indexing poset $\mc{P}_T$ and flipped basis parts. The standard and costandard modules for this standard datum are then given by
\begin{equation} \label{opposite_cellular_standard_modules}
\Delta_T^\op(\lambda) = \nabla_T(\lambda)^* = \Hom_{\mc{C}}(T,\nabla(\lambda)) \;\ \tn{and} \;\ \nabla_T^\op(\lambda) = \Delta_T(\lambda)^* = \Hom_{\mc{C}}(\Delta(\lambda),T)^* \;,
\end{equation}
respectively. \\

The arguments by Andersen–Stroppel–Tubbenhauer \cite[\S4, Theorem 4.12, Theorem 4.13]{AST} can be used word-for-word to prove the following two theorems:

\begin{thm} \label{cell_appendix_dim_simples}
If $\lambda \in \ul{\mathcal{P}}_{T}$, then $\dim L_T(\lambda) = \lbrack T:T(\lambda) \rbrack$, the multiplicity of $T(\lambda)$ in $T$.
\end{thm}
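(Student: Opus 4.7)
The plan is to reduce the dimension computation to the rank of the standardly-based bilinear form $\beta_\lambda$ on $\Delta_T(\lambda) \times \nabla_T(\lambda)^*$, and then evaluate this rank through the indecomposable decomposition of $T$. By the general theory of Du--Rui, see \cite[Theorem 2.4.1]{DuRui}, we have $\dim L_T(\lambda) = \mathrm{rank}(\beta_\lambda)$. Using the construction of the standard basis $\hat{G}^\lambda_T \hat{F}^\lambda_T$ via lifts $\hat{g} \in \Hom_{\mc{C}}(T(\lambda), T)$ of $g \in \Hom_{\mc{C}}(\Delta(\lambda), T)$ and $\hat{f} \in \Hom_{\mc{C}}(T, T(\lambda))$ of $f \in \Hom_{\mc{C}}(T, \nabla(\lambda))$, the form becomes
\[
\beta_\lambda(g, f) \cdot c^\lambda \;=\; f \circ g \;=\; \pi^\lambda \circ (\hat{f} \circ \hat{g}) \circ i^\lambda,
\]
so that $\beta_\lambda(g, f)$ is the coefficient of $c^\lambda$ in this composition.

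The key technical input is that $\End_{\mc{C}}(T(\lambda))$ is a local $K$-algebra with residue field $K$, and the projection $\pi$ to this residue field coincides with the map $n \mapsto \beta$ defined by $\pi^\lambda \circ n \circ i^\lambda = \beta \cdot c^\lambda$. Locality follows from indecomposability of $T(\lambda)$ in the Krull--Schmidt category $\mc{C}^t$ (Lemma \ref{tilting_krull_schmidt}). The identification with $\pi$ comes from a short cycle argument using $\Hom_{\mc{C}}(\Delta(\lambda), T(\lambda)) = K \cdot i^\lambda$ from Lemma \ref{tilting_standard_homs}: any nilpotent $n$ satisfies $n \circ i^\lambda = \alpha \cdot i^\lambda$ for some $\alpha \in K$, and iterating gives $n^k \circ i^\lambda = \alpha^k \cdot i^\lambda = 0$, forcing $\alpha = 0$; hence $\pi^\lambda \circ n \circ i^\lambda = 0$ for all $n \in \rad \End_{\mc{C}}(T(\lambda))$.

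With this in hand, I would decompose $T = T(\lambda)^{\oplus m_\lambda} \oplus T'$ where $m_\lambda = [T : T(\lambda)]$ and $T'$ contains no summand isomorphic to $T(\lambda)$, and split the lifts $\hat{g}, \hat{f}$ along this decomposition. Any cross term in $\hat{f} \circ \hat{g}$ that factors through $T'$ yields a non-isomorphism of $T(\lambda)$ (otherwise Krull--Schmidt would force $T(\lambda)$ to split off from $T'$), hence lies in $\rad \End_{\mc{C}}(T(\lambda))$ and is killed by $\pi$. Only the $T(\lambda)^{\oplus m_\lambda}$ block contributes, and on this block the pairing descends via $\pi$ to the standard non-degenerate bilinear form on $K^{m_\lambda} \times K^{m_\lambda}$, giving $\mathrm{rank}(\beta_\lambda) = m_\lambda$.

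The main obstacle is the cycle argument identifying the residue-field projection of $\End_{\mc{C}}(T(\lambda))$ with the composition pairing through $i^\lambda$ and $\pi^\lambda$. This is precisely what replaces the weight-space methods of \cite{AST} in our more general framework: instead of diagonalizing morphisms against weight spaces, the uniqueness up to scalar of $i^\lambda$ and $\pi^\lambda$ granted by Lemma \ref{tilting_standard_homs}, combined with nilpotency in $\rad \End_{\mc{C}}(T(\lambda))$, plays the same role.
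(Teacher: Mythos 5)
Your proof is correct and reconstructs in detail the argument that the paper invokes (namely \cite[Theorem 4.12]{AST}, which the authors assert applies word-for-word): reduce to $\mathrm{rank}(\beta_\lambda)$ via Du--Rui, identify $\beta_\lambda(g,f)$ with the residue-field image of $\hat{f}\circ\hat{g}\in\End_{\mc{C}}(T(\lambda))$, decompose $T=T(\lambda)^{\oplus m_\lambda}\oplus T'$, and observe that cross-terms through $T'$ land in $\rad\End_{\mc{C}}(T(\lambda))$ by Krull--Schmidt. Your explicit identification of the residue projection with $n\mapsto\beta$ via $\Hom_{\mc{C}}(\Delta(\lambda),T(\lambda))=K\cdot i^\lambda$ is exactly what makes the computation go through in the paper's generality without weight spaces, so this is the intended route.
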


\begin{thm} \label{cell_appendix_semisimple}
The algebra $E_T$ is semisimple if and only if $T \in \mathcal{C}$ is semisimple.  
\end{thm}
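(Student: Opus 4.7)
The plan is to follow the strategy of Andersen--Stroppel--Tubbenhauer \cite[Theorem 4.13]{AST}, using Theorem \ref{cell_appendix_dim_simples} as the key input and dimension counting to bridge between the algebra $E_T$ and the tilting decomposition of $T$. For the easy direction, I would argue as follows. Suppose $T$ is semisimple. Then $T$ decomposes as a finite direct sum $T = \bigoplus_\lambda L(\lambda)^{\oplus n_\lambda}$. Since direct summands of tilting objects are tilting by Lemma \ref{tilting_krull_schmidt}\ref{tilting_krull_schmidt:closed}, each simple $L(\lambda)$ in the sum is itself tilting, hence $L(\lambda) = \Delta(\lambda) = \nabla(\lambda) = T(\lambda)$. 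Assumption \ref{tilting_ext_assumption} immediately yields $\Hom_\mathcal{C}(L(\lambda), L(\mu)) = K \cdot \delta_{\lambda,\mu}$, so $E_T \simeq \prod_\lambda M_{n_\lambda}(K)$ is a finite product of matrix algebras over $K$, which is semisimple.

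For the converse, suppose $E_T$ is semisimple. Then every cellular standard module coincides with its simple head, so $\ul{\mc{P}}_T = \mc{P}_T$ and $\Delta_T(\lambda) = L_T(\lambda) = \nabla_T(\lambda)^*$ for each $\lambda \in \mc{P}_T$. Comparing dimensions via $\dim \Delta_T(\lambda) = [T:\nabla(\lambda)]$, $\dim \nabla_T(\lambda) = [T:\Delta(\lambda)]$, and Theorem \ref{cell_appendix_dim_simples} (which gives $\dim L_T(\lambda) = [T:T(\lambda)]$) produces the identities
\[
[T:\Delta(\lambda)] \;=\; [T:\nabla(\lambda)] \;=\; [T:T(\lambda)] \qquad \text{for all } \lambda \in \mc{P}_T.
\]
Writing $T = \bigoplus_\mu T(\mu)^{\oplus n_\mu}$ and expanding $[T:\nabla(\lambda)] = \sum_\mu n_\mu [T(\mu):\nabla(\lambda)]$, then using $[T(\mu):\nabla(\lambda)] = 0$ for $\mu \not\geq \lambda$ together with $[T(\lambda):\nabla(\lambda)] = 1$, the equation collapses to $\sum_{\mu > \lambda} n_\mu [T(\mu):\nabla(\lambda)] = 0$. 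Non-negativity forces $n_\mu [T(\mu):\nabla(\lambda)] = 0$ for every pair $\mu > \lambda$ in $\mc{P}_T$, and the symmetric argument using $\nabla_T(\lambda)$ gives the same vanishing with $\Delta$ in place of $\nabla$.

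It then remains to conclude that every $T(\mu)$ with $n_\mu > 0$ is simple, which I would carry out by descending induction on $\mu$ (starting from a maximal $\mu$ with $n_\mu > 0$). Supposing $T(\mu)$ were not simple, the standard and costandard filtrations of $T(\mu)$ must involve some $\Delta(\nu)$, respectively $\nabla(\nu)$, with $\nu < \mu$. Using Lemma \ref{tilting_standard_homs} together with the head/socle analysis of $T(\mu)$ (guaranteed by Assumption \ref{tilting_indec_assumption}) one locates a common $\nu < \mu$ with both $[T(\mu):\Delta(\nu)] > 0$ and $[T(\mu):\nabla(\nu)] > 0$. Together with $n_\mu > 0$, this forces $\nu \in \mc{P}_T$, which contradicts the vanishing derived above. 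Hence each $T(\mu)$ with $n_\mu > 0$ is simple, so $T$ is semisimple as an object of $\mathcal{C}$.

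The delicate point---and the main obstacle---is the last step: producing a single weight $\nu < \mu$ at which \emph{both} standard and costandard multiplicities in $T(\mu)$ are simultaneously positive, and verifying that this $\nu$ lies in $\mc{P}_T$. In the AST framework this is essentially automatic from the duality that fixes each indecomposable tilting, enforcing the symmetry $[T(\mu):\Delta(\nu)] = [T(\mu):\nabla(\nu)]$. In our more general setting the same conclusion is reached by exploiting the highest weight structure directly via Lemma \ref{tilting_standard_homs} and the Ext-vanishing in Assumption \ref{tilting_ext_assumption}, exactly as in \cite[\S 4]{AST}.
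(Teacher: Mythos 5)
Your forward direction is correct: a semisimple tilting object has all simple summands tilting (Lemma \ref{tilting_krull_schmidt}), each such $L(\lambda)$ equals $\Delta(\lambda)=\nabla(\lambda)=T(\lambda)$, and Assumption \ref{tilting_ext_assumption} gives $\Hom_{\mathcal{C}}(L(\lambda),L(\mu))=K\delta_{\lambda\mu}$, so $E_T$ is a finite product of matrix algebras over $K$.

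For the converse you have, to your credit, isolated exactly the right weak point --- but the final claim, that a single $\nu<\mu$ lying in $\mathcal{P}_T$ with both $[T(\mu):\Delta(\nu)]>0$ and $[T(\mu):\nabla(\nu)]>0$ can be produced from Lemma \ref{tilting_standard_homs} and Assumption \ref{tilting_ext_assumption} ``exactly as in [AST]'' --- is not true, and the gap cannot be closed under Assumptions \ref{tilting_hwc_assumption}--\ref{tilting_indec_assumption} alone. A concrete obstruction: take the split quasi-hereditary algebra with simples $L_1,L_2,L_3$, poset $1<3>2$ with $1,2$ incomparable, $\Delta(i)=L_i=\nabla(i)$ for $i\in\{1,2\}$, $\Delta(3)$ uniserial with head $L_3$ and socle $L_1$, and $\nabla(3)$ uniserial with head $L_2$ and socle $L_3$ (realised by the path algebra of a linear $A_3$ quiver). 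Then $T(3)$ is the uniserial module with composition series $L_2,L_3,L_1$ from top to bottom, with $[T(3):\Delta(2)]=[T(3):\nabla(1)]=1$ but $[T(3):\nabla(2)]=[T(3):\Delta(1)]=0$. Taking $T=T(3)$ one gets $\mathcal{P}_T=\{3\}$ and $\dim E_T=\sum_{\lambda}[T:\Delta(\lambda)]\,[T:\nabla(\lambda)]=1$, so $E_T=K$ is semisimple, while $T$ is visibly not. Note that your derived identities $[T:\Delta(\lambda)]=[T:\nabla(\lambda)]=[T:T(\lambda)]$ do hold for $\lambda\in\mathcal{P}_T=\{3\}$; the argument fails exactly at the step you flagged, because there is no $\nu<3$ lying in $\mathcal{P}_T$ even though $T(3)$ has nontrivial standard and costandard filtrations.

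What makes the [AST] argument close is the duality $\bbD$ fixing each $T(\mu)$, which forces $[T(\mu):\Delta(\nu)]=[T(\mu):\nabla(\nu)]$ for all $\nu$; that symmetry immediately yields a $\nu\in\mathcal{P}_T$ below $\mu$ whenever $T(\mu)$ is non-simple. So the paper's claim that the [AST] arguments go through ``word-for-word'' tacitly imports the duality hypothesis from the second clause of Theorem \ref{end_tilting_is_cellular}. You should record this as a genuine missing hypothesis for the converse direction (it is harmless in the paper's intended applications to $\mathcal{G}(A)$ with a triangular anti-involution, where the duality is available), rather than asserting the gap can be filled by the stated lemmas.
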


\section{A relative Morita theorem} \label{morita_appendix}

We state and prove a relative Morita theorem, since it is used several times in the article, and is not quite the usual setting one finds in the literature. 

Let $X$ be a set. We say that $E$ is an $X$-algebra if it is a $K$-algebra (not necessarily with unit) and there are idempotents $\{ e_{\lambda} \ | \ \lambda \in X \}$ in $E$ such that 
$$
E = \bigoplus_{\lambda,\mu \in X} e_{\lambda} E e_{\mu}. 
$$
We say that an $E$-module $M$ is \textit{unital} if it admits a weight decomposition $M = \bigoplus_{\lambda \in X} e_{\lambda} M$. Let $\Lmod{E}$ denote the category of finite-dimensional unital left $E$-modules. 

Let $\mc{A}$ be a $K$-linear Artinian category with finite dimensional hom spaces and enough projectives. Choose $X \subset \Irr \mc{A}$ and let $\mc{B}$ be the Serre subcategory of $\mc{A}$ generated by $\Irr \mc{A} \smallsetminus X$. Let $\pi : \mc{A} \rightarrow \mc{A} / \mc{B}$ be the quotient map, and $P(\lambda)$ the projective cover of $\lambda \in \Irr \mc{A}$ in $\mc{A}$. The following is a consequence of \cite[Theorem 3.5]{CPS}, but we include a proof for completeness.

\begin{lem}\label{lem:projcoverquotient}
For each $\lambda \in X$, the object $Q(\lambda) := \pi(P(\lambda))$ is the projective cover of $\pi(\lambda)$ in $\mc{A} / \mc{B}$. In particular, $\mc{A} / \mc{B}$ has enough projectives. 
\end{lem}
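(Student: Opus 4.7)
The pivotal observation is that $P(\lambda)$ is local with simple head $\lambda \in X$: any proper quotient of $P(\lambda)$ contains $\lambda$ as a composition factor, and since $\lambda$ is not a simple object of $\mc{B}$, no such proper quotient can lie in $\mc{B}$. Hence the only subobject $P' \subset P(\lambda)$ with $P(\lambda)/P' \in \mc{B}$ is $P' = P(\lambda)$ itself. I would record this first, because it collapses Gabriel's double-colimit description of morphisms in the Serre quotient to a single one. Combined with the Artinian (finite-length) hypothesis, which guarantees that each $M \in \mc{A}$ has a largest $\mc{B}$-subobject $t_{\mc{B}}(M)$, this yields the clean natural identification
\[
\Hom_{\mc{A}/\mc{B}}(\pi(P(\lambda)), \pi(M)) \;\simeq\; \Hom_{\mc{A}}(P(\lambda), M/t_{\mc{B}}(M)).
\]

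Armed with this formula, the main step is to verify that $Q(\lambda)$ is projective in $\mc{A}/\mc{B}$, i.e.\ that the functor $\Hom_{\mc{A}/\mc{B}}(Q(\lambda), -)$ is exact. Given a short exact sequence in $\mc{A}/\mc{B}$, I would lift it to $0 \to M_1 \to M_2 \to M_3 \to 0$ in $\mc{A}$. Although $t_{\mc{B}}$ is not an exact functor, the surjection $M_2 \twoheadrightarrow M_3$ does descend to a surjection $M_2/t_{\mc{B}}(M_2) \twoheadrightarrow M_3/t_{\mc{B}}(M_3)$, because the image of $t_{\mc{B}}(M_2)$ in $M_3$ is a subobject of $M_3$ lying in $\mc{B}$ and is therefore contained in $t_{\mc{B}}(M_3)$. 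Projectivity of $P(\lambda)$ in $\mc{A}$ then lifts any morphism $P(\lambda) \to M_3/t_{\mc{B}}(M_3)$ through this surjection, which by the formula is precisely the right-exactness required. This descent step is where I expect the main work to lie: it is what substitutes, in the absence of a right adjoint to $\pi$, for the section functor one would have access to in a Grothendieck setting.

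It remains to promote projectivity of $Q(\lambda)$ to a projective \emph{cover} of $\pi(\lambda)$. Applying the Hom formula with $M = \mu \in X$ simple (so $t_{\mc{B}}(\mu) = 0$) gives
\[
\Hom_{\mc{A}/\mc{B}}(Q(\lambda), \pi(\mu)) \;\simeq\; \Hom_{\mc{A}}(P(\lambda), \mu),
\]
which vanishes unless $\mu = \lambda$ and otherwise equals $\End_{\mc{A}}(\lambda)$. Thus the finite-length object $Q(\lambda)$ admits $\pi(\lambda)$ as its unique simple quotient; applied to both summands in a hypothetical decomposition $Q(\lambda) = Q_1 \oplus Q_2$, this forces one summand to vanish, so $Q(\lambda)$ is indecomposable and hence the projective cover of $\pi(\lambda)$. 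Enough projectives in $\mc{A}/\mc{B}$ follows immediately: given any finite-length $Y$, I would cover its (semisimple) head $\bigoplus_i \pi(\mu_i)$ by $\bigoplus_i Q(\mu_i)$ and lift this by projectivity to a surjection onto $Y$.
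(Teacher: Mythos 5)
Your argument is correct and takes essentially the same route as the paper: both collapse Gabriel's colimit description of $\Hom$-spaces in the Serre quotient using that $P(\lambda)$ has no non-zero quotient in $\mc{B}$, lift a short exact sequence from $\mc{A}/\mc{B}$ back to $\mc{A}$ (the paper via Faith's Corollary 15.8), and then appeal to projectivity of $P(\lambda)$ in $\mc{A}$. Your formulation in terms of the torsion subobject $t_{\mc{B}}$ is a slightly tidier packaging of the paper's colimit argument, and you additionally spell out the indecomposability/projective-cover step and the enough-projectives conclusion, which the paper's proof leaves implicit.
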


\begin{proof}
We begin by showing that $\pi(P(\lambda))$ is projective in $\mc{A} / \mc{B}$. Let $0 \rightarrow U \rightarrow V \rightarrow W \rightarrow 0$ be a short exact sequence in $\mc{A} / \mc{B}$. By \cite[Corollary 15.8]{FaithAlgebraI}, there exists a short exact sequence $0 \rightarrow U' \rightarrow V' \rightarrow W' \rightarrow 0$ in $\mc{A}$ whose image under the quotient functor is $0 \rightarrow U \rightarrow V \rightarrow W \rightarrow 0$. We get a short exact sequence 
$$
0 \rightarrow \Hom_{\mc{A}}(\pi(P(\lambda)),U) \rightarrow \Hom_{\mc{A}}(\pi(P(\lambda)),V') \rightarrow \Hom_{\mc{A}}(\pi(P(\lambda)),W') \rightarrow 0
$$
and it suffices to show that the quotient functor identifies the $\Hom_{\mc{A}}(\pi(P(\lambda)),U')$ with $\Hom_{\mc{A} / \mc{B}}(\pi(P(\lambda)),U)$. Since the head $\pi(\lambda)$ of $\pi(P(\lambda))$ is non-zero in $\mc{A} / \mc{B}$, 
$$
\Hom_{\mc{A} / \mc{B}}(\pi(P(\lambda)),U) = \lim_{U'' \in \mc{B}} \Hom_{\mc{A}}(\pi(P(\lambda)),U'/ U'').
$$
Since $\pi(P(\lambda))$ is projective, the map $\Hom_{\mc{A}}(\pi(P(\lambda)),U') \rightarrow \Hom_{\mc{A} / \mc{B}}(\pi(P(\lambda)),U)$ is surjective. If it were not injective then there would exist a non-zero morphism $\phi$ in $\Hom_{\mc{A}}(\pi(P(\lambda)),U')$ whose image is in $ \mc{B}$. But $\lambda$ does not belong to $ \mc{B}$, so this cannot happen. 
\end{proof}

We drop $\pi$ from the notation and write $\lambda$ for the image of $\lambda$ in $\mc{A} / \mc{B}$. Let 
\begin{align*}
E & = \left( \bigoplus_{\lambda,\mu \in X} \Hom_{\mc{A}/ \mc{B}}(Q(\lambda),Q(\mu)) \right)^{\op},\\
 & =  \left( \bigoplus_{\lambda,\mu \in X} \Hom_{\mc{A}}(P(\lambda),P(\mu)) \right)^{\op}.
\end{align*}
Then $E$ is an $X$-algebra with $e_{\lambda}$ the identity in $\Hom_{\mc{A}/ \mc{B}}(Q(\lambda),Q(\lambda))$. 

\begin{lem}\label{lem:counitiso}
For each $\lambda \in X$, the canonical map 
$$
\bigoplus_{\mu \in X} Q(\mu) \otimes_E \left( \bigoplus_{\rho \in X} \Hom_{\mc{A} / \mc{B}}(Q(\rho), Q(\lambda)) \right) \rightarrow Q(\lambda)
$$
is an isomorphism in $\mc{A} / \mc{B}$. 
\end{lem}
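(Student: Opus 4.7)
The plan is to exhibit the canonical map as the Morita-style counit of a tensor-Hom adjunction, for which the statement becomes essentially formal. Set $P \dopgleich \bigoplus_{\mu \in X} Q(\mu)$, viewed both as an object of $\mc{A}/\mc{B}$ and as a right $E$-module in the obvious way (recall $E = \End_{\mc{A}/\mc{B}}(P)^{\op}$, acting on $P$ from the right via the natural left action of $\End(P)$). Then $P$ is unital as a right $E$-module, i.e.\ $P = \bigoplus_\mu P e_\mu$ with $P e_\mu = Q(\mu)$, and the tensor product $P \otimes_E -$ from unital left $E$-modules to $\mc{A}/\mc{B}$ is well-defined and is left adjoint to $\Hom_{\mc{A}/\mc{B}}(P, -)$.

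First I would rewrite both sides of the canonical map intrinsically. The inner sum identifies, as a left $E$-module, with $E e_\lambda$: indeed, $e_\rho E e_\lambda = \Hom_{\mc{A}/\mc{B}}(Q(\rho), Q(\lambda))$ by construction of $E$, and summing over $\rho$ gives $E e_\lambda = \bigoplus_{\rho \in X} \Hom_{\mc{A}/\mc{B}}(Q(\rho), Q(\lambda))$. The source of the map is thus $P \otimes_E E e_\lambda$, and unpacking the canonical map $p \otimes f \mapsto f(p)$ shows it coincides with the multiplication map $p \otimes e \mapsto pe$ coming from the right $E$-module structure on $P$.

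Next I would produce the inverse. Since $e_\lambda \in e_\lambda E e_\lambda = \End(Q(\lambda))$ acts as the identity on $Q(\lambda) = P e_\lambda$, the assignment $q \mapsto q \otimes e_\lambda$ defines a map $Q(\lambda) \to P \otimes_E E e_\lambda$. The composition with the multiplication map is $q \mapsto q e_\lambda = q$. Conversely, any pure tensor $p \otimes e$ with $e \in E e_\lambda$ satisfies $e = e e_\lambda$, so
\[
p \otimes e \;=\; p \otimes e e_\lambda \;=\; (pe) \otimes e_\lambda,
\]
and this is sent by $q \mapsto q \otimes e_\lambda$ after multiplication to itself. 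This shows the two maps are mutually inverse, completing the proof.

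The only mild subtlety, rather than a real obstacle, is that $E$ is non-unital when $X$ is infinite, so one must work consistently with unital modules and verify that $P = PE$ (so that the standard isomorphism $P \otimes_E E \xrightarrow{\sim} P$ holds, which is what underlies the argument after specialising to the direct summand $E e_\lambda \subseteq E$). Everything else is a straightforward unpacking of definitions, using Lemma \ref{lem:projcoverquotient} only implicitly, to identify the $Q(\lambda)$ as the relevant projectives in $\mc{A}/\mc{B}$.
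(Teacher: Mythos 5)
Your proof is correct, and it takes a genuinely different route from the paper. The paper proves the lemma indirectly via Yoneda's lemma: after a preliminary finiteness reduction (passing to finite subsets $X_1, X_2 \subset X$ so that the big direct sum collapses to a finite one), it observes that the canonical map is an isomorphism iff the induced map $\Hom_{\mc{A}/\mc{B}}(Q(\lambda), M) \to \Hom_{\mc{A}/\mc{B}}(Q, M)$ is an isomorphism for every object $M$, then reduces to $M$ simple by induction on the length of $M$, and finishes by invoking the fact from Lemma~\ref{lem:projcoverquotient} that $Q(\mu)$ is the projective cover of $\pi(\mu)$. You instead give a direct proof: you recognise the canonical map as the multiplication map $P \otimes_E E e_\lambda \to P e_\lambda = Q(\lambda)$ for the right $E$-module $P = \bigoplus_\mu Q(\mu)$, identify the inner direct sum of $\Hom$-spaces with $E e_\lambda$, and exhibit the explicit two-sided inverse $q \mapsto q \otimes e_\lambda$, verifying both compositions by elementary manipulations with the balancing relation. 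Both arguments are valid. Your approach is the classical one from Morita theory and is arguably cleaner and more self-contained (no induction on length, no appeal to projective covers beyond what identifies $Q(\lambda)$); its only cost is that it requires care with the element-wise manipulation of $P \otimes_E E e_\lambda$ in the abstract abelian quotient category (legitimate here, since $\mc{A}/\mc{B}$ inherits from $\mc{A}$ the structure of a finite-length $K$-linear category with enough projectives, and you have already handled the non-unitality of $E$ by restricting to unital modules and observing that only finitely many idempotents act nontrivially). The paper's Yoneda argument sidesteps these element-level considerations by working entirely with $\Hom$-functors, at the price of the extra reduction to simples.
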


\begin{proof}
Let $X_1 = \{ \rho \in X \ | \ [Q(\lambda): \rho] \neq 0 \}$ and let $X_2 = \{ \mu \in X \ | \ [Q(\rho) : \mu ] \neq 0 \textrm{ for some } \rho \in X_1 \}$. Then both $X_1$ and $X_2$ are finite sets and 
\begin{align*}
Q &:= \bigoplus_{\mu \in X} Q(\mu) \otimes_E \left( \bigoplus_{\rho \in X} \Hom_{\mc{A} / \mc{B}} (Q(\rho),Q(\lambda)) \right) \\ &= \bigoplus_{\mu \in X_2} Q(\mu) \otimes_E \left( \bigoplus_{\rho \in X_1} \Hom_{\mc{A} / \mc{B}} (Q(\rho),Q(\lambda)) \right). 
\end{align*}
By Yoneda's lemma, the morphism will be an isomorphism if and only if the map $\Hom_{\mc{A} / \mc{B}} (Q,M) \rightarrow \Hom_{\mc{A}/ \mc{B}}(Q(\lambda),M)$ is an isomorphism for all $M$. By induction on the length of $M$, we may assume that $M$ is simple. Since $Q(\mu)$ is the projective cover of $\pi(\mu)$ in $\mc{A}/ \mc{B}$, the statement is clear in this case.
\end{proof}

\begin{thm}\label{thm:relativeMorita}
The functor $F : \mc{A} / \mc{B} \rightarrow \Lmod{E}$, 
$$
F(M) = \bigoplus_{\lambda \in X} \Hom_{\mc{A} / \mc{B}}(Q(\lambda),M)
$$
is an equivalence, with quasi-inverse
$$
N \mapsto \bigoplus_{\lambda \in X} Q(\lambda) \otimes_E N \simeq \bigoplus_{\lambda \in X} \Hom_E(Q(\lambda) ,  N). 
$$
\end{thm}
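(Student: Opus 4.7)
The plan is to verify that $F$ and the prospective quasi-inverse $G$, defined as the functor $N \mapsto Q\otimes_{E} N$ where $Q = \bigoplus_{\lambda \in X} Q(\lambda)$, are mutually quasi-inverse by exhibiting a unit and counit and checking each is an isomorphism. First I would check that $F$ is well-defined: for $M \in \mc{A}/\mc{B}$, only finitely many components $\Hom_{\mc{A}/\mc{B}}(Q(\lambda),M)$ are nonzero (since $M$ has finite length and $Q(\lambda)$ has simple head $\lambda$ by Lemma \ref{lem:projcoverquotient}), and each is finite-dimensional, while pre-composition gives the required left $E$-action making $F(M)$ unital. Because $Q(\lambda)$ is projective in $\mc{A}/\mc{B}$ by Lemma \ref{lem:projcoverquotient}, the functor $F$ is exact.

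Next I would construct $G$ on a finitely generated unital $E$-module $N$ by taking a finite presentation
$$\bigoplus_{i} Ee_{\lambda_i} \xrightarrow{d} \bigoplus_{j} Ee_{\mu_j} \to N \to 0,$$
noting that the matrix entries of $d$ lie in $e_{\mu_j}Ee_{\lambda_i} = \Hom_{\mc{A}/\mc{B}}(Q(\mu_j),Q(\lambda_i))$, so $d$ lifts canonically to a morphism $\bigoplus_i Q(\lambda_i) \to \bigoplus_j Q(\mu_j)$ in $\mc{A}/\mc{B}$; define $G(N)$ to be its cokernel. Standard diagram chasing shows $G$ is well-defined independent of the presentation, is right exact, and satisfies $G(Ee_\lambda) = Q(\lambda)$, and the universal property of the cokernel realises $G$ as a left adjoint of $F$.

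For the counit $\varepsilon_M \from GF(M) \to M$, the key input is Lemma \ref{lem:counitiso}, which says precisely that $\varepsilon_{Q(\lambda)}$ is an isomorphism; additivity extends this to finite direct sums of the $Q(\lambda)$. For arbitrary $M$, I would use that $\mc{A}/\mc{B}$ has enough projectives of the form $\bigoplus Q(\lambda)$ (Lemma \ref{lem:projcoverquotient}) to choose a presentation $P_1 \to P_0 \to M \to 0$; since $F$ is exact and $G$ right exact, $GF$ is right exact, so comparing the right-exact sequences row by row via the five lemma forces $\varepsilon_M$ to be an isomorphism.

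For the unit $\eta_N \from N \to FG(N)$, I would check the case $N = Ee_\lambda$ directly:
$$FG(Ee_\lambda) = F(Q(\lambda)) = \bigoplus_{\mu \in X} \Hom_{\mc{A}/\mc{B}}(Q(\mu),Q(\lambda)) = \bigoplus_{\mu \in X} e_\mu E e_\lambda = Ee_\lambda,$$
and $\eta_{Ee_\lambda}$ is the identity. Every finitely generated unital $E$-module admits a presentation by such free modules, so the same right-exactness/five-lemma argument upgrades this to arbitrary $N$. The main obstacle I anticipate is purely bookkeeping: tracking the conventions between left $E$-modules and right $E^{\op}$-modules so that the adjunction, the matrix description of $d$, and the action on $F(M)$ are consistent; once these identifications are nailed down, the substance of the argument rests on Lemma \ref{lem:counitiso} together with the existence of presentations guaranteed by Lemma \ref{lem:projcoverquotient}.
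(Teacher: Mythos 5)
Your proposal follows essentially the same strategy as the paper: take $G := Q \otimes_E -$ (your cokernel construction is precisely $Q \otimes_E N$ computed via a presentation, so it is the same functor), use Lemma \ref{lem:counitiso} to get the counit on projectives, and bootstrap from projectives to general objects via finite presentations on both sides. The one difference is cosmetic: the paper packages the argument as ``counit iso $\Rightarrow$ fully faithful; unit iso on frees $\Rightarrow$ essentially surjective,'' whereas you check directly that unit and counit are isomorphisms. Both are valid ways to conclude that an adjoint pair is an equivalence.

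There is one small but genuine gap you should fill. To run the presentation argument on the $E$-module side you need the ``free'' modules $Ee_\lambda$ to actually lie in $\Lmod{E}$, i.e.\ to be \emph{finite-dimensional}, and you need every $N \in \Lmod{E}$ to admit a presentation with \emph{finite} direct sums of such modules. You assert this but never justify it, and it is not automatic since $X$ may be infinite. The paper handles this explicitly: because $e_\lambda E e_\mu = \Hom_{\mc{A}/\mc{B}}(Q(\lambda),Q(\mu))$ and $Q(\mu)$ has finite length, for each fixed $\mu$ only finitely many $\lambda$ give $e_\lambda E e_\mu \neq 0$, so $Ee_\mu$ is finite-dimensional. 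Once that is in place, your argument for the unit on $Ee_\lambda$ and the lift-the-matrix construction of $G$ both go through; without it, the finite presentation $\bigoplus_i Ee_{\lambda_i} \to \bigoplus_j Ee_{\mu_j} \to N \to 0$ need not exist inside $\Lmod{E}$.
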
 

\begin{proof}
To prove that $F$ is an equivalence with quasi-inverse $G := \bigoplus_{\lambda \in X} Q(\lambda) \otimes_E -$, it suffices to show that $F$ is essentially surjective and fully faithful. Since $G$ is left adjoint to $F$, we have a unit $\eta : 1 \rightarrow F \circ G$ and counit $\epsilon : G \circ F \rightarrow 1$. We first show that $\epsilon$ is an isomorphism. By Lemma \ref{lem:counitiso}, it is an isomorphism on all projective modules in $\mc{A} / \mc{B}$. Next fix a presentation $\bigoplus_{i} Q(\lambda_i) \rightarrow \bigoplus_{j} Q(\mu_j) \rightarrow M \rightarrow 0$, where both sums are finite. Then we have a commutative diagram 
$$
\begin{tikzcd}
G \circ F \left( \bigoplus_{i} Q(\lambda_i) \right)  \arrow{d}{\wr}  \arrow{r} &  G \circ F \left( \bigoplus_{j} Q(\mu_j) \right)  \arrow{d}{\wr}  \arrow{r} & G \circ F (M)  \arrow{r}  \arrow{d}{\epsilon} & 0 \\
 \bigoplus_{i} Q(\lambda_i)  \arrow{r} & \bigoplus_{j} Q(\mu_j)  \arrow{r} & M  \arrow{r} & 0
\end{tikzcd}
$$
which implies that $\epsilon : G \circ F (M) \rightarrow M$ is an isomorphism. We can use this to prove that $F$ is fully faithful. Let $M_1,M_2 \in \mc{A} / \mc{B}$. Then we wish to show that $F_{1,2} : \Hom_{\mc{A}/ \mc{B}}(M_1,M_2) \rightarrow \Hom_{E}(F(M_1),F(M_2))$ is an isomorphism. But the composite 
\begin{align*}
\Hom_{\mc{A}/ \mc{B}}(M_1,M_2) & \stackrel{F_{1,2}}{\longrightarrow} \Hom_{E}(F(M_1),F(M_2)) \rightarrow \Hom_{\mc{A}/ \mc{B}}(G \circ F(M_1),M_2) \\& \stackrel{\epsilon}{\longrightarrow} \Hom_{\mc{A}/ \mc{B}}(M_1,M_2)
\end{align*}
is the identity on $\Hom_{\mc{A}/ \mc{B}}(M_1,M_2)$, and all arrows except the first are isomorphisms. Thus, $F_{1,2}$ is an isomorphism. 

Finally, we just need to show that $F$ is essentially surjective. First we note that for a fixed $\mu \in X$, there are only finitely many $\lambda \in X$ such that $e_{\lambda} E e_{\mu} \neq 0$. This follows from the fact that $e_{\lambda} E e_{\mu} = \Hom_{\mc{A} / \mc{B}}(Q(\lambda),Q(\mu))$ and $Q(\mu)$ has finite length. Therefore $E e_{\mu}$ is finite dimensional and each $K \in \Lmod{E}$ admits a presentation $\bigoplus_i E e_{\lambda_i} \rightarrow \bigoplus_j E e_{\mu_j} \rightarrow K \rightarrow 0$, with \textit{both} sums being finite. Since tensoring is right exact, we have an exact sequence 
$$
\bigoplus_i Q(\lambda_i) \rightarrow \bigoplus_j Q(\mu_j) \rightarrow \bigoplus_{\lambda} Q(\lambda) \otimes_E K \rightarrow 0 
$$
and applying the exact functor $F$, we get a commutative diagram 
$$
\begin{tikzcd}
\bigoplus_i E e_{\lambda_i}  \arrow{d}{\wr}  \arrow{r} &  \bigoplus_j E e_{\mu_j}  \arrow{d}{\wr} \ar[r] & N  \arrow{r}  \arrow{d}{\eta} & 0 \\
G \circ F \left( \bigoplus_i E e_{\lambda_i} \right)  \arrow{r} & G \circ F\left( \bigoplus_j E e_{\mu_j} \right)  \arrow{r} & G \circ F (N)  \arrow{r} & 0
\end{tikzcd}
$$
with exact rows. This implies that $\eta : N \rightarrow F \circ G (N)$ is an isomorphism and that $F$ is essentially surjective. 
\end{proof}

Let 
\[
R = \left( \bigoplus_{\mu,\rho \in \Irr \mc{A}} \Hom_{\mc{A}}(P(\mu),P(\rho)) \right)^{\op} \;,
\]
an $\Irr \mc{A}$-algebra such that the category of finite-dimensional unital $R$-modules is equivalent to $\mc{A}$. 

\begin{lem}\label{lem:adjointiso}
For each $M \in \mc{A}$, the canonical morphism 
$$
\bigoplus_{\stackrel{\lambda \in X}{\mu \in \Irr \mc{A}}} \Hom_{\mc{A}}(P(\lambda),P(\mu)) \otimes_R M \rightarrow \pi(M)
$$
is an isomorphism of $E$-modules. 
\end{lem}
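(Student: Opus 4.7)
The plan is to compute both sides via the two Morita equivalences in play: $\mc{A} \simeq \Lmod{R}$ (from the construction of $R$) and $\mc{A}/\mc{B} \simeq \Lmod{E}$ (Theorem~\ref{thm:relativeMorita}). Under the first, we identify $M \in \mc{A}$ with the unital left $R$-module $\widetilde{M} \dopgleich \bigoplus_{\mu \in \Irr \mc{A}} \Hom_{\mc{A}}(P(\mu), M)$, where $R$ acts by precomposition. The source of the canonical morphism then becomes $E' \otimes_R \widetilde{M}$, where
\[
E' \dopgleich \bigoplus_{\lambda \in X,\, \mu \in \Irr \mc{A}} \Hom_{\mc{A}}(P(\lambda), P(\mu)) \;=\; \Big(\textstyle\sum_{\lambda \in X} e_\lambda\Big) R
\]
is a right ideal of $R$ carrying a compatible left $E$-module structure by composition.

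First I would identify the source. Using the standard Morita computation $e_\lambda R \otimes_R N \simeq e_\lambda N$ for any unital left $R$-module $N$, one obtains
\[
E' \otimes_R \widetilde{M} \;=\; \bigoplus_{\lambda \in X} e_\lambda R \otimes_R \widetilde{M} \;\simeq\; \bigoplus_{\lambda \in X} e_\lambda \widetilde{M} \;=\; \bigoplus_{\lambda \in X} \Hom_{\mc{A}}(P(\lambda), M)
\]
as left $E$-modules. Next I would identify the target by applying $F$ from Theorem~\ref{thm:relativeMorita}, which sends $\pi(M)$ to the unital left $E$-module $\bigoplus_{\lambda \in X} \Hom_{\mc{A}/\mc{B}}(Q(\lambda), \pi(M))$.

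The crucial step is to establish, for every $\lambda \in X$, a natural isomorphism
\[
\Hom_{\mc{A}}(P(\lambda), M) \;\stackrel{\sim}{\longrightarrow}\; \Hom_{\mc{A}/\mc{B}}(Q(\lambda), \pi(M))
\]
induced by the localization functor $\pi$. This is the same argument as in the proof of Lemma~\ref{lem:projcoverquotient}: since $P(\lambda)$ is projective in $\mc{A}$ and its head $\lambda$ does not belong to $\Irr \mc{B}$, any morphism $P(\lambda) \to Y$ with $Y \in \mc{B}$ must be zero --- otherwise its image would be a non-zero quotient of $P(\lambda)$ with head $\lambda$, contradicting $Y \in \mc{B}$. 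Combined with the exactness of $\Hom_{\mc{A}}(P(\lambda), -)$ and the filtered-colimit description of morphisms in the Serre quotient $\mc{A}/\mc{B}$, this vanishing forces $\pi$ to induce the desired isomorphism.

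The remaining step is bookkeeping: under the two identifications above, the canonical map --- which on elementary tensors sends $(f : P(\lambda) \to P(\mu)) \otimes (\phi : P(\mu) \to M)$ to (the class of) $\phi \circ f$ --- becomes the identity on $\bigoplus_{\lambda \in X} \Hom_{\mc{A}}(P(\lambda), M)$. The only real obstacle in the argument is the head-of-$P(\lambda)$ computation needed to match $\Hom$-groups across the localization; everything else is a routine unpacking of the two Morita equivalences.
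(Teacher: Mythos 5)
Your proof is correct and takes essentially the same approach the paper intends, but is considerably more explicit: the paper's own argument consists only of the finiteness/well-definedness check (that only finitely many $\mu$ and $\lambda$ contribute), leaving the Morita identifications and the key Hom-space isomorphism silently implicit. Your explicit unpacking --- in particular the identification $\Hom_{\mc{A}}(P(\lambda), M) \simeq \Hom_{\mc{A}/\mc{B}}(Q(\lambda), \pi(M))$ via projectivity of $P(\lambda)$ and the fact that $\lambda \notin \Irr\mc{B}$, which is the same argument as in Lemma~\ref{lem:projcoverquotient} --- is precisely what the paper's one-line reduction to well-definedness is appealing to.
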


\begin{proof}
It suffices to check that the morphism is well-defined. But this follows, as in the proof of Proposition \ref{prop:adjointstopi}, from the fact that there are finitely many $\mu_1, \ds, \mu_{\ell}$ such that $\Hom_{\mc{A}}(P(\rho),M) =  0$ for all $\rho \in \Irr \mc{A} \smallsetminus \{ \mu_1, \ds, \mu_{\ell} \}$ and finitely many $\lambda_1, \ds, \lambda_k$ such that $\Hom_{\mc{A}}(P(\lambda),P(\mu_i)) =  0$ for all $\lambda \in X \smallsetminus \{ \lambda_1, \ds, \lambda_k \}$.
\end{proof}

\begin{prop}\label{prop:adjointstopi}
The quotient functor $\pi$ admits both a left adjoint $\pi^*$ and a right adjoint $\pi^!$, given by 
$$
\pi^*(N) = \bigoplus_{\lambda \in X} P(\lambda) \otimes_E N, \quad \pi^!(N) = \bigoplus_{\stackrel{\lambda \in X}{\mu \in \Irr \mc{A}}} \Hom_{E}(\Hom_{\mc{A}}(P(\lambda),P(\mu)),N). 
$$
\end{prop}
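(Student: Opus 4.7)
The plan is to invoke Theorem~\ref{thm:relativeMorita} together with Lemma~\ref{lem:adjointiso} to identify $\pi$ with a tensor-product functor between module categories, and then derive both adjoints via the standard tensor--hom adjunction. Let $P_\bullet := \bigoplus_{\lambda \in X, \mu \in \Irr \mc{A}} \Hom_{\mc{A}}(P(\lambda), P(\mu))$, which carries a natural $(E,R)$-bimodule structure. The category $\mc{A}$ is equivalent (in the same way as in Theorem~\ref{thm:relativeMorita}, with $X$ replaced by all of $\Irr\mc{A}$) to $\Lmod{R}$ via $M \mapsto \bigoplus_\mu \Hom_{\mc{A}}(P(\mu), M)$, and $\mc{A}/\mc{B} \simeq \Lmod{E}$ by Theorem~\ref{thm:relativeMorita}; under these equivalences, Lemma~\ref{lem:adjointiso} states precisely that $\pi$ corresponds to the functor $P_\bullet \otimes_R (-) \from \Lmod{R} \to \Lmod{E}$.

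Given this identification, the right adjoint is immediate from the tensor--hom adjunction for bimodules: the $R$-module attached to $\pi^!(N)$ is $\Hom_E(P_\bullet, N)$, and unpacking this in accordance with the direct-sum decomposition of $P_\bullet$ over $\mu$ yields the stated formula for $\pi^!$.

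For the left adjoint, I would check directly that $\pi^*(N) := \bigoplus_{\lambda \in X} P(\lambda) \otimes_E N$ works, interpreting this as the tensor product of the right $E$-module $P_X := \bigoplus_{\lambda \in X} P(\lambda)$ (via $E = \End_{\mc{A}}(P_X)^{\op}$) with the left $E$-module $N$; this is well defined as a finite colimit in $\mc{A}$ because every finite-dimensional $N \in \Lmod{E}$ admits a finite projective presentation. The usual hom--tensor adjunction then gives
\[
\Hom_{\mc{A}}(P_X \otimes_E N, M) \;\simeq\; \Hom_E\!\Big(N, \textstyle\bigoplus_{\lambda \in X} \Hom_{\mc{A}}(P(\lambda), M)\Big),
\]
and to conclude one needs to identify $\bigoplus_{\lambda \in X} \Hom_{\mc{A}}(P(\lambda), M)$ with the $E$-module $F(\pi M)$ attached to $\pi(M)$ under Theorem~\ref{thm:relativeMorita}. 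Summand by summand, this reduces to the natural isomorphism $\Hom_{\mc{A}}(P(\lambda), M) \simeq \Hom_{\mc{A}/\mc{B}}(Q(\lambda), \pi M)$, which in turn follows from projectivity of $P(\lambda)$, the Artinianity of $\mc{A}$ (which yields a largest subobject $\tau_{\mc{B}}(M) \subset M$ belonging to $\mc{B}$), and the vanishing $\Hom_{\mc{A}}(P(\lambda), \tau_{\mc{B}}(M)) = 0$---the latter because any nonzero morphism $P(\lambda) \to \tau_{\mc{B}}(M)$ has image with head $\lambda \notin \mc{B}$, contradicting its image lying in $\mc{B}$. The long exact sequence attached to $0 \to \tau_{\mc{B}}(M) \to M \to M/\tau_{\mc{B}}(M) \to 0$ then delivers $\Hom_{\mc{A}}(P(\lambda), M) \simeq \Hom_{\mc{A}}(P(\lambda), M/\tau_{\mc{B}}(M))$, and the description of morphisms in the Gabriel quotient (already exploited in the proof of Lemma~\ref{lem:projcoverquotient}) matches this with $\Hom_{\mc{A}/\mc{B}}(Q(\lambda), \pi M)$.

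The main obstacle is purely notational bookkeeping: one has to keep the various left and right actions on $P_\bullet$ and on $P_X$ straight, and verify that each hom-isomorphism respects the ambient $E$- or $R$-module structure, not merely the underlying $K$-vector space. None of this is conceptually novel, but the simultaneous interplay between the $(E,R)$-bimodule $P_\bullet$ and the ``swapped'' object $P_X$ (a right $E$-module in $\mc{A}$, equivalently a left $R$-module with a compatible right $E$-action) makes the notation quite dense.
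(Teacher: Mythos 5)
Your proposal is correct and takes essentially the same route as the paper: identify $\pi(M)$ with $\bigoplus_{\lambda\in X}\Hom_{\mc A}(P(\lambda),M)$ via Theorem~\ref{thm:relativeMorita}, write $\pi$ as the tensor functor $P_\bullet\otimes_R(-)$ via Lemma~\ref{lem:adjointiso}, and then apply hom--tensor adjunction for both adjoints. The only cosmetic differences are that you make explicit the identification $\Hom_{\mc A}(P(\lambda),M)\simeq\Hom_{\mc A/\mc B}(Q(\lambda),\pi M)$ (which the paper invokes implicitly, having established it in the proof of Lemma~\ref{lem:projcoverquotient}) while being somewhat lighter than the paper on the bookkeeping that only finitely many $\lambda$ and $\mu$ actually contribute to the direct sums for fixed $M$ and $N$.
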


\begin{proof}
By Theorem \ref{thm:relativeMorita}, we can identify $\mc{A} / \mc{B}$ with $\Lmod{E}$ such that 
$$
\pi(M) = \bigoplus_{\lambda \in X} \Hom_{\mc{A}}(P(\lambda),M).
$$
First we note that for $M \in \mc{A}$ and $N \in \Lmod{E}$, there are finitely many $\lambda_1, \ds, \lambda_k$ such that 
$$
e_{\mu} \cdot N = 0, \quad \textrm{and} \quad \Hom_{\mc{A}}(P(\mu),M) =  0 \quad \forall \ \mu \in X \smallsetminus \{ \lambda_1, \ds, \lambda_k \}. 
$$
Then
$$
\pi^*(N) = \bigoplus_{i = 1}^k  P(\lambda_i) \otimes_E N, \quad \pi(M) = \bigoplus_{i = 1}^k \Hom_{\mc{A}}(P(\lambda_i),M),
$$ 
and hence 
\begin{align*}
\Hom_{\mc{A}}(\pi^*(N),M) & = \Hom_{\mc{A}}\left( \bigoplus_{i = 1}^k  P(\lambda_i) \otimes_E N, M \right) \\
 & = \Hom_{E}\left( N, \bigoplus_{i = 1}^k \Hom_{\mc{A}}(P(\lambda_i),M) \right) \\
 & = \Hom_E(N,\pi(M)). 
\end{align*}
Similarly, there are only finitely many $\mu_1, \ds, \mu_{\ell} \in \Irr \mc{A}$ such that 
$$
\bigoplus_{i = 1}^k \bigoplus_{\mu \in \Irr \mc{A}} \Hom_{\mc{A}}(P(\lambda_i),P(\mu)) = \bigoplus_{i =1}^k \bigoplus_{j = 1}^{\ell} \Hom_{\mc{A}}(P(\lambda_i),P(\mu_j)).
$$
Therefore, then 
\begin{align*}
\Hom_{\mc{A}}(M, \pi^!(N)) & = \Hom_{\mc{A}}(M, \bigoplus_{i,j} \Hom_E(\Hom_{\mc{A}}(P(\lambda_i),P(\mu_j)), N)) \\
 & = \Hom_{E}(\bigoplus_{i,j} \Hom_{\mc{A}}(P(\lambda_i),P(\mu_j)) \otimes_R M, N) \\
 & = \Hom_E(\pi(M),N),
\end{align*}
where we have used Lemma \ref{lem:adjointiso} in the final equality.
\end{proof}


{\small

\bibliographystyle{abbrv}

\def\cprime{$'$} \def\cprime{$'$} \def\cprime{$'$} \def\cprime{$'$}
\def\cprime{$'$} \def\cprime{$'$} \def\cprime{$'$} \def\cprime{$'$}
\def\cprime{$'$} \def\cprime{$'$} \def\cprime{$'$} \def\cprime{$'$}
\def\cprime{$'$} \def\cprime{$'$} \def\cprime{$'$} \def\cprime{$'$}
\def\cprime{$'$} \def\cprime{$'$} \def\cprime{$'$} \def\cprime{$'$}
\def\cprime{$'$} \def\cprime{$'$} \def\cprime{$'$} \def\cprime{$'$}
\def\cprime{$'$} \def\cprime{$'$}

}
\end{document}